\numberwithin{section}{chapter}
\numberwithin{equation}{section}
\newcommand{\emi}[1]{\index{#1}\emph{#1}}%
\def\defthm#1#2#3#4{
	\newtheorem{#1}[theorem]{#3}
	\newtheorem*{#1*}{#3}
	\newtheorem{#2}[theorem]{#4}
	\newtheorem*{#2*}{#4}
	\crefname{#1}{#3}{#4}
	\crefname{#2}{#4}{#4}  
}
\newtheoremstyle{mythm}% 
{10pt}% Space above 
{}% Space below 
{\itshape}% Body font 
{}% Indent amount 
{\bf}%  Theorem head font 
{.}% Punctuation after theorem head 
{.5em}% Space after theorem head 
{}% 
\newtheoremstyle{mydef}% 
{10pt}% Space above 
{3pt}% Space below 
{}% Body font 
{}% Indent amount 
{\bf}%  Theorem head font 
{.}% Punctuation after theorem head 
{.5em}% Space after theorem head 
{}% 
\newtheoremstyle{myrmk}% 
{10pt}% Space above 
{3pt}% Space below 
{}% Body font 
{}% Indent amount 
{\bf}%  Theorem head font 
{.}% Punctuation after theorem head 
{.5em}% Space after theorem head 
{}% 
\theoremstyle{mythm}
\newtheorem{theorem}{Theorem}[section]
\newtheorem*{theorem*}{Theorem}
\theoremstyle{mydef}
\theoremstyle{myrmk}
\newtheorem*{replemmax}{\reptitle}
	{\end{replemmax}}
\crefname{section}{Section}{Sections}
\crefname{theorem}{Theorem}{Theorems}
\renewenvironment{proof}[1][\proofname] {\par\pushQED{\qed}\normalfont\topsep6\p@\@plus6\p@\relax\trivlist\item[\hskip\labelsep\bf#1\@addpunct{.}]\ignorespaces}{\popQED\endtrivlist\@endpefalse}
\newcommand{\ie}{\text{i.e.\ }}
\newcommand{\eg}{\text{e.g.\ }}
\newcommand{\etal}{\text{et al.\ }}
\newcommand{\etc}{\text{etc.\ }}
\crefname{diagram}{diagram}{diagrams}
\newcommand{\norm}[1]{\left\lVert#1\right\rVert}
\newcommand{\y}[1]{\operatorname{\textsf{y}}#1}
\providecommand{\noopsort}[1]{}
\def\degreedate#1{\gdef\@degreedate{#1}}
\def\degree#1{\gdef\@degree{#1}}
\def\collegeordept#1{\gdef\@collegeordept{#1}}
\def\university#1{\gdef\@university{#1}}
\def\crest#1{\gdef\@crest{#1}}
\def\logo#1{\gdef\@logo{#1}}
\def\deptlogo#1{\gdef\@deptlogo{#1}}
\title{Computability and Tiling Problems}
\author{Mark Richard Carney}
\newcommand{\submittedtext}{{Submitted in accordance with the requirements for the degree of}}
\newcommand{\sampletile}[4]{\scalebox{.75}{
\tikz[scale=1.7,label distance=2.5mm]{
\draw[fill=white] (1.6,1.6) rectangle (0,0);
\filldraw[fill=white] (0,0) -- (0.8,0.8) -- (0,1.6) -- cycle;
\filldraw[fill=white] (0,0) -- (0.8,0.8) -- (1.6,0) -- cycle;
\filldraw[fill=white] (0,1.6) -- (0.8,0.8) -- (1.6,1.6) -- cycle;
\node at (0.8,0.8) [label=above:{#2},label=left:{#1},label=right:{#3},label=below:{#4}] {};}
}}
\newcommand{\sampletilefarlabels}[4]{\scalebox{.75}{
\tikz[scale=1.7,label distance=4.5mm]{
\draw[fill=white] (1.6,1.6) rectangle (0,0);
\filldraw[fill=white] (0,0) -- (0.8,0.8) -- (0,1.6) -- cycle;
\filldraw[fill=white] (0,0) -- (0.8,0.8) -- (1.6,0) -- cycle;
\filldraw[fill=white] (0,1.6) -- (0.8,0.8) -- (1.6,1.6) -- cycle;
\node at (0.8,0.8) [label=above:{#2},label=left:{#1},label=right:{#3},label=below:{#4}] {};}
}}
\newcommand{\sampletilenearlabels}[4]{\scalebox{.75}{
\tikz[scale=1.7,label distance=1.5mm]{
\draw[fill=white] (1.6,1.6) rectangle (0,0);
\filldraw[fill=white] (0,0) -- (0.8,0.8) -- (0,1.6) -- cycle;
\filldraw[fill=white] (0,0) -- (0.8,0.8) -- (1.6,0) -- cycle;
\filldraw[fill=white] (0,1.6) -- (0.8,0.8) -- (1.6,1.6) -- cycle;
\node at (0.8,0.8) [label=above:{#2},label=left:{#1},label=right:{#3},label=below:{#4}] {};}
}}
\newcommand{\sampletilefill}[4]{\scalebox{.75}{
\tikz[scale=1.7,label distance=2.5mm]{
\draw[fill=#3] (1.6,1.6) rectangle (0,0);
\filldraw[fill=#1] (0,0) -- (0.8,0.8) -- (0,1.6) -- cycle;
\filldraw[fill=#4] (0,0) -- (0.8,0.8) -- (1.6,0) -- cycle;
\filldraw[fill=#2] (0,1.6) -- (0.8,0.8) -- (1.6,1.6) -- cycle;
{};}
}}
\newcommand\fillshapesimple[1]{ % #1 = shape, #2 = filename of texture, #3 = includegraphics options
    \begin{scope}
        \clip #1;
    \end{scope}
    \draw[line width=0.2mm] #1;
}
\newcommand\lozengetile[5]{
    \begin{scope}
        \clip #1;
    \end{scope}
    \draw[line width=0.1mm] #1;
\draw[line width=0.2mm] (30:0.66) node {#2};
\draw[line width=0.2mm] (15:1.33) node {#3};
\draw[line width=0.2mm] (330:0.66) node {#5};
\draw[line width=0.2mm] (345:1.33) node {#4};
}
\newcommand\hexagontile[4]{
  \begin{scope}
    \clip (0:0) -- (180:2) -- (120:2) -- (0:0) -- (120:2) -- (60:2) -- (0:0) -- (60:2) -- (0:2) -- (0:0) -- (0:2) -- (300:2) -- (240:2) -- (180:2) -- cycle;
  \end{scope}
  \draw[line width=0.2] (0:0) -- (180:2) -- (120:2) -- (0:0) -- (120:2) -- (60:2) -- (0:0) -- (60:2) -- (0:2) -- (0:0) -- (0:2) -- (300:2) -- (240:2) -- (180:2) -- cycle;
\draw (150:1) node {$#1$};
\draw (90:1) node {$#2$};
\draw (30:1) node {$#3$};
\draw (270:1) node {$#4$};
}
\newcommand\numberthis{\addtocounter{equation}{1}\tag{\theequation}}
\begin{document}
% \draftdate puts date, \beta as version
%\draftdate

%\mc{NB - text in bold with yellow highlighting (like this) are the author's notes...}
\pagenumbering{roman}
%%%%%%%%%%%%%%%%%%%%%%%%%%%%%%%%%%%%%%%%%%%%%%%%%%%%%%%%%%%%%%%%%%%%%%%%%%%%%%
%                                                                            %
%         Everything which comes before the introduction chapter...          %
%                                                                            %
%%%%%%%%%%%%%%%%%%%%%%%%%%%%%%%%%%%%%%%%%%%%%%%%%%%%%%%%%%%%%%%%%%%%%%%%%%%%%%

% The title page:
\begin{titlepage}
\makeatletter
\begin{picture}({50,25})
\put(-75,75){\hbox{\@deptlogo}}
\end{picture}
\begin{picture}({50,25})
\put(210,75){\hbox{\@logo}}
\end{picture}
\begin{center}
	{ \Huge {\bfseries {\@title}} \par}
	{\large \vspace*{20mm} {{\@crest} \par} \vspace*{20mm}}
	{{\Large \authors} \par}
	{\large \vspace*{1ex}
		{{\@university} \par}
		\vspace*{1ex}
		{{\@collegeordept} \par}
		\vspace*{20mm}
		{{\submittedtext} \par}
		\vspace*{1ex}
		{\it {\@degree} \par}
		\vspace*{2ex}
		{\@degreedate}}
\end{center}
\makeatother
\null\vfill
\end{titlepage}

%IPS
\addtocounter{page}{-1}
\cleardoublepage
\pagestyle{plain}
\begin{center}
	\vspace*{1.5cm}
	{\Large \bfseries Intellectual Property Statement}
\end{center}
\vspace{0.5cm}
\begin{quote} %this creates the heading for the dedication page
		The candidate confirms that the work submitted is his own and that
		appropriate credit has been given where reference has been made to
		the work of others.
		\\
		\\
		This copy has been supplied on the understanding that it is copyright
		material and that no quotation from the thesis may be published
		without proper acknowledgement.
		\\
		\\
		The right of \authors ~to be identified as Author of this work
		has been asserted by him in accordance with the Copyright, Designs
		and Patents Act 1988.
		\\
		\\
		\copyright\; October 2019 The University of Leeds and \authors.
\end{quote}
\setcounter{page}{0}
\addtocounter{page}{-1}
\pagestyle{headings}
\cleardoublepage
% The abstract
\centerline{\Huge{Abstract}}
\addcontentsline{toc}{section}{Abstract}
In this thesis we will present and discuss various results pertaining to tiling problems and mathematical logic, specifically computability theory.

We focus on Wang prototiles, as defined in \cite{GrunbaumTP}. We begin by studying Domino Problems, and do not restrict ourselves to the usual problems concerning finite sets of prototiles. We first consider two domino problems: whether a given set of prototiles $S$ has total planar tilings, which we denote $TILE$, or whether it has infinite connected but not necessarily total tilings, $WTILE$ (short for `weakly tile'). We show that both $TILE \equiv_m ILL \equiv_m WTILE$, and thereby both $TILE$ and $WTILE$ are $\Sigma^1_1$-complete. We also show that the opposite problems, $\neg TILE$ and $SNT$ (short for `Strongly Not Tile') are such that $\neg TILE \equiv_m WELL \equiv_m SNT$ and so both $\neg TILE$ and $SNT$ are both $\Pi^1_1$-complete.

Next we give some consideration to the problem of whether a given (infinite) set of prototiles is periodic or aperiodic. We study the sets $PTile$ of periodic tilings, and $ATile$ of aperiodic tilings. We then show that both of these sets are complete for the class of problems of the form $(\Sigma^1_1 \wedge \Pi^1_1)$. We also present results for finite versions of these tiling problems.

We then move on to consider the Weihrauch reducibility for a general total tiling principle $CT$ as well as weaker principles of tiling, and show that there exist Weihrauch equivalences to closed choice on Baire space, $C_{\omega^\omega}$. We also show that all Domino Problems that tile some infinite connected region are Weihrauch reducible to $C_{\omega^\omega}$.

Finally, we give a prototile set of 15 prototiles that can encode any Elementary Cellular Automaton (ECA). We make use of an unusual tile set, based on hexagons and lozenges that we have not see in the literature before, in order to achieve this.

% The dedication - optional
\cleardoublepage \addcontentsline{toc}{section}{Dedication}
\vspace*{\stretch{1}}
\begin{center}
%\Large{Dedication}
\Large{\textit{Dedicated to Prof. S. Barry Cooper}}
\end{center}
\vspace*{\stretch{2}}

% Quote - optional
%\cleardoublepage
%\vspace*{\stretch{1}}
%\begin{center}
%\Large{Quote here}
%\end{center}
%\vspace*{\stretch{2}}

% The acknowledgements
\cleardoublepage
\centerline{\Huge{Acknowledgements}}
\addcontentsline{toc}{section}{Acknowledgements}
%I'd like to thank my writers, directors, producers, make-up artists.... \etc

I wish to thank my supervisors Dr. Paul Shafer and Prof. Michael Rathjen, both of whom have guided and inspired me along this journey. In particular, Dr. Shafer's regular engagement and inspiring passion for computability, logic, and mathematics, matched with his rigorous approach and choice quality coffee, has been one of the most profound privileges to work with. The generosity of time, expertise, and guidance from both of my supervisors has made this Ph.D. possible.

I am grateful to my colleagues and friends in the Logic Group at the University of Leeds for their support in answering questions and commenting on ideas, in particular:  Giovanni Solda, Emanuele Frittaion, Alberto Marcone, Marta Fiori Carones, John Truss, Stan Wainer, Andrew Brooke-Taylor, Charles Harris, Anton Freund, Martin Krombholz, Bjarki Geir Benediktsson, John Howe, Jakob Vidmar, Richard Matthews, Rosario Mennuni, James Gay, James Riley, Anja Komatar, Cong Chen, Richard Whyman, Sarah Sigley, Cesare Gallozzi, and Petra Staynova. Of note is Emanuele Frittaion, whose observation regarding Weihrauch degrees after a seminar I gave spawned the work in Chapter 5, and Andrew Brooke-Taylor for the references on Vop\v{e}nka's Principle in Chapter 4.

My thanks to the University of Leeds and the School of Mathematics for their hosting me, and to the EPSRC for their financial assistance. It has been gratefully received.

My thanks to my parents Pauline and Richard, my brother Edward, and my family - in particular my mum, who has lingered through several conversations I am certain she was not expecting to understand all of. My love and thanks to you all for supporting me in many ways.

I acknowledge the plentiful support of my friends and colleagues in the technology scene, including the participants at Leeds Hackspace, DC151, and DC11331, and the DEFCON and BSides family. I honour the marvellous work of the baristas of Leeds.

Lastly, my thanks to my first supervisor on this Ph.D., Prof. S. Barry Cooper, to whom this thesis is dedicated. Without you, Barry, I would not have been so encouraged and prepared to undertake this task in the beginning.

Cheers, Barry, and thank you, my friends.

% Dr. Paul Shafer, Prof. Michael Rathjen, John Howe, Bjark Banedictssohn, Jakob Vidmar, Ceser Gallozzi, Dr. Anton Freund, Dr. Martin Krombholtz, Prof. John Truss 
% check spellings!! 

% The contents
\cleardoublepage
\addcontentsline{toc}{section}{Contents}
\tableofcontents

% are these technically required? 

% The list of figures and tables
\clearpage
\addcontentsline{toc}{section}{List of figures}
\markboth{List of figures}{}
\listoffigures
\clearpage
\addcontentsline{toc}{section}{List of tables}
\markboth{List of tables}{}
\listoftables
\cleardoublepage

\pagenumbering{arabic}
\chapter*{Introduction}
\label{chap:intro}
% next resets the equation numbers to start at 1 at the start of the chapter
\setcounter{equation}{0}
\renewcommand{\theequation}{\thechapter.\arabic{equation}}

In this thesis we will explore the connections between tiling problems and logic, specifically in relation to, and through the lens of, computability theory. 

\section*{Background to the Thesis}

Broadly speaking, the tiling problems we study fall into two categories, for given prototile set $S$:

\begin{enumerate}
\item Domino Problems - the question of whether $S$ tiles the plane.
\item Tiling Properties - do all/any $S$-tilings have some specific property, \eg are they all periodic or aperiodic?
\end{enumerate}

We will construct well defined versions of both of these problems, and study their relationships to various areas of computability theory. 

This thesis builds on results that the author first presented in their MSc dissertation \cite{MCarneyMSc} as part of their MSc Mathematics at the University of Leeds. In that work, we presented some ways to code various results in computability, as well as elementary cellular automata, into sets of Wang prototiles.

In building on these results, we explore with much more depth the ways in which the classes of tiling problems listed above relate to various aspects of computability. We ask questions along the following lines:
\begin{itemize}
\item What are the computable parts of a given tiling problem?
\item How do tiling problems fit into existing computability hierarchies?
\end{itemize}
We also present improved versions of the Elementary Cellular Automata tilings using an original tile schema that we have constructed for this purpose.

\subsection*{Motivations}

There are some very interesting results in the literature regarding tiling problems and logic, and in general the aim is to determine both what conditions can be met by some given prototile set, and conversely whether there exist prototile sets that exhibit particular properties that are of interest. 

We will look at both finite and infinite sets of prototiles and determine results for both of these classes of possible tiling problems. Specifically, we are interested in formulating answers to the question:

\begin{center}
``What is the relative difficulty for a given problem about tile sets and tilings?''
\end{center}

This question, as the literature belies, is far from a foregone conclusion. The construction of a prototile set is intrinsically linked to the various patterns and behaviours of that set's tilings in the plane.

Given the well-studied logical strength of other combinatorial principles, we hope to expand the logical and mathematical vocabulary in this respect for tiling problems.

\subsection*{Computability and Tiling Problems}

In 1964 (see \cite{GrunbaumTP}) Wang proved that if a prototile set of Wang tiles - diagonally quadrisected square tiles - can tile any arbitrarily large finite portion of the plane, then it can tile the whole plane. This is a fairly straightforward compactness argument, and does indeed use K\"onig's lemma (cited as `K\"onig's Infinity Lemma' in \cite{GrunbaumTP}) to achieve the result, which we present in Chapter 2, Theorem \ref{thm:WangExtension}. 

Following on from this work, Wang continued to ask interesting questions regarding tiling problems. Indeed, many of the interesting results regarding tilings spawns from a conjecture due to Hao Wang in the early 60's:

\begin{conjecture}
It is necessary, as well as sufficient, that if a set of prototiles $S$ is periodic, it tiles the plane.
\end{conjecture}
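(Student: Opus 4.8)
The statement is a biconditional in disguise, and I would split it at once. The \emph{sufficiency} direction --- that a periodic $S$ tiles the plane --- is immediate and carries no content: a periodic tiling is by definition a tiling of the whole plane, so if $S$ admits one then $S$ tiles the plane. Everything of interest lies in the \emph{necessity} direction, namely that every $S$ which tiles the plane must in fact admit a periodic tiling, or equivalently that no prototile set tiles the plane only aperiodically.

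My first instinct would be to try to manufacture periodicity out of the compactness already supplied by the Wang extension theorem (Theorem~\ref{thm:WangExtension}). The plan would run as follows: if $S$ tiles arbitrarily large $n \times n$ squares, then because there are only finitely many ways to colour the boundary of such a square, some boundary pattern must recur as $n$ grows; I would then hope to locate a fault-free square patch whose left and right edge-sequences agree and whose top and bottom edge-sequences agree, so that the patch tiles the plane by translation in both directions and the resulting tiling is doubly periodic.

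This is precisely the step where I expect the argument to break, and the manner of the break is the whole point. König's lemma yields \emph{a} total tiling but exerts no control at all over its translational symmetry, and the pigeonhole on boundary colourings guarantees only that \emph{some} colour pattern recurs, never that an entire matching block recurs with agreeing opposite edges. Having failed to close this gap, I would come to suspect --- correctly --- that the conjecture is \emph{false}, and I would abandon the attempt to prove the necessity direction in favour of refuting it: the task becomes the construction of an \emph{aperiodic} prototile set, one that tiles the plane yet admits no periodic tiling whatsoever.

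The genuine work, then, is that construction, and the natural strategy is the one that binds this historical conjecture to the computability themes of the thesis: design the colour constraints so that every tiling is forced into a rigid self-similar hierarchy with no translational period, and so that the tiles simultaneously simulate the step-by-step run of a Turing machine, with the existence of a tiling encoding non-halting. The hard part is entirely combinatorial engineering --- devising matching conditions that at once (i) admit at least one tiling, (ii) force every tiling to be aperiodic, and (iii) faithfully carry a computation --- and this is exactly the difficulty resolved by Berger's theorem and its later refinements. The refutation of Wang's conjecture and the undecidability of the Domino Problem thus emerge from a single construction, which is why I would present the conjecture here only to overturn it.
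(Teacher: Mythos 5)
Your proposal reaches exactly the conclusion the thesis itself records: the conjecture is false, the sufficiency direction is trivial, and the necessity direction is refuted by Berger's construction in \cite{berger1966} of an aperiodic prototile set (20,426 Wang tiles) that tiles the plane yet admits no periodic tiling, tied as you say to the Turing-machine encoding behind the undecidability of the Domino Problem. Your diagnosis of why the compactness/pigeonhole route cannot manufacture a periodic block is also sound and matches the historical situation the paper describes, so there is nothing to correct.
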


Seeking an answer to this question, Berger in \cite{berger1966} formulated the first set of aperiodic Wang tiles - a prototile set consisting of 20,426 tiles that has only aperiodic tilings of the plane. This completely disproved Wang's conjecture, and demonstrated that periodicity is sufficient, but not necessary for a prototile set to tile the plane - thereby negating the conjecture.

Berger's refutation of Wang's conjecture was surprising, and laid the groundwork for further results in creating aperiodic prototile sets for a decade - the most well known of which are probably Penrose tilings. A summary of this work is given at the start of Chapter 4.  

In addition to creating the first aperiodic prototile sets, Berger was also the first to formulate the connection between Wang tilings and Turing Machines. The ultimate result was that the domino problem for finite sets of Wang prototiles, namely
\begin{center}
``Does a finite set of Wang prototiles $S$ tile the plane?''	
\end{center}
and the halting problem
\begin{center}
``Does a given Turing Machine $M$ halt on given input $x$?''
\end{center}
are equivalent, and these formed the central results of his thesis.

This equivalence was highly motivational for the current work we have regarding prototile sets and mathematical logic, as we can include the Domino Problem class of tiling problems for finite sets of prototiles as having the normal form of some $\Sigma^0_1$ formula - or the negation of one, if we desire an infinite planar tiling.

\section*{The Current Literature on Tiling Problems and Logic}

Firstly, we will summarise results in the literature that relate areas of logic to theorems and ideas about tiles, tilings, and prototile set properties and constructions.

Although Berger showed early on that Wang tiles are related to the undecidability of the Halting Problem, developments of using and studying tilings in mathematical logic is comparatively recent. 

Beginning with Harel in \cite{Harel1983}, who showed how problems of `high undecidability', \ie problems in $\Pi^1_1$, can be expressed as tiling problems. This is achieved in the plane by means of a set of carefully constructed Wang prototiles. Harel then built on this work in \cite{Harel1986} developing more full relationships between prototile sets and theorems about well/illfounded trees. Indeed, \cite{Harel1983} is cited by many texts in the field of Dynamic Logic - with Harel providing a chapter on this in the Handbook of Philosophical Logic \cite{Harel1984}. 

In `On the Convenience of Tilings' \cite{Boas97Conv}, van Emde Boas showed how various complexity classes are captured in specific tiling boundary results. Starting with an effective formulation of Turing Machines as prototile sets, van Emde Boas shows that a Wang prototile set that is unbounded vertically and horizontally is \textbf{NP}-complete, owing to the fact that a Turing Tape is realized left to right, whilst successive stages of a computation are realized vertically. Similarly, van Emde Boas continued by showing that a `corridor' tiling - a tiling that is of bounded width but unbounded height - is complete for \textbf{PSPACE}.

Following Durand's work on tilings and quasiperiodicity in \cite{Durand1999}, the work of Durand, Levin, and Shen \cite{Durand2008} showed that for every prototile set admits either no tiling or some tiling with $\mathcal{O}(n)$ Kolmogorov complexity of its $(n\times n)$-squares. Thatis to say, the string taken to describe any given square in the tiling has a complexity linearly related to the size of the square. This work was a continuation of their study of computational complexity paradigms and how they relate to tile sets and their planar tilings.

In Durand, Romashenko, and Shen \cite{Shen2010}, we find a significant development in the underlying theory of tilings - the existence of fixed point-based tilings. This work married up the work on Wang tiles with the previous work by Penrose and Amman on aperiodic Penrose tilings - see \cite[Chapters 10,11]{GrunbaumTP} for full presentations and discussions of these earlier works.

With these results in hand, recent work on $\Pi^0_1$ sets and tilings by Brown-Westrick in \cite{Westrick2017} utilised these self-similar Turing Machine tilings from \cite{Shen2010} in order to show that effectively closed subshifts of the distinct square shift are all sofic \cite[Theorem 1, 2]{Westrick2017}.

The study of tilings has, naturally from the above, been found and utilised in symbolic dynamics. A full introduction is found in the aforementioned Harel \cite{Harel1984}, with some interesting results being found recently in the work of Delvenne and Blondel \cite{Delvenne2004} where it is shown that by means of tiling problems, an analogue of Rice's theorem for computable functions is possible, giving that certain properties of dynamical systems are undecidable. As an extension to this result (Theorem 1 in \cite{Delvenne2004}), it is shown that topological entropy (as defined in \cite[Sec. 4.3, p.140]{Delvenne2004}) is undecidable for Turing Machines and tilings alike. Simpson in \cite{Simpson2007} also gave the following insight into tiling problems and their relation to mathematical logic, writing in \cite{Simpson2007} that:

\begin{quote}
``In the study of 2-dimensional subshifts of finite type, it has been useful to note that they are essentially the same thing as \emph{tiling problems} in the sense of Wang [ in \cite{Wang1990}].''
\end{quote}

Indeed, Levin's address, given as the Kolmogorov Lecture in 2005 at the University of London - see \cite{Levin2005} - gave some detail on the use of enumerable tilings in order to
prove that $2$-adic shifts and reflections can be enforced by a prototile set.

It is interesting to note that \cite{Delvenne2004} makes use of the notion of \emph{quasi-periodicity} - the property that every pattern $u$ of the tiling, there exists a $k$ such that any given $(k \times k)$ patch of tiles contains $u$. This notion is an interesting interim property that bridges the gap between fully periodic and fully aperiodic - see section \ref{subsec:quasiperiodic} for further details.

Adjacent to this work in mathematical logic, papers by Kari \cite{Kari1996} and later Culik \cite{CULIK1996245} showed how theorems about cellular automata that compute non-repeating reals can be converted into prototile sets to give very small sets of aperiodic prototiles. This work was generalised by Jeandel and Rao in \cite{Rao2015} to give the smallest possible set of aperiodic Wang prototiles, with a very small size of 11 prototiles to achieve this. They also proved through various means - both mathematically and with computational assistance - that this prototile set was smallest possible, and also had the property that if we were to remove any single tile from the prototile set, we no longer have tilings of the plane. Thereby, this prototile set either tiles aperiodically or fails to tile at all.

Having given this outline of the general view of tiling problems with respect to mathematical logic and related fields, we are now in a position to outline our contribution to this field.

\section*{Outline of the Thesis and Main Results}

Here we give an overview of the outline of the thesis, the main points in each chapter, and an account of the original work we are presenting in this volume.

\subsection*{Overview and Outline of the Thesis}

In chapter 1 we give a full background to the underlying mathematical logic and machinery we will use throughout the thesis. We give many definitions and present theorems generally without proofs, indicating sources along the way should they be necessary to the reader. We introduce precise definitions of Turing Machines as well as basic computability results that will be used later on. We also define various notions of reducibility in preparation for our work in Chapter 3.

We also give the background theory of computable trees as computable subsets of Baire space and Cantor space that form the backbone of many of our results in later chapters. We also give background results concerning the $\Pi^1_1$-completeness of Kleene's $\mathcal{O}$ which we shall use in later chapters. We finish this chapter with overview material for how computable trees, ordinals, and the arithmetical and analytic hierarchies hang together mathematically.

In chapter 2 we give an overview of core results regarding tilings and prototile sets. We give proofs of the Extension Theorem and state formally the first of our core tiling problems - the Domino Problem. We then give a proof of the undecidability of the Domino Problem by means of the computable conversion of any Turing Machine into a set of prototiles in such a way that their tilings tiling the plane iff the given Turing Machine on input $x$ does not halt. 

We introduce here the notion of a tile schema - a way of describing specific placement of colours from chosen colour sets. This allows us to describe (infinite) prototile sets by means of carefully chosen colour sets and schema tile construction such that the resultant product of combining these gives prototile sets whose tilings carry the specific properties we are looking for. Though this method may seem convoluted \textit{prima facie}, we hope to demonstrate that this technique leads in fact to quite straightforward proofs for translating various principles and concepts into the combinatorial properties of a prototile set. 

We round off this chapter by noticing some interesting corollaries and propositions arising from this fact that are of similar ilk to other results in mathematical logic - principally the fact that there exist prototile sets such that their domino problem is undecidable by Peano Arithmetic. 

In chapter 3 we state the first run of our main results - $\Pi^1_1$- and $\Sigma^1_1$-completeness of specific domino problems. We consider domino problems that require all tilings to be total, as well as domino problems that do not require total tilings, but instead only require an infinite connected patch of the plane to be tiled. To prove these results of $\Pi^1_1$ and $\Sigma^1_1$ completeness, we utilise the completeness for these classes due to wellfounded and illfounded trees. We construct tile schemas for each, and then demonstrate the completeness by means of $m$-reductions between our classes of prototile sets and ill-/well-founded trees.

With Chapter 4 we depart from domino problems, and instead consider the problems regarding whether or not the tilings for a given prototile set are all periodic, all aperiodic, or some mixture of the two. We state the fundamental results, with background references provided for this rather interesting class of problems.

We demonstrate that these notions are simultaneously $\Pi^1_1$ and $\Sigma^1_1$, as well as prove that, in fact, the questions of periodicity and aperiodicity for infinite sets of prototiles are both complete for the class of problems of the form $(\Pi^1_1 \wedge \Sigma^1_1)$. We also show that the set of all finite prototile sets whose tilings are aperiodic is $\Pi^0_1$, which is a surprising result.

Chapter 5 is an extension of this notion of computable reductions into the realm of Weihrauch reducibility. We give a feature rich presentation of the definitions and notions of Weihrauch reducibility, and state some core results. We then give intuitions for the core concepts in this theory, and proceed to derive Weihrauch equivalences between domino problems and closed choice on Baire space. 

Intuitively these results are motivated by realisation that all Wang tilings can be given by `tiling trees', first defined by Wang, for which closed choice realizers in Baire space can locate the infinite paths through, and from which we can recover a tiling of the plane. We can also consider that, given a non-deterministic prototile set - that is, for any prototile in the set there exist multiple possibilities for matching tiles in a given tiling - then having some choice principle in play is a natural conclusion. We give some exact results by means of Weihrauch equivalences.

The proposal for a new way of coding Elementary Cellular Automata (ECAs) into prototile sets is the subject of Chapter 6. Here, we demonstrate that for the 3-ary functions defining the behaviour of ECAs is naturally coded by a hexagon and lozenge based construction. With the requisite tiles to neaten up the upper edge of our tiling, we have a prototile set consisting of 15 tiles that very naturally give a way to represent the behaviour of ECAs in tilings of the half-plane by means of coding the first `input' row, and then making it such that the subsequent tilings of each row are exactly given by the underlying function of the given ECA.

We also show that such a prototile set is necessarily then chaotic and Turing Complete given correct choices for the ECA rule that we encode - Rule 30 and Rule 100 respectively for these results. Thus we have a nice and very small prototile set that carries with it a lot of possible mathematical capability.

Finally, we complete the thesis with an overview in Chapter 7 of the various open problems that we have found along the way - both in the literature and in the course of our research. We also aim to indicate the possible avenues for extending the results in this thesis further.

\subsection*{Summary of Original Work}

In this thesis, the following items are our original contributions:

\begin{itemize}
\item Our proof of theorem \ref{thm:TMTilings} is inspired by the form in \cite{Boas97Conv}, but is reshaped to match the structure of our later proofs. The observations leading up to corollary \ref{cor:PAGoodNotTile} have not been found in the literature, but are relatively straightforward to derive.
\item The results given in Chapter 3 are all original unless stated otherwise. Specifically, our main results are:
\begin{itemize}
\item Lemma \ref{lemma-pproc}
\item Theorem \ref{thm:TILE-ILL}
\item Theorem \ref{thm:nTILE-WELL}
\item Theorem \ref{thm:SNT-WELL}
\item Theorem \ref{thm:WTILE-ILL}
\end{itemize}
\item The results concerning $ATile$, $PTile$, $ATile_{FIN}$ and $PTile_{FIN}$ in Chapter 4 are all original:
\begin{itemize}
\item Theorem \ref{thm:ILL-ATile}
\item Theorem \ref{thm:ILL-PTile}
\item Theorem \ref{thm:WELL-PTile}
\item Theorem \ref{thm:WELL-ATile}
\item Theorem \ref{thm:X-ATile}
\item Theorem \ref{thm:X-PTile}
\item Corollary \ref{cor:ATilePTileComplete}
\item Theorem \ref{thm:ATileFIN-Pi01}
\item Theorem \ref{thm:PTileFINPi11}
\end{itemize}
\item The Weihrauch reductions for tiling problems in Chapter 5 are original:
\begin{itemize}
\item Theorem \ref{thm:CT-SW-ClosedC}
\item Theorem \ref{thm:CWPT-SW-ClosedC}
\item Theorem \ref{thm:CC-WKLstarCIPT}
\item Theorem \ref{thm:DPW-CC}
\end{itemize}
\item The main result in Chapter 6 is also original: Theorem \ref{thm:ECA-15-Hex}
\end{itemize}

\chapter*{Glossary of Sets and Constructions}
\label{chap:Glossary}
% next resets the equation numbers to start at 1 at the start of the chapter
\setcounter{equation}{0}
\renewcommand{\theequation}{\thechapter.\arabic{equation}}

We give a table that details all of the major sets and operators that are used in this thesis, for convenience and for reference.

%\begin{table}[h]
%\begin{tabular}{p{2.5cm}|p{8cm}|p{2cm}}
\begin{longtable}{p{2.5cm}|p{8.25cm}|p{1.75cm}}
\textbf{Name} & \textbf{Description} & \textbf{Thesis Ref.} \\
 \midrule
$m$-reducibility & Given two sets $A$ and $B$, $A$ is $m$-reducible to $B$, written $A \leq_m B$, if there exists some computable function $f: \omega \rightarrow \omega$ such that for all $x \in \omega$, $x \in A \iff f(x) \in B$ & \ref{def:mred}\\
\hline
Weihrauch Reducibility & Given two operators $f$ and $g$ on represented spaces, we say $f \leq_W g$, if there exist computable $H,K :\subseteq \omega^\omega \rightarrow \omega^\omega$ such that for any realizer $G \vdash g$, $F = K\langle id_{\omega^\omega}, GH \rangle$ is a realizer for $f$. & \ref{def:Weihrauch} \\
\hline
$WELL$ & The set of all indices $e$ such that $\varphi_e$ is the characteristic function of a well-founded tree $T \subseteq \omega^{<\omega}$. & \ref{def:WELL}\\
\hline
$ILL$ & The set of all indices $e$ such that $\varphi_e$ is the characteristic function of an ill-founded tree $T \subseteq \omega^{<\omega}$. & \ref{def:ILL}\\
\hline
$TILE$ & The set of all indices $e$ such that $\varphi_e$ is the characteristic function of an infinite Wang prototile set whose tilings are total in the plane. & \ref{def:TILE} \\
\hline
$WTILE$ & The set of all indices $e$ such that $\varphi_e$ is the characteristic function of an infinite Wang prototile set whose tilings are infinite, connected, but not necessarily total in the plane. & \ref{def:WTILE} \\
\hline
$SNT$ & The set of all indices $e$ such that $\varphi_e$ is the characteristic function of an infinite Wang prototile set whose connected tilings are all finite. & \ref{def:SNT}\\
\hline
$ATile$ & Set of all $e$ such that $\varphi_e$ is the characteristic function for a set of prototiles who planar tilings are all total and aperiodic. & \ref{def:ATile} \\
\hline
$PTile$ & Set of all $e$ such that $\varphi_e$ is the characteristic function for a set of prototiles who planar tilings are all total and periodic. & \ref{def:PTile} \\
\hline
$ATile_{FIN}$ & Set of all $e$ such that $\varphi_e$ is the characteristic function for a \emph{finite} set of prototiles who planar tilings are all total and aperiodic. & \ref{def:ATileFIN}\\
\hline
$PTile_{FIN}$ & Set of all $e$ such that $\varphi_e$ is the characteristic function for a \emph{finite} set of prototiles who planar tilings are all total and periodic. & \ref{def:PTileFIN}\\
\hline
AIT & The construction found in the proof of theorem \ref{thm:TILE-ILL} that creates an aperiodic prototile set given an ill-founded tree. & \ref{def:AITPIT}\\
\hline
PIT & The construction found in the proof of theorem \ref{thm:ILL-PTile} that creates an aperiodic prototile set given an ill-founded tree. & \ref{def:AITPIT}\\
\hline
$CT$ & The operator that takes some set of Wang prototiles as input and returns a total tiling of the plane. & \ref{def:ChooseTiling} \\
\hline
$CWPT$ & An operator that takes a set of Wang prototiles and returns a connected planar, but not necessarily total tiling. & \ref{ref:CWPT} \\
\hline
$CIPT$ & An operator that takes a prototile set $S$ that has total planar tilings, and returns an infinite `slice' of this tiling as a tiling of an infintie region of $\mathbb{Z}^2$. & \ref{def:CIPT} \\
\hline
$WIPT$ & An operator that takes a set of prototiles and return a tiling that has an infinite patch tiled within it, but we do not know where it is. & \ref{def:WIPT} \\
\hline
$DPW$ & The $DPW$ operator takes some set of prototiles and return a tiling that has an infinite connected patch within it. & \ref{def:DPW} \\
\hline
$C_{\omega^\omega}$ & Closed choice on Baire space - equivalent to finding a path through an ill-founded Baire space tree. & \ref{def:ClosedChoice} \\
\hline
$C_{2^\omega}$ & Closed choice on Cantor space - equivalent to Weak K\"onig's Lemma. & Sec \ref{sec:FurtherWeakTilingProblems} \\
\hline
$C_{\omega}$ & closed choice on the natural numbers - this takes a function $f: \omega \rightarrow \omega$ such that $range(f) \neq \omega$, and returns some point $n \notin range(f)$. & Sec. \ref{sec:FurtherWeakTilingProblems} \\
\end{longtable}
%\end{tabular}
%\end{table}

%\include{Chapter1/chapter1}
\chapter{Computability, Trees, and Preliminary Concepts}
\label{chap2}
% next resets the equation numbers to start at 1 at the start of the chapter
\setcounter{equation}{0}
\renewcommand{\theequation}{\thechapter.\arabic{equation}}

%------------------------------------------------------------------------------

\epigraph{The Analytical Engine has no pretensions whatever to originate anything. It can do whatever we know how to order it to perform…But it is likely to exert an indirect and reciprocal influence on science itself.}{\textit{Ada Lovelace, \\ in a Letter to Charles Babbage}}

In this chapter we will present the background theory for the rest of this volume. We will give definitions, theorems, and select proofs to lay the logical and mathematical groundwork for later chapters. 

\section{Preliminaries}

We will use the following standard notation throughout this work:

\begin{definition}
We shall make use of the standard logical notation:
\begin{itemize}
\item $\forall x$ and $\exists x$ for `for all $x$' and `there exists $x$' respectively.
\item $x \wedge y$ and $x \vee y$ for logical `$x$ AND $y$' and `$x$ OR $y$' respectively.
\item In general, variables and constants will be in lower case Roman lettering: $a,b,c,x,y,z,\ldots$
\item Lower case Roman letters such as $f,g,h,s,t,\ldots$ can also be used for function names.
\item In general, sets will be in upper case Roman lettering: $X, Y, Z, \ldots$
\item We shall use $A \rightarrow B$ to denote logical implication.
\item We shall use $A \cap B$ and $A \cup B$ to denote set intersection and union of $A$ and $B$.
\item We shall use $A \setminus B$ to denote the set $A$ with any elements found in $B$ removed, the standard set-minus.
\item Greek letters $\alpha, \beta, \gamma, \ldots$ shall be used primarily for ordinals, with the exception of $\varphi$ which is used for Turing Machines.
\item We shall use $\mathbb{N}, \mathbb{Z}, \mathbb{Q}, \mathbb{R}$ to mean the natural numbers, integers, rationals, and reals respectively.
\item For a given set $A$, let $\mathcal{P}(A)$ denote the \emph{powerset} of $A$ - the set of all subsets of $A$.
\item Unless otherwise indicated, our computable functions will be of the form $f: \omega \rightarrow \omega$.
\end{itemize}
\end{definition}

\begin{definition}[Cantor Pairing Function]
We shall use the standard Cantor pairing function to represent ordered pairs $\langle x,y \rangle$ as follows: \[ \langle x,y \rangle = \frac{(x+y)(x+y+1)}{2} + y \]
\end{definition}

We will shorten the notation for ordered $n$-tuples as $\langle x_1, x_2, \ldots, x_n \rangle$, with $\langle x,y,z \rangle = \langle \langle x,y \rangle , z \rangle$, and so forth. We fix this coding for the duration of this thesis, which will serve our definition of `computable' later.

We denote the set of natural numbers by its ordinal notation $\omega$, allowing for $\mathbb{N}$ to be used where it will avoid confusion.

\section{Computability}\label{sec:Computability}\index{Computability Theory}

We will use standard definitions, using \cite{Cooper2003computability} as our main reference text.

\begin{definition}\label{def:CompRel}
Let a \emi{computable relation} $R_e \subseteq \omega \times \omega)$ be a computable relation such that for some Turing Machine $e$, $$ R(x,y) \iff (\exists y) \varphi_e(x) = y $$ 
\end{definition}

\begin{definition}[First Layer of the Arithmetical Hierarchy]\label{def:CompRel}
We define the following notation for logical complexity of formulas as follows:
\begin{itemize}
\item If for all $x \in \omega$ we have $$ x \in A \iff (\exists y)R(x,y) $$ for a computable relation $R$, then we say that $A$ is a $\Sigma^0_1$ set, or $A \in \Sigma^0_1$.
\item If for all $x \in \omega$ we have $$ x \in A \iff (\forall y)R(x,y) $$ for a computable relation $R$, then we say that $A$ is a $\Pi^0_1$ set, or $A \in \Pi^0_1$.
\item If $A \in \Sigma^0_1 \cap \Pi^0_1$ then we say that $A$ is $\Delta^0_1$, or write $A \in \Delta^0_1$.
\end{itemize}
\end{definition}

Note that we rely on alternating existential/universal quantifiers, called \emph{prenex normal form}, in the structure of our formulae to properly ascertain which layer of any hierarchy we are at. Given this arithmetical hierarchy, we will later denote the `analytic' (also called `inductive') hierarchy in the same way, with a superscript of $1$ - $\Pi^1_1$, $\Sigma^1_1$, and $\Delta^1_1$. We will also find the following definition useful:

\begin{definition}[Skolem/Herbrand Normal Form]\index{Skolem/Herbrand normal form}
In the simplest form that we require in this thesis, a function is in Skolem (Herbrand) normal form if all of the existentially (universally) quantified terms are replaced by functions that take the preceding universally (existentially) quantified variable as input.

We always begin with formulae in prenex normal form. An example of Skolemisation is taking $$\forall x \, \exists y \, \forall z \, [P(x,y,z)]$$ and producing $$\forall x \, \forall z \, [P(x,f(x),z)]$$ for some \emi{Skolem function} $f$. Likewise, Herbrandization is taking some formula $$\exists x \, \forall y \, \exists z \, [P(x,y,z)]$$ and producing some $$\exists x \, \exists z \, [P(x,g(x),z)]$$ for some \emph{Herbrand function} $g$.
\end{definition}

\subsection{Turing Machines}\label{subsec:TMs}\index{Turing Machine}

We define a Turing Machine as follows:

\begin{definition}\label{def:TM}
A \emi{Turing Machine} (abbreviated to `TM') consists of a bi-infinite row of cells called the `tape', upon which are written symbols according to a `program' $P$ held in the TM `head' that moves sequentially along the tape. A program is a set of 5-tuples of the following form: $$ ( s,q,s',q',\{ L, R \} ) $$ where $s$ and $q$ are respectively the current symbol and state, $s'$ is the symbol to be written in place of $s$, and $q'$ is the next internal state for the TM to switch to. The final item instructs the head to move left or right, denoted $L$ or $R$ respectively.
\end{definition}

Before a TM is run, we set the input in symbols on the tape, set the head at position 0, and set the internal state to the starting state denoted $q_0$. We then allow the TM to operate along the input on the tape according to its program $P$.

Let us denote $\varphi_e(x)$ as the $e^{\text{th}}$ TM, under some chosen, effective enumeration of all possible Turing Machines, acting on input $x$. We say that our computation halts if we reach the reserved halting state, after which no more computation is performed. If such a computation halts, whatever is on the tape when it halts is considered the output. If the $e^{\text{th}}$ TM halts on input $x$ with output $y$, we write this $\varphi_e(x) \downarrow = y$. Where $\varphi_e(x)$ does not halt, we write $\varphi_e(x) \uparrow$.

\begin{definition}
A function $f : \omega \rightarrow \omega$ is \emi{computable} if there exists some $e$ s.t. $f = \varphi_e$.
\end{definition}

\begin{definition}[Halting Problem]\label{def:HP}\index{halting problem}
For any given TM $\varphi_e$ and some input $x$, is there a decidable method of determining if $\varphi_e(x)$ halts?
\end{definition}

\begin{definition}
There exists a Turing machine $U$ - the \emi{Universal Turing Machine} - which if given input $(e,n)$ can simulate $\varphi_e(n)$. That is to say, $\varphi_U(e,n) = \varphi_e(n)$.
\end{definition}

Alan Turing introduced these concepts in \cite{Turing1936a}, and determined that it the Halting Problem was in fact \emph{undecidable}, meaning that there is no universal Turing Machine that can decide it.

\subsection{Enumeration in Stages}

Given the discrete way in which we formulate Turing Machines, it is natural to press `stop' every now and again and see how our computation might be going. To do this, we can talk of successive stages of a computation, and the current configuration of the Turing Machine's tape at that particular point.

\begin{definition}\label{def:stagecomp}
For any TM $\varphi_e$:
\begin{itemize}
\item Let $\varphi_{e,s} (x)$ denote the computation $\varphi_e(x)$ carried out up to stage $s$.
\item Let $C_{e,s}$ denote the bi-infinite sequence corresponding to the tape configuration of $\varphi_e(x)$ at stage $s$ of the computation.
\end{itemize}
\end{definition}

\begin{theorem}[\protect{\cite[Thm. 5.2.10]{Cooper2003computability}}]\label{thm:DownarrowThm}
For any computation $\varphi_e(x)$, $$\varphi_e(x) \downarrow \iff (\exists s) \varphi_{e,s}(x) \text{ is in the HALT state} $$
\end{theorem}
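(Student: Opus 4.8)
The plan is to prove the biconditional by unwinding the definitions of convergence and of a staged computation, handling each direction separately. The argument is essentially bookkeeping, since the notion of ``stage'' in Definition \ref{def:stagecomp} is set up precisely so as to track the successive configurations $C_{e,s}$ of the machine one computation step at a time.

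First I would fix the correspondence between computation steps and stages. By Definition \ref{def:stagecomp}, $\varphi_{e,s}(x)$ is the initial segment of the run of $\varphi_e(x)$ obtained by executing the first $s$ instructions dictated by the program $P$, and $C_{e,s}$ records the tape contents together with the head position and internal state after those $s$ steps. Thus whether $\varphi_{e,s}(x)$ ``is in the HALT state'' is a (decidable) property of the configuration $C_{e,s}$, namely that its internal-state component equals the reserved halting state. For the forward direction ($\Rightarrow$), I would suppose $\varphi_e(x)\downarrow$. By Definition \ref{def:TM} and the accompanying discussion, this means the head enters the reserved halting state after finitely many applications of $P$, say after exactly $s_0$ steps, after which no further computation is performed; then by construction the internal-state component of $C_{e,s_0}$ is the halting state, so $\varphi_{e,s_0}(x)$ is in the HALT state, witnessing the right-hand side. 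For the converse ($\Leftarrow$), suppose some $s$ has $\varphi_{e,s}(x)$ in the HALT state. Then the internal-state component of $C_{e,s}$ is the halting state, and since reaching that state is exactly the condition under which the machine performs no more computation, the run of $\varphi_e(x)$ terminates, i.e. $\varphi_e(x)\downarrow$.

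The only point requiring care is the monotonicity of the halt condition: once the machine is in the HALT state it remains there and makes no further moves, so the existential quantifier over stages is unproblematic and the least halting stage is well defined. The main (minor) obstacle is ensuring that the informal phrase ``after which no more computation is performed'' in Definition \ref{def:TM} is read as a genuine fixed point of the one-step transition function, so that halting at some stage and the global notion $\varphi_e(x)\downarrow$ truly coincide rather than merely implying one another. Once this identification is granted, the equivalence is immediate and no further estimates are needed.
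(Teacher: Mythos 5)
Your proof is correct and takes essentially the same approach as the paper's: both arguments simply unwind the definition of stage-by-stage computation, observing that a halting run reaches the HALT state after finitely many steps, which witnesses the existential. The paper's proof is in fact terser than yours --- it only argues the forward direction explicitly, leaving the converse (and the monotonicity of the halt condition you carefully flag) implicit --- so your additional bookkeeping is a welcome completion of the same argument rather than a different route.
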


\begin{proof}
If our computation has halted, then it has managed to reach the `HALT' state in the program. This necessarily means that a finite number of steps has been carried out before we halt. Thus, $s$ exists. 
\end{proof}

This gives the following corollary immediately:

\begin{corollary}[\protect{\cite[E. 5.2.14]{Cooper2003computability}}]
For any $e$, $\{ x : (\exists s) \varphi_{e,s}(x) \downarrow \}$ is a $\Sigma^0_1$ set.
\end{corollary}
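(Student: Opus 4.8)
The plan is to read this off directly from Theorem \ref{thm:DownarrowThm} together with the definition of $\Sigma^0_1$. By that theorem, for each fixed $e$ we have $\varphi_e(x)\downarrow$ precisely when $(\exists s)\,\varphi_{e,s}(x)$ is in the HALT state, so the set $\{x : (\exists s)\,\varphi_{e,s}(x)\downarrow\}$ is exactly the halting set of the $e^{\text{th}}$ machine, now presented with its existential witness $s$ made explicit. The work of the proof is therefore just to check that this presentation has the correct prenex shape, namely a single existential quantifier over a computable relation.

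To conclude it is $\Sigma^0_1$, I would exhibit a computable relation $R$ with $x \in A \iff (\exists s)\,R(x,s)$. Take $R(x,s)$ to be the relation ``$\varphi_{e,s}(x)$ is in the HALT state''. The key (and only) point to verify is that $R$ is computable: given $x$ and $s$, one simulates $\varphi_e$ on input $x$ for exactly $s$ stages — a bounded, finite computation by Definition \ref{def:stagecomp} — and then inspects whether the resulting configuration $C_{e,s}$ records the reserved halting state. Since this is a terminating, decidable check, $R$ is indeed a computable relation.

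Putting these together, $\{x : (\exists s)\,\varphi_{e,s}(x)\downarrow\}$ has the form $(\exists s)\,R(x,s)$ for a computable $R$, which is exactly the definition of a $\Sigma^0_1$ set. There is no genuine obstacle here: the substance was already discharged in proving Theorem \ref{thm:DownarrowThm}, and this corollary merely repackages that equivalence into the matching normal form. The only subtlety worth flagging is that $e$ is held fixed throughout, so that the bounded simulation underlying $R$ is uniformly computable in the remaining arguments $x$ and $s$; this is what legitimises treating $s$ as the sole quantified witness.
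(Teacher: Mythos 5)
Your proposal is correct and matches the paper's route exactly: the paper derives this corollary ``immediately'' from Theorem \ref{thm:DownarrowThm} together with the $\Sigma^0_1$ normal form of Definition \ref{def:CompRel}, which is precisely the equivalence you invoke. Your only addition is to spell out the (routine but worth stating) verification that the relation ``$\varphi_{e,s}(x)$ is in the HALT state'' is computable via bounded simulation with $e$ held fixed, which the paper leaves implicit.
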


\subsection{Core Background to Computability Theory}\label{subsec:ComputabilityTheory}\index{computability theory}

Computability Theory arose out of the work of G\"{o}del, Church, Turing, Kleene, P\'{e}ter, and Post - their foundational papers are collected in \cite{Davis:2004}. A core thematic idea arising out of this study, originally called `Recursion theory', was the {\em Church-Turing Thesis} defined as follows in \cite[p.42]{Cooper2003computability}:

\begin{definition}[Church-Turing Thesis]
For a given function $f$: $$ f \text{ is effectively computable } \iff f \text{ is recursive } \iff f \text{ is Turing computable.}$$
\end{definition}

This states that any algorithm we can come up with can be performed on a Turing Machine. As Cooper points out in section 2.5 in \cite{Cooper2003computability}, this gives us the security that our intuition for computability is matched with relevant details when it is needed. 

We can extend idea of what is computable to sets and trees, which we can initiate with the following definitions.

\begin{definition}
Let $\chi_A$ denote the \emi{characteristic function} of a set $A \subseteq \omega$.
\end{definition}

\begin{definition}
A set $A$ is \emi{computable} if the characteristic function $\chi_A$ is computable.
\end{definition}

That is to say that a set $A \subseteq \omega$ is computable if there exists $e$ such that for each $x \in \omega$

$$ \varphi_e(x) = \begin{cases}
			0 	& x \notin A \\
			1 	& x \in A  
			\end{cases}$$

We can also define what it is for a set to be \emi{computably enumerable}:

\begin{definition}[Computably Enumerable Sets]
We say that a set $A$ is \emi{computably enumerable}, or \emph{c.e.}, if $A = \emptyset$ or for some computable $f$, \[ A = range(f) = \{ f(0), f(1), f(2), \ldots \} \]
\end{definition}

There is an early result due to Post (see \cite[p.72]{Cooper2003computability}):

\begin{theorem}[\protect{\cite[Thm. 5.1.5]{Cooper2003computability}}]\label{thm:comp2ce}
If $A \subseteq \omega$ is computable, then $A$ is c.e.
\end{theorem}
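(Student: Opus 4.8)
The plan is to exhibit, for an arbitrary computable set $A$, a computable function whose range is exactly $A$, which is precisely the condition required by the definition of a computably enumerable set. The argument naturally splits according to whether or not $A$ is empty, mirroring the two clauses of the definition of c.e.

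First I would dispose of the trivial case: if $A = \emptyset$, then $A$ is c.e. immediately, since the definition of computable enumerability explicitly admits $A = \emptyset$ as a base case. So I may assume from now on that $A \neq \emptyset$ and fix some witness $a_0 \in A$.

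Next I would define a total function $f : \omega \rightarrow \omega$ by
$$ f(n) = \begin{cases} n & \text{if } n \in A, \\ a_0 & \text{if } n \notin A. \end{cases} $$
The key observation is that $f$ is computable. Since $A$ is computable, its characteristic function $\chi_A$ is computable by definition; hence on input $n$ one computes $\chi_A(n)$ and then returns $n$ or $a_0$ according to whether this value is $1$ or $0$. This is a composition of computable procedures branching on a computable test, and so is itself computable (by the Church--Turing thesis, or directly by describing a Turing Machine that simulates the machine for $\chi_A$ and branches on its output). I would then verify $\operatorname{range}(f) = A$ by double inclusion: every output of $f$ is either some $n \in A$ or the fixed element $a_0 \in A$, giving $\operatorname{range}(f) \subseteq A$; and for any $a \in A$ we have $f(a) = a$, giving $A \subseteq \operatorname{range}(f)$. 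Thus $A = \operatorname{range}(f)$ with $f$ computable, so $A$ is c.e.

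This proof is essentially routine, so there is no substantial obstacle; the one point worth flagging is the role of nonemptiness. The fallback value $a_0$ is exactly what keeps $f$ total while guaranteeing it never outputs anything outside $A$. Without a guaranteed element of $A$ this device would fail to produce a total function with the correct range, which is precisely why the definition of c.e. singles out $\emptyset$ as a separate case rather than demanding a single uniform enumerating function.
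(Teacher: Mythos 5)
Your proof is correct, and it takes a slightly different route from the one in the text. The paper's proof builds a machine that runs $\chi_A$ on $0, 1, 2, \ldots$ in succession and enumerates $x$ whenever the answer to ``$x \in A$?'' is positive; it never explicitly produces the total computable function $f$ with $\operatorname{range}(f) = A$ that the definition of c.e.\ demands, nor does it address the case $A = \emptyset$ (where the procedure enumerates nothing, which is fine only because the definition admits $\emptyset$ separately). Your version closes exactly those gaps: you split off $A = \emptyset$ explicitly, and in the nonempty case you exhibit the enumerating function directly via the standard default-value device $f(n) = n$ if $n \in A$, else $f(n) = a_0$, verifying $\operatorname{range}(f) = A$ by double inclusion. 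What each approach buys: the paper's sequential-search sketch carries the useful intuition that a computable set can be listed \emph{in increasing order} (a genuinely stronger fact about computable sets), whereas yours is the tighter match to the letter of the definition and correctly isolates where nonemptiness is indispensable --- your closing remark about why $a_0$ is needed to keep $f$ total is exactly the point the paper's proof glosses over.
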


\begin{proof}
Let $A$ be computable. Then we have a computable characteristic function $\chi_A$ that can decide for any $x \in \omega$ the question ``$x \in A$?", meaning there is a code $i$ such that $\varphi_i = \chi_A$. 

Given this $i$, we construct a Turing Machine that contains the machine given by $i$ and recursively answers the questions ``$0 \in A$?", ``$1 \in A$?", $\ldots$ in succession. For each positive answer to ``$x \in A$?" we enumerate $x$ into $A$, giving our result.
\end{proof}

In a similar way, we can prove other basic results, such as:

\begin{theorem}[\protect{\cite[Thm. 5.1.7]{Cooper2003computability}}]
$A$ is computable if and only if both $A$ and $\overline{A}$ are c.e.
\end{theorem}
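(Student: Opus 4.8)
The plan is to prove the two directions of the biconditional separately, leaning on the preceding result (Theorem~\ref{thm:comp2ce}) for the forward implication and on a dovetailing argument for the converse.

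For the forward direction, suppose $A$ is computable. First I would observe that $\overline{A}$ is then also computable: if $\varphi_i = \chi_A$, then the machine that runs $\varphi_i$ and swaps its output ($0 \leftrightarrow 1$) computes $\chi_{\overline{A}}$. With both $A$ and $\overline{A}$ computable, Theorem~\ref{thm:comp2ce} immediately gives that each is c.e., completing this direction with essentially no further work.

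The converse is where the real content lies. Suppose both $A$ and $\overline{A}$ are c.e. The degenerate cases $A = \emptyset$ and $\overline{A} = \emptyset$ (i.e. $A = \omega$) are handled at once, since the constant functions $0$ and $1$ respectively witness computability of $\chi_A$. Otherwise, by the definition of c.e. there are computable functions $f$ and $g$ with $A = range(f)$ and $\overline{A} = range(g)$. To decide membership of an arbitrary $x$, I would \emph{dovetail} the two enumerations: compute $f(0), g(0), f(1), g(1), \ldots$ in turn, halting as soon as $x$ is produced by one of them, outputting $1$ if $x$ appears among the $f$-values and $0$ if it appears among the $g$-values.

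The key observation making this a total and correct procedure is that $A$ and $\overline{A}$ partition $\omega$: every $x \in \omega$ lies in exactly one of the two sets, hence is enumerated by exactly one of $f, g$, so the search above terminates after finitely many steps and returns the correct bit. This exhibits $\chi_A$ as a computable function, so $A$ is computable. The one step I would be careful to state explicitly — and the only place any subtlety arises — is this guarantee of termination, which rests essentially on $A \cup \overline{A} = \omega$ rather than on any monotonicity or predictable behaviour of the individual enumerations; neither $f$ nor $g$ need produce $x$ at any controlled stage, but the combined dovetailed search cannot run forever.
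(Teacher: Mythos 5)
Your proof is correct and follows essentially the same route as the paper's: the forward direction via Theorem~\ref{thm:comp2ce} and the converse via a dovetailed search through the two enumerations. In fact you are more careful than the paper at two points --- you handle the degenerate cases $A = \emptyset$ and $A = \omega$ (needed since the definition of c.e.\ only provides an enumerating function for non-empty sets), and your dovetailing of $f(0), g(0), f(1), g(1), \ldots$ until $x$ appears is the precise formulation of what the paper's looser phrase ``computes both $f(x)$ and $g(x)$'' intends.
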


\begin{proof}
($\rightarrow$) \, This follows from \ref{thm:comp2ce} above.
\newline
($\leftarrow$) \, If both $A$ and $\overline{A}$ are computably enumerable by computable functions $f$ and $g$ respectively, then we can construct $\chi_A$ by means of a TM that for all $x \in \omega$ computes both $f(x)$ and $g(x)$. Clearly one of these will give an answer, as both sets are c.e., and so $\chi_A$ is computable.
\end{proof}

However, the inverse arguments fail, which is where computability theory starts to get much more interesting.

\begin{theorem}[\protect{\cite[Thm. 5.3.1]{Cooper2003computability}}]
There exists a computably enumerable set that is is not computable.
\end{theorem}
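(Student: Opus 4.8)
The plan is to exhibit a concrete witness, namely the diagonal halting set
$$ K = \{ e : \varphi_e(e) \downarrow \}, $$
and then to argue separately that $K$ is computably enumerable but fails to be computable.

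First I would establish that $K$ is c.e. By Theorem \ref{thm:DownarrowThm}, for each $e$ we have that $\varphi_e(e) \downarrow$ if and only if $(\exists s)\,\varphi_{e,s}(e)$ is in the HALT state, so membership in $K$ is a $\Sigma^0_1$ condition whose underlying relation ``$\varphi_{e,s}(e)$ has reached the HALT state by stage $s$'' is computable. The enumeration procedure is then the standard dovetailing argument, essentially that of the corollary following Theorem \ref{thm:DownarrowThm}: run the stage-bounded computations $\varphi_{e,s}(e)$ for all pairs $(e,s)$ in the order given by our fixed pairing, and enumerate $e$ into $K$ the first time some $\varphi_{e,s}(e)$ is found to be in the HALT state. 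This lists exactly the elements of $K$, so $K$ is c.e.

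The heart of the argument, and the step I expect to be the main obstacle, is showing that $K$ is not computable; this proceeds by diagonalisation. Suppose towards a contradiction that $\chi_K$ is computable. Then I would build a Turing Machine $M$ that, on input $e$, first computes $\chi_K(e)$ and then deliberately does the opposite of $\varphi_e(e)$: if $\chi_K(e) = 1$ (so $\varphi_e(e) \downarrow$) it enters a non-terminating loop, and if $\chi_K(e) = 0$ (so $\varphi_e(e) \uparrow$) it halts at once. Let $d$ be an index for $M$. Evaluating $M$ at its own index yields
$$ \varphi_d(d) \downarrow \iff \chi_K(d) = 0 \iff d \notin K \iff \varphi_d(d) \uparrow, $$
which is a contradiction. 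Hence no computable $\chi_K$ exists, and $K$ is not computable.

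Assembling the two halves gives the theorem: $K$ is computably enumerable but not computable. The only delicate point to get right is the self-reference in the diagonalisation, i.e.\ checking that $M$ is genuinely effective, since it invokes only the assumed computable $\chi_K$ together with the fixed enumeration $\varphi_e$, and therefore really does possess an index $d$ to which the diagonal argument can be applied.
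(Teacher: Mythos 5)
Your proof is correct, and the first half (that $K$ is c.e.) matches the paper's argument essentially verbatim; the interesting divergence is in the non-computability half. The paper does not diagonalise against $\chi_K$ directly: it instead observes that if $K$ were computable then $\overline{K}$ would be c.e.\ (by the earlier result that computable sets and their complements are c.e.), and then derives a contradiction from $\overline{K} = W_i$ via the equivalences $x \in W_i \iff x \notin W_x$ evaluated at $x = i$. Your route assumes $\chi_K$ is computable and builds a machine $M$ that inverts its own halting behaviour, so that at its index $d$ one gets $\varphi_d(d)\downarrow \iff \varphi_d(d)\uparrow$. The trade-off is this: the paper's argument proves the strictly stronger fact that $\overline{K}$ is not even computably enumerable (so $K$ is c.e.\ but not co-c.e.), which is the fact actually reused later when the paper discusses the arithmetical hierarchy and $m$-reducibility; your argument only yields non-computability of $K$, but it is more self-contained, needing neither the theorem ``$A$ computable $\Rightarrow$ $\overline{A}$ c.e.''\ nor the $W_i$ indexing of c.e.\ sets. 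You correctly flag the one delicate point in your version, namely that $M$ is genuinely effective and hence possesses an index $d$ in the fixed enumeration; this is where the Church--Turing thesis (or, if one wants full formality, an explicit construction together with the $s$-$m$-$n$ machinery the paper develops in Theorem \ref{thm:smn}) is silently invoked, and with that noted your diagonalisation is airtight.
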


We first define Post's Set:
\begin{definition}[Post's Set]\index{Post's Set}\label{PostsSet}
Let $ K = \{ e : \varphi_e(e)\downarrow \} $.
\end{definition}

\begin{proof}
We first note that Post's set $K$ is $\Sigma^0_1$, and thereby computably enumerable, as $$ e \in K \iff e \in W_e \iff \exists s \, \varphi_{e,s}(e) \downarrow .$$

However, to see that $K$ is not computable, it suffices to show that $\overline{K}$ is not computably enumerable. To see this, let $\overline{K}$ be computably enumerable for contradiction. Then $K = W_i$ for some $i \in \omega$, giving $$ x \in W_i \iff x \in \overline{K} \iff x \notin K \iff x \notin W_x $$ For $x=e$ this forces a contradiction by forcing different answers for ``$j \in \overline{K}$"? and ``$j \in W_j$?" for all $j \in \omega$.
\end{proof}

\subsection{Conventional theorems in Computability}

There are two standard, and very important theorems in computability - the $s$-$m$-$n$ theorem, and the recursion theorem, which we will give brief exposition and proofs of. These statements and proofs are based on \cite{Cooper2003computability} and \cite{Soare1987}.

\begin{theorem}[$s$-$m$-$n$ Theorem, \protect{\cite[Thm. 4.2.6]{Cooper2003computability}}]\label{thm:smn}
For every $m,n \geq 1$ there exists a 1-1 computable function $s^m_n$ of $m+1$ variables, such that for all $x, y_1, y_2, \ldots y_m$: $$ \varphi^{(n)}_{s^m_n(x,y_1,y_2,\ldots,y_m)} = \lambda z_1,z_2,\ldots,z_n [ \varphi^{m+n}_x(y_1,\ldots,y_m,z_1,\ldots,z_n) ] $$
\end{theorem}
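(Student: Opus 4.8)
The plan is to construct $s^m_n$ directly, by describing for each choice of index $x$ and parameters $y_1,\ldots,y_m$ a Turing Machine that performs the required partial application, and then to appeal to the Church--Turing Thesis to conclude that the passage from $(x,y_1,\ldots,y_m)$ to an index of that machine is itself computable. Injectivity I would secure separately, by a padding argument layered on top of this construction.

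First I would fix $x$ and $y_1,\ldots,y_m$ and describe informally a machine $M_{x,\vec{y}}$ which, on input $(z_1,\ldots,z_n)$, first writes the constants $y_1,\ldots,y_m$ onto the tape to the left of its input so as to assemble the full argument tuple $(y_1,\ldots,y_m,z_1,\ldots,z_n)$, and then simulates the machine $\varphi^{m+n}_x$ on this tuple via the Universal Turing Machine, halting with exactly whatever output that simulation produces. By construction $M_{x,\vec{y}}$ computes the function $\lambda z_1,\ldots,z_n[\varphi^{m+n}_x(y_1,\ldots,y_m,z_1,\ldots,z_n)]$, with the correct agreement on divergence: $M_{x,\vec{y}}$ halts on $(z_1,\ldots,z_n)$ if and only if $\varphi^{m+n}_x$ halts on the extended tuple.

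Next I would argue that the map sending $(x,y_1,\ldots,y_m)$ to a code for $M_{x,\vec{y}}$ is effective. Prepending the fixed numerals $y_1,\ldots,y_m$ is a uniformly describable, finite editing of program text that depends only on those numerals, and the simulation step merely embeds the program indexed by $x$; so the whole procedure is an algorithm in the inputs $(x,y_1,\ldots,y_m)$. By the Church--Turing Thesis there is therefore a computable function producing the corresponding index, which I would call a preliminary version $\tilde s^m_n$.

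The main obstacle is upgrading $\tilde s^m_n$ to a genuinely $1$-$1$ function without disturbing the function it indexes, since the naive construction need not be injective. The hard part will be arranging this cleanly. I would use a padding function $p$: from any index $e$ and any auxiliary number $k$ one can compute an index $p(e,k)$ with $\varphi_{p(e,k)}=\varphi_e$, where $p$ is injective in $k$ and $k$ is recoverable from the output $p(e,k)$. Setting $s^m_n(x,y_1,\ldots,y_m)=p\bigl(\tilde s^m_n(x,y_1,\ldots,y_m),\,\langle x,y_1,\ldots,y_m\rangle\bigr)$ then forces distinct input tuples, which have distinct Cantor codes and hence recoverable distinct padding parameters, to receive distinct output indices, while leaving $\varphi^{(n)}_{s^m_n(x,y_1,\ldots,y_m)}$ unchanged. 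This yields a $1$-$1$ computable $s^m_n$ satisfying the stated equation.
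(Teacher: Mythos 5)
Your proposal is correct and takes essentially the same approach as the paper's proof sketch: hard-code the parameters $y_1,\ldots,y_m$ into a machine that then runs $\varphi^{m+n}_x$ on the assembled tuple, observe that the passage from $(x,y_1,\ldots,y_m)$ to an index for this machine is effective, and repair injectivity afterwards by padding, using the standard pairing of the inputs. The only difference is one of detail --- the paper sketches just the case $m=n=1$ and mentions padding in a single line, while you carry out the general case and make the padding function and its recoverable parameter explicit --- but the decomposition of the argument is identical.
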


Here, the notation of $\varphi_x^y$ denotes the machine with index $x$ that takes $y$-many inputs. This theorem is the only time we shall use this notation in this thesis - later, the subscript shall be used to denote Oracle sets.

Note, here we use the standard $\lambda$-notation for the substitution of $z_1, z_2, \ldots$ into our computable function. The notation of $m$ and $n$ in $s^m_n$ denote the number of parameters into the computable function $s$.

\begin{proof}[Proof sketch]
For $m=n=1$, let the TM $\varphi_{s^1_1(x,y)}(z)$ obtain $\varphi_x$, and then apply $\varphi_x(y,z)$. Such an $s = s^1_1$ is computable, as it is some effective procedure on $x$ and $y$. If it is not 1-1, then we can make it so by `padding' the process, and then letting the resultant $s'$ be s.t. $\varphi_{s(x,y)} = \varphi_{s'(x,y)}$, ordering our inputs $\langle x,y \rangle$ using a standard pairing function.
\end{proof}

\begin{theorem}[Kleene's Recursion Theorem, \protect{\cite[Thm. 4.4.1]{Cooper2003computability}}]\label{thm:recursion}
For every computable function $f$ there exists an $n$ - called the \emi{fixed point} of $f$ - s.t. \[ \varphi_n = \varphi_{f(n)} \]
\end{theorem}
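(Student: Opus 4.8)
The plan is to combine a diagonalisation with the $s$-$m$-$n$ theorem (\cref{thm:smn}), which has just been established. Throughout I take $f$ to be total computable, as our notion of computable function requires. The goal is to manufacture an index $n$ on which $f$ acts trivially up to functional equality, and the engine for this is a self-referential index built by $s$-$m$-$n$.

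First I would construct a computable \emph{diagonal} function $d$. Consider the partial computable function $\psi(x,y)$ that on input $\langle x,y \rangle$ first runs $\varphi_x(x)$ via the universal machine; if this halts with value $e = \varphi_x(x)$, it then outputs $\varphi_e(y)$, and otherwise diverges. Since $\psi$ is partial computable, \cref{thm:smn} supplies a total computable function $d$ with
\[ \varphi_{d(x)}(y) = \psi(x,y) \quad\text{for all } x,y. \]
The feature I would exploit is that $d$ is total as a function \emph{producing indices}, even though $\varphi_{d(x)}$ is the everywhere-undefined function whenever $\varphi_x(x)\uparrow$. In particular, whenever $\varphi_x(x)\downarrow$ we have the clean identity $\varphi_{d(x)} = \varphi_{\varphi_x(x)}$.

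Next I would diagonalise against $d$. The composite $f \circ d$ is computable, being a composition of total computable functions, so it has an index $v$, i.e.\ $\varphi_v = f \circ d$. Since $f$ and $d$ are total, $\varphi_v$ is total, and in particular $\varphi_v(v)\downarrow = f(d(v))$. Setting $n = d(v)$ and using that $\varphi_v(v)$ converges, I obtain
\[ \varphi_n = \varphi_{d(v)} = \varphi_{\varphi_v(v)} = \varphi_{f(d(v))} = \varphi_{f(n)}, \]
which is exactly the claimed fixed point.

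The only delicate step is the construction of $d$: one must keep a firm distinction between the \emph{always defined} index-producing function $d$ and the partial function $\varphi_{d(x)}$ that it codes, so that the equality $\varphi_{d(x)} = \varphi_{\varphi_x(x)}$ is invoked only on the branch where $\varphi_x(x)$ converges — which is precisely the branch hit at $x = v$, since $\varphi_v$ is total. Everything else is routine: closure of the computable functions under composition, the existence of indices, and the substitution guaranteed by $s$-$m$-$n$.
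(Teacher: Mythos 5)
Your proof is correct and takes essentially the same route as the paper's: the same diagonal function $d$ built via the $s$-$m$-$n$ theorem, the same composite $f \circ d$ given an index (your $v$, the paper's $i$), and the same fixed point $n = d(v)$ verified by the chain $\varphi_n = \varphi_{d(v)} = \varphi_{\varphi_v(v)} = \varphi_{f(d(v))} = \varphi_{f(n)}$. If anything, your explicit construction of $\psi(x,y)$ and the distinction between the total index-producing $d$ and the possibly empty function $\varphi_{d(x)}$ is slightly more careful than the paper's informal ``$\uparrow$ otherwise'' clause, but it is the identical argument.
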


\begin{proof}
Define the `diagonal' function $d(u)$ as follows: 
\[ \varphi_{d(u)} = 
\begin{cases}
	\varphi_{\varphi_u(u)} & \text{if } \varphi_u(u)\downarrow \\
	\uparrow & \text{otherwise}
\end{cases} \]

Note that by \ref{thm:smn}, $d(u)$ is 1-1 and total. $d$ is also independent of the $f$ that we are interested in.

For such a given $f$, let $i$ be the index given by \[ \varphi_i = f \circ d \]

\underline{Claim:} We claim that $n = d(i)$ is some fixed point for $f$. 

Note that, $f$ gives that $\varphi_i$ is total (as $d$ is total, above), so $\varphi_{d(i)} = \varphi_{\varphi_i(i)}$. Thus our result follows from the following equivalences: \[ \varphi_n = \varphi_{d(i)} = \varphi_{\varphi_i(i)} = \varphi_{fd(i)} = \varphi_{f(n)} \]
\end{proof}

In the previous proof, we constructed a function we described as \emph{diagonal}. Let \emi{diagonalization}, the construction of a diagonal function, be as follows: let $e$ be the index of $\varphi_e$, which we diagonalise $e$ by running $\varphi_e(e)$. 

This technique was first introduced by G\"odel in \cite{godel31} to give us unprovable statements, and was later used by Turing in \cite{Turing1936a} in relation to proving the non-computability of the halting problem. The set of \emph{Diagonally Non-Recursive} functions, or \emi{DNR}, is composed of all the computable functions $f$ such that $f(e) \neq \varphi_e(e)$ for all $e$, and is the subject of current study in modern mathematical logic. A thorough introduction and treatment can be found in \cite{Hirschfeldt2014}. 

We can also note that the numbers for which $\varphi_n = \varphi_{f(n)}$ need not be unique for any given $f$. 

\subsection{Computable Notions of Reducibility}

In speaking about computability, we often want to relativise two sets between each other. To do this, we will need the following definitions. We will begin with a more basic form of reducibility, called $m$-reducibility. This is defined in \cite{Cooper2003computability} as follows:

\begin{definition}[$m$-Reducibility]\label{def:mred}
Given set $A$ and $B$, we say that $A$ is \emi{$m$-reducible} to $B$, written $A \leq_m B$, if there is a computable function $f : \omega \rightarrow \omega$ such that for all $x \in \omega$: \[ x \in A \iff f(x) \in B \] If our function $f$ is injective, we say that $A$ is \emi{1-reducible} to $B$, written $A \leq_1 B$.
\end{definition}

Although $m$-reducibility was introduced \emph{after} Turing reducibility (see \ref{def:Turingred} below), it is a slightly easier-to-formulate version of reducibility between two sets. Cooper in \cite[p.103]{Cooper2003computability} gives the intuition for $m$-reducibility as $A$ being in some sense ``at least as computable" as $B$. 

From the definition \ref{def:mred} above, we can derive that \[ A \leq_m B \iff \overline{A} \leq_m \overline{B} \] which follows from the fact that $A = f^{-1}(B)$, and following from a general fact about pre-images we get that $\overline{A} = f^{-1}(\overline{B})$

Additionally, we can prove relatively straightforward theorems that give a good flavour of how theorems around $m$-reducibility are carried out:

\begin{theorem}[\protect{\cite[Thm. 7.1.2]{Cooper2003computability}}]
The ordering $\leq_m$ is:
\begin{enumerate}
\item reflexive.
\item transitive.
\item if $A \leq_m B$ and $B$ is computable, then $A$ is computable.
\item if $A \leq_m B$ and $B$ is c.e., then $A$ is c.e.
\end{enumerate}
\end{theorem}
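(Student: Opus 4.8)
The plan is to dispatch all four items by leaning on the closure properties of computable functions, letting the reduction witness do essentially all of the work. Throughout I will write $f$ for a computable function realising a given reduction $A \leq_m B$, so that $x \in A \iff f(x) \in B$ for every $x \in \omega$; the task in each case is to produce the right computable object from $f$ together with data about $B$.

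For reflexivity I would take $f = \id$, the identity map $f(x) = x$, which is computable and trivially satisfies $x \in A \iff f(x) \in A$, so $A \leq_m A$. For transitivity, suppose $A \leq_m B$ via $f$ and $B \leq_m C$ via $g$. Then $g \circ f$ is computable, being a composition of computable functions, and $x \in A \iff f(x) \in B \iff g(f(x)) \in C$, so $g \circ f$ witnesses $A \leq_m C$. For the third item, suppose $A \leq_m B$ via $f$ with $B$ computable, so that $\chi_B$ is computable. Since $x \in A \iff f(x) \in B$, we have $\chi_A = \chi_B \circ f$, again a composition of computable functions and hence computable; thus $A$ is computable.

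The only item carrying any genuine content is the fourth, where the main point is choosing a convenient description of a c.e. set. I would use the $\Sigma^0_1$ characterisation already employed in the discussion of Post's set: since $B$ is c.e., there is a computable relation $R$ with $y \in B \iff (\exists s) R(y,s)$. Substituting $y = f(x)$ yields $x \in A \iff f(x) \in B \iff (\exists s) R(f(x),s)$, and because $f$ is total computable the relation $R'(x,s) \defeq R(f(x),s)$ is itself computable. Hence $A$ is $\Sigma^0_1$ and therefore c.e.

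I do not expect a real obstacle here, since parts 1--3 are immediate once closure under composition is noted; the step needing the most explicit justification is the substitution in part 4, namely verifying that $R'(x,s) = R(f(x),s)$ remains a \emph{computable} relation. This holds precisely because $f$ is total and computable, so that $R'$ can be decided by first computing $f(x)$ and then querying $R$, and it is exactly this totality that licenses the reduction to a $\Sigma^0_1$ form.
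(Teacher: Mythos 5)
Your proposal is correct and matches the paper's own proof step for step: identity for reflexivity, composition $g \circ f$ for transitivity, $\chi_A = \chi_B \circ f$ for item 3, and the $\Sigma^0_1$ substitution $x \in A \iff (\exists s)\, R(f(x),s)$ for item 4. Your explicit remark that $R'(x,s) = R(f(x),s)$ is computable because $f$ is total is a small but welcome amount of extra care that the paper leaves implicit.
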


\begin{proof}
\item
\textbf{1. - Reflexive} Clearly $A \leq_m A$ as for all $x$, $f(x) = x$ is computable. \qed 
\newline
\textbf{2. - Transitive} Let $A \leq_m B$ be given by $f$, and $B \leq_m C$ be given by $g$. We can get $A \leq_m C$ by \[ x \in A \iff f(x) \in B \iff g(f(x)) \in C \] so $A \leq_m C$ by $g \circ f$. \qed
\newline
For the next two proofs, let $A \leq_m B$ by a computable $f$. \newline
\textbf{3.} If $B$ is computable, then $\chi_A = \chi_B \circ f$, which is computable. \qed
\newline
\textbf{4.} Let $B \in \Sigma^0_1$, with $$x \in B \iff \exists y R(x,y)$$ for a computable relation $R$. Then $$x \in A \iff \exists y R(f(x), y)$$ giving us immediately that $A \in \Sigma^0_1$ also.
\end{proof}

Following on from these normal forms, we can prove that not just computable sets are $\Sigma^0_1$, but also computably enumerable sets are $\Sigma^0_1$ complete. This important intuition will be complimented by successive results in later sections.

\begin{theorem}[\protect{\cite[Thm. 5.1.5]{Cooper2003computability}}]
The following are equivalent:
\begin{enumerate}
\item $A$ is c.e.,
\item $A \in \Sigma^0_1$.
\end{enumerate}
\end{theorem}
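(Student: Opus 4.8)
The plan is to prove the two implications separately, and in each direction to dispatch the empty set as a special case, since the paper's definition of computable enumerability isolates $A = \emptyset$.

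First I would show that every c.e. set is $\Sigma^0_1$. If $A = \emptyset$, then $x \in A \iff (\exists y) R(x,y)$ holds vacuously for the everywhere-false computable relation $R(x,y) \iff x \neq x$, so $A \in \Sigma^0_1$ at once. Otherwise $A = range(f)$ for some total computable $f$, and then $x \in A \iff (\exists y)[f(y) = x]$. Since $f$ is computable, the relation $R(x,y) \iff f(y) = x$ is a computable relation in the sense of Definition~\ref{def:CompRel}, witnessing $A \in \Sigma^0_1$.

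For the converse, suppose $A \in \Sigma^0_1$, so $x \in A \iff (\exists y) R(x,y)$ for a computable relation $R$. If $A = \emptyset$ then $A$ is c.e. by definition, so assume $A \neq \emptyset$ and fix some $a_0 \in A$. I would then define an enumerating function $f$ that searches over the pairing coding: on input $n$, decode $n = \langle x,y \rangle$ and set $f(n) = x$ if $R(x,y)$ holds, and $f(n) = a_0$ otherwise. This $f$ is total and computable because $R$ is decidable, and $range(f) = A$: every output is either $a_0 \in A$ or some $x$ admitting a witness $y$ with $R(x,y)$, hence in $A$; conversely every $x \in A$ has a witness $y$ with $R(x,y)$, so $x = f(\langle x,y \rangle) \in range(f)$. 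Thus $A$ is c.e.

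The main obstacle — really the only subtlety — is the totality requirement hidden in the definition of c.e.: an enumerating function must be a total $f : \omega \rightarrow \omega$, so its range is never empty. This is what forces the separate treatment of $A = \emptyset$ in both directions, and in the backward direction it is what forces the default value $a_0$, so that $f$ stays total on those inputs $\langle x,y \rangle$ where the search $R(x,y)$ fails. Everything beyond this bookkeeping is a routine projection argument.
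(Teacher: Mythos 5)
Your proof is correct, and in the backward direction it takes a genuinely different (and arguably tighter) route than the paper. The forward direction is identical in substance: both express $x \in A$ as $(\exists y)[f(y)=x]$, modulo how the empty set is dispatched. For the backward direction, however, the paper constructs a \emph{partial} machine that on a given input searches for a witness of $R$ and diverges if none exists, and then declares $A$ c.e.\ from this; read against the paper's own definition of c.e.\ (that $A = \emptyset$ or $A = \operatorname{range}(f) = \{f(0), f(1), f(2), \ldots\}$ for a computable $f$, which implicitly requires $f$ to be total), this leaves a small gap --- the range of a partial function is not literally what the definition asks for, and the paper's variable bookkeeping ($x$ versus $y$) is also muddled there. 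Your argument closes exactly that gap: by splitting off $A = \emptyset$, fixing a default element $a_0 \in A$, and dovetailing over coded pairs $\langle x,y \rangle$ with $f(\langle x,y \rangle) = x$ when $R(x,y)$ holds and $f(\langle x,y \rangle) = a_0$ otherwise, you produce a total computable enumeration with $\operatorname{range}(f) = A$, which is the standard padding/dovetailing proof and matches the stated definition on the nose. What the paper's version buys is brevity and the (equally standard) intuition that c.e.\ sets are the halting behaviours of searches; what yours buys is strict conformity to the totality requirement, at the cost of the extra case analysis you correctly identify as the only real subtlety.
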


\begin{proof}
\item
\textbf{$1 \rightarrow 2$} Let $A$ be c.e. - if $A = \emptyset$, then $x \in A \iff \exists x (x = x+1)$. Let $A = range(f)$ for some computable function $f$. Then \[ x \in A \iff \exists s (f(s) = x) \] where $f$ is now a computable relation between $s$ and $x$. 
\newline
\textbf{$2 \rightarrow 1$} Let $A \in \Sigma^0_1$ such that there is a computable $R$ giving \[ \exists y R(x,y) \iff x \in A \] we then construct a TM $e$ such that on input $y$, it will search through all possible $x \in \omega$ and $R(x,y)$ (computable) with the following outcomes:
$$ \varphi_e(y) = 
\begin{cases}
	x & \text{if } R(x,y) \\
	\uparrow & \text{otherwise}
\end{cases} $$
	
Thus, $ (\exists x) \varphi_e(y) = x \iff x \in A$ with $A$ also being c.e.
\end{proof}

\subsection{Turing Reducibility and the Jump Operator}

Although $m$-reducibility is incredibly useful, we can generalise it to a notion of \emph{Turing reducibility} by means of the following definitions - first proposed by Turing in 1939, but following the outline in \cite{Cooper2003computability}.

\begin{definition}[Oracle Turing Machines]
We define an \emi{oracle Turing machine} to be a normal Turing machine, but with access to an extra tape - called the \emi{oracle} - and makes use of \emph{query quadruples} $(q_i, S_k, q_j, q_k)$ that allow the Turing machine to behave as follows. Let $\varphi_e^A(x)$ be the $e^{th}$ TM on input $x$ and oracle $A$:
\begin{itemize}
\item The TM computes as before until it encounters a query quadruple.
\item The TM, then in state $q_i$, will read the current value on the work tape, call it $n$, and then query the oracle tape to ask $\text{is } n \in A?$.
\item Depending on the output of the query, the TM will then:
\begin{itemize}
\item State $q_j$ if $n \in A$.
\item State $q_k$ if $n \notin A$.
\end{itemize}
\end{itemize}
\end{definition}

Note, this definition does not require our oracle sets to be computable nor enumerable - just that they are there. In fact, it is explicitly why oracle Turing Machines were introduced - in order to analyse questions like ``is the halting problem all there is?'' Essentially, we can now ask ``What can we compute knowing the characteristic function of a, not necessarily computable, set $A$?" This breakthrough from Turing allowed us to reason about problems `beyond' the halting problem, by talking about \emi{Turing reducibility}.

\begin{definition}[Turing Reducibility]\label{def:Turingred}
We say that a set $A$ is \emi{Turing reducible} to a set $B$, written $A \leq_T B$ if for some $e$, $$\chi_A = \varphi_e^B$$.
\end{definition}

It is worth noting, however, that Turing reducibility is finer than $m$-reducibility, as evidenced by the following result:

\begin{theorem}[\protect{\cite[Thm. 4.2.6]{Cooper2003computability}}]
There exists $A$ and $B$ s.t. $A \leq_T B$ but $A \nleq_m B$.
\end{theorem}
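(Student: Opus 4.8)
The plan is to exhibit a concrete witnessing pair. I would take $A = \overline{K}$ and $B = K$, where $K$ is Post's set from Definition~\ref{PostsSet}. The strategy splits into two independent halves: first establish $\overline{K} \leq_T K$, which is immediate from the definition of oracle machines, and then establish $\overline{K} \nleq_m K$ by a preservation argument exploiting the fact that $m$-reducibility respects computable enumerability whereas Turing reducibility does not.

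For the Turing reduction, I would construct an oracle machine that, on input $x$, issues a single query ``$x \in K$?'' to the oracle $K$ and returns the opposite answer, so that $\chi_{\overline{K}}(x) = 1 - \chi_K(x)$. Concretely this is realised by a query quadruple whose target states are arranged to output $1$ in the branch reached when $x \notin K$ and $0$ in the branch reached when $x \in K$. This gives $\chi_{\overline{K}} = \varphi_e^{K}$ for a suitable index $e$, and hence $\overline{K} \leq_T K$.

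For the failure of $m$-reducibility, I would argue by contradiction. Suppose $\overline{K} \leq_m K$. We have already established that $K$ is c.e.\ (it is $\Sigma^0_1$, being Post's set), together with the result that if $X \leq_m Y$ and $Y$ is c.e.\ then $X$ is c.e. Applying this with $X = \overline{K}$ and $Y = K$ forces $\overline{K}$ to be c.e. But this contradicts the fact, shown in the course of proving that $K$ is not computable, that $\overline{K}$ is \emph{not} computably enumerable. Hence $\overline{K} \nleq_m K$, and the pair $A = \overline{K}$, $B = K$ witnesses the claim.

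The conceptual crux — rather than any genuine obstacle — is recognising that the distinguishing feature between the two reducibilities here is closure under complementation: an oracle machine can freely negate the oracle's answers, so every set is Turing-equivalent to its complement, whereas an $m$-reduction cannot in general transport c.e.-ness to the complement. Once $\overline{K}$ is chosen precisely as a set that is Turing-equivalent to the c.e.\ but non-computable $K$ while failing to be c.e.\ itself, both halves drop out of lemmata already in hand. There is essentially no calculation involved; the only care needed is in invoking the correct earlier results (the $\Sigma^0_1$-ness of $K$, the non-enumerability of $\overline{K}$, and the preservation of c.e.-ness under $\leq_m$).
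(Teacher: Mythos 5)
Your proof is correct and is essentially the same argument as the paper's: the paper takes a non-computable c.e.\ set $C$ with the pair $(C,\overline{C})$, while you take the mirror-image pair $(\overline{K},K)$, and both rest on the same two facts — that an oracle machine can negate its oracle's answers (so a set and its complement are Turing-equivalent) and that $\leq_m$ preserves c.e.-ness, which fails for $\overline{K}$. Your orientation is if anything marginally more direct, since $\overline{K} \nleq_m K$ follows in one step from the preservation lemma without invoking the equivalence $A \leq_m B \iff \overline{A} \leq_m \overline{B}$.
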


\begin{proof}
Consider $C$ a non-computable computably enumerable set, with $\overline{C}$ its compliment. It is clear that $$ C \leq_T \overline{C} $$ but as $C$ is non-computable, we also have that $$ C \nleq_m \overline{C} $$
\end{proof}

The outcome of Turing's work was the Turing hierarchy, which is defined by taking successive `jumps' which we define as follows.

\begin{definition}
Let $A, B$ be given sets:
\begin{itemize}
\item We write $A \equiv_T B$ if $A \leq_T B$ and $B \leq_T A$.
\item We define the \emi{Turing degree} - also called the \emph{degree of unsolvability} - for some $A \subseteq \omega$ to be \[ deg(A) =_{def} \{ X \subseteq \omega : X \equiv_T  A \} \] 
\item We can write $\boldsymbol{\mathcal{D}}$ for the collection of all such degrees, and can define the partial ordering $\leq$ on $\boldsymbol{\mathcal{D}}$ induced by $\leq_T$ as: \[ deg(B) \leq deg(A) \Longleftrightarrow_{def} B \leq_T A \]
\end{itemize}
\end{definition}

It follows from this and some other results that three is in fact a partial order on $\mathcal{D}$, however this is beyond the scope of this thesis. Returning to Post's set, $K$, we state the following theorems - omitting proofs that can be found in \cite{Cooper2003computability}. 

\begin{definition}
For $n,e \in \omega$, let $HALT = { (n,e) : \varphi_e(n)\downarrow}$.
\end{definition}

\begin{theorem}[\protect{\cite[Thm. 5.3.1]{Cooper2003computability}}]
$$ HALT \leq_T K $$
\end{theorem}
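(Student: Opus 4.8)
The plan is to exhibit a many-one reduction $HALT \leq_m K$, from which $HALT \leq_T K$ follows immediately: any $m$-reduction is in particular a Turing reduction, since with oracle $K$ one simply applies the reducing function and makes a single oracle query. So it suffices to produce a computable $k$ with $(n,e) \in HALT \iff k(n,e) \in K$.

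The key step uses the $s$-$m$-$n$ theorem (\ref{thm:smn}). Given a pair $(n,e)$, I want to manufacture an index whose self-application halts exactly when $\varphi_e(n)$ halts. To this end, consider the computable function of three arguments that, on input $(e,n,z)$, discards $z$ and simply simulates $\varphi_e(n)$. Applying $s$-$m$-$n$ to fix the first two arguments yields a $1$-$1$ computable function $k(n,e)$ with $\varphi_{k(n,e)}(z) = \varphi_e(n)$ for every $z$. In particular $\varphi_{k(n,e)}$ is either everywhere-undefined or everywhere-defined, so its convergence behaviour is independent of the input $z$.

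With this in hand the chain of equivalences is immediate: $(n,e) \in HALT$ iff $\varphi_e(n)\downarrow$ iff $\varphi_{k(n,e)}(k(n,e))\downarrow$ iff $k(n,e) \in K$, where the last step is just the definition of Post's set (\ref{PostsSet}). Since $k$ is computable, this is exactly an $m$-reduction in the sense of \ref{def:mred}, so $HALT \leq_m K$, and therefore $HALT \leq_T K$.

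Because the argument rests entirely on the $s$-$m$-$n$ theorem, there is no substantive obstacle; the only point requiring care is ensuring that the auxiliary machine genuinely ignores its own input $z$ before beginning the simulation, so that the value of $\varphi_{k(n,e)}(k(n,e))$ tracks $\varphi_e(n)$ and nothing else. (One may equally phrase the conclusion as a direct Turing reduction: to decide $(n,e)\in HALT$ with oracle $K$, compute $k(n,e)$ and answer according to whether $k(n,e)\in K$.)
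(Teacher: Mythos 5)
Your proof is correct, and in fact establishes the stronger fact $HALT \leq_m K$ (indeed $HALT \leq_1 K$, since $s$-$m$-$n$ gives a $1$-$1$ reducing function), from which $HALT \leq_T K$ follows by a single oracle query. The paper itself omits the proof of this theorem, deferring to \cite{Cooper2003computability}; your $s$-$m$-$n$ argument --- padding out an index $k(n,e)$ with $\varphi_{k(n,e)}(z) = \varphi_e(n)$ for all $z$, so that self-application tracks $\varphi_e(n)$ --- is precisely the standard argument given there, and every step, including the observation that the auxiliary machine must ignore its input $z$, is sound.
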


Thus, $K$ is incomputable, and so things that $K$ reduces to are also necessarily incomputable. We also need the following idea of \emph{index sets}.

\begin{definition}
Let $\mathcal{A}$ be a set of partial computable functions - or of computably enumerable sets. The \emi{index set} of $\mathcal{A}$ is then the set $A$ of all the indices of elements of $\mathcal{A}$.
\end{definition}

\begin{theorem}[Rice's Theorem, \protect{\cite[Thm. 7.1.11]{Cooper2003computability}}]\index{Rice's theorem}
If $A$ is an index set - with $A \neq \emptyset$ and $A \neq \omega$ - then $K \leq_m A$ or $K \leq_m \overline{A}$. 
\end{theorem}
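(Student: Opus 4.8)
The plan is to exploit the defining feature of an index set---that membership depends only on the partial computable function $\varphi_e$ rather than on the particular index $e$---together with the $s$-$m$-$n$ theorem (\ref{thm:smn}) to manufacture an $m$-reduction from Post's set $K$. Let $e_\emptyset$ denote an index for the everywhere-undefined function (for instance a machine that immediately enters an infinite loop on every input). The argument splits into two cases according to whether $e_\emptyset \notin A$ or $e_\emptyset \in A$, and these will yield $K \leq_m A$ and $K \leq_m \overline{A}$ respectively.

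Consider first the case $e_\emptyset \notin A$. Since $A \neq \emptyset$, fix some $a \in A$. Using the $s$-$m$-$n$ theorem, I would define a computable function $f$ such that for every $x$,
\[ \varphi_{f(x)}(z) = \begin{cases} \varphi_a(z) & \text{if } \varphi_x(x)\downarrow, \\ \uparrow & \text{otherwise.} \end{cases} \]
Concretely, on input $z$ the machine $\varphi_{f(x)}$ first simulates $\varphi_x(x)$ and, only if that halts, proceeds to compute and output $\varphi_a(z)$. The verification then goes by cases on $x$: if $x \in K$ then $\varphi_x(x)\downarrow$, so $\varphi_{f(x)} = \varphi_a$, and since $a \in A$ and $A$ is an index set, $f(x) \in A$; if $x \notin K$ then $\varphi_x(x)\uparrow$, so $\varphi_{f(x)}$ is everywhere undefined, whence $\varphi_{f(x)} = \varphi_{e_\emptyset}$ and, again by the index-set property, $f(x) \notin A$. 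Thus $x \in K \iff f(x) \in A$, establishing $K \leq_m A$.

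The remaining case, $e_\emptyset \in A$, reduces to the one just treated by passing to the complement. One observes that $\overline{A}$ is again an index set (the closure condition ``$e \in A$ and $\varphi_e = \varphi_{e'}$ imply $e' \in A$'' is preserved under complementation), that $\overline{A} \neq \emptyset$ because $A \neq \omega$, and that $e_\emptyset \notin \overline{A}$. Applying the first case with $\overline{A}$ in place of $A$ yields $K \leq_m \overline{A}$, completing the proof.

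I expect the only real subtlety to lie in making the $s$-$m$-$n$ construction precise and in tracking where the index-set hypothesis is used: it is invoked exactly twice in the verification, once to transfer $a \in A$ to $f(x) \in A$ and once to transfer $e_\emptyset \notin A$ to $f(x) \notin A$. Without that hypothesis the equalities $\varphi_{f(x)} = \varphi_a$ and $\varphi_{f(x)} = \varphi_{e_\emptyset}$ would not suffice to decide membership in $A$, so this is the crux on which the whole reduction turns.
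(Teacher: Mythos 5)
Your proof is correct: the case split on whether an index $e_\emptyset$ of the everywhere-undefined function lies in $A$, the $s$-$m$-$n$ construction of $f$ with $\varphi_{f(x)} = \varphi_a$ when $x \in K$ and $\varphi_{f(x)} = \varphi_{e_\emptyset}$ otherwise, and the passage to $\overline{A}$ in the second case constitute exactly the standard argument of the cited source \cite[Thm.\ 7.1.11]{Cooper2003computability}. The thesis itself states this theorem without proof, deferring to Cooper, so there is nothing to diverge from; your verification, including the check that $\overline{A}$ is again an index set with $\overline{A} \neq \emptyset$ and $e_\emptyset \notin \overline{A}$, is complete as written.
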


This result gives us the following corollary:

\begin{corollary}[\protect{\cite[Cor. 7.1.12]{Cooper2003computability}}]
Every non-trivial index set is incomputable.
\end{corollary}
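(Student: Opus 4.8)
The plan is to derive incomputability directly from Rice's Theorem together with the downward closure of computability under $m$-reducibility. First I would let $A$ be an arbitrary non-trivial index set, so that $A \neq \emptyset$ and $A \neq \omega$; these are precisely the hypotheses needed to invoke Rice's Theorem, which then yields the dichotomy that either $K \leq_m A$ or $K \leq_m \overline{A}$.

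Next I would argue by contradiction, supposing that $A$ is computable. The key auxiliary observation is that the complement of a computable set is again computable, since $\chi_{\overline{A}} = 1 - \chi_A$ is computable whenever $\chi_A$ is; thus under our assumption both $A$ and $\overline{A}$ are computable. Now in either branch of the dichotomy the target of the reduction, namely $A$ or $\overline{A}$, is computable, so by property (3) of \cite[Thm. 7.1.2]{Cooper2003computability}, which states that $X \leq_m Y$ with $Y$ computable forces $X$ computable, we conclude in both cases that $K$ is computable.

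This contradicts the established incomputability of Post's set $K$. Hence the assumption that $A$ is computable is untenable, and every non-trivial index set is incomputable.

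I would expect no serious obstacle here: the statement is a short corollary whose entire content is the case split already supplied by Rice's Theorem. The only point requiring genuine care is the second branch $K \leq_m \overline{A}$, where one must explicitly invoke the stability of computability under complementation so that the same downward-closure argument applies uniformly, rather than treating the $\overline{A}$ case informally.
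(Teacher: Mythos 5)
Your proof is correct and is exactly the intended derivation: the paper states this corollary immediately after Rice's Theorem without printing an argument, and the implicit proof is precisely your route via the dichotomy $K \leq_m A$ or $K \leq_m \overline{A}$, closure of computability under complementation, and part (3) of \cite[Thm.~7.1.2]{Cooper2003computability}, contradicting the incomputability of $K$. Your explicit care with the $K \leq_m \overline{A}$ branch is a sound touch, not a deviation.
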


This gives us a window into the core intuition behind Rice's important result on computable functions - that every non-trivial semantic property is fundamentally undecidable, by means of $m$-reducibility of $K$ into index sets.

\section{Computable Trees}

We denote Cantor space by $2^{\omega}$, and Baire space by $\omega^{\omega}$. For any alphabet $\Sigma$, we denote the set of strings $\sigma = (\sigma(0), \sigma(1), \ldots, \sigma(n-1))$ of length $n$ by $\Sigma^n$. We denote the set of arbitrary length finite strings by $\Sigma^{< \omega}$, and similarly for Cantor space we use $2^{< \omega}$, and for Baire space we shall use $\omega^{< \omega}$.

Let $|\sigma |$ denote the length of the string $\sigma \in \Sigma^{< \omega}$. We denote the initial segment of $\sigma$ of length $n$ by $\sigma \upharpoonright n$. For $\sigma$ and $\tau$, where $| \sigma | = i$ and $| \tau | = j$, we write $\sigma^{\frown} \tau$ for the string $(\sigma(0), \sigma(1), \ldots \sigma(i-1), \tau(0), \tau(1), \ldots, \tau(j-1))$, which we call the \emi{concatenation} of $\sigma$ and $\tau$. We write $\tau \prec \sigma$ if $\tau$ is an \emi{initial segment}, or \emi{initial substring}, of $\sigma$ - that is, there is some $n < | \sigma |$ such that for all $0 \leq i \leq n$ it holds that $\tau(i) = \sigma(i)$.

\subsection{Trees and $\Pi^0_1$ Classes}

The source for this section is Cenzer's chapter titled ``$\Pi^0_1$ Classes in Computability Theory" in \cite{Cenzer-griffor1999handbook}. 

\begin{definition}\label{def:trees}
A \emi{tree} is a set $T \subset \Sigma^{< \omega}$ that is closed under initial segments. That is, for all $\tau \in \Sigma^{< \omega}$ such that $|\tau| \leq |\sigma |$ it is true that $ \forall \sigma \in T \, (\tau \prec \sigma \rightarrow \tau \in T)$.
\end{definition}

We say that $\sigma$ is a {\em successor} to some $\tau \in T$ if there exists some $s \in \Sigma^{< \omega}$ s.t. $\sigma = \tau^{\frown} s$. If $\sigma \in T$ is a successor of some $\tau \in T$ and $| \sigma | = | \tau | + 1$ we say that $\sigma$ is an \emi{immediate successor} of $\tau$. 

\begin{definition}
We say that a tree $T$ is \emi{finitely branching} if for every $\tau \in T$ there are finitely many immediate successors in $T$.
\end{definition}

For every $T \subset 2^{< \omega}$ or $T \subset \Sigma^{< \omega}$ (for a finite alphabet $\Sigma$), $T$ can only be finitely branching.

\begin{definition}
We will make use of the following definitions for paths through a tree $T$:
\begin{itemize}
\item An \emi{infinite path} through $T$ is a sequence $(x(0), x(1), \ldots)$ such that $x \upharpoonright n \in T$ for all $n \in \omega$.
\item Denote by $[T]$ the set of infinite paths through $T$.
\end{itemize}
\end{definition}

We also state what it is for a set to be a $\Pi^0_1$ class, which is congruent with earlier definitions of $\Pi^0_1$ sets we stated earlier.

\begin{definition}\index{$\Pi^0_1$ Classes}
\begin{itemize}
\item A formula is $\Delta_0$ if it is a primitive recursive function.
\item A set $X \subset \omega^{\omega}$ is a $\Pi^0_1$ class if there is a $\Delta_0$ formula $\varphi(n,x)$ in the language of first order arithmetic such that $ x \in X \iff (\forall n) \varphi(n,x)$.
\end{itemize}
\end{definition}

A definition of Primitive Recursive Functions as well as other definitions we use here can be found in Cooper \cite[Sec. 2.1 p.12]{Cooper2003computability}.

The $\Pi^0_1$ classes may be described topologically as effectively closed subsets of the product space $\omega^{\omega}$. Early results in the study of $\Pi^0_1$ classes were carried out by Kleene, who proved the Kleene basis theorem in 1943. Further work was carried out by Kreisel, Shoenfield, Jockush, Soare, \etal.

The topology on Baire space, $\omega^{\omega}$ is determined by a basis of intervals given by $I(\sigma) = \{ x : \sigma \prec x \} $. A subset $P \subset \omega^{\omega}$ is closed iff $P = [T]$ for some tree $T$, hence our description of $\Pi^0_1$ classes as effectively closed subsets of Baire space.

Note that each interval given by $I$ is also closed, thus we can describe the intervals as \emi{clopen}. Note also that for Cantor space, $2^{\omega}$, the clopen sets are just the finite unions of intervals. 

Given these definitions we can state the core intuition for a $\Pi^0_1$ class as a tree in terms of some fixed initial segment $\sigma$ for which the $\Pi^0_1$ class is the set of points that are all possible extensions of $\sigma$ - the cone of extensions above this fixed initial segment.

We now wish to formalise the relationship between $\Pi^0_1$ classes and trees by means of the following Lemma: 

\begin{lemma}[\protect{\cite[p.41,Lem. 1.1]{Cenzer-griffor1999handbook}}]\label{lemma:trees-rec-rels}
For any class $P \subset \omega^{\omega}$, the following are equivalent:
\begin{enumerate}
\item $P = [T]$ for some computable tree $T \subset \omega^{< \omega}$.
\item $P = [T]$ for some primitive recursive tree $T$.
\item $P = \{ x : \forall n (R(n,x) )\}$ for some computable relation $R_e$,
\item $P = [T]$ for some $\Pi^0_1$ tree $T \subset \omega^{< \omega}$.
\end{enumerate}
\end{lemma}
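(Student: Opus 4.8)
The plan is to establish the four characterisations by a cycle of implications $(2)\Rightarrow(1)\Rightarrow(4)\Rightarrow(3)\Rightarrow(2)$, so that any one can be reached from any other. Two of the four links are immediate from the inclusions between complexity classes. For $(2)\Rightarrow(1)$, every primitive recursive tree is in particular a computable tree, since primitive recursive functions are computable, and $P=[T]$ is unchanged. For $(1)\Rightarrow(4)$, if $T$ is a computable tree then membership $\sigma\in T$ is a decidable, hence trivially $\Pi^0_1$, relation (write $\sigma\in T\iff \forall m\,(\chi_T(\sigma)=1)$, the matrix being constant in $m$), so the same $T$ witnesses $(4)$.

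For $(4)\Rightarrow(3)$ the idea is quantifier contraction. If $T$ is a $\Pi^0_1$ tree, membership has the form $\sigma\in T\iff\forall m\,S(m,\sigma)$ for a computable relation $S$. Then $x\in[T]\iff\forall n\,(x\upharpoonright n\in T)\iff\forall n\,\forall m\,S(m,x\upharpoonright n)$. Using the Cantor pairing function to fold the two universal quantifiers into one, I would set $R(k,x)=S((k)_1,\,x\upharpoonright(k)_0)$, where $(k)_0,(k)_1$ are the computable projections; this is a computable relation with oracle $x$, and $P=\{x:\forall k\,R(k,x)\}$, which is exactly $(3)$.

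The heart of the argument, and the step I expect to be the main obstacle, is $(3)\Rightarrow(2)$, because here a merely \emph{computable} matrix must be replaced by a \emph{primitive recursive} one. Starting from $P=\{x:\forall n\,R(n,x)\}$ with $R$ computed by an oracle machine total on every oracle, I would appeal to the use principle and step counting: the relation ``the machine deciding $R$, on input $n$ with finite oracle $\sigma$, halts within $s$ steps outputting $0$, querying only positions $<|\sigma|$'' is primitive recursive in $(n,\sigma,s)$ (a bounded simulation, essentially a relativised Kleene $T$-predicate). Call it $\theta(n,\sigma,s)$. Since the machine is total, $\neg R(n,x)\iff\exists s\,\theta(n,x\upharpoonright s,s)$, and hence $\forall n\,R(n,x)\iff\forall n\,\forall s\,\neg\theta(n,x\upharpoonright s,s)$.

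I would then define the tree by bounding both quantifiers against the string length,
\[
T=\{\sigma\in\omega^{<\omega}:\ \forall n<|\sigma|\ \forall s<|\sigma|\ \neg\theta(n,\sigma\upharpoonright s,s)\}.
\]
Membership is a bounded quantification of a primitive recursive predicate, so $T$ is primitive recursive; it is closed under initial segments because a shorter string is subject to strictly fewer constraints that, on their common range, coincide with those for the longer string. It then remains to verify $[T]=P$: if $x\in P$ then every $x\upharpoonright k$ satisfies the defining condition (as $s<k$ forces $(x\upharpoonright k)\upharpoonright s=x\upharpoonright s$), so $x\in[T]$; conversely if $x\in[T]$ then for each fixed $n,s$, choosing $k>\max(n,s)$ recovers $\neg\theta(n,x\upharpoonright s,s)$, whence $\forall n\,R(n,x)$ and $x\in P$. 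The delicate points are the totality of the oracle machine, needed to pass between $R$ and $\neg R$ through $\theta$, and the bookkeeping that bounding $n$ and $s$ by $|\sigma|$ loses nothing, both of which the verification above is designed to handle.
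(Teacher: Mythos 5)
Your proposal is correct, and it is worth noting that it does more than the thesis itself, which offers no argument for this lemma at all: the ``proof'' in the text is a single sentence deferring to Cenzer's handbook chapter. Your cycle $(2)\Rightarrow(1)\Rightarrow(4)\Rightarrow(3)\Rightarrow(2)$ is the canonical route, and you correctly isolate the only substantive link, $(3)\Rightarrow(2)$, where the computable matrix must be upgraded to a primitive recursive one; your treatment of it via the relativised step-counting predicate is sound. The two delicate points are exactly where you flag them: totality of the oracle machine is what licenses $\neg R(n,x)\iff\exists s\,\theta(n,x\upharpoonright s,s)$ (without it the forward direction fails, since a rejecting computation might never halt), and the clause in $\theta$ restricting queries to positions $<|\sigma|$ is what makes the finite-oracle simulation agree with the true oracle computation in both directions --- choosing $s\geq\max(t,u)$ with $t$ the halting time and $u$ the use handles the forward implication, and the query bound handles the converse. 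Your verification that $T=\{\sigma:\forall n<|\sigma|\,\forall s<|\sigma|\,\neg\theta(n,\sigma\upharpoonright s,s)\}$ is primitive recursive (bounded quantification of a primitive recursive predicate), downward closed (shorter strings face a subset of the same constraints, since $\tau\prec\sigma$ and $s<|\tau|$ give $\tau\upharpoonright s=\sigma\upharpoonright s$), and satisfies $[T]=P$ is complete, including the converse step of choosing $k>\max(n,s)$ to recover each instance $\neg\theta(n,x\upharpoonright s,s)$ from $x\upharpoonright k\in T$. The quantifier contraction in $(4)\Rightarrow(3)$ via the pairing function and the trivial inclusions $(2)\Rightarrow(1)\Rightarrow(4)$ are all fine. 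In short: what the thesis buys by citation (brevity), you supply self-contained, and your argument is essentially the one in the cited source, so nothing needs repair.
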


Recall our definitions of computable relation (definition \ref{def:ArithH}) and tree (definition \ref{def:trees}) above.

\begin{proof}
A proof of this can be found in \cite[p.41]{Cenzer-griffor1999handbook}.
	%\mc{TODO Griffor (Cenzer) p.41}
\end{proof}

Armed with this characterisation, we can equate the enumeration of computable trees with effectively enumerated $\Pi^0_1$ classes, as demonstrated in the following lemma.

\begin{lemma}[\protect{\cite[p.41,Lem. 1.2]{Cenzer-griffor1999handbook}}]
There is a uniformly recursive sequence $T_e$ of primitive recursive trees such that, for every $\Pi^0_1$ class $P$, there is some $e$ such that it holds that $$P = [T_e]$$
\end{lemma}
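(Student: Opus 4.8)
The plan is to build a single uniform procedure that, from any index $e$, extracts a primitive recursive tree $T_e$, and then to show that every $\Pi^0_1$ class already arises as $[T_e]$ for a suitable $e$ via the characterisation of Lemma \ref{lemma:trees-rec-rels}. The device that makes everything primitive recursive is a time bound tied to string length: longer strings are allotted more computation steps, so I run $\varphi_e$ only for $|\sigma|$ stages when deciding whether $\sigma$ survives into the tree.

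Concretely, I would define
$$ \sigma \in T_e \iff (\forall m \leq |\sigma|)\; \varphi_{e,|\sigma|}(\sigma \upharpoonright m) \uparrow, $$
so that $\sigma$ is kept precisely when neither it nor any of its initial segments has been ``killed'' --- i.e.\ has caused $\varphi_e$ to halt --- within $|\sigma|$ steps. Three routine verifications then set up the sequence. First, $T_e$ is a tree: if $\sigma \in T_e$ and $\rho = \sigma \upharpoonright k$ with $k < |\sigma|$, then the initial segments of $\rho$ are among those of $\sigma$ and are tested with the smaller bound $|\rho| < |\sigma|$, so non-halting within $|\sigma|$ steps yields non-halting within $|\rho|$ steps and $\rho \in T_e$. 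Second, the predicate $\sigma \in T_e$ is decided by a bounded search over the finitely many $\sigma\upharpoonright m$ together with the $|\sigma|$-step bounded simulation $\varphi_{e,|\sigma|}$, which is primitive recursive in $(e,x,s)$; hence each $T_e$ is primitive recursive and the predicate is decidable uniformly in $(e,\sigma)$, giving the required uniformly recursive sequence.

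The real content is surjectivity onto the $\Pi^0_1$ classes. Given a $\Pi^0_1$ class $P$, Lemma \ref{lemma:trees-rec-rels} supplies a computable tree $S$ with $P = [S]$; since $S$ is computable its complement $\omega^{<\omega} \setminus S$ is computable, hence I may fix an index $e$ with $\varphi_e(\sigma)\downarrow$ iff $\sigma \notin S$. I would then show $[T_e] = [S]$ by two inclusions. For $[S] \subseteq [T_e]$: if $x \in [S]$ then every initial segment of every $x\upharpoonright n$ lies in $S$ (as $S$ is a tree), so $\varphi_e$ never halts on any of them and each $x\upharpoonright n \in T_e$. For $[T_e] \subseteq [S]$: if $x \notin [S]$, pick the least $m$ with $x \upharpoonright m \notin S$ and let $s_0$ be the halting time of $\varphi_e(x\upharpoonright m)$; then at any $n \geq \max(m,s_0)$ the string $x\upharpoonright n$ has a killed initial segment $x\upharpoonright m$ detected within $n$ steps, so $x \upharpoonright n \notin T_e$ and $x \notin [T_e]$.

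The step I expect to require the most care is reconciling the time bound with both the tree condition and the limit condition: the bound must grow along each branch, so that any killing is detected eventually and $[T_e]$ discards exactly the paths that $S$ discards, while remaining monotone enough that survival is closed under initial segments. Using the length $|\sigma|$ as the uniform stage bound is precisely what makes both hold at once, and checking this compatibility is the crux; the remaining primitive-recursiveness and uniformity claims are then bookkeeping on the bounded universal simulation.
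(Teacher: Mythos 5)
Your proof is correct, but it follows a different route from the one in the text. The paper's proof fixes a recursive enumeration $\pi_0, \pi_1, \ldots$ of the primitive recursive $\{0,1\}$-valued functions and simply closes each one downward, setting $\sigma \in T_e$ iff $\pi_e(\langle \tau \rangle) = 1$ for every $\tau \preceq \sigma$; the payoff is that whenever $\pi_e$ is already the characteristic function of a primitive recursive tree $T$, one gets the literal identity $T_e = T$, so surjectivity is immediate from Lemma \ref{lemma:trees-rec-rels} with no argument about paths at all. You instead index over \emph{all} partial computable functions and use the classical stage-bounded ``forbidden strings'' construction, declaring $\sigma \in T_e$ iff no initial segment of $\sigma$ has been killed by $\varphi_e$ within $|\sigma|$ steps. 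This never recovers $T_e = S$ as sets of strings --- your $T_e$ typically contains dead strings that are only pruned at later lengths --- so you must do the extra work of proving $[T_e] = [S]$ by the two inclusions, which you carry out correctly (the monotonicity of the stage bound in $|\sigma|$ is exactly what makes both the tree property and the eventual-detection argument go through, as you note). What your version buys in exchange: you avoid needing an effective enumeration of a class of \emph{total} functions (the mild trick the paper's proof silently relies on, since primitive recursive indices, unlike total computable ones, are effectively listable), every index $e$ whatsoever yields a primitive recursive tree with no preprocessing, and your construction works just as well when the $\Pi^0_1$ class is presented via a merely c.e.\ set of forbidden strings rather than a computable tree. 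Both proofs invoke Lemma \ref{lemma:trees-rec-rels} at the same point, so the approaches are equally dependent on the preceding characterisation; yours is slightly longer but more robust, the paper's is shorter but leans on the enumerability of the primitive recursive functions.
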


\begin{proof}
Let $\pi_0, \pi_1, \ldots$ be a recursive enumeration of the primitive recursive functions such that $\pi_i : \omega \rightarrow \{ 0,1 \}$. Define the $e^{th}$ such tree by \[ \sigma \in T_e \iff (\forall \tau \preceq \sigma) \pi_e(\langle \tau \rangle_n ) = 1 \] where $\langle \tau \rangle_n = \langle n, (\tau(0), \tau(1),\ldots,\tau(n-1)) \rangle$.

$T_e$ is a tree, and if $T$ is a primitive recursive tree with characteristic function $\pi_e$, then $T = T_e$. By lemma \ref{lemma:trees-rec-rels}, every $\Pi^0_1$ class is thereby equal to one of the $[T_e]$. 
\end{proof}

\section{Kleene's $\mathcal{O}$ and $\Pi^1_1$-Completeness}

In this section we will outline results that give the relationship between well-founded trees and $\Sigma^1_1$-completeness. Our preliminary definitions are as follows.

Unless otherwise stated, the material in this section is based on \cite{Cooper2003computability} and \cite{sacks_2017}. 

\begin{definition}[Ordinal]\label{def:ordinal}
We define ordinals as follows:
\begin{itemize}
\item A \emi{totally ordered set} is a set $A$ with a relation $\leq$ such that the following hold:
\begin{itemize}
\item (Reflexivity) $\forall a \in A (a \leq a)$
\item (Antisymmetry) $(a \leq b \wedge b \leq a) \rightarrow a = b$
\item (Transitivity) $(a \leq b \wedge b \leq c) \rightarrow a \leq c$
\item (Comparability) $\forall a,b \in A (a \leq b \vee b \leq a)$
\end{itemize}
\item A \emi{well-ordered set} is a totally ordered set $A$ together with a relation $\leq$ such that every subset $S \subseteq A$ has a least element.
\item Two sets $A, B \subseteq \omega$ are said to be \emi{order isomorphic} iff there exists a bijection $f:A \rightarrow B$ between $A$ and $B$ such that for all $a_1, a_2 \in A$ \[ a_1 \leq a_2 \iff f(a_1) \leq f(a_2) \]
\item Two well-ordered sets $A,B \subseteq \omega$ have the same \emi{order type} iff they are order isomorphic.
\item An \emph{ordinal number} or \emi{ordinal} (in the language due to Cantor) is just an order type of some well-ordered set.
\end{itemize}
\end{definition}

\textbf{NB} - later, in definition \ref{def:well-ord}, we will formalise the difference between a totally-ordered and well-ordered set. Specifically that the well-foundedness of such as set forces the relation to be irreflexive and connected.

\begin{definition}[\bf{Ord}]
We denote the set of all ordinals - that is, the set of every possible order type - as \textbf{Ord}.
\end{definition}

We now have all the basic machinery we need to describe the computable, or recursive ordinals.

\subsection{Ordinal Notations and Kleene's $\mathcal{O}$}

The aim of Kleene's construction is to analyse the structure of the computable ordinals, by means of creating representations of each as natural numbers.

The resulting theory identified that the computable ordinals form an initial segment of \textbf{Ord}, sitting strictly below the least non-computable ordinal, which we shall call the \emph{Church-Kleene ordinal}, denoted $\omega_1^{CK}$.

We will begin this journey into categorising and enumerating the computable ordinals by first defining a way of formulating \emph{notations} for the ordinals. The core idea here is that we can construct things that represent ordinals - including successor ordinals and limit ordinals - but in a way that can be more easily manipulated and understood for our present purposes. 

\begin{definition}[Ordinal Notation Ordering]\index{ordinal notations}
We first define the ordering $<_\mathcal{O}$:
\begin{itemize}
\item If $x$ and $y$ are both notations for constructive ordinals, then let $x <_\mathcal{O} y$ be for ``$x$ is less than $y$ according to the ordering of notations."
\item Given an ordinal can have two different notations, $<_\mathcal{O}$ is not linear.
\end{itemize}
\end{definition}

We can regard $x <_\mathcal{O} y$ as a set of ordered pairs - thus it is the closure of a finite set $X$ under some $\Sigma^1_1$-closure condition $A(X)$ we we define below.

\begin{definition}
Let $X$ be a finite set, the closure condition $A(X)$ has three clauses:
\begin{enumerate}
\item $\forall u,v (\langle u,v \rangle \in X \rightarrow \langle v, 2^v \rangle \in X )$ (Successors)
\item $\forall n (\varphi_e(n) \downarrow \wedge \langle \varphi_e(n) , \varphi_e(n+1) \rangle \in X) \rightarrow \forall n ( \langle \varphi_e(n), 3 \cdot 5^e \rangle \in X)$ (Limits)
\item $\forall u,v,w (\langle u,v \rangle, \langle v,w \rangle \in X \rightarrow \langle u,w \rangle \in X) $ (Transitivity)
\end{enumerate}
\end{definition}

Thus, there is some least $X$ such that $\langle 1,2 \rangle \in X$, with $A(X)$. We let $<_\mathcal{O}$ be this least such $X$. 

\subsection{Kleene's $\mathcal{O}$}

We can now define Kleene's $\mathcal{O}$ as follows:

\begin{definition}[Kleene's $\mathcal{O}$]
Let $\mathcal{O}$ denote the set of notations for constructive ordinals. $\mathcal{O}$ forms the field of $<_\mathcal{O}$.
\end{definition}

We will use the following definition of notations, noting that they are all defined recursively for future purposes.

\begin{definition}
Let the function $| \cdot | : \mathcal{O} \rightarrow \text{\textbf{Ord}}$ be defined by transfinite recursion on $<_\mathcal{O}$ as follows:
\begin{align*}  |1| & =  0 \\
| 2^u | & =  |u| + 1 \\
| 3 \cdot 5^e | & = \lim_{n\to\infty} | \varphi_e(n) |
\end{align*}
\end{definition}

We can now define all of the constructive ordinals in the following manner.

\begin{definition}[Constructive Ordinals]
An ordinal $\delta \in \text{\textbf{Ord}}$ is a \emi{constructive ordinal} if $\delta = u$ for some $u \in \mathcal{O}$. 
\end{definition}

\subsection{Kleene's $\mathcal{O}$, and Well-foundedness}

We define well-foundedness as follows: 

\begin{definition}[Well-founded relations]\label{def:well-fdd}
A binary relation $R$ is \emi{well-founded} if there is no $f$ s.t. \[ \forall x (R(f(x+1),f(x)) \]
\end{definition}

We are now ready for the following theorem:

\begin{theorem}[\protect{\cite[Thm. 2.2]{sacks_2017}}]
\begin{enumerate}
\item $<_\mathcal{O}$ and $\mathcal{O}$ are $\Pi^1_1$
\item $<_\mathcal{O}$ is a well-founded partial ordering.
\item For $v \in \mathcal{O}$, the restriction of $<_\mathcal{O}$ to $\{ u | u <_\mathcal{O} v \}$ is linear.
\end{enumerate}	
\end{theorem}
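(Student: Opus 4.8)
The plan is to take the three assertions in order, with the crucial lever throughout being that $<_\mathcal{O}$ was defined as the \emph{least} set $X$ containing $\langle 1,2\rangle$ and closed under the condition $A(X)$; minimality (the least-fixed-point property) drives essentially every step.

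For assertion (1) I would exploit the inductive shape of the definition directly. First observe that $A(X)$ is \emph{arithmetic in the set parameter} $X$: the Successor and Transitivity clauses are purely arithmetic, and the Limit clause only inserts the $\Sigma^0_1$ predicate $\varphi_e(n)\downarrow$ under arithmetic quantifiers. Since the least set closed under an operator equals the intersection of all closed sets, I would write
\[ (x <_\mathcal{O} y) \iff \forall X\big[(\langle 1,2\rangle \in X \wedge A(X)) \rightarrow \langle x,y\rangle \in X\big], \]
a single universal set quantifier over an arithmetic matrix, hence $\Pi^1_1$. Since $\mathcal{O}$ is the field of $<_\mathcal{O}$, I would then set $x \in \mathcal{O} \iff x = 1 \vee \exists y\,(x <_\mathcal{O} y \vee y <_\mathcal{O} x)$ and invoke closure of $\Pi^1_1$ under number quantifiers to conclude $\mathcal{O} \in \Pi^1_1$.

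For assertion (2), transitivity is immediate because $<_\mathcal{O}$ itself satisfies $A(X)$ and clause (3) \emph{is} transitivity. The heart is well-foundedness, whose natural rank is the norm $|\cdot|$: I would prove $x <_\mathcal{O} y \Rightarrow |x| < |y|$, so that any $<_\mathcal{O}$-descending sequence would yield a descending sequence of genuine ordinals, which is absurd; irreflexivity then follows from the impossibility of $|x| < |x|$. The subtlety here, and what I expect to be the main obstacle, is a genuine circularity: $|\cdot|$ was defined by transfinite recursion \emph{along} $<_\mathcal{O}$, which presupposes the very well-foundedness sought. To break this I would not assume $<_\mathcal{O}$ well-founded in advance; instead I would argue along the stages of the inductive generation of $<_\mathcal{O}$, establishing \emph{simultaneously} by stage-induction that $|\cdot|$ is well defined on the notations produced so far and that it strictly respects the ordering. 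Getting this bookkeeping right at limit notations $3\cdot 5^e$, where $|3\cdot 5^e| = \lim_n |\varphi_e(n)|$ requires the whole sequence $\varphi_e(n)$ to be already ranked and $<_\mathcal{O}$-increasing, is the delicate part.

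For assertion (3) I would induct on $v \in \mathcal{O}$ — now legitimate since (2) supplies well-foundedness — splitting into the three notation shapes. For $v = 1$ the predecessor set $\{u : u <_\mathcal{O} 1\}$ is empty and linearity is vacuous. For a successor $v = 2^w$ I would first establish, via minimality, that $u <_\mathcal{O} 2^w \iff (u <_\mathcal{O} w \vee u = w)$; the inductive hypothesis linearly orders $\{u : u <_\mathcal{O} w\}$, and $w$ sits $<_\mathcal{O}$-above all of these, so the predecessors of $2^w$ stay linear. For a limit $v = 3\cdot 5^e$ I would show $u <_\mathcal{O} 3\cdot 5^e \iff \exists n\,(u <_\mathcal{O} \varphi_e(n))$ together with $\varphi_e(n) <_\mathcal{O} \varphi_e(n+1)$, so the predecessors form a union of an increasing chain of initial segments, each linearly ordered by the inductive hypothesis; comparability of any two predecessors $u,u'$ follows by locating both below a common $\varphi_e(n)$ and applying linearity there. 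In each case the forward characterisation of predecessors comes from the standard least-fixed-point argument — checking that the proposed set of pairs already satisfies $A(X)$ and therefore contains $<_\mathcal{O}$ — the same device that underpins (1).
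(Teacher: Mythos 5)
Your proposal is correct and follows essentially the same route as the paper (which itself reproduces Sacks): part (1) via the least-fixed-point characterisation $x <_\mathcal{O} y \iff \forall X\,[(\langle 1,2\rangle \in X \wedge A(X)) \rightarrow \langle x,y\rangle \in X]$ with arithmetic matrix (the paper merely cites Sacks p.~9 for this part), and parts (2)--(3) via the staged generation of $<_\mathcal{O}$ followed by induction along it, with the same predecessor analysis by notation shape ($2^w$ versus $3\cdot 5^e$) in (3). The only cosmetic difference is in (2): the paper assigns to each \emph{pair} the ordinal stage at which it enters the natural enumeration and observes that $u <_\mathcal{O} v$ is enumerated strictly before $v <_\mathcal{O} w$, whereas you assign ordinals to \emph{notations} via the norm $|\cdot|$, legitimised by exactly the stage induction you describe --- both devices collapse a $<_\mathcal{O}$-descending sequence to a descending sequence of ordinals, and your explicit handling of the circularity at limit notations is sound.
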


Our proof comes directly from \cite{sacks_2017}.

\begin{proof}
\textbf{1.} A full proof of 1. can be found in \cite[p.9]{sacks_2017}
\newline
\textbf{2.} The following \emi{natural enumeration} of $<_\mathcal{O}$ is equivalent to a redefinition of $<_\mathcal{O}$ by means of transfinite recursion on ordinals, as follows:
\begin{itemize}
\item \textbf{Stage 0}: enumerate $1 <_\mathcal{O} 2$.
\item \textbf{Stage $\delta + 1$}: enumerate all $v <_\mathcal{O} 2^v$ and $u <_\mathcal{O} 2^v$ if $u <_\mathcal{O} v$ was enumerated at stage $\delta$.
\item \textbf{Stage $\lambda$ (limit)}: enumerate $\varphi_e(n) <_\mathcal{O} 3 \cdot 5^e$ and $u <_\mathcal{O} 3 \cdot 5^e$, if not enumerated at some earlier stage, if for each $n$ it holds that $\varphi_e(n) <_\mathcal{O} \varphi_e(n+1)$ was enumerated at an earlier stage, and if for some $n,u <_\mathcal{O} \varphi_e(n)$ was also enumerated at an earlier stage.
\end{itemize}

By induction on each stage $\gamma$, a pair enumerated at some stage $\gamma$ belongs to $<_\mathcal{O}$. On the other hand, the set of all pairs enumerated into $<_\mathcal{O}$ is a solution of $A(X)$, and so contains $<_\mathcal{O}$. 

By induction on $u <_\mathcal{O} v$ and $v <_\mathcal{O} w$, then $u <_\mathcal{O} v$ is enumerated at an earlier stage than $v <_\mathcal{O} w$. It then follows that $<_\mathcal{O}$ is well-founded, else there would otherwise be a descending infinite sequence of ordinals.

\leavevmode\\ \textbf{3.}
We prove this by induction on $<_\mathcal{O}$. Assume $u_1, u_2 <_\mathcal{O} v$, we check that one of the following hold:
\begin{itemize}
\item $u_1 <_\mathcal{O} u_2$,
\item $u_1 = u_2$, or
\item $u_2 <_\mathcal{O} u_1$.
\end{itemize}

If $v = 2^u$, then (1) above implies that $u_1, u_2 \leq_\mathcal{O} u$ and our result follows by induction. Else, if $v = 3 \cdot 5^e$, then we apply (2) to get the result.
\end{proof}
We can now prove the following facts about ordinal notations and their addition:

\begin{definition}\label{def:recI}
\begin{itemize}
\item Let $+_\mathcal{O}$ be such that if $a,b \in \mathcal{O}$, then $a +_\mathcal{O} b \in \mathcal{O}$ and $$|a +_\mathcal{O} b | = |a| + |b| $$
\item Let $h$ be a recursive function such that $$ \varphi_{h(e,a,d)} \simeq \varphi_e(a, \varphi_d(n)) $$
\item Let $I$ be a recursive function such that
\begin{align*}
a & \text{  if } b = 1 \\
\varphi_{I(e)}(a,b) \simeq 2^{\varphi_e(a,m)} & \text{  if } b = 2^m \\
3 \cdot 5^{h(e,a,d)} & \text{  if } b = 3 \cdot 5^d \\
7 & \text{  otherwise}
\end{align*}
\end{itemize}
\end{definition}

It is worth noting that, because our breaking up of $\mathcal{O}$ into notations for zero, successors, and limits is effective, $I$ above is recursive, even though $<_\mathcal{O}$ is non-recursive. Also, the clause for $I(e)$ is sensible even if $a,b \notin \mathcal{O}$.

\begin{theorem}[Kleene, \protect{\cite[Thm. 3.4]{sacks_2017}}]
The recursive function $+_\mathcal{O}$ has the following properties. For all $a$, and $b$:
\begin{enumerate}
\item $a,b \in \mathcal{O} \iff a +_\mathcal{O} b \in \mathcal{O}$.
\item $a,b \in \mathcal{O} \Rightarrow | a +_\mathcal{O} b | = |a| + |b|$.
\item $a,b \in \mathcal{O} \wedge b \neq 1 \Rightarrow a <_\mathcal{O} (a +_\mathcal{O} b)$.
\item $a \in \mathcal{O} \wedge c <_\mathcal{O} b \iff (a +_\mathcal{O} c ) <_\mathcal{O} (a +_\mathcal{O} b)$.
\item $a \in \mathcal{O} \wedge b=c \in \mathcal{O} \iff (a +_\mathcal{O} b) = (a +_\mathcal{O} c)$.
\end{enumerate}
\end{theorem}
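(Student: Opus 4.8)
The plan is to prove all five properties together by a single transfinite induction along the well-founded ordering $<_\mathcal{O}$, whose well-foundedness is part 2 of the structure theorem for $<_\mathcal{O}$ proved above. Since the recursion defining $+_\mathcal{O}$ through the functions $I$ and $h$ of Definition \ref{def:recI} branches on the shape of the second argument $b$, the induction splits into the three cases $b = 1$, $b = 2^m$, and $b = 3 \cdot 5^d$, with the ``otherwise'' clause treated separately. First I would isolate the constructive halves---the $\Rightarrow$ direction of (1) together with (2), (3), and the $\Rightarrow$ direction of (4)---and prove them simultaneously by induction on the notation $b$; the uniqueness halves (the converse of (1) and the $\Leftarrow$ directions of (4) and (5)) then follow from trichotomy.

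For the inductive step, the base case $b = 1$ is immediate because $a +_\mathcal{O} 1 = a$. In the successor case $b = 2^m$ we have $a +_\mathcal{O} 2^m = 2^{a +_\mathcal{O} m}$, so (1) and (2) come from the induction hypothesis at $m$ together with $|2^u| = |u| + 1$, while (3) and the monotonicity (4) follow from $c <_\mathcal{O} 2^c$ chained through transitivity of $<_\mathcal{O}$ with the inductive instance of (4). The limit case $b = 3 \cdot 5^d$ is the heart of the argument: here $a +_\mathcal{O} b = 3 \cdot 5^{h(e,a,d)}$ with $\varphi_{h(e,a,d)}(n) = a +_\mathcal{O} \varphi_d(n)$, and to establish (1)---that this is a genuine element of $\mathcal{O}$---I must verify that the sequence $n \mapsto a +_\mathcal{O} \varphi_d(n)$ is $<_\mathcal{O}$-increasing, which is exactly the forward monotonicity (4) applied to $\varphi_d(n) <_\mathcal{O} \varphi_d(n+1)$. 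Property (2) then drops out by passing to the limit in the inductive equalities $|a +_\mathcal{O} \varphi_d(n)| = |a| + |\varphi_d(n)|$ via $|3 \cdot 5^e| = \lim_n |\varphi_e(n)|$, and (3) holds because each term $a +_\mathcal{O} \varphi_d(n)$ lies $<_\mathcal{O}$-below the limit notation while $a <_\mathcal{O} a +_\mathcal{O} \varphi_d(n)$ by the inductive instance of (3).

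For the uniqueness halves I would reverse the induction. The converse of (1)---that $a +_\mathcal{O} b \in \mathcal{O}$ forces $a, b \in \mathcal{O}$---is proved by induction on $a +_\mathcal{O} b$ along $<_\mathcal{O}$: if $a +_\mathcal{O} b = 2^{a +_\mathcal{O} m} \in \mathcal{O}$ then $a +_\mathcal{O} m \in \mathcal{O}$ is strictly smaller, so the hypothesis gives $a, m \in \mathcal{O}$ and hence $b = 2^m \in \mathcal{O}$; the limit subcase is analogous, using the $\Leftarrow$ direction of (4) to recover $\varphi_d(n) <_\mathcal{O} \varphi_d(n+1)$, and the ``otherwise'' value $7 \notin \mathcal{O}$ is never produced. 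The $\Leftarrow$ direction of (4) and both directions of (5) then follow from the forward monotonicity together with trichotomy: by the linearity of $<_\mathcal{O}$ below a fixed notation (part 3 of the structure theorem), once the relevant notations are comparable, $a +_\mathcal{O} c <_\mathcal{O} a +_\mathcal{O} b$ rules out both $b = c$ (which would force equality, against irreflexivity) and $b <_\mathcal{O} c$ (which would give $a +_\mathcal{O} b <_\mathcal{O} a +_\mathcal{O} c$ by the forward direction), leaving $c <_\mathcal{O} b$.

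I expect the main obstacle to be the limit case of the simultaneous induction, precisely because of the circularity it exposes: one cannot certify that $a +_\mathcal{O} 3 \cdot 5^d$ is a legitimate member of $\mathcal{O}$ without already knowing that addition is strictly monotone in its second argument, so proving the five properties one after another fails and they must be bundled into a single induction whose hypothesis supplies (4) at every strictly smaller notation. A secondary technical point, in the converse directions, is ensuring that the relevant pairs of notations genuinely lie in a common linearly-ordered cone so that trichotomy applies---notations in $\mathcal{O}$ are not comparable in general---and it is here that the comparability clause (part 3 of the structure theorem) must be invoked with care.
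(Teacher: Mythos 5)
The paper offers no argument of its own here---its ``proof'' is the single line pointing to Sacks \cite{sacks_2017}, I.3.4, p.~13---so your sketch has to be measured against the standard argument that citation stands for. Your global architecture matches it: all five clauses are handled by one simultaneous transfinite induction on the notation $b$, split according to the clauses $b = 1$, $b = 2^m$, $b = 3 \cdot 5^d$ in the definition of $+_\mathcal{O}$, and you correctly isolate the crux, namely that certifying $a +_\mathcal{O} 3 \cdot 5^d \in \mathcal{O}$ needs the inductive instance of the forward direction of (4) to see that $n \mapsto a +_\mathcal{O} \varphi_d(n)$ is $<_\mathcal{O}$-increasing. The constructive halves of your induction are sound as written.

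The gap is in the converse directions. You propose to obtain the $\Leftarrow$ of (4), cancellation in (5), and the increasingness of $\varphi_d$ in the converse of (1) from forward monotonicity plus trichotomy, invoking part 3 of the structure theorem. But that clause only linearises the cone $\{u : u <_\mathcal{O} v\}$ below a \emph{single} $v \in \mathcal{O}$, and in each of these situations the two notations you must compare---$c$ and $b$, or $\varphi_d(n)$ and $\varphi_d(n+1)$---are not known to lie below any common notation. In particular $c <_\mathcal{O} a +_\mathcal{O} b$ is generally false (it is not even true in general that $c \leq_\mathcal{O} a +_\mathcal{O} c$: the $<_\mathcal{O}$-predecessors of $a +_\mathcal{O} c$ are predecessors of $a$ and translates $a +_\mathcal{O} y$, and $c$ itself is typically neither---take for $a$ and $c$ incomparable limit notations of the same ordinal); and in the converse-of-(1) limit subcase the only candidate upper bound, $3 \cdot 5^d$, is exactly what you are trying to show is a notation, so the appeal to comparability is circular, as your closing caveat half-suspects without resolving. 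The repair, which is what the cited proof actually runs on, is a structural lemma folded into the same induction: every $x <_\mathcal{O} a +_\mathcal{O} b$ satisfies either $x \leq_\mathcal{O} a$ or $x = a +_\mathcal{O} y$ for some $y <_\mathcal{O} b$. Given this, $a +_\mathcal{O} c <_\mathcal{O} a +_\mathcal{O} b$ forces $a +_\mathcal{O} c = a +_\mathcal{O} y$ with $y <_\mathcal{O} b$ (the alternative $a +_\mathcal{O} c \leq_\mathcal{O} a$ is excluded by (3) unless $c = 1$, a trivial case), and left-cancellation $c = y$ is then purely syntactic---powers of $2$ and numbers of the form $3 \cdot 5^e$ are distinct integers, and the index functions supplied by the $s$-$m$-$n$ construction may be taken injective---so $c <_\mathcal{O} b$ follows with no comparability assumption at all. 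The same lemma, applied with $\varphi_d(n+1)$ in place of $b$, closes the limit subcase of the converse of (1).
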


\begin{proof}
Can be found in Sacks \cite{sacks_2017} I.3.4 p.13.
\end{proof}

Due to our computable approach, and the fact that our notations for ordinals are, in particular, very computable, we can get theorems such as the following:

\begin{definition}
Denote by $W_e$ the $e^{th}$ computably enumerable subset of $\omega$, the domain of $\varphi_e$.
\end{definition}

The intuition here is that $W_e$ is the `set of inputs that $\varphi_e$ halts on' - which is why we use the domain of $\varphi_e$ in our definition.

\begin{theorem}[Kleene, \protect{\cite[Thm. 3.5]{sacks_2017}}]\label{thm:pinit}
There exists a computable function $p$ such that for all $b \in \mathcal{O}$, $$W_{p(b)} = \{ a : a \leq_\mathcal{O} b \} $$
\end{theorem}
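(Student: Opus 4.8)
The plan is to define $p$ by effective transfinite recursion on the structure of the notation $b$, following the three clauses of the closure condition $A(X)$, and then to verify the claimed identity by transfinite induction on $<_\mathcal{O}$, whose well-foundedness we have just established.

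First I would set up the recursion. The predecessors of a notation are dictated by how it was generated: $1$ has only itself below it, $2^c$ has $2^c$ together with everything $\leq_\mathcal{O} c$, and $3 \cdot 5^e$ has $3 \cdot 5^e$ together with everything $\leq_\mathcal{O} \varphi_e(n)$ for some $n$. Accordingly I want $p$ to satisfy
\[
W_{p(b)} = \begin{cases}
\{1\} & \text{if } b = 1, \\
\{2^c\} \cup W_{p(c)} & \text{if } b = 2^c, \\
\{3 \cdot 5^e\} \cup \bigcup_{n} W_{p(\varphi_e(n))} & \text{if } b = 3 \cdot 5^e, \\
\emptyset & \text{otherwise.}
\end{cases}
\]
Each clause is an effective operation on c.e. sets (adjoining an element, a binary union, an effective union of a uniformly c.e. family), so by the $s$-$m$-$n$ theorem (\ref{thm:smn}) there is a total computable $f$ such that, reading $i$ as an index for $p$, the set $W_{f(i,b)}$ equals the right-hand side with $\varphi_i$ in place of $p$. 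Currying $f$ and applying Kleene's recursion theorem (\ref{thm:recursion}) yields an index $n_0$ with $\varphi_{n_0}(b) = f(n_0,b)$ for all $b$; setting $p = \varphi_{n_0}$ gives a total computable $p$ satisfying the displayed recursion, since producing an index is defined regardless of whether the underlying enumeration halts.

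Next I would prove $W_{p(b)} = \{ a : a \leq_\mathcal{O} b \}$ for all $b \in \mathcal{O}$ by induction along $<_\mathcal{O}$. The base case $b = 1$ is immediate. For $b = 2^c$ I would combine the successor clause of $A(X)$ with the linearity of $<_\mathcal{O}$ below a fixed notation (part 3 of the structure theorem for $\mathcal{O}$) to show that $a \leq_\mathcal{O} 2^c$ holds exactly when $a = 2^c$ or $a \leq_\mathcal{O} c$; the inductive hypothesis then identifies the right-hand set with $\{2^c\} \cup W_{p(c)} = W_{p(2^c)}$. For $b = 3 \cdot 5^e$ I would likewise establish that $a \leq_\mathcal{O} 3 \cdot 5^e$ iff $a = 3 \cdot 5^e$ or $a \leq_\mathcal{O} \varphi_e(n)$ for some $n$, and conclude by applying the inductive hypothesis to each $\varphi_e(n) <_\mathcal{O} 3 \cdot 5^e$.

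The main obstacle will be the two ``iff'' characterisations, in particular the forward direction asserting that nothing unexpected lies below $b$. Here I would lean on $<_\mathcal{O}$ being the \emph{least} solution of $A(X)$: the family of sets produced by the recursion above, glued together, is itself closed under the successor, limit, and transitivity clauses, hence contains $<_\mathcal{O}$, which gives one inclusion; the converse inclusion follows from the stage-by-stage natural enumeration of $<_\mathcal{O}$, since any predecessor of $b$ is enumerated strictly before $b$ and is therefore captured by the recursion. Well-foundedness of $<_\mathcal{O}$ is exactly what legitimises the transfinite induction and guarantees that the recursion bottoms out on each input drawn from $\mathcal{O}$.
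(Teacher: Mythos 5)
Your proposal is correct and takes essentially the same route as the paper: define $p$ by effective transfinite recursion on the syntactic form of the notation, obtaining the index via the $s$-$m$-$n$ theorem together with Kleene's recursion theorem (the paper's functions $i$, $j$, and fixed point $c$ play exactly the roles of your $f$ and $n_0$), and then verify the identity by induction along $<_\mathcal{O}$, where your extra detail about leastness under $A(X)$ and the natural enumeration just fleshes out what the paper compresses into ``by induction on $<_\mathcal{O}$''. The only difference is cosmetic: your clauses compute the reflexive cone $\{a : a \leq_\mathcal{O} b\}$, matching the statement as printed, while the paper's clauses (following Sacks) compute the strict cone --- e.g.\ $W_{p(1)} = \emptyset$ rather than $\{1\}$ --- so if anything your recursion is the more faithful match to the displayed theorem.
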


\begin{proof}
The required properties of $p$ are as follows:
\begin{align*}
	W_{p(1)} & = \emptyset \\ 
	W_{p(2^a)} & = \{a\} \cup W_{p(a)} \numberthis \label{C2:Eq1} \\
	W_{p(3 \cdot 5^d)} & = \bigcup\limits_{n \in \omega} \{ W_{p(\varphi_d(n))} : \varphi_d(n)\downarrow \} 
\end{align*}

By induction on $<_\mathcal{O}$ we get that any $p$ that satisfies all of \ref{C2:Eq1} will also satisfy our theorem. As such, we want to show the existence of such a computable $p$, specifically by means of effective transfinite recursion on $p$. Let $e_0$ be any G\"odel number for some TM, and let $i$ and $j$ be computable functions such that:
\begin{align*}
	W_{e_0} & = \emptyset \\
	W_{i(e,a)} & = \{a\} \cup W_{\varphi_e(a)} \numberthis \label{C2:Eq2} \\
	W_{j(e,d)} & = \bigcup\limits_{n \in \omega} \{ W_{\varphi_e(\varphi_d(n))} : n < \omega \}
\end{align*}

In \ref{C2:Eq2}, it is intended that when $\varphi_e(a) \uparrow$, that $W_{\varphi_e(a)} = W_{\varphi_e(\varphi_d(n))} = \emptyset$. We can now obtain a recursive $I$ (similar to \ref{def:recI} above) such that:
\begin{align*}
e_0 & \text{  if } b = 1 \\
\varphi_{I(e)}(b) \simeq i(e,a) & \text{  if } b = 2^a \\
j(e,d) & \text{  if } b = 3 \cdot 5^d \\
0 & \text{  otherwise}
\end{align*}

By theorem \ref{thm:recursion}, the fixed point theorem, $I$ necessarily has a fixed point $c$ where $\varphi_{I(c)} \simeq \varphi_c$. Let $p(b)$ be $\varphi_c(b)$. Then
\begin{align*}
e_0 & \text{  if } b = 1 \\
p(b) = i(e,a) & \text{  if } b = 2^a \\
j(e,d) & \text{  if } b = 3 \cdot 5^d \\
0 & \text{  otherwise}
\end{align*}

Given $i$ and $j$ are both computable and total, we get that \ref{C2:Eq2}$\rightarrow$\ref{C2:Eq1}.
\end{proof}
We are also able to obtain the existence of similar recursive functions:
\begin{theorem}[Kleene, \protect{\cite[Thm. 3.5]{sacks_2017}}]\label{thm:Kleeneq}
There exists a recursive function $q$ such that for all $b \in \mathcal{O}$, $$W_{q(b)} = \{ \langle x,y \rangle : x <_\mathcal{O} y <_\mathcal{O} b \} $$
\end{theorem}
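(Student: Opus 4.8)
The plan is to follow exactly the template of the proof of Theorem~\ref{thm:pinit}, replacing the ``initial segment'' sets $W_{p(b)}$ by the desired ``strict pairs'' sets and bootstrapping off the function $p$ we have just constructed. First I would write down the clauses that $q$ ought to satisfy, by transfinite recursion on $<_\mathcal{O}$:
\begin{align*}
W_{q(1)} &= \emptyset \\
W_{q(2^a)} &= W_{q(a)} \cup \{ \langle x, a \rangle : x \in W_{p(a)},\ x \neq a \} \\
W_{q(3 \cdot 5^d)} &= \bigcup_{n \in \omega} \{ W_{q(\varphi_d(n))} : \varphi_d(n)\downarrow \}.
\end{align*}
The intuition is that the pairs $\langle x,y \rangle$ with $x <_\mathcal{O} y <_\mathcal{O} b$ are exactly those already below an immediate predecessor, together with — at a successor $2^a$ — the new pairs that place $a$ in the second coordinate. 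For those new pairs the admissible first coordinates are precisely the $x <_\mathcal{O} a$, which I can read straight off $W_{p(a)} = \{a' : a' \leq_\mathcal{O} a\}$ by discarding the single element $x = a$ (legitimate since $<_\mathcal{O}$ is irreflexive). At a limit $3\cdot 5^d$ I use that $y <_\mathcal{O} 3\cdot 5^d$ holds iff $y <_\mathcal{O} \varphi_d(n)$ for some $n$, so the whole set is the union of the sets already produced at the notations $\varphi_d(n)$.

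Next I would promote these clauses to an effective definition. Exactly as with $i$ and $j$ in Theorem~\ref{thm:pinit}, the operations ``take $W_{q(a)}$ and adjoin the column $\{\langle x,a\rangle : x \in W_{p(a)},\ x \neq a\}$'' and ``take the effective union of the $W_{q(\varphi_d(n))}$'' are each given by a computable index function, because $p$ is total recursive and equality on $\omega$ is decidable; call these functions $i'(e,a)$ and $j'(e,d)$. Feeding them into a recursive case-split $I'$ on the shape of $b$ (whether $b$ is $1$, a successor $2^a$, a limit $3\cdot 5^d$, or none of these) and applying Kleene's Recursion Theorem (Theorem~\ref{thm:recursion}) yields a fixed point $c$ with $\varphi_{I'(c)} \simeq \varphi_c$; setting $q(b) = \varphi_c(b)$ produces a total recursive $q$ satisfying the displayed clauses. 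Since this step is mechanically parallel to the $p$-construction, I would keep it terse and point back to that earlier argument rather than repeat the fixed-point bookkeeping.

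Finally I would verify correctness by induction on $<_\mathcal{O}$, which is legitimate because $<_\mathcal{O}$ is well-founded, as established earlier in this section. The base case ($b=1$) and the successor case are immediate from the clauses together with the defining property of $p$, using that within $W_{p(a)}$ the condition $x \neq a$ is equivalent to $x <_\mathcal{O} a$. The delicate point is the limit case, where I must check both inclusions: that every pair $\langle x,y\rangle$ with $x <_\mathcal{O} y <_\mathcal{O} 3\cdot 5^d$ is caught by some $W_{q(\varphi_d(n))}$ (choosing $n$ with $y <_\mathcal{O} \varphi_d(n)$), and conversely that the union introduces no spurious pair. I expect this limit verification — reconciling the ``there exists $n$'' characterisation of being $<_\mathcal{O} 3\cdot 5^d$ with the set-theoretic union, and correctly handling the convention that $W_{\varphi_d(n)} = \emptyset$ when $\varphi_d(n)\uparrow$ — to be the main obstacle, just as the limit clause was the subtle one in the proof of Theorem~\ref{thm:pinit}.
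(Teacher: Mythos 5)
Your proposal is correct and takes exactly the paper's route: the paper's proof of this theorem just says to rerun the construction of Theorem \ref{thm:pinit} with the recursion equations adjusted to enumerate pairs $\langle x,y \rangle$ with $x <_\mathcal{O} y <_\mathcal{O} b$, which is precisely what your explicit clauses for $W_{q(1)}$, $W_{q(2^a)}$, and $W_{q(3\cdot 5^d)}$, together with the fixed-point bookkeeping via Theorem \ref{thm:recursion}, carry out. Your successor clause (reading the strict predecessors of $a$ off $W_{p(a)}$) and your limit-case verification that $y <_\mathcal{O} 3\cdot 5^d$ iff $y <_\mathcal{O} \varphi_d(n)$ for some $n$ supply correctly the detail the paper leaves implicit.
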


\begin{proof}
Essentially the same as for the proof of theorem \ref{thm:pinit}, with the modification that we adjust the definition \ref{C2:Eq1} and \ref{C2:Eq2} preserve all of the pairs $\langle x,y \rangle$ s.t. $x <_\mathcal{O} y <_\mathcal{O} a$ in our recursive definition of $i$.
\end{proof}

\subsection{Recursive Ordinals and well-founded Relations}

We can now show that every c.e. subset of $\mathcal{O}$ is bounded in a ``highly effective manner."\cite[p.15]{sacks_2017} 

\begin{theorem}[\protect{\cite[Lem. 4.1]{sacks_2017}}]
There exists a computable $g$ such that for all $e$:
\begin{enumerate}
	\item $g(e) \in \mathcal{O} \iff W_e \subseteq \mathcal{O}$,
	\item $g(e) \in \mathcal{O} \Rightarrow |a| < |g(e)| \text{ for all } a \in W_e$.
\end{enumerate}
\end{theorem}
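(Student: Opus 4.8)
The plan is to realise $g(e)$ as a \emph{limit notation} $g(e) = 3\cdot 5^{d(e)}$, where $\varphi_{d(e)}$ enumerates a strictly $<_\mathcal{O}$-increasing sequence of ``running sums'' taken along an enumeration of $W_e$. The two tools I would lean on are the addition function $+_\mathcal{O}$ with its properties established above (closure: $a,b\in\mathcal{O}\iff a+_\mathcal{O} b\in\mathcal{O}$; additivity: $|a+_\mathcal{O} b|=|a|+|b|$; and strict monotonicity: $a<_\mathcal{O} a+_\mathcal{O} b$ whenever $b\neq 1$), together with the $s$-$m$-$n$ theorem (\ref{thm:smn}) to extract a single computable $g$ from the uniform construction.

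Concretely, fix the standard enumeration of $W_e=\dom(\varphi_e)$ in which at most one element enters per stage, and write $a_{e,s}$ for the element (if any) entering at stage $s$. Set $c_{-1}=1$ and, for $s\ge 0$, define $c_s = c_{s-1}+_\mathcal{O}(a_{e,s}+_\mathcal{O} 2)$ if some $a_{e,s}$ enters at stage $s$, and $c_s=c_{s-1}+_\mathcal{O} 2$ otherwise (the padding step guarantees the sequence is total and keeps growing even when $W_e$ is finite or empty). Since $+_\mathcal{O}$ is total recursive, the map $(e,s)\mapsto c_s$ is computable, so by \ref{thm:smn} there is a computable $d$ with $\varphi_{d(e)}(s)=c_s$; put $g(e)=3\cdot 5^{d(e)}$, which is computable.

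For the verification I would argue both directions of (1) from the defining limit clause for $\mathcal{O}$. If $W_e\subseteq\mathcal{O}$ then every $a_{e,s}\in\mathcal{O}$, so closure of $+_\mathcal{O}$ gives $c_s\in\mathcal{O}$ for all $s$, while $a_{e,s}+_\mathcal{O} 2\neq 1$ and $2\neq 1$ give $c_s<_\mathcal{O} c_{s+1}$ by strict monotonicity; hence $\varphi_{d(e)}$ is a $<_\mathcal{O}$-increasing sequence of notations and $g(e)=3\cdot 5^{d(e)}\in\mathcal{O}$. Conversely, if $g(e)\in\mathcal{O}$, the limit clause forces each $c_s=\varphi_{d(e)}(s)\in\mathcal{O}$, and peeling off summands by the ``only if'' direction of closure ($a+_\mathcal{O} b\in\mathcal{O}\Rightarrow a,b\in\mathcal{O}$) yields $a_{e,s}\in\mathcal{O}$ for every $s$, i.e.\ $W_e\subseteq\mathcal{O}$. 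For (2), assuming $g(e)\in\mathcal{O}$ and $a\in W_e$, say $a=a_{e,s}$, additivity gives $|a|<|a|+1=|a+_\mathcal{O} 2|\le |c_s|$, and since each step strictly raises the ordinal value we get $|c_s|<|g(e)|=\lim_n|c_n|$, whence $|a|<|g(e)|$.

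The main obstacle I anticipate is bookkeeping rather than depth: I must make sure the construction is genuinely total and $<_\mathcal{O}$-increasing in \emph{all} cases (the padding by $+_\mathcal{O} 2$ is exactly what handles empty or finite $W_e$, where (2) is then vacuous), and I must invoke the \emph{only if} half of the closure property of $+_\mathcal{O}$ for the forward direction of (1), which is the one genuinely load-bearing algebraic fact. Everything else is a direct reading of the limit clause in the definition of $\mathcal{O}$ together with the additivity and monotonicity of $+_\mathcal{O}$.
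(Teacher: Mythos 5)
Your proposal is correct, and it is essentially the standard argument the paper defers to (Sacks, Lemma I.4.1): form running $+_\mathcal{O}$-sums of $a+_\mathcal{O}2$ along an enumeration of $W_e$, package them as a limit notation $3\cdot 5^{d(e)}$ via the $s$-$m$-$n$ theorem, and verify both claims from the closure, additivity, and strict monotonicity properties of $+_\mathcal{O}$. Your padding by $+_\mathcal{O}2$ correctly handles finite and empty $W_e$, and you rightly identify the ``only if'' half of closure as the load-bearing fact for the forward direction of (1).
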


\begin{proof}
A proof can be found in Sacks \cite{sacks_2017} p.16.
\end{proof}

We now formalize our definition \ref{def:ordinal}.

\begin{definition}\label{def:well-ord}
A binary relation $R(x,y)$ is a \emi{well-ordering} if it is:
\begin{enumerate}
\item (Connected) $(\forall x,y) ( R(x,y) \vee R(y,x) \vee x=y )$
\item (Transitive) $(\forall x,y,z) ( R(x,y) \wedge R(y,z) \rightarrow R(x,z) )$
\item (well-founded) if $S \neq \emptyset$, $S$ is a subset of the field of $R$, then $\exists y \in S$ such that $(\forall x \in S) \neg R(x,y)$
\\ Note, that 3. implies:
\item (Irreflexive) $(\exists x) \neg R(x,x)$
\item (Antisymmetric) $(\forall x,y) (R(x,y) \rightarrow R(y,x))$
\end{enumerate}
\end{definition}

Given the well-foundedness of a well-ordering relation, we can define the \emi{height} of $R$ as follows:

\begin{definition}
\begin{itemize}
\item Let $R$ be a well-founded binary relation, then it has a height, denoted by $|R|$, measured by some ordinal. 
\item Let $\beta$ be an ordinal variable. $\mu \beta$ is then the ``least $\beta$ such that..."
\item $|x| = \mu \beta \, [ R(y,x) \rightarrow |y| < \beta]$
\item $|R| = \mu \beta \, \forall x \, [ x \in \text{ field of } R \rightarrow |x| < \beta ]$
\end{itemize}
\end{definition}

We can also enumerate computable relations:

\begin{definition}
Let $R_e$ denote $R_e(x,y) \iff \varphi_e(x,y)$.
\end{definition}

Thus, we can enumerate all computable relations. We shall let $$\textbf{Rel}=\{ R_e : e < \omega \}$$

\begin{lemma}[\protect{\cite[Lem. 4.3]{sacks_2017}}]\label{lemma:Rwellfdd}
There exists a computable $f$ such that, for all $e$:
\begin{itemize}
	\item $R_e \text{ is well-founded } \iff f(e) \in \mathcal{O}$, and
	\item $R_e \text{ is well-founded } \rightarrow |R_e| \leq |f(e)|$
\end{itemize}
\end{lemma}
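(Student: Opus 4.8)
The plan is to assign, effectively and uniformly in $e$, a notation $h(e,x)$ to every $x \in \omega$ that bounds the notations of all $R_e$-predecessors of $x$, and then to bound all of the $h(e,x)$ by a single notation $f(e)$. The central tool is the preceding bounding result (\cite[Lem.~4.1]{sacks_2017}): it supplies a computable $g$ with $g(a) \in \mathcal{O} \iff W_a \subseteq \mathcal{O}$ and, whenever $g(a) \in \mathcal{O}$, $|b| < |g(a)|$ for every $b \in W_a$. Thus $g$ lets us take an effective supremum of a computably enumerable set of notations which lands in $\mathcal{O}$ exactly when every member of the set does, and this is precisely the operation needed to mimic the ordinal-rank recursion.

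First I would define $h$ by effective transfinite recursion. Given a candidate index $a$ for $h$, the set $\{\, 2^{\varphi_a(e,y)} : R_e(y,x)\,\}$ is uniformly computably enumerable, since $R_e$ is a computable relation, so by the $s$-$m$-$n$ theorem (\ref{thm:smn}) there is a computable $\sigma_a(e,x)$ indexing it; setting $\varphi_{F(a)}(e,x) = g(\sigma_a(e,x))$ then defines a computable $F$ on indices. By Kleene's recursion theorem (\ref{thm:recursion}), $F$ has a fixed point $n$, and I put $h = \varphi_n$, so that $h(e,x) = g(\sigma_n(e,x))$, where $\sigma_n(e,x)$ indexes $\{2^{h(e,y)} : R_e(y,x)\}$. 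Because $g$ and $\sigma_n$ are total, $h$ is total and computable; crucially, $h(e,x)$ always halts, so well-foundedness is reflected only in whether $h(e,x) \in \mathcal{O}$, never in termination. By the bounding result, $h(e,x) \in \mathcal{O}$ iff every predecessor $y$ has $h(e,y) \in \mathcal{O}$, and in that case $|h(e,y)| + 1 = |2^{h(e,y)}| < |h(e,x)|$.

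Next I would verify the two directions together with the rank bound. If $R_e$ is well-founded, transfinite induction along $R_e$ shows every $h(e,x) \in \mathcal{O}$: at an $x$ all of whose predecessors already have notations in $\mathcal{O}$, the indexed set lies in $\mathcal{O}$, so $h(e,x) = g(\sigma_n(e,x)) \in \mathcal{O}$, the base case with no predecessors being $g$ of an index for $\emptyset$, which lies in $\mathcal{O}$. The same induction gives $\rank_{R_e}(x) \le |h(e,x)|$, since $|h(e,x)| > |h(e,y)|$ for every predecessor $y$. Conversely, if $R_e$ is ill-founded, fix an infinite descending chain $x_0, x_1, \ldots$ with $R_e(x_{i+1}, x_i)$; were all $h(e,x_i) \in \mathcal{O}$, the bounding inequality would force $|h(e,x_{i+1})| < |h(e,x_i)|$, an infinite strictly descending sequence of ordinals, which is impossible. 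Hence some $h(e,x) \notin \mathcal{O}$.

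Finally I would set $f(e) = g(\tau(e))$, where $\tau(e)$ is a computable index, again by $s$-$m$-$n$, for the computably enumerable set $\{2^{h(e,x)} : x \in \omega\}$. By the bounding result, $f(e) \in \mathcal{O}$ iff every $2^{h(e,x)} \in \mathcal{O}$ iff every $h(e,x) \in \mathcal{O}$, which by the previous paragraph holds iff $R_e$ is well-founded; this gives the first clause. When $R_e$ is well-founded, $|f(e)| > |2^{h(e,x)}| = |h(e,x)| + 1 \ge \rank_{R_e}(x) + 1$ for every $x$ in the field of $R_e$, so $\rank_{R_e}(x) < |f(e)|$ for all such $x$, whence $|R_e| \le |f(e)|$ by the definition of $|R_e|$. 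I expect the main obstacle to be the careful setup of the effective transfinite recursion: one must tie the self-reference in $\sigma_a$ together through the recursion theorem and confirm that $h$ is genuinely total, so that the recursion is effective, while checking that all of the ordinal-theoretic content is carried by membership in $\mathcal{O}$ rather than by halting; once that is in place, the inductions and the final bounding step are routine.
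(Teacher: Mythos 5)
Your proof is correct and is essentially the argument the paper defers to in \cite[Lem.~4.3]{sacks_2017}: an effective transfinite recursion set up through the recursion theorem (Theorem~\ref{thm:recursion}) and the $s$-$m$-$n$ theorem, using the bounding function $g$ of \cite[Lem.~4.1]{sacks_2017} to assign a notation $h(e,x)$ dominating the notations of all $R_e$-predecessors of $x$, followed by one final application of $g$ to bound $\{2^{h(e,x)} : x \in \omega\}$ by $f(e)$. This is the same style of effective transfinite recursion the thesis itself carries out in the proof of Theorem~\ref{thm:pinit}, and your verification that $h$ is total -- so that ill-foundedness manifests only as $h(e,x) \notin \mathcal{O}$, never as divergence -- is precisely the point that needs care.
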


This lemma gives rise to the following theorem due to Kleene and Markwald:

\begin{theorem}[Kleene-Markwald, \protect{\cite[Thm. 4.4]{sacks_2017}}]
The computable ordinals are equal to the constructive ordinals.
\end{theorem}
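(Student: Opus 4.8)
The plan is to prove the two inclusions separately: every constructive ordinal is computable, and every computable ordinal is constructive. Throughout I identify a computable (recursive) ordinal with the height $|R_e|$ of a computable well-founded relation $R_e$, and a constructive ordinal with $|u|$ for some $u \in \mathcal{O}$; since heights of computable well-orderings coincide with order types, it suffices to manipulate computable well-orderings on $\omega$.

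For the inclusion that every constructive ordinal is computable, fix $u \in \mathcal{O}$ and set $\delta = |u|$; I want to exhibit a computable well-ordering of order type $\delta$. By \cref{thm:Kleeneq} the set $W_{q(u)} = \{\langle x,y\rangle : x <_\mathcal{O} y <_\mathcal{O} u\}$ is c.e., and by \cref{thm:pinit} its field $\{a : a <_\mathcal{O} u\}$ is c.e. (it is also the projection of $W_{q(u)}$). The crucial point is that, although $<_\mathcal{O}$ is only $\Pi^1_1$ globally, its restriction to notations below $u$ is linear (the third clause of the structural theorem for $<_\mathcal{O}$); hence for any two field elements $x,y$ exactly one of $x <_\mathcal{O} y$, $y <_\mathcal{O} x$, $x = y$ holds, and a search through the enumeration of $W_{q(u)}$ for whichever disjunct is true must halt. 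I would therefore fix a computable injective enumeration $g$ of the field and define $m \prec n \iff g(m) <_\mathcal{O} g(n)$ on $\omega$; the halting comparability search makes $\prec$ a computable relation, while by construction $g$ is an order isomorphism onto $(\{a : a <_\mathcal{O} u\}, <_\mathcal{O})$, so $\prec$ is a computable well-ordering of order type $\delta$.

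For the converse, let $\alpha = |R_e|$ with $R_e$ a computable well-founded relation. By \cref{lemma:Rwellfdd}, well-foundedness of $R_e$ gives $f(e) \in \mathcal{O}$ together with the bound $\alpha = |R_e| \leq |f(e)|$. Since $|f(e)|$ is constructive, it remains to pass from the inequality to membership, for which I would show that the constructive ordinals form an initial segment of $\mathbf{Ord}$: if $v \in \mathcal{O}$ and $\gamma < |v|$, then some $w <_\mathcal{O} v$ has $|w| = \gamma$. This I would prove by transfinite induction on $<_\mathcal{O}$, splitting on whether $v$ is a successor notation $2^m$ or a limit notation $3 \cdot 5^d$ and using the defining clauses for $|\cdot|$; in the limit case one chooses $n$ with $\gamma < |\varphi_d(n)| < |v|$ and applies the induction hypothesis below $\varphi_d(n) <_\mathcal{O} v$. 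Downward closure then yields that every ordinal $\leq |f(e)|$, in particular $\alpha$, is constructive, completing this inclusion.

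I expect the main obstacle to lie in the first inclusion: the conversion of the merely c.e., $\Pi^1_1$-definable order $<_\mathcal{O}$ into a genuinely computable well-ordering of the same order type. Everything hinges on the observation that linearity below a fixed notation makes the comparability question decidable on the field by an unbounded but terminating search, so that the reindexing $g$ produces a computable relation rather than a merely c.e.\ one. In the second inclusion the only real work beyond citing \cref{lemma:Rwellfdd} is the initial-segment property, which is routine but must be stated, since the lemma delivers only the inequality $|R_e| \leq |f(e)|$ rather than equality.
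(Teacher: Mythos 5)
Your proof is correct and takes essentially the same route as the paper, which itself offers no argument beyond citing Sacks: the standard proof there consists of exactly your two inclusions, namely converting the c.e.\ linear order on the c.e.\ field $\{a : a <_\mathcal{O} u\}$ into a genuinely computable well-ordering of the same order type by reindexing along a computable injective enumeration (with linearity of $<_\mathcal{O}$ below $u$, part 3 of the structural theorem, guaranteeing that the dovetailed comparability search through $W_{q(u)}$ halts), and, conversely, combining Lemma \ref{lemma:Rwellfdd} with the downward closure of the constructive ordinals, which your transfinite induction on $<_\mathcal{O}$ establishes correctly. One negligible slip: the field $\{a : a <_\mathcal{O} u\}$ is not quite a projection of $W_{q(u)}$ (the first projection misses maximal elements such as $m$ below $u = 2^m$, the second misses the notation $1$), but this is harmless since Theorem \ref{thm:pinit} already supplies the field as c.e.
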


\begin{proof}
A proof can be found in Sacks \cite{sacks_2017} p. 18.
\end{proof}

\subsection{$\mathcal{O}$, Well-foundedness, and $\Pi^1_1$ Sets}

In this subsection, we will build on our theory and present the ordinal analysis of $\Pi^1_1$ Sets.

\begin{definition}
Let $\overline{f}(x) = \{ \langle i, f(i) \rangle : i < x \}$, essentially that, for $p_i$ being the $i^{th}$ prime, $p_0 = 2$: $$ \overline{f}(x) = \prod\limits_{i < x} p_i^{1+f(i)} $$ If $y = \overline{f}(x)$ for some $f$ and $x$, we say that $y$ is a \emi{sequence number}.
\end{definition}

This $\overline{f}(x)$ can be thought of as the code for the graph of $f \upharpoonright x$ - essentially, it is the code for the sequence $\langle f(0), f(1), \ldots, f(x-1) \rangle$, with $f(0) = 1$. We can denote the length of $x$ as $len(\overline{f}(x))$. We can thus view $y$ as $\langle y_0, y_1, \ldots , y_{len(y)-1} \rangle$.

If $y$ and $z$ are both sequence numbers, then we say that `$y$ is \emi{properly extended} by $z$', written $y \prec_{seq} z$ if $len(y) < len(z)$ and for all $i < len(y)$ we have that $y_i = z_i$.

\begin{definition}
Let \textbf{Seq} denote the set of all sequence numbers.
\end{definition}

\textbf{Seq} is a computable set, and $\prec_{seq}$ is a computable, antisymmetric, transitive binary relation. We can think of $(\textbf{Seq},\prec_{seq})$ as presenting Baire space, $\omega^\omega$ as a tree - which is why it is useful in the study of $\Pi^1_1$ sets.

We denote $S_R(y)$ to be the restriction of $(\textbf{Seq},\prec_{seq})$ to the sequence numbers $\overline{f}(x)$ such that $$S_R(y) = \forall i < x [ \neg R(\overline{f}(i),y) ] $$

The following proposition begins our connection between well-foundedness and formulae in the normal form $\Pi^1_1$:

\begin{proposition}[\protect{\cite[Prop. 5.3]{sacks_2017}}]\label{prop:wellfddR}
$\forall f \exists x (R(\overline{f}(x), y)) \iff S_R(y) \text{ is well-founded.}$
\end{proposition}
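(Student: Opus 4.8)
The plan is to reduce the biconditional to a single observation — that the infinite paths through $S_R(y)$ are exactly the functions $f$ that ``evade'' $R$ relative to $y$ — and then read off the equivalence by negating a quantifier. Accordingly, I would first unwind the definition of $S_R(y)$. A sequence number $\overline{f}(x)$ lies in $S_R(y)$ precisely when $\neg R(\overline{f}(i),y)$ holds for every $i<x$; since the $\prec_{seq}$-predecessors of $\overline{f}(x)$ are exactly the $\overline{f}(i)$ with $i<x$, this condition is inherited by all initial segments, so $S_R(y)$ is genuinely a subtree of $(\mathbf{Seq},\prec_{seq})$ rather than an arbitrary set of sequence numbers.

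Next I would identify the infinite paths explicitly. An infinite path is a function $f\colon\omega\to\omega$ all of whose initial segments $\overline{f}(x)$ belong to $S_R(y)$. By the defining clause this asserts that for every $x$ and every $i<x$ we have $\neg R(\overline{f}(i),y)$; letting $x$ range over all of $\omega$, the condition collapses to $\forall i\,\neg R(\overline{f}(i),y)$. Hence the infinite paths through $S_R(y)$ are precisely those $f$ satisfying $\forall x\,\neg R(\overline{f}(x),y)$.

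Finally I would invoke the convention that the tree $S_R(y)$ is well-founded iff it has no infinite path. Combining this with the previous step, $S_R(y)$ is well-founded iff there is no $f$ with $\forall x\,\neg R(\overline{f}(x),y)$, i.e. iff $\neg\,\exists f\,\forall x\,\neg R(\overline{f}(x),y)$, which upon pushing the negation inward is exactly $\forall f\,\exists x\,R(\overline{f}(x),y)$. I would write this out as the two contrapositives of one chain: an $R$-evading $f$ manufactures an infinite path (so $S_R(y)$ is not well-founded), and conversely any infinite path yields, in its union, an $R$-evading $f$ contradicting $\forall f\,\exists x\,R(\overline{f}(x),y)$.

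The content is entirely definitional, so there is no serious obstacle; the single point requiring care is the orientation of the extension ordering. Since $\prec_{seq}$ means ``is properly extended by,'' an infinite path corresponds to an infinite $\prec_{seq}$-increasing chain, and it is the absence of such an ascending chain — equivalently, well-foundedness of the reversed relation in the sense of Definition \ref{def:well-fdd} — that ``well-founded'' must denote here; reading it instead as well-foundedness of $\prec_{seq}$ itself would be vacuous, as lengths cannot strictly decrease forever. Flagging this orientation explicitly is the only subtlety worth spelling out.
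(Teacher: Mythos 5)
Your proposal is correct and follows essentially the same route as the paper's own proof: both reduce the biconditional to a quantifier negation, identifying ill-foundedness of $S_R(y)$ with the existence of an $f$ satisfying $\forall x\,\neg R(\overline{f}(x),y)$. Your explicit remark on the orientation of $\prec_{seq}$ — that an infinite path is an ascending extension chain, so ``well-founded'' must refer to the absence of such chains rather than to $\prec_{seq}$ literally decreasing — is a worthwhile clarification of a point the paper's compressed chain of equivalences (which speaks loosely of a ``descending sequence'') leaves implicit.
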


\begin{proof}
Fix some $y$.  $\neg (\forall x \, \exists x \, R(\overline{f}(x),y)$ if and only if there is some $f$ such that $\forall x \, \neg R(\overline{f}(x),y)$ if and only if there is some $f$ such that $f(0) > f(1) > f(2) > \ldots$ in an infinite descending sequence $S_R(y)$ if and only if $S_R(y)$ is not well-founded.
\end{proof}

We can now continue our analysis with the following normalisation of $\Pi^1_1$ predicates and theorems. We note that for any computable relation $R_1(f,x,y)$ we can find $e$ such that $\varphi_e^f(x,y) = 0 \iff R_1(f,x,y)$. Using this, we can prove the following, denoting by $WF$ the set of all well-founded trees - we now show the following lemma:

\begin{lemma}[\protect{\cite[Sec. 5.2]{sacks_2017}}]\label{lemma:wellfddTreesPi11}
For each $\Pi^1_1$ set P, $$ P \leq_m WF $$
\end{lemma}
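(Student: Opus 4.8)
The plan is to combine the Kleene normal form for $\Pi^1_1$ predicates with Proposition~\ref{prop:wellfddR} and then package the outcome as an $m$-reduction using the $s$-$m$-$n$ theorem. First I would invoke the normalisation discussed immediately above the statement: every $\Pi^1_1$ set $P$ can be written, for some computable relation $R$, in the form
$$ y \in P \iff \forall f \, \exists x \, R(\overline{f}(x), y). $$
This is exactly the hypothesis of Proposition~\ref{prop:wellfddR}, so for each fixed $y$ we obtain that $y \in P$ if and only if the tree $S_R(y)$ is well-founded. Thus membership of $y$ in $P$ has been converted into a single well-foundedness question about a tree that depends on $y$.

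Next I would check that $S_R(y)$ is genuinely a tree and is computable uniformly in $y$. Since its defining condition ``$\forall i < x \, [\neg R(\overline{f}(i), y)]$'' is preserved under passing to initial segments, $S_R(y)$ is closed under $\prec_{seq}$ and hence is a subtree of $(\textbf{Seq}, \prec_{seq})$; and since $R$ is computable, the predicate ``$\sigma \in S_R(y)$'' is decidable uniformly in the pair $(\sigma, y)$. By the $s$-$m$-$n$ theorem (Theorem~\ref{thm:smn}) there is therefore a total computable function $g$ such that $\varphi_{g(y)}$ is precisely the characteristic function of $S_R(y)$ for every $y$.

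Putting these together yields the reduction: $y \in P$ iff $S_R(y)$ is well-founded iff $\varphi_{g(y)}$ is the characteristic function of a well-founded tree iff $g(y) \in WF$. Since $g$ is total computable, this is exactly $P \leq_m WF$, as required.

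I expect the main obstacle to be the very first step, namely producing the computable relation $R$ that witnesses the normal form $\forall f \, \exists x \, R(\overline{f}(x), y)$. A general $\Pi^1_1$ set is presented as $\forall f$ applied to an arithmetical matrix carrying its own alternating number quantifiers, so one must Skolemise or Herbrandise and then contract those quantifiers into the single bounded existential $\exists x$ ranging over the sequence number $\overline{f}(x)$, all while keeping the matrix computable and monotone enough that $S_R(y)$ comes out downward-closed. Once $R$ is secured, the remaining steps (the tree check and the $s$-$m$-$n$ application) are routine.
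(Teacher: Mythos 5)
Your proposal is correct and takes essentially the same route as the paper's proof: reduce the $\Pi^1_1$ set to the Kleene normal form $\forall f \, \exists x \, R(\overline{f}(x),y)$ and then apply Proposition~\ref{prop:wellfddR} to convert membership into well-foundedness of the tree $S_R(y)$. You are in fact more careful than the paper, which leaves implicit both the verification that $S_R(y)$ is a tree computable uniformly in $y$ and the $s$-$m$-$n$ step producing the total computable index function $g$ witnessing $P \leq_m WF$.
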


\begin{proof}
Let some $B \in \Pi^1_1$. By the above, there is some computable $R$ such that for all $y$, $$ y \in B \iff \forall f \, \exists x \, R(\overline{f}(x),y) $$ By proposition \ref{prop:wellfddR}, we get $$ y \in B \iff S_R(y) \text{ is well-founded.} $$
\end{proof}

We can thus extend this lemma to a result due to Kleene:

\begin{theorem}[Kleene, \protect{\cite[Thm. 5.4]{sacks_2017}}]\label{thm:KleenePi11O}
For each $\Pi^1_1$ set $P$, $$P \leq_m \mathcal{O}$$
\end{theorem}

\begin{proof}
Let $B \in \Pi^1_1$. As for \ref{lemma:wellfddTreesPi11}, we have that there is some computable $R$ such that for all $y$, $$ y \in B \iff \forall f \, \exists x \, R(\overline{f}(x),y) $$ and again by \ref{prop:wellfddR}, we get $$ y \in B \iff S_R(y) \text{ is well-founded.} $$

Given the $S_R(y)$ is computable uniformly in $y$ - that is, we only require one TM with which to carry our the computation - we have that there exists a computable function $g$ such that $S_R(y) = R_{g(y)}$. Let $f$ be as in lemma \ref{lemma:Rwellfdd}, then $$ y \in B \iff f(g(y)) \in \mathcal{O} $$
\end{proof}

This gives us the following useful corollary:

\begin{corollary}[\protect{\cite[Cor. 5.5]{sacks_2017}}]\label{cor:OnotSigma11}
$\mathcal{O} \notin \Sigma^1_1$
\end{corollary}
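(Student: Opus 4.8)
The plan is to argue by contradiction, exploiting that Theorem \ref{thm:KleenePi11O} exhibits $\mathcal{O}$ as a $\Pi^1_1$-complete set under $m$-reducibility, together with the fact (part 1 of the theorem establishing that $<_\mathcal{O}$ and $\mathcal{O}$ are $\Pi^1_1$) that $\mathcal{O}$ itself lies in $\Pi^1_1$. Suppose, for contradiction, that $\mathcal{O} \in \Sigma^1_1$. Combined with $\mathcal{O} \in \Pi^1_1$ this would place $\mathcal{O}$ in $\Delta^1_1 = \Sigma^1_1 \cap \Pi^1_1$, and the strategy is to propagate this low complexity to \emph{every} $\Pi^1_1$ set via completeness, forcing a collapse of the analytic hierarchy.

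The engine of the argument is the $\Sigma^1_1$-analog of the preservation property already recorded for $\leq_m$ and $\Sigma^0_1$: if $A \leq_m B$ via a computable $f$ and $B \in \Sigma^1_1$, then $A \in \Sigma^1_1$. First I would record this. It is immediate from the normal form: writing $z \in B \iff \exists g\, \forall n\, R(\overline{g}(n), z)$ for a computable $R$, substitution gives $x \in A \iff f(x) \in B \iff \exists g\, \forall n\, R(\overline{g}(n), f(x))$, which is still a $\Sigma^1_1$ condition because $f$ is computable and the leading second-order quantifier block is unchanged.

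Next I would feed an arbitrary $\Pi^1_1$ set $P$ through Theorem \ref{thm:KleenePi11O}, which supplies a computable $f$ with $P \leq_m \mathcal{O}$. Under the standing assumption $\mathcal{O} \in \Sigma^1_1$, the preservation fact yields $P \in \Sigma^1_1$. As $P$ was arbitrary, $\Pi^1_1 \subseteq \Sigma^1_1$, and taking complements gives $\Sigma^1_1 \subseteq \Pi^1_1$; hence $\Sigma^1_1 = \Pi^1_1$.

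The main obstacle is the final step: showing this collapse is impossible, i.e.\ that $\Pi^1_1 \neq \Sigma^1_1$. This non-collapse of the analytic hierarchy is the one external ingredient the argument genuinely needs, and I would justify it by the standard diagonalization against a universal $\Sigma^1_1$ predicate $U \subseteq \omega \times \omega$. Setting $D = \{ e : (e,e) \in U \}$, the complement $\overline{D}$ is $\Pi^1_1$ but cannot be $\Sigma^1_1$: otherwise $\overline{D} = \{ x : (e_0, x) \in U \}$ for some index $e_0$ by universality, forcing $e_0 \in \overline{D} \iff (e_0, e_0) \in U \iff e_0 \notin \overline{D}$, a contradiction. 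With $\Pi^1_1 \neq \Sigma^1_1$ in hand, the collapse derived above is absurd, so the assumption fails and $\mathcal{O} \notin \Sigma^1_1$. Equivalently, one may phrase the endgame directly: were $\mathcal{O}$ to lie in $\Delta^1_1$, the witness $\overline{D}$, being $\Pi^1_1$, would $m$-reduce to $\mathcal{O}$ by Theorem \ref{thm:KleenePi11O} and hence be $\Sigma^1_1$, contradicting its construction.
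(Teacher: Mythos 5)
Your proposal is correct and follows essentially the same route as the paper: assume $\mathcal{O} \in \Sigma^1_1$, use closure of $\Sigma^1_1$ under $\leq_m$ together with the completeness from Theorem \ref{thm:KleenePi11O} to conclude every $\Pi^1_1$ set would be $\Sigma^1_1$, then refute this by diagonalization. Your diagonal set $\overline{D}$ built from a universal $\Sigma^1_1$ predicate is the same witness as the paper's $Q(y) = \forall f\,\exists x\,\varphi_y^{f\upharpoonright x}(y)$, merely phrased through the universal predicate rather than the $\Pi^1_1$ normal form (and your write-up of the diagonalization is in fact cleaner than the paper's).
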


\begin{proof}
This proof is structurally similar to one that a complete c.e. subset of $\omega$ is not computable.

We first note that for any set $S$ such that for some $A \in \Sigma^1_1$, if $S \leq_m A$, then $S$ must also be $\Sigma^1_1$. So, by \ref{prop:wellfddR}, we have that if $\mathcal{O}$ were $\Sigma^1_1$, then every $\Pi^1_1$ set would also be $\Sigma^1_1$. 

Thus it suffices that we can find some $A \in \Pi^1_1$ such that $A \notin \Sigma^1_1$. Define $Q(y)$ to be $\forall f \, \exists x \, \varphi_y^{f \upharpoonright x}(y)$. Suppose $( \neg Q(y) \in \Pi^1_1 )$, then $\neg Q(y)$ is equivalent to the statement that there exists $e$, $\forall f \, \exists x \, \varphi_e^{f \upharpoonright x}(y)$. So $\neg Q(y) \iff Q(y)$.
\end{proof}

We can now prove the result due to Spector about the $\Sigma^1_1$-boundedness of $\mathcal{O}$:

\begin{theorem}[Spector, 1955 - \protect{\cite[Cor. 5.6]{sacks_2017}}]
Let $X \subseteq \mathcal{O}$ and $X \in \Sigma^1_1$. There exists some $b \in \mathcal{O}$ such that $\forall x \in X \, (|x| \leq |b|)$.
\end{theorem}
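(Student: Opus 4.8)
The plan is to argue by contradiction against Corollary \ref{cor:OnotSigma11}, which asserts $\mathcal{O} \notin \Sigma^1_1$. Suppose $X \subseteq \mathcal{O}$ is $\Sigma^1_1$ but that no bounding notation exists; the negation of the conclusion says precisely that $\forall b\, \exists x \in X\,(|b| < |x|)$, i.e. the order types $\{|x| : x \in X\}$ are cofinal in $\omega_1^{CK}$. I would show that under this hypothesis $\mathcal{O}$ is itself $\Sigma^1_1$, the desired contradiction. Once that is ruled out we have a recursive ordinal $\delta = \sup\{|x| : x \in X\} < \omega_1^{CK}$, and any $b \in \mathcal{O}$ with $|b| = \delta$ witnesses $\forall x \in X\,(|x| \le |b|)$.

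The naive attempt ``$a \in \mathcal{O} \iff \exists x\,(x \in X \wedge a \le_\mathcal{O} x)$'' fails, and this failure is the crux of the whole argument: since $<_\mathcal{O}$ is linear only \emph{below} a fixed notation, a notation $a$ of small order type need not be $<_\mathcal{O}$-comparable with any $x \in X$ however large $|x|$ is. The right move is to compare \emph{order types} rather than notations, and to express that comparison by the existence of an order-embedding, which is a $\Sigma^1_1$ assertion. First I would apply the $\Pi^1_1$-completeness reduction (Theorem \ref{thm:KleenePi11O}, via Proposition \ref{prop:wellfddR} and Lemma \ref{lemma:wellfddTreesPi11}) to the $\Pi^1_1$ set $\mathcal{O}$ itself, obtaining a \emph{total} computable map $a \mapsto T_a$ of trees with $a \in \mathcal{O} \iff T_a$ well-founded for \emph{every} $a$, and with $\operatorname{rank}(T_a) \ge |a|$ when $a \in \mathcal{O}$. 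Linearising each $T_a$ by its Kleene--Brouwer ordering $L_a$ yields a total computable family of \emph{linear} orders with $L_a$ a well-ordering iff $a \in \mathcal{O}$ and, in that case, $\operatorname{type}(L_a) \ge \operatorname{rank}(T_a) \ge |a|$. Passing to total, controlled linear orders is what makes the backward direction below immune to being spoofed by some $a \notin \mathcal{O}$: embeddability of a linear order into a genuine well-order \emph{forces} well-foundedness.

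With this family in hand I would set
\[ \Phi(a)\ \colon\quad \exists x\,\exists h\,\bigl[\, x \in X\ \wedge\ h \text{ is an order-embedding of } L_a \text{ into } L_x \,\bigr], \]
and prove $a \in \mathcal{O} \iff \Phi(a)$. For the forward direction, if $a \in \mathcal{O}$ then $L_a$ is a recursive well-ordering of some type $\tau_a < \omega_1^{CK}$; since $\operatorname{type}(L_x) \ge |x|$ and the $|x|$ are cofinal in $\omega_1^{CK}$ by hypothesis, some $x \in X$ has $\operatorname{type}(L_x) > \tau_a$, and rank-comparison of well-orderings supplies an embedding $h$. For the backward direction, any $x \in X \subseteq \mathcal{O}$ makes $L_x$ a genuine well-ordering, so an order-embedding of the linear order $L_a$ into $L_x$ leaves $L_a$ with no infinite descending chain, whence $T_a$ is well-founded and $a \in \mathcal{O}$. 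Counting quantifiers, $x \in X$ contributes a $\Sigma^1_1$ matrix, ``$h$ is an order-embedding'' is arithmetic in the $\Sigma^0_1$ orders $L_a, L_x$, and the outer $\exists x\,\exists h$ are number and function quantifiers, so $\Phi \in \Sigma^1_1$ and hence $\mathcal{O} \in \Sigma^1_1$, contradicting Corollary \ref{cor:OnotSigma11}.

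The main obstacle, as flagged, is the non-linearity of $<_\mathcal{O}$: it blocks the obvious definition and is exactly why this result is stronger than the c.e.-bounding theorem for $\mathcal{O}$ (the one furnished by the computable $g$ with $g(e) \in \mathcal{O} \iff W_e \subseteq \mathcal{O}$). The resolution is to substitute a $\Sigma^1_1$ ``embeds-into'' relation for genuine $<_\mathcal{O}$-comparability, which is legitimate only because the linearisations $L_a$ are total and well-founded \emph{precisely} on $\mathcal{O}$, and because $\operatorname{type}(L_x) \ge |x|$ transfers cofinality of $\{|x|\}$ to cofinality of the embedding targets. The remaining bookkeeping — that the Kleene--Brouwer type dominates the tree rank, and the exact quantifier arithmetic for $\Phi$ — is routine and I would not belabour it.
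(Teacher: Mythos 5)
Your architecture is the same as the paper's: under the no-bound hypothesis you define a $\Sigma^1_1$ predicate asserting that a relation coding ``$a \in \mathcal{O}$'' order-embeds into a well-order indexed by some $x \in X$, verify that this predicate defines $\mathcal{O}$, and contradict Corollary \ref{cor:OnotSigma11}. The one step that does not hold as written is your claim that the reduction from Theorem \ref{thm:KleenePi11O} comes ``with $\operatorname{rank}(T_a) \ge |a|$ when $a \in \mathcal{O}$''. That theorem (via Proposition \ref{prop:wellfddR} and Lemma \ref{lemma:wellfddTreesPi11}) delivers only the equivalence $a \in \mathcal{O} \iff T_a$ is well-founded; it says nothing about ranks, and your forward direction genuinely needs the lower bound: cofinality of $\{|x| : x \in X\}$ in $\omega_1^{CK}$ transfers to cofinality of the target types only through $\operatorname{type}(L_x) \ge \operatorname{rank}(T_x) \ge |x|$. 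Without it, $X$ could consist of notations $x$ with enormous $|x|$ whose trees $T_x$ nevertheless have small rank, and then some $L_a$ with $a \in \mathcal{O}$ would admit no embedding target, breaking the equivalence $a \in \mathcal{O} \iff \Phi(a)$. (Unboundedness of the ranks over all of $\mathcal{O}$ does follow by a boundedness argument, but that is not enough, since $X$ is an arbitrary $\Sigma^1_1$ subset.)

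The repair is exactly the paper's move, and it explains why the paper's proof is shaped asymmetrically: keep the source relation from the normal form (your $L_a$, the paper's $R_{t(y)}$), but take as embedding targets not $L_x$ but Kleene's canonical well-orders $W_{q(x)} = \{\langle u,v \rangle : u <_\mathcal{O} v <_\mathcal{O} x\}$ from Theorem \ref{thm:Kleeneq}, which for $x \in \mathcal{O}$ are genuine well-orders of type exactly $|x|$, so type-faithfulness is built in rather than demanded of the reduction; combined with Lemma \ref{lemma:Rwellfdd}, which bounds the height of the source by a constructive ordinal, this is precisely the paper's predicate $Q(y)$. Alternatively you can rescue your own family by padding, say $T'_a = T_a \sqcup D_a$ with $D_a$ the tree of finite descending sequences of $W_{q(a)}$: for $a \in \mathcal{O}$ both parts are well-founded and $\operatorname{rank}(T'_a) \ge |a|$, while for $a \notin \mathcal{O}$ the ill-foundedness of $T_a$ persists. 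But note that this rescue itself invokes Theorem \ref{thm:Kleeneq}, so either way the effective type-$|x|$ well-orders are the indispensable ingredient, not routine bookkeeping. Everything else in your proposal --- the backward direction (embedding into a genuine well-order forces well-foundedness, hence $a \in \mathcal{O}$), the quantifier count showing $\Phi \in \Sigma^1_1$, and the final contradiction --- matches the paper and is sound.
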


\begin{proof}
As for the proof of \ref{thm:KleenePi11O}, we can replace $\mathcal{O}$ with $B$ and find a computable function $t$ such that for all $y$, $$ y \in \mathcal{O} \iff R_{t(y)} \text{ is well-founded} $$

Let our $Q(y)$ be $$ \exists z \, [ z \in X \wedge \exists f \, \forall u,v \, (R_{t(y)}(u,v) \rightarrow \langle f(u) , f(v) \rangle \in W_{q(z)}) ] $$ where $W_{q(z)}$ is as per \ref{thm:Kleeneq}. $Q(y) \in \Sigma^1_1$. If $Q(y)$ holds then we have $R_{t(y)}$ must be well-founded. 

Suppose that $b$ does not exist, then if $R_{t(y)}$ is well-founded, then by \ref{lemma:Rwellfdd} there is some $z \in X \subseteq \mathcal{O}$ such that $|R_{t(y)}| < |z|$, and thereby $Q(y)$ holds. But $y \in \mathcal{O}$ is $\Sigma^1_1$, contradicting \ref{cor:OnotSigma11}.
\end{proof}

We can thus get the following corollary:

\begin{corollary}[\protect{\cite[Ex. 5.7]{sacks_2017}}]\label{cor:CompTreesPi11}
The set of all well-founded computable trees is $\Pi^1_1$ complete.
\end{corollary}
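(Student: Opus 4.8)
The plan is to establish the two halves of completeness separately: that the index set of well-founded computable trees lies in $\Pi^1_1$, and that every $\Pi^1_1$ set $m$-reduces to it.

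For membership, I would argue directly from the definition of well-foundedness. A computable tree $T \subseteq \omega^{<\omega}$ is well-founded precisely when it has no infinite path, i.e.\ when $\forall f\, \exists n\, (f \upharpoonright n \notin T)$. Writing $e$ for an index of the characteristic function of $T$, the matrix ``$f \upharpoonright n \notin T$'' is computable uniformly in $e$ and $f \upharpoonright n$, so the displayed condition is a single universal function quantifier over an arithmetical matrix -- that is, it is $\Pi^1_1$ in $e$. Hence the set of such indices is $\Pi^1_1$. Alternatively, one can invoke \ref{lemma:Rwellfdd}: the computable $f$ there satisfies $R_e$ well-founded $\iff f(e) \in \mathcal{O}$, and since $\mathcal{O}$ is $\Pi^1_1$ by the earlier theorem stating that $\mathcal{O}$ and $<_\mathcal{O}$ are $\Pi^1_1$, pulling back along the computable $f$ keeps us in $\Pi^1_1$.

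For hardness, the work is essentially already done by \ref{lemma:wellfddTreesPi11}, which shows that for every $\Pi^1_1$ set $P$ there is a computable relation $R$ with $y \in P \iff S_R(y) \text{ is well-founded}$. The one point needing care is that $S_R(y)$ must be presented as a genuine computable tree whose index is obtained computably from $y$: this is exactly what \ref{lemma:trees-rec-rels} guarantees, since $S_R(y)$ is cut out by a computable relation uniformly in $y$, so there is a computable map $y \mapsto e(y)$ with $T_{e(y)} = S_R(y)$. That map is then the required $m$-reduction of $P$ into the set of well-founded computable trees, so the set is $\Pi^1_1$-hard. Combining the two directions yields $\Pi^1_1$-completeness.

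I expect the only genuine obstacle to be bookkeeping in the hardness direction -- verifying that the reduction is uniform and that $S_R(y)$ really is closed under initial segments (a tree) rather than merely a computable relation -- and this is handled cleanly by the equivalence in \ref{lemma:trees-rec-rels}, which moves freely between computable relations and computable trees with the same infinite-path set. The membership direction is routine once well-foundedness is rewritten as the absence of an infinite path.
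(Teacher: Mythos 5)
Your proposal is correct and takes essentially the route the paper intends: the paper states this corollary without a separate proof (deferring to Sacks, Ex.\ 5.7), and its implicit derivation is exactly your assembly of the preceding material --- hardness via Lemma \ref{lemma:wellfddTreesPi11} together with the uniform computability of $S_R(y)$ already noted in the proof of Theorem \ref{thm:KleenePi11O}, and membership from the $\Pi^1_1$ normal form of well-foundedness (no infinite path). The only cosmetic point is that the membership direction for an index set should also fold in the arithmetical conditions that $\varphi_e$ is total and codes a tree, which does not affect the $\Pi^1_1$ classification.
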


It should be noted that in later chapters, \ref{lemma:wellfddTreesPi11} and \ref{cor:CompTreesPi11} will be particularly useful, as it will form the basis for our theorems that relate the well-foundedness of trees to tilings of the plane using infinite prototile sets (see Chap. 2 for definitions of these terms). 

\section{Trees, Ordinals, and the Arithmetical and Analytic Hierarchies}

We have outlined in previous sections various definitions that will be used in our work in later chapters. There are some deep and illuminating connections between these objects, which we hope to outline and illustrate in this section. Unless otherwise stated, all results can be found in \cite{Cenzer-griffor1999handbook} and \cite{Cooper2003computability}.

\subsection{Fundamental Results}\label{sec:KLemmaWKL}

Our formulation of K\"onig's lemma comes from \cite{Pudlak2013} and \cite{Kaye2007}.

\begin{lemma}[K\"onig's Lemma, \protect{\cite[Thm. 3.13]{Kaye2007}}]
Every infinite finitely branching tree has an infinite branch.
\end{lemma}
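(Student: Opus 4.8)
The plan is to construct an infinite branch one node at a time, always remaining inside an infinite subtree, and to use the finite-branching hypothesis to guarantee that such a descent can always be continued. This is the standard ``infinite subtree'' argument, and the whole weight of the proof falls on a single pigeonhole observation.

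First I would introduce the notion of a \emph{good} node: call $\sigma \in T$ good if it has infinitely many descendants in $T$, that is, if the set $\{\tau \in T : \sigma \prec \tau\} \cup \{\sigma\}$ is infinite. Since $T$ is assumed infinite, the root (the empty string) is good, which gives the base case of the construction.

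The key step is the claim that every good node $\sigma$ has at least one good immediate successor. Because $T$ is finitely branching, $\sigma$ has only finitely many immediate successors $\sigma^{\frown} s_1, \ldots, \sigma^{\frown} s_k$ in $T$, and every descendant of $\sigma$ other than $\sigma$ itself extends one of these. Hence the set of descendants of $\sigma$ is the finite union $\{\sigma\} \cup \bigcup_{i=1}^{k} \{\tau \in T : \sigma^{\frown} s_i \prec \tau \text{ or } \tau = \sigma^{\frown} s_i\}$. If each immediate successor had only finitely many descendants, this union would be finite, contradicting the goodness of $\sigma$; so at least one $\sigma^{\frown} s_i$ is good. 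With this in hand I would build the branch by recursion: let $x \upharpoonright 0$ be the root, and given that $x \upharpoonright n$ is good, let $x \upharpoonright (n+1)$ be a good immediate successor of it. By construction every initial segment $x \upharpoonright n$ lies in $T$, so $x \in [T]$ is an infinite branch.

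I expect the only genuine subtlety to be the finiteness argument in the pigeonhole step: it is precisely where the finite-branching hypothesis is used, and it is exactly what fails for infinitely branching infinite trees (where every child could have a finite subtree). A secondary point worth flagging is the dependent choice implicit in the recursive selection of successors; this can be made fully effective by adopting the convention of always choosing the \emph{least} good immediate successor under the fixed ordering of the alphabet, which removes any appeal to choice and is what makes the computable content of the lemma transparent for the later chapters.
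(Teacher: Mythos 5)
Your proof is correct and takes essentially the same route as the paper's: you descend through nodes whose subtree of descendants is infinite, using the pigeonhole principle over the finitely many immediate successors to continue the descent, exactly as the paper does. If anything your version is slightly more complete, since the paper carries out the argument only for a binary tree while your ``good node'' formulation handles arbitrary finite branching, and your least-successor convention makes the choice-free, effective content explicit.
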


\begin{proof}
We prove this for $T$, a binary tree. For a string $\sigma$ with $|\sigma| = n$, let $$ T_\sigma =  \{ \tau \in T : \tau \upharpoonright n = \sigma \} \cup \{ \sigma \upharpoonright k : k < n \}$$ We shall call $T_\sigma$ the \emph{subtree of $T$ below $\sigma$}. Though it is easy to check that $T$ is a tree, it may or not be infinite. 

We want $\gamma \in T$ such that the tree $T_\gamma$ below $\gamma$ is infinite. Let this be our induction hypothesis. Suppose we have some $\gamma$, with $| \gamma | = n$ and $T_\gamma$ is infinite. Since our tree $T$ is binary, we have $$ T_\gamma = \{ \tau \in T:\tau \upharpoonright (n+1) = \gamma^\frown 0 \} \cup \{ \tau \in T: \tau \upharpoonright (n+1) = \gamma^\frown 1 \} \cup \{ \gamma \upharpoonright k : k \leq n \} $$

The third of these sets is clearly finite, so one of the first two - corresponding to '0' and '1'respectively - must be infinite, by our induction hypothesis. 

If the first of these is infinite, we set $\gamma(n+1) = \gamma^\frown 0$, and so we have $$ T_{\gamma(n+1)} = \{ \tau \in T: \tau \upharpoonright (n+1) = \gamma^\frown 0 \} \cup \{ \gamma^\frown 0  \} \cup \{ \gamma \upharpoonright k : k \leq n \} $$ which is infinite. In the other case, we do the same for $\gamma(n+1) = \gamma^\frown 1$, which gives us the same infinite tree $T_{\gamma(n+1)}$ as before.

In both cases, we have defined $\gamma(n+1)$ and proved our induction hypothesis for $n+1$.

\end{proof}

This lemma is rather famous throughout the mathematical \oe{}uvre - indeed, in other reference texts such as \cite{GrunbaumTP}, this theorem features in reference to the compactness of Wang tiles as ``K\"onig's Infinity Lemma". This is something we shall later make use of in proving this result in chapter 2.

K\"onig's Lemma applied to trees with a bound on the number of children for each node, then we say that this is \emph{Weak K\"onig's Lemma} (WKL). WKL is a very important principle studied in reverse mathematics, such as a compactness principle for Cantor space. This is not, however, within the scope of this thesis to study or present.

\subsection{Trees and Analytic Sets}

We start by defining the extendible nodes of a tree:

\begin{definition}
For a tree $T$, we define the set of \emi{extendible nodes} $Ext(T)$ by \[ \sigma \in Ext(T) \iff (\exists x) (x \in [T] \wedge \sigma \prec x) \]
\end{definition}

This definition allows us to collect all of the initial segments of the points $x$ that lie in some tree $T$. Our aim is to use this set to establish $Ext(T)$ as a basis for trees whose sets of paths are $\Pi^0_1$ sets. By this, we mean that any extension in $Ext(T)$ is a $\Pi^0_1$ set. We first establish what a basis is:

\begin{definition}
Let $\Theta \subseteq \mathcal{P}(\omega^\omega)$ be a collection of subclasses of $\omega^{\omega}$. A set $\Gamma \subset \omega^{\omega}$ is a \emi{basis} if every class $C \in \Theta$, there is some $x \in C$ such that $x \in \Gamma$.
\end{definition}

This gives us natural formulation for `basis theorems', such as the following extracted from \cite[p.52]{Cenzer-griffor1999handbook}.

\begin{theorem}[\protect{\cite[Remark p.51]{Cenzer-griffor1999handbook}}]
The class $\Delta^0_0$ of computable functions is a basis for the family of open subclasses of Baire space.
\end{theorem}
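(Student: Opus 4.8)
The plan is to show directly that every \emph{nonempty} open subclass of Baire space contains a computable (indeed primitive recursive) point; the restriction to nonempty classes is forced by the definition of basis, since the empty open set contains no point at all and so could never meet $\Gamma$. Recall from the preceding discussion that the topology on $\omega^\omega$ has as a basis the clopen intervals $I(\sigma) = \{ x : \sigma \prec x \}$ for $\sigma \in \omega^{<\omega}$, so every open class is a union of such intervals. The entire content of the theorem is that a basic interval is pinned down by a finite string, and any finite string extends to a computable total function.

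First I would fix an arbitrary nonempty open class $C \subseteq \omega^\omega$ and choose any point $x \in C$. Since $C$ is open, there is a basic interval with $x \in I(\sigma) \subseteq C$; concretely, $\sigma = x \upharpoonright n$ for some $n$ and $I(x \upharpoonright n) \subseteq C$. This is the step where openness is used: it lets me shrink the arbitrary class $C$ down to a single basic interval determined by the finite stem $\sigma$.

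Next I would exhibit a computable witness inside $I(\sigma)$. The natural choice is the ``pad with zeros'' function $f$ defined by $f(k) = \sigma(k)$ for $k < |\sigma|$ and $f(k) = 0$ for $k \geq |\sigma|$. Then $\sigma \prec f$, so $f \in I(\sigma) \subseteq C$, and $f$ is plainly computable --- in fact primitive recursive, hence a member of $\Delta^0_0$ in the sense fixed above. Since $C$ was an arbitrary nonempty open class, every such class meets $\Delta^0_0$, which is exactly the assertion that $\Delta^0_0$ is a basis for the open subclasses of $\omega^\omega$.

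There is no genuine obstacle here: this is an easy basis theorem, and the only points deserving care are the bookkeeping convention that ``basis'' quantifies over nonempty classes, and the observation that openness reduces the problem to computing a total extension of the finite stem $\sigma$. Everything else is immediate, and one could even remark that the same witness shows the stronger statement that $\Delta^0_0$ meets every nonempty basic clopen interval.
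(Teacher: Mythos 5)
Your proof is correct, and it is the standard argument: the paper itself states this theorem without proof, citing the remark in Cenzer's handbook chapter, and the intended justification is precisely your observation that a nonempty open class contains a basic interval $I(\sigma)$ and that padding the finite stem $\sigma$ with zeros yields a primitive recursive point of that interval. Your side remarks — that ``basis'' must quantify over nonempty classes, and that the same witness handles every nonempty basic clopen interval — are accurate bookkeeping and match the conventions the paper uses elsewhere (e.g.\ in the Kleene Basis Theorem, where nonemptiness of $P = [T]$ is assumed explicitly).
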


However, we will present the following result - the Kleene Basis theorem.

\begin{theorem}[Kleene Basis Theorem, \protect{\cite[Thm. 3.1]{Cenzer-griffor1999handbook}}]
For any tree $T$ such that a $\Pi^0_1$ class $P = [T] \neq \emptyset$, $P$ contains a member that is computable in $Ext(T)$.
\end{theorem}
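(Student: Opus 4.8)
The plan is to construct the desired path by a greedy, leftmost search relative to the oracle $Ext(T)$. First I would record two elementary facts about extendible nodes. Since $P = [T] \neq \emptyset$, the empty string $\langle\rangle$ lies in $Ext(T)$; and every $\sigma \in Ext(T)$ is itself a node of $T$, because a witnessing path $x \in [T]$ with $\sigma \prec x$ has all of its initial segments in $T$. The structural fact I would isolate is that every extendible node has an extendible immediate successor: if $\sigma \in Ext(T)$ with witness $x$, then $\sigma ^\frown x(|\sigma|) = x \upharpoonright (|\sigma|+1)$ is again an initial segment of the same path $x \in [T]$, and so lies in $Ext(T)$.

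Next I would define a path $x$ by recursion on its length, using $Ext(T)$ as an oracle. Set $x \upharpoonright 0 = \langle\rangle$, which is extendible by the first observation. Given $x \upharpoonright n = \sigma \in Ext(T)$, use the oracle to search through $k = 0, 1, 2, \ldots$ for the least $k$ with $\sigma ^\frown k \in Ext(T)$, and put $x(n) = k$. By the structural fact such a $k$ exists, so each search halts after finitely many oracle queries; hence the whole construction is an algorithm with oracle $Ext(T)$, giving $x \leq_T Ext(T)$, i.e. $x$ is computable in $Ext(T)$.

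Finally I would verify $x \in P$. By construction every initial segment $x \upharpoonright n$ is chosen to lie in $Ext(T)$, and hence in $T$ by the second observation; since this holds for all $n$, the sequence $x$ is an infinite path through $T$, so $x \in [T] = P$, as required.

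The only nontrivial ingredient is the structural lemma that extendibility propagates to some immediate successor, which is exactly what guarantees the leftmost search always succeeds. I expect this to be the conceptual crux, precisely because we work in Baire space: the search over immediate successors $\sigma ^\frown k$ is unbounded, so termination is not automatic and relies essentially on knowing (via $\sigma \in Ext(T)$, ultimately propagated from $P \neq \emptyset$) that at least one extendible successor exists. Everything else reduces to the observation that locating the least extendible successor is an effective, terminating procedure relative to the oracle.
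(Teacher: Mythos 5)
Your proof is correct and takes essentially the same approach as the paper: the leftmost extendible path, choosing at each stage the least $n$ with $x\upharpoonright k{}^\frown n \in Ext(T)$ via the oracle. You have in fact supplied the supporting details (nonemptiness of $P$ gives $\langle\rangle \in Ext(T)$, extendibility propagates to an immediate successor, and extendible nodes lie in $T$) that the paper's terse proof leaves implicit.
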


\begin{proof}
The infinite path $x$ through $T$ can be computably defined by letting $x(0)$ be the least $n$ such that the sequence $(n) \in Ext(T)$. We continue the construction by letting, for every $k$, $x(k+1)$ be the least $n$ such that $(x(0),x(1),\ldots,x(k),n) \in Ext(T)$.
\end{proof}

We can also prove the following result:

\begin{theorem}[\protect{\cite[Thm. 3.3]{Cenzer-griffor1999handbook}}]
For any recursive tree $T \subset \omega^{< \omega}$, $Ext(T)$ is a $\Sigma^1_1$ set.
\end{theorem}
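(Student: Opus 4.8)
The plan is to unwind the definition of $Ext(T)$ directly into the prenex normal form that witnesses membership in $\Sigma^1_1$, namely a single existential function quantifier applied to an arithmetical matrix. Starting from the defining equivalence $\sigma \in Ext(T) \iff (\exists x)(x \in [T] \wedge \sigma \prec x)$, the first step is to replace the second-order membership condition $x \in [T]$ by its arithmetical unfolding. Since $[T]$ is by definition the set of infinite paths through $T$, we have $x \in [T] \iff (\forall n)(x \upharpoonright n \in T)$, so that
\[ \sigma \in Ext(T) \iff (\exists x)\big[ (\forall n)(x \upharpoonright n \in T) \wedge (\sigma \prec x) \big]. \]

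Next I would verify that the matrix inside the function quantifier is arithmetical, in fact $\Pi^0_1$. The condition $\sigma \prec x$ amounts to $x \upharpoonright |\sigma| = \sigma$, which is a computable predicate of $\sigma$ and the finite initial segment $x \upharpoonright |\sigma|$. This is exactly where the hypothesis that $T$ is \emph{recursive} does its work: because $\chi_T$ is computable, the predicate ``$x \upharpoonright n \in T$'' is uniformly computable in $n$ and $x \upharpoonright n$, and hence $(\forall n)(x \upharpoonright n \in T)$ is $\Pi^0_1$. Folding the bounded check $\sigma \prec x$ into the universal quantifier yields a single recursive relation $R(x,\sigma,n)$ with
\[ \sigma \in Ext(T) \iff (\exists x)(\forall n)\, R(x,\sigma,n), \]
which is precisely the normal form defining a $\Sigma^1_1$ set, matching the description of the analytic hierarchy given earlier in this chapter.

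I do not expect a genuine obstacle, since the content is essentially bookkeeping about the analytic hierarchy; the one point requiring care is ensuring the matrix really is arithmetical once the function quantifier is pulled to the front. This is exactly the step that collapses for non-recursive $T$, so the recursiveness hypothesis must be invoked explicitly rather than silently, via a fixed index for $\chi_T$. I would close with a remark on uniformity: because the passage from $T$ to $R$ uses only such an index together with the fixed computable string operations $\upharpoonright$ and $\prec$, the $\Sigma^1_1$ definition is obtained effectively from any index for $T$, which is the form later uniform arguments will need.
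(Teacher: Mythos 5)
Your proof is correct and takes essentially the same approach as the paper, whose entire proof consists of exhibiting the characterisation $\sigma \in Ext(T) \iff (\exists x)(\forall n > |\sigma|)(x \upharpoonright n \in T \wedge \sigma \prec x \upharpoonright n)$, i.e.\ an existential function quantifier in front of a number quantifier over a recursive matrix. Your write-up simply makes explicit the bookkeeping the paper leaves implicit --- unfolding $x \in [T]$, checking that the matrix is $\Pi^0_1$ via the recursiveness of $\chi_T$, and noting the uniformity in an index for $T$ --- all of which is sound.
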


\begin{proof}
This follows from the following characterisation: \[ \sigma \in Ext(T) \iff (\exists x)(\forall n > |\sigma|) (x \upharpoonright n \in T \wedge \sigma \prec x \upharpoonright n) \]
\end{proof}

These results solidify the fundamental link that we will use later, specifically that the well- or ill-foundedness of a tree $T \subset \omega^{<\omega}$ is complete to $\Pi^1_1$ and $\Sigma^1_1$ formulae. This is a fact that is central to our results in chapter 3 and beyond.

\chapter{Tilings - Concepts and Results}
\label{chap3}
% next resets the equation numbers to start at 1 at the start of the chapter
\setcounter{equation}{0}
\renewcommand{\theequation}{\thechapter.\arabic{equation}}

%------------------------------------------------------------------------------

\epigraph{It is the shape that matters.}{\textit{Samuel Beckett \\ to Harold Hobson}}

This chapter presents previous results to do with the mathematical study of tiling problems. We present more general results first, and then focus on tiling problems for Wang prototiles that will occupy the rest of our study in this thesis.

\section{Tilings of the Plane}

In this chapter, we will give an overview of the notation, history, and important results concerning tiling problems. Unless otherwise indicated, we will use \cite{GrunbaumTP} and \cite{FuchsT} as our primary resources for material in this chapter.

\subsection{Preliminaries of Tilings}

We will use the following definitions of tilings in this thesis. Note we restrict ourselves to tilings on the plane $\mathbb{R}^2$. 

\begin{definition}[Tiles]
A \emi{tile} is a closed polygon that covers some finite potion of the plane.
\end{definition}

Topologically, each tile is a closed subset of the plane, and is homeomorphic to a disc. As such, we can define \emi{tilings} as follows:

\begin{definition}[Tilings]
Tilings will generally take the following forms:
\begin{itemize}
\item Tiles form a \emi{complete tiling} if the union of these subsets is the full plane.
\item Tiles form a \emi{partial tiling} if there are points in the plane that are not contained in any subset.
\end{itemize}
\end{definition}

For complete tilings, each point $p \in \mathbb{R}^2$ will find itself in one of two situations. Either we have that:
\begin{enumerate}
	\item $p$ is to the interior of at most one tile, or
	\item $p$ is on the edge join of two tiles.
\end{enumerate}
As a consequence of this, tiles in a complete tiling have pairwise disjoint interiors, and there are no gaps between the tiles in the tiling. 

To make it easier to consider the relationship between a tiling and the tiles that constitute it, we can define sets of prototiles as follows:

\begin{definition}[Prototile Sets]
For a given tiling $\mathcal{T}$,
\begin{itemize}
\item A \emi{prototile} set $\mathcal{S} \subset \mathcal{T}$ is a set of tiles such that for every tile $t \in \mathcal{T}$ there is an $s \in \mathcal{S}$ that is congruent to $t$.
\item A prototile set $\mathcal{S}$ is called \emph{minimal} if for all $s_i,s_j \in \mathcal{S}$, \[ s_i \text{ is congruent to } s_j \iff s_i = s_j \]
\end{itemize}
\end{definition}

Later in this thesis we will consider only \emph{minimal tilings}, where we have substituted geometric requirements with a regular polygonal lattice with edge conditions. But for now, we will proceed with all the above definitions.

\subsection{The Extension Theorem}

The Extension Theorem is a compactness-like argument that is an important result from the literature, a version of which will become very useful later in this volume.

We will start with some definitions for related and useful concepts we will use in theorem \ref{thm:extension}. For this section we will assume that all prototile sets are finite, although we will relax this requirement for our further work in tiling problems later in this volume.

\begin{definition}
Given a tiling $\mathcal{T}$, $t_i, t_j \in \mathcal{T}$, the \emi{Hausdorff distance} $h(t_i, t_j)$ between two tiles is defined as \[ h(t_1,t_2)  = \max\left\{ \sup_{a \in t_1} \inf_{b\in t_2} \norm{a - b} ,\sup_{b\in t_2} \inf_{a\in t_1} \norm{a - b} \right\} \]
\end{definition}

From this definition it follows that where for some tiles $t_1, t_2 \in \mathcal{T}$, we have that $h(t_1, t_2) = 0 \implies t_1 = t_2$. 

\begin{definition}[Patch Tiling]
A \emi{patch} is the union of a number of tiles covering some non-total portion of the plane $R \subset \mathbb{R}^2$.
\end{definition}

The usual intuition for patch tilings is that they are finite portions of the plane, however we will also use this wording to denote infinite connected regions of the plane that are not total. Where the context requires we will talk of `infinite patches' and `finite patches', but generally speaking, we use this looser definition of `patch tiling' than is generally used in the literature.

\begin{definition}
We say that a set of prototiles $\mathcal{S}$ \emi{tiles over} a finite subset $X$ of the plane if there is a finite patch tiling $P_\mathcal{S}$ such that for all $x \in X$, $x \subset P_\mathcal{S}$, with each $t \in P_\mathcal{S}$ congruent to some $s \in \mathcal{S}$.
\end{definition}

Where we have finite patches as a bounded tiling, these are then also topologically equivalent to a disc.

\begin{definition}
A sequence of tiles $t_1, t_2, t_3, \ldots$ \emi{converges} to a limit tile $t$ if $\lim_{i\to\infty} h(t_i,t) = 0$.
\end{definition}

\begin{definition}[Circumparameter]
$U$ is a \emi{circumparameter} of a prototile set $\mathcal{S}$ if for every $t \in \mathcal{S}$, $t$ is contained in some disc of radius $U$.
\end{definition}

\begin{definition}[Inparameter]
Analogously we have that $u$ is an \emi{inparameter} of $\mathcal{S}$ if for each $t \in \mathcal{S}$, there exists a disc of radius $u$ that can be wholly inscribed within $t$.
\end{definition}

Now that we have covered the base definitions we require for this section, we will proceed to prove some general theorems in the theory of tilings. Our aim here is to state the geometric and topological arguments that are commonly used to analyse general properties of tilings derived from finite prototile sets. We begin with the following lemmas:

\begin{lemma}[Bolzano-Weierstrass Theorem]\label{lemma:ConvSeq}
Let $S$ be a closed bounded area in $\mathbb{R}^2$, and let $z_1,z_2, z_3, \ldots $ be a sequence of points in $S$. There is a subsequence of $z_{i_1}, z_{i_2},\ldots$ that converges to some point $z \in S$.
\end{lemma}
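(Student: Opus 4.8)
The plan is to use the classical nested-bisection argument, exploiting the completeness of $\mathbb{R}^2$ and the closedness of $S$. Since $S$ is bounded, it is contained in some closed square $Q_0 = [-M,M] \times [-M,M]$, so every term of the sequence lies in $Q_0$. The idea is to trap a convergent subsequence inside an infinite chain of shrinking squares, then use closedness to place the limit back in $S$.

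First I would bisect $Q_0$ along both axes into four congruent closed sub-squares. Since there are infinitely many indices $n$ with $z_n \in Q_0$, the pigeonhole principle guarantees that at least one of the four sub-squares contains $z_n$ for infinitely many indices $n$; call such a sub-square $Q_1$. Iterating this construction produces a nested sequence of closed squares $Q_0 \supseteq Q_1 \supseteq Q_2 \supseteq \cdots$, where each $Q_k$ has side length $2M/2^{k}$ (so its diameter tends to $0$) and contains $z_n$ for infinitely many indices $n$.

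Next I would invoke completeness: because the $Q_k$ are nonempty, closed, nested, and have diameters tending to $0$, their intersection $\bigcap_{k} Q_k$ consists of a single point $z$. I would then build the subsequence by choosing indices $i_1 < i_2 < i_3 < \cdots$ with $z_{i_k} \in Q_k$ for each $k$; this is possible precisely because each $Q_k$ still contains infinitely many terms, so at every stage there is a term whose index exceeds the previously chosen one. Since $z_{i_k}$ and $z$ both lie in $Q_k$, we have $\norm{z_{i_k} - z} \leq \operatorname{diam}(Q_k) \to 0$, whence $z_{i_k} \to z$.

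Finally, since each $z_{i_k}$ lies in $S$ and $S$ is closed, the limit $z$ belongs to $S$, completing the argument. The only delicate point is the bookkeeping in the subsequence selection: I must track indices rather than merely points, so as to guarantee both that the chosen indices are strictly increasing and that the ``infinitely many terms'' property is preserved through every bisection. An alternative route would apply the one-dimensional Bolzano--Weierstrass theorem to the two coordinate sequences in turn, first extracting a subsequence along which the first coordinates converge and then a further subsequence along which the second coordinates converge; but the direct bisection keeps the completeness input explicit and entirely self-contained.
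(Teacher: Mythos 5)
Your proposal is correct and follows essentially the same route as the paper's own proof: repeated bisection of a bounding square, pigeonhole to select a sub-square containing infinitely many terms, and a nested-squares limit argument. In fact your version is the more careful of the two, since you make explicit the completeness step ($\bigcap_k Q_k$ is a single point because the diameters tend to $0$), the strictly increasing choice of indices $i_1 < i_2 < \cdots$, and the use of closedness of $S$ to conclude $z \in S$ --- details the paper's sketch leaves implicit.
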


Note, such a limit $z$ need not be unique. 

\begin{proof}
Let $z_i$ for $i \in \omega$ be our sequence $z_1, z_2, z_3, \ldots$, and let $S_0$ be a bounded region in $\mathbb{R}^2$.
	
First we bisect $S_0$. By pigeonhole principle, we have that at least one of these pieces contains infinitely many $z_i$. Call this piece $S_1$, and repeat the subdivision infinitely. The same density must apply to at least one of any subdivided region, so we can choose a sequence of pieces $S_2$, $S-3$, \ldots containing infinitely-many $z_i$ in each subsequent piece.

From our eventual infinite sequence $S_0, S_1, S_2, \ldots$ we can choose any sequence of points, with each successive $z_i$ coming from $S_i$. These points converge closer to some limit point $z$.
\end{proof}

\begin{theorem}[Selection Theorem, \protect{\cite[p.154]{GrunbaumTP}}]\label{thm:Selection}
Let $t_1, t_2, \ldots$ be an infinite sequence of tiles such that all $t_i$ are congruent - by translation and rotation - to (bounded) $t$, that is fixed. If every $t_i$ contains point $p$, then the sequence contains a convergent subsequence whose limit tile $t'$ is congruent to $t$, with $p \in t$.
\end{theorem}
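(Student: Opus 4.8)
The plan is to reduce the statement to an application of the Bolzano--Weierstrass Theorem (Lemma \ref{lemma:ConvSeq}) in a compact space of rigid motions, and then to transport compactness from that parameter space to the tiles themselves via the Hausdorff metric. First I would fix coordinates so that the given bounded tile $t$ is contained in a disc of radius $U$ about the origin, and record that each $t_i$ is the image of $t$ under an orientation-preserving rigid motion $g_i$, written $g_i(x) = \rho_{\theta_i}(x) + v_i$ for a rotation $\rho_{\theta_i}$ through angle $\theta_i \in [0,2\pi)$ and a translation vector $v_i \in \mathbb{R}^2$. Then $t_i = g_i(t)$, and since rotations fix the origin we have $v_i = g_i(0)$, so each $t_i$ is contained in the disc of radius $U$ about $v_i$.

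The second step is to confine the parameters to a compact set. Because $p \in t_i$ and $t_i \subseteq \overline{B}(v_i, U)$, we get $\norm{p - v_i} \le U$, so every $v_i$ lies in the closed disc $\overline{B}(p,U)$, while the angles $\theta_i$ already lie in the compact circle. Representing each motion by the point $(\cos\theta_i, \sin\theta_i, v_i) \in \mathbb{R}^4$, all these points sit in a fixed bounded region; hence by the extension to $\mathbb{R}^4$ of Lemma \ref{lemma:ConvSeq} — obtained by iterating the bisection argument coordinate by coordinate, or by successive extraction of subsequences — I can pass to a subsequence along which $\theta_{i_k} \to \theta$ and $v_{i_k} \to v$. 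I then set $g(x) = \rho_\theta(x) + v$ and define the candidate limit tile $t' = g(t)$, which is congruent to $t$ by construction.

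It remains to verify the two convergence claims. For Hausdorff convergence I would use the continuity of the map sending a rigid motion to its image tile: since $t$ is bounded, one checks directly from the definition of Hausdorff distance that $h\bigl(g_{i_k}(t), g(t)\bigr) \le \sup_{x \in t} \norm{g_{i_k}(x) - g(x)}$, and the right-hand side is forced small once $\theta_{i_k}, v_{i_k}$ are close to $\theta, v$, using that $\norm{g_{i_k}(x) - g(x)} \le \norm{\rho_{\theta_{i_k}}(x) - \rho_{\theta}(x)} + \norm{v_{i_k} - v}$ and that rotations are equicontinuous in the angle uniformly on the bounded set $t$. Thus $h(t_{i_k}, t') \to 0$. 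For the membership $p \in t'$, I would note that each $t_{i_k}$ contains $p$, so the triangle-type bound $\operatorname{dist}(p, t') \le h(t_{i_k}, t') + \operatorname{dist}(p, t_{i_k}) = h(t_{i_k}, t') \to 0$ gives $\operatorname{dist}(p, t') = 0$; since $t'$ is a closed polygon, this yields $p \in t'$, as required.

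The main obstacle is the continuity estimate in the third step: converting closeness of the motion parameters into a Hausdorff bound needs the uniform inequality $h(g(t), g'(t)) \le \sup_{x\in t}\norm{g(x)-g'(x)}$ together with the equicontinuity of the rotations on the bounded tile $t$. Everything else is bookkeeping, though one point to state carefully is that the three-parameter compactness genuinely follows from the two-dimensional Bolzano--Weierstrass lemma by iterated subsequence extraction.
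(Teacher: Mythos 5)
Your proposal is correct and takes essentially the same route as the paper's proof: both arguments exploit the common point $p$ to confine the congruence parameters to a compact set (translations pinned within distance $U$ of $p$, rotation angles on the circle) and then apply iterated Bolzano--Weierstrass extraction, exactly as in Lemma \ref{lemma:ConvSeq}. The only real differences are bookkeeping and rigor: the paper parametrizes each motion by the pull-back point $q_n = g_n^{-1}(p)$ inside the compact tile $t$ plus an angle $\alpha_n \pmod{2\pi}$ and then simply asserts that the tiles converge to the rotated copy of $t$ with $q$ over $p$, whereas you parametrize directly in the isometry group (with the $(\cos\theta_i,\sin\theta_i)$ embedding neatly handling the angular wrap-around) and explicitly supply the two steps the paper leaves implicit --- the uniform bound $h\bigl(g_{i_k}(t), g(t)\bigr) \le \sup_{x\in t}\lVert g_{i_k}(x)-g(x)\rVert$ upgrading parameter convergence to Hausdorff convergence, and the closedness argument giving $p \in t'$ from $\operatorname{dist}(p,t') \le h(t_{i_k},t') \to 0$.
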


\begin{proof}
Choose $t_n \cong t_0$ for each $n \in \omega$. As such, each point $p \in t_n$ identifies some point $q_n \in T$. By \ref{lemma:ConvSeq}, there is a convergent subsequence $q_{i_1}, q_{i_2}, \ldots \rightarrow q$ inside $t$. 

Intuitively, this limit point $q$ is taken from a point $p$ that is `common' to all tiles where they translated, but \emph{not} rotated, and placed over each other. When separated out, this is our sequence of $q_i$'s, where each tile is labelled spiralling out from our $t_0$ - much like the `snake' proof in classical set theory.

If we position this $t$ such that $q$ is over the coordinate $(0,0) \in \mathbb{R}^2$, then we can notice that all of our translations are rotated about $q$. So the position of each tile is the translation $q-q_i$ followed by some rotation $\alpha_{i_n}$ (mod $2\pi$).
	
As such, by using the same reasoning in lemma \ref{lemma:ConvSeq}, we can we can gather a subsequence of rotation angles $\alpha_{i_1}, \alpha_{i_2} , \ldots$ which converges (modulo $2\pi$). Let $\alpha$ be the limit of this sequence, and so $t_{i_1}, t_{i_2}, \ldots \rightarrow t'$, which is a copy of $t$ rotated by $\alpha$ and with $q$ coincident with $p$.
\end{proof}

Note, this theorem will fail if such a $p$ does not exist, for example. That said, we will use the following special case later:

\begin{corollary}[\protect{\cite[p.154]{GrunbaumTP}}]
Let $t_0, t_1, t_2, \ldots$ converge to some $t$, if $d(t_i, t) \rightarrow 0$ as $i \rightarrow \infty$. Then $t$ is congruent to $t_0$. 
\end{corollary}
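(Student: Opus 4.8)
The plan is to realise this corollary as a direct specialisation of the Selection Theorem (Theorem \ref{thm:Selection}), exploiting the fact that the full sequence $t_0, t_1, \ldots$ is already assumed convergent, so that unlike in that theorem no subsequence need be retained at the end. Since the tiles are all congruent copies of the single prototile $t_0$, I would first write each $t_i = g_i(t_0)$, where $g_i$ is a rigid motion of the plane, i.e.\ a rotation $R_{\alpha_i}$ through an angle $\alpha_i \in [0,2\pi)$ composed with a translation by a vector $v_i \in \mathbb{R}^2$. The whole content of the claim is then that the limit tile $t$ is again of this form $g(t_0)$ for a single rigid motion $g$, since that is exactly what congruence to $t_0$ means.

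Next I would use the convergence hypothesis $d(t_i,t) \to 0$ to control the motions $g_i$. Because the Hausdorff distances tend to $0$, all but finitely many $t_i$ lie within Hausdorff distance $1$ of the fixed bounded tile $t$, and hence are contained in a common bounded region of $\mathbb{R}^2$; this forces the translation vectors $v_i$ to be bounded. The rotation angles $\alpha_i$ already lie in the bounded set $[0,2\pi)$. Applying the Bolzano--Weierstrass Theorem (Lemma \ref{lemma:ConvSeq}) to the bounded sequence of pairs $(\alpha_i, v_i)$---exactly as in the proof of Theorem \ref{thm:Selection}, where the rotation angles were shown to admit a convergent subsequence modulo $2\pi$---yields a subsequence with $(\alpha_{i_k}, v_{i_k}) \to (\alpha, v)$, and hence $g_{i_k} \to g$ for the rigid motion $g$ defined by $R_\alpha$ followed by translation by $v$.

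I would then pass this convergence through to the tiles: since the action of a rigid motion on a fixed bounded set is continuous with respect to the Hausdorff metric, $t_{i_k} = g_{i_k}(t_0) \to g(t_0)$, and the tile $g(t_0)$ is by construction congruent to $t_0$. Finally I invoke uniqueness of the Hausdorff limit: the full sequence converges to $t$, so every subsequence converges to $t$ as well, giving $t = g(t_0)$, and therefore $t$ is congruent to $t_0$, as required.

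I expect the main obstacle to be the bookkeeping that keeps the rotation and translation components of each $g_i$ under simultaneous control---confirming that convergence in Hausdorff distance genuinely bounds the translation parts (rather than allowing a tile to drift off while its shape stays close to $t$), and verifying that the joint limit $(\alpha, v)$ assembles into a bona fide rigid motion whose action commutes with taking Hausdorff limits. Since these are precisely the ingredients already marshalled in the proof of the Selection Theorem, I anticipate the argument to be short: here the limit is pinned down in advance, and uniqueness of the Hausdorff limit removes the need to retain only a subsequence.
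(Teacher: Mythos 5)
Your proposal is correct, and it is worth noting that the paper itself gives no proof of this corollary at all: it is stated bare, with a citation to \cite[p.154]{GrunbaumTP}, as a special case to be read off from the Selection Theorem (Theorem \ref{thm:Selection}). Your argument is essentially the intended one, and it fills in the omitted details faithfully: parametrise the congruences $t_i = g_i(t_0)$, bound the translation parts using Hausdorff convergence to the bounded limit tile, extract a convergent subsequence of motion parameters via Bolzano--Weierstrass (Lemma \ref{lemma:ConvSeq}) exactly as in the proof of Theorem \ref{thm:Selection}, pass to the limit using continuity of rigid motions in the Hausdorff metric, and conclude by uniqueness of Hausdorff limits on compact sets. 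One point in your favour that deserves emphasis: a black-box application of Theorem \ref{thm:Selection} would actually not be licit here, since that theorem hypothesises a point $p$ common to all the $t_i$, and Hausdorff convergence $d(t_i,t)\to 0$ does not by itself furnish such a common point; by re-running the parameter-compactness argument rather than invoking the theorem directly, you sidestep this gap. The only cosmetic caveat is that congruence should be read, as in the paper's statement of the Selection Theorem, as ``by translation and rotation'' (no reflections), which is precisely the class of motions you parametrise; if reflections were admitted one would simply split the sequence by orientation and argue on whichever half is infinite.
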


We can now prove the Extension Theorem, which is a fundamental, general result about tilings.

\begin{theorem}[Tiling Extension Theorem, \protect{\cite[Thm. 3.8.1]{GrunbaumTP}}]\label{thm:extension}\index{extension theorem}
Let $\mathcal{S}$ be a finite set of prototiles - each of which is a closed topological disc. If $\mathcal{S}$ tiles over arbitrarily large discs, then there exist $\mathcal{S}$-tilings of the plane. 
\end{theorem}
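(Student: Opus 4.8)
The plan is to run a compactness (diagonalisation) argument in which the Selection Theorem (Theorem \ref{thm:Selection}) plays the role that K\"onig's Lemma plays in the discrete setting; indeed, for a genuine Wang-tile version one could discretise and invoke K\"onig's Lemma directly (as in Theorem \ref{thm:WangExtension}), but here placements of a topological disc range over a continuum of translations and rotations, so the Selection Theorem is the right compactness tool. By hypothesis, for each $k \in \omega$ there is a finite patch tiling $P_k$ that tiles over the disc $D_k$ of radius $k$ centred at the origin. Since $\mathcal{S}$ is finite I may fix a common inparameter $u > 0$ and circumparameter $U$; positivity of $u$ guarantees that only finitely many tiles of any $P_k$ can meet a given bounded region, and only finitely many can contain a single point. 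This uniform local finiteness is exactly what makes the successive extractions below possible.

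First I would extract a convergent limit tile at the origin: among the tiles of the $P_k$ containing $0$ there are, since $\mathcal{S}$ is finite, infinitely many congruent to one prototile and all sharing the point $0$, so the Selection Theorem yields a convergent subsequence whose limit tile $\tau_0$ is congruent to that prototile and contains $0$. I would then fix a sequence of reference points $q_0 = 0, q_1, q_2, \dots$ dense in $\mathbb{R}^2$ and iterate along the subsequence just obtained: for each $q_i$ there are only finitely many tiles containing it, so finitely many applications of the Selection Theorem refine to a further subsequence along which the tile(s) covering $q_i$ converge, each limit tile being congruent to a prototile of $\mathcal{S}$ by the corollary to the Selection Theorem. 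Diagonalising over $i$ produces a single subsequence along which the tiles covering every $q_i$ converge, and collecting all the resulting limit tiles gives a candidate tiling $\mathcal{T}^\ast$ of the whole plane.

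The hard part, and the step I expect to require the most care, is verifying that $\mathcal{T}^\ast$ is genuinely a tiling rather than merely a limiting collection of tiles: I must show it covers the plane with pairwise disjoint interiors and no gaps. Covering is the easier half, since every point lies in some $D_k$, was covered in all sufficiently large approximating patches, and covering is preserved under Hausdorff convergence. For disjointness of interiors I would argue by contradiction: if two limit tiles had overlapping interiors then, because the Hausdorff distance $h(\cdot,\cdot)$ controls each tile to arbitrary precision, the approximating tiles in $P_k$ for large $k$ would already overlap, contradicting that each $P_k$ is a patch tiling. Finally, the absence of gaps is governed by the positive inparameter $u$, which prevents any limit tile from degenerating to measure zero and forces the limit tiles meeting a bounded region to fill it exactly as their finite-stage approximants did; together with disjointness this upgrades $\mathcal{T}^\ast$ from a mere cover to a tiling, completing the argument.
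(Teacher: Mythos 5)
Your proposal is correct and follows essentially the same route as the paper's proof: a sequential compactness argument driven by the Selection Theorem (Theorem \ref{thm:Selection}), extracting limit tiles at a countable family of reference points via nested subsequences and then verifying that the limit collection covers the plane with disjoint interiors. The only cosmetic difference is that you diagonalise over an arbitrary dense sequence of points, whereas the paper uses the lattice $\Lambda$ of spacing equal to the common inparameter $u$, which makes the ``every tile contains a reference point'' step automatic; your use of the inparameter and a pigeonhole over a finite net accomplishes the same thing.
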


The proof will follow the one found in \cite[p.151]{GrunbaumTP}.

\begin{proof}
Let $\mathcal{S}$ be a finite set of prototiles, and let $U$ be the common circumparameter, and $u$ be the common inparameter. Consider the lattice $\Lambda$ of all points who regular Cartesian coordinates are $(nu,mu)$ for $m,n \in \mathbb{Z}$. $\Lambda$ therefore has some point in each prototile in $\mathcal{S}$. Let $L_0, L_1, L_2, \ldots$ be the full sequence of these points, spiralling out from some chosen $L_0$, say $(0,0)$. 

For any positive $r \in \mathbb{N}$, let $D(L_0, r)$ be the disc of radius $r$ centred on the point $L_0$. Let $P(r)$ be the finite patch of tiles from $\mathcal{S}$ that covers $D(L_0,r)$. When $r$ is large enough for $D(L_0,r)$ to contain some $L_s$, let $t_{rs}$ denote the tile of $P(r)$ that covers $L_s$. If, however, $L_s$ lies on an edge or a vertex point, then we can choose any tile in $P(r)$ that is incident to $L_s$. 

By the Selection Theorem (\ref{thm:Selection}) we have that, given $\mathcal{S}$ is finite, the sequence $t_0, t_1, t_2, \ldots$ has a subsequence $t'_{0}, t'_{1}, \ldots$ of tiles that are congruent to $t'_0$. This sequence will also contain an infinite subsequence $S_0 = t'_{i_0}, t'_{i_1}, \ldots$ that is convergent, and whose limit tile $t'_0$ will also contain $L_0$.

We now consider the sequence of tiles $t_{r1}$ containing $L_1$, restricting attention to values of $r$ that correspond to tiles in $S_0$. We can carry out the same line of argument as we just did to acquire $S_1$ of tiles all congruent to $t_1$, containing $L_1$, and convergent to a limit tile $t'_1$. 

Let $\mathcal{T} = \{ t'_0, t'_1, t'_2, \ldots \} $, deleting any duplicates as necessary in our selection. Ultimately, we want to show that $\mathcal{T}$ forms an $\mathcal{S}$-tiling of the plane. To show this, let $p$ be any point of the plane. We want to show that $p$ belongs to at least one $t'_i$, but does not belong to the interior of any other $t'_j$. 

Let $D(p,u)$ be the disc centred at $p$, of circumparameter radius $u$. Let $L_m$ be the point of $\Lambda$ in $D(p,u)$ with greatest index. We want to restrict our attention to the sequence of finite patches $P(r)$ as $r$ ranges through value corresponding to the subsequences $S_m$, specifically $\mathcal{T}_r = \{ t_{r0}, t_{r1}, t_{r2}, \ldots t_{rm} \}$.

As $r \rightarrow \infty$, $\mathcal{T}_r$ converges to the set $\mathcal{T}' = \{ t'_0, t'_1, \ldots \}$. Since all of the tiles in $\mathcal{T}_r$ have disjoint interiors, and all contain $p$, the same is true of each member of $\mathcal{T}'$. Thus, $\mathcal{T}$ is an $\mathcal{S}$-tiling of the plane.
\end{proof}

\section{The Domino Problem}

Whilst theorem \ref{thm:extension} gives us a notion of compactness that we can express through tiles, we then come to a more general question about tilings, known as the `Domino Problem'.

\begin{definition}[the Domino Problem]\label{def:domprob}\index{domino problem}
For any given set of prototiles $S$, does there exist an $S$-tiling of the plane?
\end{definition}

By theorem \ref{thm:extension} we know that if we can extend any finite patch $S$-tiling, we can get a tiling of the plane, but the Domino Problem asks us to consider whether there is any finite patch that cannot be tiled. 

When considering the Domino Problem for various sets of tiles, it is possible to modulate various requirements on how we cover the plane. For example, we might not consider trivial sub-tilings of some $S' \subset S$, or we might permit `small' holes that are strictly smaller than any polygon $t \in S$, such that we can consider them `small enough' in the limit. 

Given this definition, it is common to consider these conditions on a Domino Problem for some prototile set, unless explicitly indicated otherwise. Given a set of prototiles $S$:

\begin{itemize}
\item We will \emph{not} require that $\forall t \in S$, $t$ is used at least once in each $S$-tiling of the plane.
\begin{itemize}
\item This requirement is sometimes used to prevent trivial sub-tilings of the plane of some $S' \subset S$, mentioned above. However, when we come to dealing with encoding a Turing Machine into prototile a set, we need to allow that our TM will not enter every state on every input.
\end{itemize}
\item We will require that, for lattice regular polygonal tilings such as Wang tiles (defined in \ref{def:Wang}) we do not admit rotations of the tiles.
\begin{itemize}
\item Although this increases our prototile sets significantly, it make our later more functional definitions much more straightforward.
\end{itemize}
\item That our tilings are complete tilings.
\end{itemize}
Though we will make use predominantly of lattice-based tilings in this thesis, we wish to prevent gap from occurring in our tilings. As such, our resultant tilings can be thought of as total functions over the plane via coverings given by mapping each point in the lattice to a copy of a prototile.

These requirements can serve as to simplify our tilings, definitions, and constructions of prototiles. As noted, although the number of prototiles will increase, the complexity of our tiling functions will significantly reduce.

However, in this thesis, we will attempt to make our tilings as `free' as possible. Although this gives us slightly larger prototile sets, it serves to give us some better insight into the equivalence between the logical complexity of some statement, the computable trees arising from these statements, and the computable prototile sets that code paths of these computable trees into planar tilings.

\subsection{Wang Tiles}

To properly analyse the Domino Problem, we wish to reduce the complexity of our tilings to some `bare minimum', in line with the requirements above. As such we will make use of Wang tiles, first introduced by Hao Wang in \cite{Wang1990}, which we define as follows:

\begin{definition}[Wang Tiles]\label{def:Wang}\index{Wang tiles}
Let \emi{Wang tiles} be square tiles, diagonally quadrisected, such that ordered 4-tuples of the form $\langle l,u,r,b \rangle$ can be represented by:

\begin{center}
\sampletile{$l$}{$u$}{$r$}{$b$}
\end{center}
Where $l,u,r,b$ each stand for left, upper, right, and bottom respectively.
\end{definition}

We keep our previous definitions of `prototiles', `prototile sets', and `tilings'. Given this prototile definition, we will need to consider what happens when the edges of our tiles are to meet. Given a set $\mathcal{S}$ of Wang prototiles:

\begin{definition}
Given two Wang tiles $w, u \in \mathcal{S}$, such that $w = \langle l_w, u_w, r_w, b_w \rangle$ and $u = \langle l_u, u_u, r_u, b_u \rangle$:
\begin{itemize}
\item The \emi{edge meets} between these tiles are the comparisons between meeting edges, where one of the following applies:
\begin{itemize}
\item $l_w$ is next to $r_u$,
\item $u_w$ is next to $b_u$,
\item $r_w$ is next to $l_u$,
\item $b_w$ is next to $u_u$
\end{itemize}
\item The \emi{match criteria} for Wang tiles are the requirements that for any edge meet, the edge symbols match. Explicitly, one of the following holds: 
\begin{itemize}
\item if $l_w$ is next to $r_u$, then $l_w = r_u$
\item if $u_w$ is next to $b_u$, then $u_w = b_u$
\item if $r_w$ is next to $l_u$, then $r_w = l_u$
\item if $b_w$ is next to $u_u$, then $b_w = u_u$
\end{itemize}
\end{itemize}
\end{definition}

\begin{figure}[t]
  \centering
  \begin{tikzpicture}
  \begin{scope}[scale=2]
\foreach \x/\y/\l/\u/\r/\b in {1/0/$\cdot$/$\cdot$/$\cdot$/$u$,
	       0/1/$\cdot$/$\cdot$/$l$/$\cdot$,
	       1/1/$l$/$u$/$r$/$b$,
	       2/1/$r$/$\cdot$/$\cdot$/$\cdot$,
	       1/2/$\cdot$/$b$/$\cdot$/$\cdot$}
      {
      \draw[fill=white] (1+\x,1-\y) rectangle (0+\x,0-\y);
      \filldraw[fill=white] (0+\x,0-\y) -- (0.5+\x,0.5-\y) -- (0+\x,1-\y) -- cycle;
      \filldraw[fill=white] (0+\x,0-\y) -- (0.5+\x,0.5-\y) -- (1+\x,0-\y) -- cycle;
      \filldraw[fill=white] (0+\x,1-\y) -- (0.5+\x,0.5-\y) -- (1+\x,1-\y) -- cycle;
      \node at (0.5+\x,0.5-\y) [label=above:{\u},label=left:{\l},label=right:{\r},label=below:{\b}] {};
     }
\end{scope}
\end{tikzpicture}
    \caption{Edge Conditions in the von Neumann Neighbourhood surrounding a Wang tile.}
    \label{fig:vNN}
\end{figure}
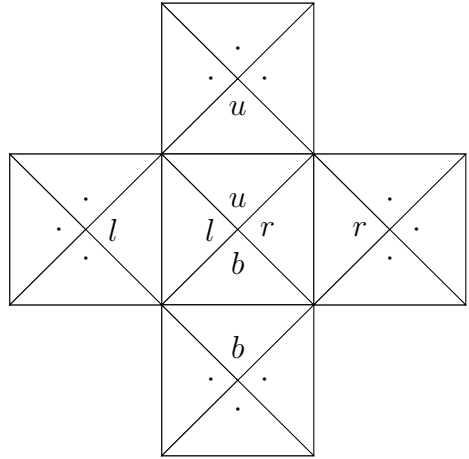

Intuitively we use the von Neumann neighbourhood surrounding the tile as the basis for our matching and placement conditions for each Wang prototile. This means that we only ever consider the 4-place valency for each tile and for each position in our $\mathbb{Z}^2$ lattice following the rules we stated above. When we come to code cellular automata, we will still only consider the von Neumann neighbourhood over the usual Moore neighbourhood.

From this construction of Wang tiles, we can now envisage our tilings as projection functions $$f_{\mathcal{S}} : \mathbb{Z}^2 \rightarrow \mathcal{S}$$ This characterisation will be useful when we explore computable tilings later in this thesis. Thus, the following definition is natural:

\begin{definition}\index{Total Wang tilings}
Given a set of Wang prototiles $\mathcal{S}$, we say that an $\mathcal{S}$-tiling of the plane is a \emi{total tiling} if for $f: \mathbb{Z}^2 \rightarrow \mathcal{S}$, and $f$ enforces the edge-meet criteria for the von Neumann neighbourhood of every point in $\mathbb{Z}^2$.
\end{definition}

Given for every point $(x,y) \in \mathbb{Z}^2$ there is some $s \in \mathcal{S}$ such that $f(x,y) = s$ and $f$ ensures that $s$ observes and meets all of the match criteria for its neighbours in the plane.

Our notion of a `total Wang tile tiling' is indeed a direct analogue for complete tilings we defined earlier. The slight change in terminology is to facilitate the intuition we will use later in this thesis that a complete tiling generated by a computable function must be total on $\mathbb{Z}^2$, and so is in this sense a total function. Thereby, total functions give total tilings, and total tilings must come from total functions.

Thus, a total tiling from a Wang prototile set is analogous to a complete tiling we considered previously. When we consider computable sets of Wang prototiles, this definition will be equivalent to a computable function $\varphi_e$ being total. 

Wang proved a version of the Extension Theorem for Wang tiles - known as \emph{Wang's theorem}. Our statement and proof are taken from \cite[p.600]{GrunbaumTP}.

\begin{theorem}[\protect{\cite[p.600]{GrunbaumTP}}]\label{thm:WangExtension}
Let $\mathcal{S}$ be a finite set of Wang prototiles. If it is possible, of arbitrarily large values of $n$, to assemble $n \times n$ blocks of tiles satisfying the edge-matching conditions, then there is an $\mathcal{S}$-tiling of the plane.
\end{theorem}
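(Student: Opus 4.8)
The plan is to run the standard compactness argument via König's Lemma, exactly as the introduction anticipates. First I would organise the finite tilings into a tree. For each $k \in \omega$ let $S_k$ denote the concentric block $[-k,k]^2 \cap \mathbb{Z}^2$ (of side $2k+1$), and let the nodes at level $k$ of a tree $T$ be precisely the valid $\mathcal{S}$-tilings of $S_k$, i.e. the functions $\sigma : S_k \to \mathcal{S}$ satisfying every edge-matching condition inside $S_k$. I declare a level-$(k+1)$ tiling to be a child of a level-$k$ tiling when the former restricts to the latter on $S_k$. Because the edge-matching conditions are local, involving only the von Neumann neighbourhood of each point, such a restriction is automatically again a valid tiling, so $T$ is genuinely a tree under the restriction ordering.

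Next I would verify the two hypotheses of König's Lemma. The tree is \emph{finitely branching}: since $\mathcal{S}$ is finite and $S_{k+1}$ adds only finitely many new cells to $S_k$, each node has at most $|\mathcal{S}|^{|S_{k+1} \setminus S_k|}$ children, hence finitely many. The tree is \emph{infinite}: this is the one step that genuinely uses the hypothesis. Given $k$, choose by hypothesis some $n > 2k+1$ for which an $n \times n$ block admits a valid tiling; since the matching conditions are translation-invariant I may take this block to contain $S_k$, and restricting the tiling to $S_k$ yields a valid element at level $k$. Hence every level is non-empty, so $T$ has infinitely many nodes.

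By König's Lemma, $T$ then has an infinite branch $\sigma_0 \prec \sigma_1 \prec \sigma_2 \prec \cdots$, where each $\sigma_k$ tiles $S_k$ and $\sigma_{k+1}$ extends $\sigma_k$. I would define $f : \mathbb{Z}^2 \to \mathcal{S}$ by setting $f(x,y) = \sigma_k(x,y)$ for any $k$ large enough that $(x,y) \in S_k$; this is well-defined by the compatibility of the branch. Finally I would check that $f$ is a total $\mathcal{S}$-tiling: any point together with its four neighbours lies in a common $S_k$, on which $\sigma_k$ already satisfies the match criteria, so $f$ satisfies them at every point of $\mathbb{Z}^2$.

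The main obstacle is the infinitude of the tree, since the hypothesis only guarantees that arbitrarily large blocks are tileable rather than a coherent nested family of tilings; the observation that a large tiling restricts to a valid tiling of every smaller concentric square is what bridges this gap, and König's Lemma then supplies the coherence for free. Alternatively one could bypass the tree entirely and invoke the Tiling Extension Theorem \ref{thm:extension}, noting that a tileable $n \times n$ block covers a disc of radius roughly $n/2$, so tiling arbitrarily large blocks implies tiling over arbitrarily large discs; but the König's Lemma route stays within the discrete combinatorial setting native to Wang tiles, which is preferable here.
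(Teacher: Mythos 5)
Your proof is correct and follows essentially the same route as the paper: a tree whose level-$k$ nodes are tilings of concentric square blocks, shown finitely branching from the finiteness of $\mathcal{S}$ and infinite by restricting the hypothesised large tileable blocks to smaller concentric squares, with K\"onig's Lemma supplying the infinite branch that glues to a total tiling (your write-up is in fact more explicit than the paper's about why every level is non-empty and why the branch yields a well-defined valid tiling). One caveat on your closing aside: bypassing the tree by invoking Theorem \ref{thm:extension} directly would not be legitimate here, since its Selection-Theorem machinery produces limit tiles congruent up to rotation, and the paper explicitly excludes rotations of Wang prototiles --- as the paper itself remarks, the theorem would otherwise be immediate --- but since you prefer the K\"onig's Lemma route anyway, this does not affect your proof.
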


We should reiterate that we only admit \emph{translations} of Wang prototiles - we do not permit rotations of Wang prototiles into tilings of the plane. If we did, this theorem would be immediate from the Extension theorem, theorem \ref{thm:extension}. Additionally the proof will make explicit use of the face that $\mathcal{S}$ is a finite set of prototiles.

\begin{proof}
Given a set of prototiles $\mathcal{S}$, with $| \mathcal{S} | = r$. We can construct a graph-theoretic tree in the following manner. We start with a single root node $n^0$. Level 1 is formed of $n^1_1, \ldots, n^1_r$ corresponding to each of the tiles in $\mathcal{S}$. Similarly at each level $k$, we add nodes $n^k_1, \ldots,n^k_{r_k}$ corresponding to adding a ring of tiles around each of the previous blocks.

We then form the tree by joining all of level 1 nodes to the root node. For any level $k$, we connect any of the $n^k$ to the nodes in $n^{k+1}$; if, for any $n^k_i$, $n^{k+1}_j$ contains the block represented by $n^k_i$, and the tiles on the outer edge of block $n^{k+1}_j$ match all the edge-matching criteria for the exterior of the tiles represented in $n^k_i$ are met by the inner edge criteria $n^{k+1}_j$. If this holds, then $n^k_i \text{ and } n^{k+1}_j$ are connected.

Each successive block can be thought of as an extension of the previous block by an outer `square ring' of tiles from $\mathcal{S}$ that surround the outside of the block.

Thus, we can reduce the question of an $\mathcal{S}$-tiling now to whether each level $k$ is connected to each $k+1$. If the answer is in the negative, then  there exists some $n$ such that there can be no patch of Wang tiles greater than $n \times n$ that can be extended to a full planar tiling.

If the answer is in the positive, then we have created a finitely branching infinite tree. By K\"onig's Lemma, there is necessarily an infinite path through our tree. By this construction, this path corresponds to an $\mathcal{S}$-tiling of the plane. 
\end{proof}

It is worth noting that although K\"{o}nig's Lemma is utilised in this proof, this is not necessary. Given our sets of prototiles are always finite, we only actually require Weak K\"{o}nig's Lemma - that an infinite bounded-branching tree necessarily has an infinite path - for this proof with some modification of our tiling tree as follows.

\begin{proof}[Alternative Tree Construction for proof of \ref{thm:WangExtension}]
Take some finite prototile set $S$, and consider each each point on $\mathbb{Z}^2$ by spiralling out from the centre point $(0,0)$ as before for the proof of theorem \ref{thm:extension}. We can construct a tree based on the valid tiles that could be placed at each successive point based on the 1 or 2 edge criteria defined by previously placed tiles. 

This tree is bounded by the size $S$, which is finite, thereby restricting the branching of our tree. A total planar tiling also corresponds to a path through this tree by the following observations:
\begin{itemize}
\item Each level on our tree corresponds to a point in $\mathbb{Z}^2$.
\item All edge-meet criteria are met by the construction of each branch.
\end{itemize}
Thus if our tree is infinite, there must be an infinite path by WKL, meaning there is a total planar tiling.
\end{proof}

Later in this thesis, we will entertain weaker notions of tiling the plane, and will draw more equivalences with properties and principles on trees in both Baire space and Cantor space.

\section{Undecidability of the Domino Problem}

One of Hao Wang's students, Robert Berger, proved in \cite{berger1966} the undecidability of the Domino Problem for finite sets of Wang prototiles. Whilst Berger's original created a prototile set of over 6,000 tiles, we present an updated proof where sets of `universal Turing Machine prototiles' number in the few hundred.

\begin{definition}
For a set $\mathcal{S}$ of prototiles, we denote ``There exists a complete $\mathcal{S}$-tiling of the plane" by $Tile(\mathcal{S})$.
\end{definition}

Note that $Tile(S)$ immediately has a $\Sigma^0_1$ normal form as the existence of an infinite sequence $s \in S^{\omega}$, such that $s$ is a sequence of tiles that covers each point in the lower-right quarter plane in $\mathbb{Z}^2$, thereby giving a total tiling of this quarter plane.

However, given we can extend any $S$ with tiles that fill in the other three quarter-planes, we can convert this $s$ to a total planar tiling.

\begin{theorem}[\protect{\cite[Thm. 3-3]{berger1966}}]\label{thm:TMTilings}
The Domino Problem for finite Wang prototile sets is $\Sigma^0_1$-complete.
\end{theorem}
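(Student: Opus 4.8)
The plan is to split $\Sigma^0_1$-completeness into its two standard halves: membership of the Domino Problem in $\Sigma^0_1$, and hardness by a many-one reduction (Definition \ref{def:mred}) from the Halting Problem (Definition \ref{def:HP}). Membership is supplied by the remark immediately preceding the statement: a complete tiling is witnessed by a legal sequence of tiles $s \in S^\omega$ covering the lower-right quarter plane, and any such quarter-plane tiling extends to the full plane, so $Tile(S)$ carries the first-level normal form recorded there. The substance of the theorem is therefore the hardness direction, where I would produce, uniformly and computably in $\langle e,x\rangle$, a finite set $S_{e,x}$ of Wang prototiles (Definition \ref{def:Wang}) whose tiling behaviour faithfully records the computation $\varphi_e(x)$.

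The construction I would use encodes the space--time diagram of the computation into the $\mathbb{Z}^2$ lattice, reading time upwards so that successive rows carry the successive tape configurations $C_{e,0}, C_{e,1}, \ldots$ of Definition \ref{def:stagecomp}. Horizontal edge colours would carry the tape symbol resident in each cell, while vertical edge colours carry the state-and-head data, arranged so that exactly one cell per row is flagged as holding the head. Three families of prototiles then suffice: \emph{seed} tiles fixing the bottom row to the initial configuration on input $x$; \emph{inaction} tiles, away from the head, that copy a symbol vertically unchanged; and \emph{action} tiles, at the head, that transform the current pair $(s,q)$ into $(s',q')$ with the prescribed $L/R$ shift, one such tile for each quintuple of the program $P$. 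The decisive design choice is to supply \emph{no} legal continuation above a cell once the head enters the HALT state, so that halting forces a local obstruction that no tile can complete.

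With this dictionary the correspondence becomes exact. If $\varphi_e(x)\uparrow$ the computation never halts, every finite initial segment of the diagram is a legal patch, and Wang's Extension Theorem (\ref{thm:WangExtension}) yields a tiling of the seeded quarter plane, which extends to the whole plane as in the preceding remark; hence $S_{e,x}$ tiles. If instead $\varphi_e(x)\downarrow$, then by Theorem \ref{thm:DownarrowThm} the head reaches HALT at some finite stage, the obstruction tiles render the row above untileable, and no complete tiling exists. Thus $S_{e,x} \in Tile \iff \varphi_e(x)\uparrow$, so the map $\langle e,x\rangle \mapsto S_{e,x}$ is a many-one reduction from (the complement of) the Halting Problem; combined with the membership recorded above, this yields the $\Sigma^0_1$-completeness asserted.

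The step I expect to be the main obstacle is proving that the encoding is \emph{rigid}: that every complete $S_{e,x}$-tiling genuinely transcribes the run of $\varphi_e(x)$ rather than some parasitic pattern. This forces the colour alphabet and the match criteria to guarantee a single well-defined head in each row, to forbid the spontaneous birth or disappearance of head markers across adjacent rows, and to make the seeded bottom row the only configuration compatible with the chosen boundary colours; only under these constraints does ``tiles'' coincide with ``admits an infinite legal computation.'' Establishing this rigidity, and checking along the way that $\langle e,x\rangle \mapsto S_{e,x}$ is computable and always yields a finite set, is where the real care lies; the compactness half of the equivalence, via the Extension Theorem, is then comparatively routine.
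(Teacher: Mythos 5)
Your proposal follows essentially the same route as the paper's proof: the same families of tiles (seed tiles fixing the initial row, symbol tiles copying the tape vertically, head/action tiles implementing each quintuple, and a halting tile that creates a local obstruction), successive rows carrying successive configurations, and the same resulting equivalence $\varphi_e(x)\downarrow \iff \neg Tile(\mathcal{S}_e(x))$, with compactness handling the non-halting direction. The rigidity checks you flag as the main obstacle (exactly one head per row, no spontaneous head creation, the seeded bottom row being forced) are precisely the verification points the paper asserts at the close of its construction, so your sketch matches the paper's argument in both structure and substance.
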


We will prove this by showing that for any Turing Machine $\varphi_e$ there exists a set of prototiles $\mathcal{S}_e$ such that $$ \varphi_e(x) \downarrow \iff \neg Tile(\mathcal{S}_e) $$ In order to do this, we will need the following machinery:

\begin{definition}\label{def:schematile}
A \emi{schema tile} is a prototile that determines a set of prototiles for given sets of colours. That is, it determines the position of colours taken from one or more sets of colours.
\end{definition}

\begin{example}[Schema Tile Example]
Let $A = \{ a_1, a_2 \}$ and $B = \{ b_1 \}$ be sets of colours. Let $t$ be the schema tile, with $i \neq j$:
\begin{center}
\sampletile{$a_i$}{$b_i$}{$a_j$}{$b_i$}
\end{center}	
The prototile set $\mathcal{S}$ generated by $t$ will consist of the following tiles:
\begin{center}
\sampletile{$a_1$}{$b_1$}{$a_2$}{$b_1$} \sampletile{$a_2$}{$b_1$}{$a_1$}{$b_1$}
\end{center}
It is worth observing that this resultant prototile set can give total planar tilings.
\end{example}

Thus, we can talk about the following progression: $$ \text{schema tile} + \text{colours} \Rightarrow \text{prototile sets} \Rightarrow \text{planar tilings} $$ By careful control of our schema tiles, we can establish the overall `shape' or `behaviour' of our prototile sets, which in turn controls some desirable feature or features of our classes of planar tilings.  

The following proof is after \cite{Boas97Conv} and \cite{MCarneyMSc}, however it has been restructured in order to match the structure of proofs later in this thesis.

\begin{proof}[Proof of \ref{thm:TMTilings}]
We construct the following schema tiles with which we can emulate Turing Machines. Let $s \in \Sigma$ be colours representing symbols, $q_i \in Q$ be colours representing machine states, and $(s,q) \in \Sigma \times Q$ be colours corresponding to each symbol matched with each state. Let $B$ be a distinguished colour representing `blank', and $H$ be distinguished colour representing the halting state.

\noindent \textbf{Symbol tiles}
\begin{center}
\sampletile{$B$}{$s$}{$B$}{$s$}
\end{center}
\textbf{Head State tiles}
\begin{center}
\sampletile{$q$}{$s$}{$B$}{$(s,q)$} \sampletile{$B$}{$s$}{$q$}{$(s,q)$}
\end{center}
\textbf{Computational tiles}
For $s,s' \in \Sigma$ and $q, q' \in Q$, permitting $s = s'$ and $q=q'$,
\begin{center}
\sampletile{$B$}{$(s,q)$}{$q'$}{$s'$} \sampletile{$q'$}{$(s,q)$}{$B$}{$s'$}
\end{center}
\textbf{Halting tile}
\begin{center}
%\sampletile{$H_L$}{$s$}{$B$}{$s'$} \sampletile{$B$}{$s$}{$H_R$}{$s'$}
\sampletile{$B$}{$s$}{$H$}{$s'$}
\end{center}

Let $\varphi_e$ be some Turing Machine composed of 5-tuples, and let $\varphi_e(x)$ be the computation that we wish to represent in our planar tilings.

We first take every symbol in our Turing program, and represent each one by some $s \in \Sigma$. We then code each symbol in the tape by a symbol tile. The `blank' representing colour $B$ serves to line up our rows into representations of configurations $c_i$ for $i \in \omega$. We now colour all of the symbol tiles with each $s \in \Sigma$, and put these into $\mathcal{S}_e$.

Next, we need to assign each of the states in $e$ to a state $q \in Q$, and we are then ready to add the Head State and Computation prototiles to $\mathcal{S}_e$. To do this, we take each $s \in \Sigma$, and each $q \in Q$, and assign colours for each `TM state' $(s,q)$. The Head State tiles will accept a state from $q$ from left or right, and will merge this information into the bottom quadrant of the prototile. 

Next, we need to look to all of the 5-tuples $(s,q,s',q',\{L,R\}) \in e$. For each $(s,q)$ taken from $\Sigma \times Q$, we look to see which of these form the first two positions of a 5-tuple. We then create a prototile for $\mathcal{S}_e$ of the form of this tuple based off the schema, placing the exit state $q'$ on the left or right according to the last position of the 5-tuple.
	
\emph{E.g.} let $(1,a,1,a,L)$ and $(1,b,0,a,R)$ be valid 5-tuples from some given $\varphi_e$. We can represent them in $\mathcal{S}_e$ by means of the computation schema tiles as follows (respectively left and right): 
\begin{center}
\sampletile{$a$}{$(1,a)$}{$B$}{$1$}  \sampletile{$B$}{$(1,b)$}{$a$}{$0$}
\end{center}
Given this we colour all the necessary computation tiles - except for any 5-tuple that enters the halting state, which we will deal with below) - remove any unnecessary head state tiles, and add all these to the symbol tiles in $\mathcal{S}_e$.

In order to complete the representation of $\varphi_e$, we need to add the halting states. These are distinctive, 5-tuples, and for any given halting 5-tuple $(s, q, s', HALT, \{ L, R \})$, we represent these as:
\begin{center}
\sampletile{$B$}{$(s,q)$}{$H$}{$s'$}
\end{center}

In order to fully represent our computation $\varphi_e(x)$ we perform the following steps:
\begin{enumerate}
\item We first take the representation of $x$ in symbols from $\Sigma$ - let this be a string of symbols $s_0, s_1, \ldots, s_k$, where $k = |x|$.
\item We take $s_0$, the initial state of our TM $q_0$, and place the following tile in the first position at co-ordinate $(0,0)$:
\begin{center}
\sampletile{$q_0$}{$s_0$}{$B$}{$(s_0,q_0)$}
\end{center}
\item We then place the respective symbol tiles for $s_1, \ldots, s_k$ to the right of this tile on what will become the representation of the first configuration $c_0$ of $\varphi_e(x)$. We can also continue tiling this entire bi-infinite row according to the symbols on the rest of the TM tape.
\begin{itemize}
\item We will later use the index on each configuration $c_i$ to map to the lower quadrants of every even row of tiles $r_{2i}$ for checking later.
\end{itemize}
\item We now continue the computation by continuing the tiling - given the prototiles in use code each part of the computation, each row can be read off as a successive stage of the computation.
\item the Halting tiles are designed that they will block the tiling from tiling the plane to the right any further.
\end{enumerate}

We can check the following facts about our tiling computation:
\begin{itemize}
\item Given our TM is not a non-deterministic TM, there will be only one choice for each computation prototile on each row.
\item Each row $r_{2i}$ will correspond to some configuration $c_i$ in our computation, with the tape configuration being readable from the top quadrants of each tile on the row.
\item Given our first row setup, there will not be more than one TM head performing the computation.
\end{itemize}

Thus, our tiling problem $Tile(\mathcal{S}_e(x))$ is also represented by the problem $$\exists s \, \{r_{2s} \in \mathcal{S}_e(x) \text{ has a hole}\}$$ which is in turn equivalent to the statement $\exists s \, \varphi_{e,s}(x) \downarrow$. As such, the Domino Problem for finite Wang prototile sets is $\Sigma^0_1$-complete.
\end{proof}

% for this - no. There is now a hole because of $H$
%\mc{TODO - should this be $\exists s \, \{r_{2s} = r_{2s+1} \in \mathcal{S}_e(x) \}$ ??}

%\begin{definition}
%Let $\mathbb{T} = \{ \mathcal{S} : \text{there exist total planar $\mathcal{S}$-tilings} \} $
%\end{definition}
%
%We can prove the following theorem:
%
%\begin{theorem}
%\[ K \leq_m \mathbb{T} \]
%\end{theorem}
%
%TODO - should this be $HALT$??

%Or more colloquially, there is no $e$ such that $Tile = \varphi_e$. Our proof will act in the spirit of \cite{berger1966}, but will follow more closely previous work by the author in \cite{MCarneyMSc}, which is itself inspired by \cite{Boas97Conv}.
%
%\begin{proof}
%\mc{TODO}
%\end{proof}

\begin{corollary}[\protect{\cite[Cor. 4-1, p.36]{berger1966}}]
The Domino Problem is undecidable.
\end{corollary}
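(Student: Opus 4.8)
The plan is to read this corollary as an immediate consequence of the $\Sigma^0_1$-completeness established in Theorem \ref{thm:TMTilings}, together with the basic transfer properties of $m$-reducibility. The essential observation is that the construction $\varphi_e \mapsto \mathcal{S}_e$ carried out in the proof of Theorem \ref{thm:TMTilings} is effective at every stage — coding symbols, states, and $5$-tuples into schema tiles, and then expanding each schema into an explicit finite prototile set, are all mechanical operations on the finite data of the program of $\varphi_e$ and the input $x$. Hence by the $s$-$m$-$n$ theorem (Theorem \ref{thm:smn}) there is a single computable function $f$ such that $f(e,x)$ is an index for the prototile set $\mathcal{S}_e(x)$ associated to the computation $\varphi_e(x)$.

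With this uniformity in hand, the witnessed equivalence $\varphi_e(x)\downarrow \iff \neg Tile(\mathcal{S}_e(x))$ becomes an $m$-reduction of the Halting Problem to the complement of the Domino Problem. Writing $D$ for the set of indices of finite prototile sets that tile the plane, we have $\langle e,x\rangle \in HALT \iff f(e,x) \notin D$, so that $HALT \leq_m \overline{D}$. First I would state this reduction explicitly and observe that it is precisely the content packaged by the completeness half of Theorem \ref{thm:TMTilings}, now made uniform.

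Next I would invoke the undecidability of the Halting Problem. Since $HALT$ (equivalently Post's set $K$, Definition \ref{PostsSet}) is not computable, and since $m$-reducibility transfers computability downward — if $A \leq_m B$ and $B$ is computable then $A$ is computable — a computable $\overline{D}$ would force $HALT$ to be computable, a contradiction. Hence $\overline{D}$ is not computable. Finally, because a set is computable exactly when its complement is, $D$ itself is not computable, i.e. the Domino Problem is undecidable.

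There is no genuine obstacle here; every ingredient is already in place. The only point demanding care is the claim of uniformity — that $\mathcal{S}_e(x)$ is produced \emph{computably} from $(e,x)$, rather than merely existing for each fixed machine — but this is immediate on inspecting the proof of Theorem \ref{thm:TMTilings}. Once uniformity is granted, the corollary is a one-line consequence of completeness: a set complete for $\Sigma^0_1$ (or for $\Pi^0_1$) cannot lie in $\Delta^0_1$, since otherwise every $\Sigma^0_1$ set — in particular the non-computable set $K$ — would be computable.
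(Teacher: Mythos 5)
Your proposal is correct and follows essentially the same route as the paper: both derive undecidability from the equivalence $\varphi_e(x)\downarrow \iff \neg Tile(\mathcal{S}_e(x))$ of Theorem \ref{thm:TMTilings} together with the undecidability of the Halting Problem. You merely make explicit two points the paper leaves implicit --- the uniformity of the map $(e,x)\mapsto \mathcal{S}_e(x)$ via the $s$-$m$-$n$ theorem, and the transfer of undecidability through the complement --- which is a welcome sharpening, not a different argument.
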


\begin{proof}
By \ref{thm:TMTilings}, it is clear that there exists a class of prototile sets corresponding to each TM enumerated by some $e$. By our construction, $$ \neg Tile(\mathcal{S}_e(x)) \iff \varphi_e(x) \downarrow $$
	
Thus, given the Halting Problem is undecidable, then the question of whether or not the corresponding $\mathcal{S}_e$-tilings tile the plane or not is also undecidable.
\end{proof}

The above re-proof of this classic result due to Berger is intended to illustrate our proof method in later chapters.

The original proof uses much more machinery, and a large set of prototiles for a Universal Turing Machine. This simplification makes plain the equivalence much more immediately, and lays a groundwork for our later results.

We will use this equivalence in the rest of this volume when we define \emph{computable prototile sets} and \emph{computable tilings} in the next chapter.

\subsection{Universal Turing Machine and TM Tilings}

Let Universal Turing Machines (UTMs) be minimal Turing Machine symbol and state sets, such that they can effectively emulate a Turing Machine of any size. 

\begin{definition}
Let a \emi{$(x,y)$-Universal Turing Machine} $\psi$, denoted $(x,y)$-UTM, be a Turing Machine that uses precisely $x$ active non-halting states, and $y$-many symbols on the tape, such that $\psi$ is Turing Complete.
\end{definition}

As such, we can think of them as being a pre-coded minimum requirement for any Turing Machine to operate. Let $\mathcal{S}_{UTM}$ denote a `library' of all possible states and symbols given by some UTM of a given number of states and symbols.
 
Due to the succinctness of our construction, it is reasonable to ask ``how big would a Turing prototile library be?". By colouring for all possible states, symbols, and state-symbol combinations we can get the following theorem:

\begin{theorem}[\protect{\cite[Chap. 3]{MCarneyMSc}}]
There exists a set, called the \emph{library}, of prototiles $\mathcal{S}$ with $|\mathcal{S}| = 625$, such that for every $\varphi_e$ there exists a set of prototiles $P_e \subset \mathcal{S}$ such that $P_e$ is a finite set of prototiles that represents $\varphi_e$ selected from $\mathcal{S}$.
\end{theorem}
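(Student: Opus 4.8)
The plan is to route every Turing machine through a single fixed universal machine, so that the library of prototiles does not grow with the index $e$. Concretely, I would fix an $(x,y)$-UTM $\psi$ in the sense of the definition above, with tape-symbol set $\Sigma$ of size $y$ and state set $Q$ of size $x$, and then define the library $\mathcal{S} = \mathcal{S}_{UTM}$ to be the set of all Wang prototiles obtained by instantiating the four schema tiles from the proof of Theorem \ref{thm:TMTilings} — the symbol, head-state, computational, and halting schemata — as their colours range over every admissible value. The admissible colours are exactly the elements of $\Sigma$, of $Q$, of the product $\Sigma \times Q$, together with the two distinguished colours $B$ and $H$; since $\psi$ is fixed, these colour sets are fixed, so $\mathcal{S}$ is a fixed finite set.

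Next I would count $|\mathcal{S}|$ schema by schema. The symbol tiles $\langle B,s,B,s\rangle$ contribute $|\Sigma|$ prototiles; the two head-state schemata $\langle q,s,B,(s,q)\rangle$ and $\langle B,s,q,(s,q)\rangle$ contribute $2\,|\Sigma|\,|Q|$; the two computational schemata, whose free parameters are the input pair $(s,q)$, the exit state $q'$, and the written symbol $s'$, contribute $2\,|\Sigma|^2\,|Q|^2$; and the halting schema $\langle B,s,H,s'\rangle$ contributes $|\Sigma|^2$. One then discards the colourings that can never occur in a legal simulation of $\psi$ — combinations $(s,q)$ carrying no defined transition, and head-state or halting instances that are unreachable — and removes any prototile generated by more than one schema. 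Substituting the parameters of the chosen UTM and carrying out this bookkeeping is what yields the explicit value $|\mathcal{S}| = 625$.

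It then remains to produce, for an arbitrary $\varphi_e$, a subset $P_e \subseteq \mathcal{S}$ representing it, and here universality does the work. By definition $\psi$ is Turing complete, so there is a computable encoding under which $\psi$ on input $\langle e,x\rangle$ simulates $\varphi_e(x)$. Applying the construction of Theorem \ref{thm:TMTilings} to $\psi$ rather than to $\varphi_e$ directly, every prototile it produces is by construction an instance of one of the four schemata coloured from the fixed sets $\Sigma$, $Q$, $\Sigma\times Q$, $\{B\}$ and $\{H\}$, and hence already lies in $\mathcal{S}$. Encoding $e$ into the initial (input) row then singles out the finite set $P_e$ consisting of the UTM prototiles together with the symbol tiles that spell out $\langle e\rangle$ in that row, and $P_e \subset \mathcal{S}$ represents $\varphi_e$ in exactly the sense of Theorem \ref{thm:TMTilings}: $\neg Tile(P_e(x)) \iff \varphi_e(x)\downarrow$.

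The main obstacle is the exact enumeration. The schema counts above over-count, because the same prototile can be generated by more than one schema and because many colourings are never realized in a run of $\psi$; the real work is to show that after deduplication and pruning the total collapses to precisely $625$, while verifying that no tile needed for a faithful simulation has been removed. The only other point needing care — but one that is immediate from universality — is that a \emph{single} fixed UTM suffices for all $e$, so that the one library $\mathcal{S}$ serves every $\varphi_e$ simultaneously and the cardinality does not depend on the machine being encoded.
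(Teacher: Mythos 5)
Your overall route is the paper's route: fix a single universal machine (the paper uses a $(2,5)$-UTM), instantiate the schema tiles from the proof of Theorem \ref{thm:TMTilings} over its fixed colour sets to obtain a finite library $\mathcal{S}$, and let universality carry an arbitrary $\varphi_e$ into a finite subset $P_e \subset \mathcal{S}$ by coding $e$ and the input into the initial row. The paper itself defers the enumeration to the cited MSc thesis, so on the universality half your argument is in step with the source and is fine.

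The genuine gap is in the counting, and it is not merely unfinished bookkeeping: your framework points the wrong way. Under your own per-schema formulas, a $(2,5)$-UTM with $|\Sigma| = 5$ and $|Q| = 2$ yields $|\Sigma| + 2|\Sigma||Q| + 2|\Sigma|^2|Q|^2 + |\Sigma|^2 = 5 + 20 + 200 + 25 = 250$ prototiles gross, and at most $275$ if you use the paper's actual halting schema $\langle B,(s,q),H,s'\rangle$ — note it carries the merged colour $(s,q)$ on its upper edge, not the bare symbol $s$ you wrote, so it contributes $|\Sigma|^2|Q| = 50$, not $|\Sigma|^2$. Swapping the roles of states and symbols only lowers these totals. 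Since deduplication and pruning can only decrease a count, and your gross count already sits strictly below the target $625$, your claim that the total ``collapses to precisely $625$'' after discarding unreachable colourings cannot be correct. The value $625 = 5^4$ instead suggests the library in the cited construction is generated far more liberally than your four schemas — plausibly the full set of Wang $4$-tuples over a five-element colour set. To prove the statement as given you must either adopt that more generous colouring and then check that every tile needed for a faithful simulation lies in it, or accept that your schemas produce a perfectly serviceable library of a different, smaller cardinality — which is a different theorem from the one asserted.
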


The proof of this can be found in \cite{MCarneyMSc}, and involves colouring a full library of Turing tiles with the states and symbols of a $(2,5)-UTM$, known universal Universal Turing Machine.

Indeed, if we take Smith's as-yet unpublished proof that a $(2,3)$-TM is universal, \cite{Smith2007}, then we can get the following theorem:

\begin{theorem}[C. 2019]
There is a library set of Turing Machine encoding prototiles of size 105.
\end{theorem}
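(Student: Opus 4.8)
The plan is to reuse, essentially verbatim, the library construction underlying the preceding theorem, but to feed it the smaller universal machine supplied by Smith \cite{Smith2007} in place of the $(2,5)$-UTM. First I would invoke Smith's (as-yet unpublished) result that there is a $(2,3)$-Universal Turing Machine $\psi$, i.e. a Turing-complete machine using $x = 2$ non-halting states and $y = 3$ tape symbols. The earlier construction in the proof of \ref{thm:TMTilings} is entirely schematic: it assigns colours to the symbol set $\Sigma$, the state set $Q$, and the product set $\Sigma \times Q$, together with the two distinguished colours $B$ and $H$, and it depends on the ambient machine only through the parameters $x = |Q|$ and $y = |\Sigma|$. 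So the first substantive step is to argue that the same four families of schema tiles (symbol tiles, head-state tiles, computation tiles, and the halting tile), instantiated over the $(2,3)$ palette, still produce a library $\mathcal{S}_{UTM}$ from which every program realisable on $\psi$ can be selected as a finite subset.

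The second step is the combinatorial count. I would enumerate the instantiations of each schema family as a function of $x$ and $y$: the symbol tiles contribute a term linear in $y$, the head-state tiles a term in $xy$ (doubled for the two head-entry directions), the computation tiles a term in $x^2 y^2$ (again doubled for left/right motion), and the halting tiles a term in $y^2$. Substituting $x = 2$, $y = 3$ into the precise tally used in the MSc library construction \cite{MCarneyMSc} should collapse to $105$; the key consistency check is that this is exactly the specialisation that gave $625$ at $(x,y) = (2,5)$, so once the two endpoints are pinned to the same bookkeeping the value is forced.

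The main obstacle, I expect, is twofold. First, the arithmetic must be pinned to the exact tile families of the MSc library \cite{MCarneyMSc} rather than to the streamlined re-proof presented in this chapter, since those two constructions already disagree on the total (the re-proof is leaner) and only one of them lands on $105$ at $(2,3)$; so the delicate part is reproducing the MSc enumeration faithfully rather than recounting the schema of \ref{thm:TMTilings}. Second, because Smith's universality result is unpublished, the theorem is only as secure as that input, and one must additionally verify that $\psi$'s transition table -- its halting convention and its use of both head directions -- is compatible with the schema, so that no transition of $\psi$ falls outside the colours provided and the library genuinely suffices to encode $\psi$ and hence, by universality, every $\varphi_e$.
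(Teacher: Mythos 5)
Your overall route --- instantiate the Turing-tile schemas over the $(2,3)$ palette of Smith's machine and count --- is in fact exactly what the paper does: its proof is simply ``generate colours from $2$ states and $3$ symbols, obtain the $6$ state--symbol pairs, apply these to the schemas of Theorem \ref{thm:TMTilings}, and count all compositions.'' But your bookkeeping, as stated, does not produce $105$, and the consistency check you propose to anchor it is false. The error is in the halting family: the halting tile actually used in the construction is $\langle B, (s,q), H, s' \rangle$, whose upper quadrant carries a state--symbol \emph{pair} (the displayed schema tile $\langle B, s, H, s' \rangle$ is misleading here, but the in-proof instantiation and the theorem's total force the $(s,q)$ form). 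So this family contributes $xy \cdot y = xy^2 = 2 \cdot 9 = 18$ tiles, not the $y^2 = 9$ your tally assigns it. With your terms $y + 2xy + 2x^2y^2 + y^2$ one gets $3 + 12 + 72 + 9 = 96 \neq 105$; the correct count is $y + 2xy + 2x^2y^2 + xy^2 = 3 + 12 + 72 + 18 = 105$.

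Your proposed anchor is moreover inverted. The same formula at $(x,y) = (2,5)$ gives $5 + 20 + 200 + 50 = 275$, not $625$, so the $625$ and $105$ theorems are \emph{not} two specialisations of a single enumeration: $625 = 5^4$ comes from the fuller library of the MSc construction, whereas the $105$ of this theorem is obtained by counting the streamlined schema of Theorem \ref{thm:TMTilings} at $(2,3)$ --- precisely the count you instruct yourself to avoid. Reproducing the MSc enumeration faithfully at $(2,3)$ would land you on some other number, not $105$. Your remaining caveats (reliance on Smith's unpublished universality result, and checking that the $(2,3)$ machine's transition table and halting convention fall within the colours the schema provides) are sound and match the hedging in the text, but they do not repair the count; once the halting family is corrected and the tally pinned to the chapter's schema rather than the MSc library, the argument goes through as in the paper.
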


The proof comes from generating colours from a set of states $|\Sigma| = 2$ and a set of symbols $|Q| = 3$, obtaining $|\Sigma \times Q| = 6$, and then applying these colours to our Turing Tile schemas, and then counting all possible compositions.

%\section{Computable Tilings}
%
%We will use the following definitions in this thesis, as we work towards the $\Sigma^1_1$-completeness theorems in later chapters.
%
%\begin{definition}\index{computable prototile sets}\index{computable tile sets}
%Let $X \subset \omega$, and $\mathcal{S}$ be a set of Wang prototiles.
%\begin{itemize}
%\item Let $X_\mathcal{S} = \{ \langle c_l, c_u, c_r, c_b \rangle : \langle  c_l, c_u, c_r, c_b \rangle \text{ codes some prototile in } \mathcal{S} \}$.
%\item We say that $\mathcal{S}$ is \emph{computable} if $X_\mathcal{S}$ is computable.
%\item We say that an $\mathcal{S}$-tiling of the plane is computable if $f_\mathcal{S} : \mathbb{Z}^2 \rightarrow \mathcal{S}$ is computable.
%\end{itemize}
%\end{definition}

\section{Implications of TM Tilings}

There are some interesting implications that arise out of the fact that every Turing Machine has a representation in tiles. We state the following processes and theorems from \cite{Cichon1983}, assuming that the definitions of Primitive Recursive Arithmetic (PRA) and Peano Arithmetic (PA) are already known:

\begin{definition}[\protect{\cite[Process 1]{Cichon1983}}]\label{def:Process1}
\begin{enumerate}
\item Given some $n \in \omega$, write this number as the sum of powers of $x$ (base-$x$ notation).
\item Increase the base of the representation by 1.
\item Subtract one from this new representation. 
\item Return to 2 and repeat this procedure.
\end{enumerate}
\end{definition}

\begin{definition}[\protect{\cite[Process 2]{Cichon1983}}]\label{def:Process2}
Same as \ref{def:Process1}, except that on step 1 we write $n$ as \emph{pure base} representation, that is we write $n$ in base $x$, and then continue this process for all the exponents.
\end{definition}

The difference between these two definitions is that process 1 (definition \ref{def:Process1}) will admit for $n = 244$ a representation of $3^5 +1$, whilst \ref{def:Process2} will go further to $3^{3+2} +1$. After one iteration of \ref{def:Process1} we get $(3^5 + 1) : \rightarrow 4^5$, whereas \ref{def:Process2} will give us $(3^{3+2} + 1) : \rightarrow 4^{4+2}$.

The algorithm in \ref{def:Process2} is due to Goodstein in 1944 in \cite{Goodstein1944}. \cite{Cichon1983} gives short, elegant proof of the following famous results originally due to Kirby and Paris \cite{ParisKirby82}:

\begin{theorem}[\protect{\cite[Thorem 1]{Cichon1983}}]\label{thm:Cichon1}
For any $n \in \omega$ and base $x$, \ref{def:Process1} terminates, but this fact is not provable in PRA.
\end{theorem}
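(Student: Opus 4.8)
The plan is to separate the two assertions: first establish that Process~\ref{def:Process1} always terminates, and then show that this universal termination, although true, outstrips the proof-theoretic strength of PRA. For each individual pair $(n,x)$ the termination is in fact provable even in PRA; the independence is entirely a feature of the $\forall n\,\forall x$ statement.

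For termination I would assign to each intermediate value an ordinal and argue by well-foundedness. Writing a number in base $x$ as $n = \sum_{i\le k} a_i x^i$ with $0 \le a_i < x$ and $a_k \neq 0$, set $o(n,x) = \sum_{i\le k} a_i \omega^i$. Two observations drive the argument. First, the degree $k$ never increases: since $a_k < x < x+1$, re-reading the same digit string in base $x+1$ is a legitimate base-$(x+1)$ expansion, and subtracting $1$ cannot create a digit in position $k+1$. Second, bumping $x$ to $x+1$ leaves $o$ unchanged, while the map $m \mapsto o(m,x+1)$ is order-preserving (digitwise lexicographic comparison from the top matches both the natural-number order and the ordinal order), so subtracting $1$ and re-normalising strictly decreases $o$. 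Hence the ordinals attached to successive terms form a strictly descending sequence, and well-foundedness of $<$ forces the process to reach $0$ in finitely many steps. Crucially, for a fixed $(n,x)$ every ordinal occurring lies below the fixed bound $\omega^{k+1} < \omega^\omega$, so transfinite induction well within PRA's reach already settles each instance.

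The independence is the delicate half. I would make it precise recursion-theoretically, following Cicho\'n's method. Let $L(n,x)$ be the number of steps Process~\ref{def:Process1} takes to reach $0$ from $(n,x)$; its graph is primitive recursive, so the universal termination is exactly the $\Pi^0_2$ statement asserting the totality of $L$. The key calibration is that $L$ sits at level $\omega$ of the fast-growing (Hardy) hierarchy: as $n$ grows with $x$ fixed, the ordinal $o(n,x)$ ranges cofinally in $\omega^\omega$, and the step-count tracks the Hardy functions $H_\alpha$ for $\alpha < \omega^\omega$, whose diagonal dominates every primitive recursive function in the manner of the Ackermann function $F_\omega$. Consequently $L$ is not primitive recursive. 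Since the provably total recursive functions of PRA are precisely the primitive recursive functions, PRA cannot prove $L$ total, and therefore cannot prove the universal termination of Process~\ref{def:Process1}.

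The main obstacle is the growth-rate calibration in the third paragraph: relating $L$ cleanly to the Hardy hierarchy and extracting the Ackermann-like (non-primitive-recursive) bound $F_\omega$ is where the real work lies, together with the invocation of the exact characterisation of PRA's provably total functions. By contrast the ordinal-descent termination argument, and the reduction of independence to the non-primitive-recursiveness of $L$, are routine once the assignment $o(\cdot,\cdot)$ is in place.
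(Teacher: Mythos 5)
Your proposal is correct, but note that the thesis itself gives no proof of this statement: it is imported verbatim from Cicho\'n's paper, and your reconstruction (ordinal assignment $o(n,x)=\sum_i a_i\omega^i$ below $\omega^\omega$ for termination, then calibrating the step-count function against the Hardy hierarchy and using that the provably total functions of PRA are exactly the primitive recursive ones) is precisely Cicho\'n's recursion-theoretic method that the citation points to. Your side remarks are also accurate: each fixed instance is a true $\Sigma^0_1$ sentence and hence PRA-provable, so the independence lives entirely in the $\Pi^0_2$ universal statement, exactly as you say.
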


\begin{theorem}[\protect{\cite[Thorem 2]{Cichon1983}}]\label{thm:Cichon2}
For any $n \in \omega$ and base $x$, \ref{def:Process2} terminates, but this fact is not provable in PA.
\end{theorem}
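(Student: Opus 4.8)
The plan is to prove \ref{thm:Cichon2} in two logically independent parts: first that \ref{def:Process2} always terminates (a true $\Pi^0_2$ statement about the standard model), and second that this termination cannot be derived in PA. For termination I would run the classical ordinal-descent argument. Given a starting value $n$ and base $x$, write each successive term in \emph{hereditary} (pure) base-$b$ notation, where $b$ is the current base, and form an ordinal below $\varepsilon_0$ by replacing every occurrence of the numeral $b$ with the symbol $\omega$ and reading the result as a Cantor normal form; call this $o_k$ for the $k$-th term. The crucial observation is that bumping the base from $b$ to $b+1$ leaves this ordinal unchanged, since the syntactic shape of the hereditary expression is identical and only the numeral naming the base has moved, whereas the subsequent subtraction of $1$ strictly lowers the Cantor-normal-form ordinal. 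Hence $o_1 > o_2 > o_3 > \cdots$ is a strictly descending sequence of ordinals below $\varepsilon_0$; since $\varepsilon_0$ is well-ordered this sequence is finite, so the process reaches $0$ and halts. This half needs nothing beyond the well-foundedness of $\varepsilon_0$, available in the metatheory.

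For the unprovability in PA I would follow Cichon's recursion-theoretic route. Measure the \emph{length} of the Goodstein sequence as a function $G(n)$ of the starting value, and calibrate its growth against the Hardy hierarchy $(H_\alpha)_{\alpha<\varepsilon_0}$, defined by $H_0(n)=n$, $H_{\alpha+1}(n)=H_\alpha(n+1)$, and $H_\lambda(n)=H_{\lambda[n]}(n)$ at limits via the canonical fundamental sequences. The base-bumping step of \ref{def:Process2} mirrors exactly the assignment of fundamental sequences to limit ordinals, and unwinding this correspondence shows that $G$ grows at the rate of $H_{\varepsilon_0}$. Since the provably total recursive functions of PA are precisely those dominated by some $H_\alpha$ with $\alpha<\varepsilon_0$ (Gentzen, Wainer), the function $G$ sits at level $\varepsilon_0$ and therefore outgrows every PA-provably-total function; consequently $G$ cannot be proved total in PA, so the termination statement is not a theorem of PA. Equivalently, a formalised termination proof would yield the $1$-consistency of PA, contradicting Gödel's second incompleteness theorem.

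The main obstacle is the unprovability direction. The termination argument is short precisely because it offloads all the work onto the well-ordering of $\varepsilon_0$, which lives in the metatheory; the genuinely hard content is showing that PA cannot recover the result internally, that is, that \emph{no} PA-proof exists rather than merely that the ordinal argument above is unavailable inside PA. This rests on the full proof-theoretic analysis of PA: the identification of $\varepsilon_0$ as its proof-theoretic ordinal, Gentzen's theorem that PA proves transfinite induction below but not up to $\varepsilon_0$, and the exact matching of PA's provably total functions with the Hardy (equivalently, fast-growing) hierarchy. Pinning $G$ down to level $\varepsilon_0$ precisely enough to invoke this characterisation — by making the Goodstein base-bumping agree with the fundamental-sequence structure of the ordinals — is the delicate bookkeeping, and it is here that Cichon's argument does the real work.
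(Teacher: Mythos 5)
Your proposal is correct and takes essentially the same route as the paper, which offers no proof of its own: it cites Cichon and summarises precisely the recursion-theoretic argument you spell out, namely that the function computing the Goodstein process lies outside $ProvRec(PA)$, established by calibrating the sequence-length function against the Hardy hierarchy and invoking the characterisation of PA's provably total functions below $\varepsilon_0$. Your termination half (ordinal descent below $\varepsilon_0$ with the base-bump leaving the assigned ordinal fixed and the subtraction strictly decreasing it) is the standard Goodstein argument and is also sound.
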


Denote by $ProvRec(PA)$ the Provably Recursive functions of PA. Cichon's \cite{Cichon1983} proof of \ref{thm:Cichon2} relies on demonstrating that some machine $\varphi_{Good}$ that computes \ref{def:Process2} is such that $$\varphi_{Good} \notin ProvRec(PA)$$ Given this fact, it is necessarily true that $$ PA \nvdash \forall n,x \, \exists s \, \varphi_{Good,s}(n,x) \downarrow = 0$$

Let $S_{Good}$ denote the Turing Machine tiling generated by the process outlined in the proof of theorem \ref{thm:TMTilings}. We get the following corollary:

\begin{corollary}[C. 2019]\label{cor:PAGoodNotTile}
It is necessarily the case that for all $n,x$ there exists an $s$ such that the row $r_{2s}$ has a hole, and so $\forall n,x \, [\neg Tile(S_{Good}(n,x))]$, however by \cite{Cichon1983} it is necessarily true that $$PA \nvdash \forall n,x \, [\neg Tile(S_{Good}(n,x))]$$
\end{corollary}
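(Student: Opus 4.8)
The plan is to reduce both assertions to facts already established about the Goodstein machine $\varphi_{Good}$ through the explicit tile-to-machine correspondence of Theorem \ref{thm:TMTilings}. Recall that for the index $Good$ and input $(n,x)$ that construction produces a finite prototile set $S_{Good}(n,x)$ together with the equivalence
$$ \neg Tile(S_{Good}(n,x)) \iff \exists s \, \varphi_{Good,s}(n,x) \downarrow, $$
since a hole is forced in some row $r_{2s}$ exactly when a Halting tile must be placed, which happens exactly when the simulated computation reaches the HALT state by stage $s$.

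First I would establish the unconditional claim $\forall n,x\,[\neg Tile(S_{Good}(n,x))]$. By Goodstein's theorem — the true statement underlying Theorem \ref{thm:Cichon2} — Process \ref{def:Process2} terminates on every $n$ and every base $x$, so $\varphi_{Good}(n,x)\downarrow$ for all $n,x$. Hence there is a stage $s$ with $\varphi_{Good,s}(n,x)\downarrow$, and the equivalence above yields $\neg Tile(S_{Good}(n,x))$; equivalently, in every attempt to tile with $S_{Good}(n,x)$ some row $r_{2s}$ must contain a hole.

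For the independence claim I would argue by contradiction. Suppose $PA \vdash \forall n,x\,[\neg Tile(S_{Good}(n,x))]$. The correspondence of Theorem \ref{thm:TMTilings} is entirely finitary: the tile set $S_{Good}(n,x)$ is produced by a computable (indeed primitive recursive) function of $(n,x)$, and ``row $r_{2s}$ is forced to contain a hole'' is a decidable predicate of $(n,x,s)$ that PA proves equivalent to ``$\varphi_{Good,s}(n,x)$ is in the HALT state''. Internalising this equivalence inside PA, the assumed theorem would give $PA \vdash \forall n,x\,\exists s\,\varphi_{Good,s}(n,x)\downarrow$, i.e. PA would prove that $\varphi_{Good}$ is total. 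This contradicts \cite{Cichon1983}, where it is shown that $\varphi_{Good} \notin ProvRec(PA)$ and hence $PA \nvdash \forall n,x\,\exists s\,\varphi_{Good,s}(n,x)\downarrow = 0$.

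The main obstacle is precisely this last step: one must verify that the metamathematical equivalence furnished by Theorem \ref{thm:TMTilings} is itself \emph{provable} in PA, not merely true in the standard model. This is where care is needed, but the correspondence only involves bounded reasoning about finite patches, the local edge-matching conditions of the von Neumann neighbourhood, and a step-by-step simulation of the machine — all primitive recursive, and so formalisable within a fragment of PA. Granting this, no new tiling-theoretic argument is required; the corollary is a transfer of the Kirby--Paris/Cichoń independence phenomenon through the undecidability construction of Theorem \ref{thm:TMTilings}.
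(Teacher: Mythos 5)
Your proposal is correct and takes essentially the same route as the paper, which derives the corollary by transferring Cicho\'n's result that $\varphi_{Good} \notin ProvRec(PA)$ through the equivalence $\neg Tile(\mathcal{S}_e(x)) \iff \varphi_e(x) \downarrow$ established in Theorem \ref{thm:TMTilings}. Your explicit verification that this equivalence is itself provable in PA (being a primitive recursive matter of finite patches and local edge-matching) is a point the paper leaves implicit, and you handle it correctly.
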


It is perhaps unexpected \emph{prima facie} that the Domino Problem would have the means to defy provability of mathematically strong theories such as PA. However, the long established relationships between tilings and computability cement that there exists sets of Wang prototiles that have interesting proof theoretic outcomes.

\chapter{$\Sigma^1_1$-Complete Tilings}
\label{chap4}
% next resets the equation numbers to start at 1 at the start of the chapter
\setcounter{equation}{0}
\renewcommand{\theequation}{\thechapter.\arabic{equation}}

%------------------------------------------------------------------------------

\epigraph{I could be bounded in a nutshell and count myself king of infinite space.}{\textit{Hamlet}}

In this chapter we present our main results that concern infinite sets of Wang prototiles, and relate these to problems on infinite trees in Baire space. Previous work in tilings has generally considered finite sets of prototiles - and this is a natural assumption to make about things that we ostensibly only consider to be of finitely-many possibilities. 

The difference, as we shall see, is that by allowing our tilings as functions $f:\mathbb{Z}^2 \rightarrow \mathcal{S}$ to range over infinite prototiles, the original Domino Problem \ref{def:domprob} becomes equivalent, after careful construction, to whether a tree corresponding to our tiling is well-founded or ill-founded. As we found that finite sets of prototiles are equivalent to the Halting Problem, so we construct this new equivalence in this chapter.

We then extend this result to a variation of the Domino Problem - the problem of `weakly tiling' the plane, as well as an analogous notion of `strongly not tiling' the plane.

\section{Computable Trees and Computable Tilings}

In the section that follows, we will need the following in order to prove theorem \ref{thm:TILE-ILL}. First, we define what we mean by computable tilings. Recall that we represent by $\langle l,u,r,b \rangle$ the Wang prototile
\begin{center}
\sampletile{$l$}{$u$}{$r$}{$b$}
\end{center}
We define a computable set of Wang prototiles as follows:

\begin{definition}\index{computable prototile sets}\index{computable tile sets}
Let $X \subset \omega$, and $\mathcal{S}$ be a set of Wang prototiles.
\begin{itemize}
\item Let $X_\mathcal{S} = \{ \langle c_l, c_u, c_r, c_b \rangle : \langle  c_l, c_u, c_r, c_b \rangle \text{ codes some prototile in } \mathcal{S} \}$.
\item We say that $\mathcal{S}$ is \emph{computable} if $X_\mathcal{S}$ is computable.
\item We say that an $\mathcal{S}$-tiling of the plane is computable if $f_\mathcal{S} : \mathbb{Z}^2 \rightarrow \mathcal{S}$ is computable.
\item We say that $\mathcal{S}$ is \emph{total} if for every point $(x,y) \in \mathbb{Z}^2$ and a tiling function $f: \mathbb{Z}^2 \rightarrow \mathcal{S}$, $f$ is total on $\mathbb{Z}^2$, all edge conditions are met for any $\mathcal{S}$-tiling.
\end{itemize}
\end{definition}

\section{$\Pi^1_1$ Properties of Tilings}

In this section we will cover previous work on the $\Pi^1_1$ nature of specified Domino Problems that inquire about the properties of tile occurrences in planar tilings.

\subsection{Harel's $\Pi^1_1$ Tilings}

David Harel in \cite{Harel1986} was interested in translations between various kinds of computable trees. The core idea is to formulate correspondences between finitely branching and countably infinitely branching trees and infinitely branching tress, one-to-one, such that the paths along the latter become ``$\varphi$-abiding" paths of the former, for $\varphi$ being some property of infinite paths.

Harel in \cite{Harel1986} proposes the following problem relating to Wang prototile sets:

\begin{definition}[Recurring Tile Problem]\index{recurring domino problem}\label{def:RecDP}
Given a set of prototiles $\mathcal{S}$, for $t \in \mathcal{S}$, does $t$ occur infinitely often in a tiling of the lattice $\mathbb{Z}^2$?
\end{definition}

This is a variation on the standard Domino problems that we have considered so far. Rather than ask ``do there exist planar tilings?" we ask ``do any planar tilings have a given property?" The property in this case is a weaker question than ``are all $S$-tilings periodic or aperiodic?" - something we will come to discuss later in this thesis.

Harel in \cite{Harel1986} goes on to prove the following theorem:

\begin{theorem}[\cite{Harel1986}, Theorem 6.3]\label{thm:RecDomProbSigma}
The Recurring Tile Problem is $\Sigma^1_1$-complete.
\end{theorem}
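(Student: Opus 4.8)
The plan is to prove the two halves of completeness separately: membership in $\Sigma^1_1$, which is a routine quantifier count, and $\Sigma^1_1$-hardness, which is where the real work lies. For membership, observe that an $\mathcal{S}$-tiling of the lattice is precisely a function $f : \mathbb{Z}^2 \to \mathcal{S}$, and after coding $\mathbb{Z}^2$ and $\mathcal{S}$ into $\omega$ this is a single point of Baire space. The assertion that $f$ is a \emph{legal} tiling says that the match criteria hold in the von Neumann neighbourhood of every point, which is a $\Pi^0_1$ condition on $f$; the assertion that the distinguished prototile $t$ recurs infinitely often is $(\forall n)(\exists x,y)[\max(|x|,|y|) > n \wedge f(x,y) = t]$, which is $\Pi^0_2$. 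Hence the Recurring Tile Problem has the shape $(\exists f)[\Phi(f)]$ with $\Phi$ arithmetical, so it lies in $\Sigma^1_1$.

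For hardness I would reduce from $ILL$, the set of indices of ill-founded computable trees $T \subseteq \omega^{<\omega}$; since the well-founded computable trees are $\Pi^1_1$-complete by Corollary \ref{cor:CompTreesPi11}, their complement $ILL$ is $\Sigma^1_1$-complete. Given an index for such a $T$, I would build, uniformly and computably in that index (invoking the $s$-$m$-$n$ theorem, \ref{thm:smn}), a prototile set $\mathcal{S}_T$ together with a marked tile $t$. Because infinite prototile sets are permitted here, I can use an infinite colour alphabet to carry natural-number data on the edges, so that each relevant tile records a node $\sigma \in \omega^{<\omega}$. Using the schema-tile machinery (\ref{def:schematile}), I would arrange the tiles so that, reading up a distinguished ray of any legal tiling, the successive occurrences of the marked tile spell out nodes $\sigma_0 \prec \sigma_1 \prec \cdots$, each lying in $T$ and each extending its predecessor by one coordinate, while the remaining prototiles are inert filler that can always complete the rest of the plane.

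The two implications should then fall out of the design. If $T$ is ill-founded, fix an infinite path $x \in [T]$ and lay the initial segments $x \upharpoonright 0, x \upharpoonright 1, \ldots$ along the ray; the schema forces $t$ to appear at every level, so $t$ recurs infinitely often and we have a positive instance. Conversely, if some legal tiling makes $t$ recur infinitely often, the edge constraints force infinitely many marked tiles up the ray whose recorded nodes are $\prec$-increasing and remain inside $T$; their union is an infinite branch, so $T \in ILL$. This yields $e \in ILL \iff (\mathcal{S}_T, t)$ is a positive instance, i.e. an $m$-reduction, and together with membership establishes $\Sigma^1_1$-completeness.

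The main obstacle is the converse implication, and specifically ruling out spurious recurrence: I must prevent a tiling from producing infinitely many copies of $t$ that do \emph{not} descend ever deeper into the tree — for instance, a single shallow node repeated horizontally, or infinitely many unrelated shallow nodes scattered across the plane. The schema therefore has to tie each marked tile to a strictly increasing level counter and force consecutive marked tiles to be joined by an edge-verified one-step extension in $T$, so that a genuine infinite branch is the only mechanism by which $t$ can recur infinitely often. Getting this colour bookkeeping to be simultaneously rigid enough to force a real path and loose enough that the filler tiles still complete every partial tiling is the crux of the construction.
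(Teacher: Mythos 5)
Your membership argument is fine, and in fact more direct than the paper's route (the paper detours through a nondeterministic machine $M$ that re-enters $q_0$ on each use of $t$, then appeals to the NTM recurrence problem); your quantifier count $(\exists f)[\Phi(f)]$ with $\Phi$ arithmetical is unobjectionable. The hardness half, however, proves the wrong theorem. You explicitly lean on ``infinite prototile sets are permitted here'' and carry nodes $\sigma \in \omega^{<\omega}$ as edge colours, but the statement being proved is Harel's Theorem 6.3, which concerns \emph{finite} Wang prototile sets --- and the paper's proof is exactly the finite-tile one. Hardness established with infinite tile sets does not transfer down to the finite-tile problem (hardness of a problem does not imply hardness of its restriction to a subclass of instances), so the entire content of Harel's result is bypassed. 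With finitely many colours you cannot record elements of $\omega^{<\omega}$ on edges at all; what Harel does instead is reduce from the recurrence problem for nondeterministic Turing machines (the properties \textbf{C1}/\textbf{C2} in the paper's proof), which is $\Sigma^1_1$-hard by the tree--NTM equivalence of Theorem \ref{lemma:AequivC}. There the $\omega$-branching of the tree is simulated by the machine's unbounded nondeterministic choices unfolding row by row in a tiling built from the finite schema of Theorem \ref{thm:TMTilings}, and the marked tile is the single fixed tile representing re-entry into $q_0$ on a blank cell. Your construction is essentially a re-derivation of this thesis's own Chapter 3 machinery for infinite prototile sets (cf.\ Theorem \ref{thm:TILE-ILL}), which is a different, later result, not a proof of Harel's.

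Two further points, the first of which bites even under the infinite-tile reading you adopt. A Wang prototile is determined by its four colours, so a single tile $t$ cannot itself ``spell out'' varying nodes $\sigma_0 \prec \sigma_1 \prec \cdots$: distinct node labels mean distinct tiles, and $t$ must have fixed colours. You would need a fixed marker tile forced adjacent to each node-carrying column tile, which changes the design you sketch. Second, the spurious-recurrence problem you correctly flag as the crux is precisely where Harel's finite construction spends its effort, and his solutions are structural rather than colour-based: the arrow colours forcing patterns $\cdots \rightarrow\, \rightarrow\, \leftarrow\, \leftarrow \cdots$ guarantee exactly one machine head per row (killing horizontal duplication of the marked tile), and the counter modification --- incrementing a variable on each entry into $q_0$ --- rules out tilings in which the recurrence appears only in the impossible downward direction when the quadrant tiling is extended to the full plane. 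Without devices of this kind, available with finitely many tiles, the converse implication cannot be made to work for the theorem as stated.
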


We first require the following definition and lemmas from \cite{Harel1986}:

\begin{definition}
A class $A$ is \emph{$\Sigma^1_1$-hard} if there is a computable way of converting any $\Sigma^1_1$ formula into some member of $A$.
\end{definition}

\begin{definition}
A tree $T$ is an $\omega$-tree if $T \subseteq \omega^{< \omega}$. A $k$-tree is a tree $T \subseteq \{0,1,\ldots,k-1\}^{< \omega}$ for some finite $k \in \omega$. If such a \emi{$k$-tree} $T$ is bounded by some $b \in \omega$ then it is a \emi{$b$-tree}. We say that a \emph{recurrence} in a $b$-tree is the repetition of some specific $i \in \{ 0, \ldots, k-1 \}$ along an infinite path. 
\end{definition}

For graph-theoretic trees, this is equivalent to some of the non-leaf nodes being marked, and a recurrence being infinitely many marked nodes along some infinite path in the tree. 

\begin{lemma}[\cite{Harel1986}, p.230]\label{lemma:InfTreeRec}
The set $A$ of computable well-founded $\omega$-trees is computably isomorphic to the set $B$ of computable marked recurrence-free $b$-trees.
\end{lemma}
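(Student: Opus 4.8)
The plan is to exhibit explicit, computable, mutually inverse translations between the two families, with the defining property of one side (well-foundedness) carried \emph{exactly} onto the defining property of the other (recurrence-freeness). The single idea behind the whole construction is to trade \emph{infinite branching} for \emph{infinite depth plus marking}: an $\omega$-branching node is simulated by a binary spine in which ``going right $n$ times and then turning left'' encodes the choice of the $n$-th child, and the committing left-step is the node we mark. In this way every image tree is a $2$-tree, hence a $b$-tree with $b = 2$.

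First I would define the forward map $\Phi$. Given a computable $\omega$-tree $T$, build $T' = \Phi(T)$ by replacing the edge from $\sigma$ to its child $\sigma^\frown n$ with a path of $n$ unmarked right-steps followed by a single marked left-step; the nodes reached immediately after a left-step correspond to the nodes of $T$, and the pure right-spines are the ``not-yet-committed'' auxiliary nodes. A node of $T$ of finite degree $m$ contributes only a finite right-spine of length $m$, while a node of infinite degree contributes an infinite right-spine. Since membership in $T$ is decidable, the coding set $X_{T'}$ is computable uniformly in an index for $\chi_T$, so $\Phi$ acts computably on indices and always produces a computable $2$-tree.

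Next I would verify the key equivalence $T \text{ well-founded} \iff \Phi(T) \text{ recurrence-free}$. An infinite path $\sigma_0 \prec \sigma_1 \prec \cdots$ of $T$ contributes exactly one marked left-step per step $\sigma_k \to \sigma_{k+1}$, so concatenating the spines yields an infinite path of $T'$ with infinitely many marks, i.e.\ a recurrence. Conversely every infinite path of $T'$ is either (a) one that commits infinitely often, whose marked left-steps read off an infinite path of $T$, or (b) one that eventually runs off along a pure right-spine through a node of infinite degree, carrying only finitely many marks and hence \emph{not} a recurrence. Thus $T'$ has a recurrence iff $T$ is ill-founded; note that $T'$ may well have infinite (right-spine) paths even when $T$ is well-founded, and it is precisely the marking that preserves the $\Pi^1_1$ strength that branching alone would lose by K\"onig's Lemma.

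Finally I would give the decoding $\Psi$ and assemble the isomorphism. From a marked recurrence-free $b$-tree one recovers an $\omega$-tree by reading the marks as commitments exactly as above (a $b$-bounded spine encodes a child index in $\{0,\dots,b-1\}$, which one widens to $\omega$ by padding), and the same path analysis shows $\Psi$ sends recurrence-free trees to well-founded ones and is computable on indices. Since $\Phi$ and $\Psi$ are computable and each preserves and reflects the relevant property, they witness $A \le_1 B$ and $B \le_1 A$; making both reductions injective by the usual padding and invoking Myhill's isomorphism theorem upgrades this to the asserted computable isomorphism. I expect this last upgrade to be the main obstacle: the raw encodings are injective but not onto, their images being spine-encoded trees of a special shape, so one must either cut down to a normal form on which $\Phi$ and $\Psi$ are genuinely inverse or lean on the Myhill machinery to pass from mutual $1$-reductions to an actual computable bijection of $\omega$.
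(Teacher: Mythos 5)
The thesis states this lemma without proof, deferring to \cite{Harel1986}, so there is no in-paper argument to compare against; your plan --- trading infinite branching for infinite depth, with marks recording genuine descent in the original tree, and then invoking Myhill's isomorphism theorem to upgrade mutual $1$-reductions of the index sets to a computable isomorphism --- is the standard route and essentially the cited one. Your central verification is also sound: infinite paths of $\Phi(T)$ split into those committing infinitely often (infinitely many marks, i.e.\ a recurrence, from which one reads off an infinite path of $T$) and those eventually running along a pure spine (finitely many marks), so well-foundedness transfers to recurrence-freeness. Two steps, however, fail as written.

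First, your $\Phi(T)$ is not computable as defined: the $k$-th node of the right-spine below $\sigma$ belongs to $\Phi(T)$ iff $(\exists m \geq k)\,\sigma^\frown m \in T$, a $\Sigma^0_1$ condition, and neither the degree of $\sigma$ nor the supremum of its child indices is computable from $\chi_T$ --- so truncating the spine of a finite-degree node at length $m$, as you claim to do, makes $\Phi(T)$ merely c.e., contradicting your assertion that $X_{T'}$ is computable uniformly in an index for $\chi_T$. The repair is to attach the \emph{full} infinite right-spine to every node of $T$ unconditionally, with a marked left-child after $n$ right-steps present iff $\sigma^\frown n \in T$; your path analysis survives verbatim, since eventually-pure-spine paths still carry only finitely many marks. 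Second, your decoding $\Psi$ (reading a spine as a child index) is meaningful only on trees of the special spine shape in the image of $\Phi$, whereas the reduction $B \leq_1 A$ requires a map defined, and property-reflecting, on \emph{every} computable marked $b$-tree; cutting $B$ down to a normal form does not produce such a reduction, and Myhill's theorem cannot be invoked until both $1$-reductions actually exist. The standard fix is the inter-mark segment tree: send $T'$ to the tree of finite sequences $\langle u_1, \ldots, u_k \rangle$ of nonempty strings such that each concatenation $u_1^\frown \cdots ^\frown u_i$ is a node of $T'$ whose marks occur exactly at the segment ends; this tree is computable from $\chi_{T'}$, and its infinite paths correspond exactly to the infinite paths of $T'$ carrying infinitely many marks, so it is well-founded iff $T'$ is recurrence-free. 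Finally, since $A$ and $B$ are index sets, both reductions must route junk indices (partial $\varphi_e$, total functions not coding trees or markings) outside the target set, a filtering step of precisely the kind carried out in Lemma \ref{lemma-pproc}; with these repairs in place, your concluding appeal to padding and Myhill goes through.
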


This lemma then sets the scene for the following theorem:

\begin{theorem}[\cite{Harel1986}, Lemma 6.1]\label{lemma:AequivC}
Let $A$ be the set of computable well-founded $\omega$-trees, and let $C$ be the set of enumerated notation for all Non-deterministic Turing Machines (NTMs). Then $$ A \equiv_1 C $$
\end{theorem}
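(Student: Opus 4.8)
The plan is to route the equivalence through the set $B$ of computable marked recurrence-free $b$-trees supplied by Lemma \ref{lemma:InfTreeRec}. Since that lemma gives a \emph{computable isomorphism} between $A$ and $B$, it provides a single computable bijection witnessing both $A \leq_1 B$ and $B \leq_1 A$, so $A \equiv_1 B$. It therefore suffices to establish $B \equiv_1 C$, after which combining the reductions in each direction yields $A \equiv_1 C$.

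First I would prove $B \leq_1 C$. Given an index for a computable marked $b$-tree $T$, I would uniformly and computably build a notation for an NTM $M_T$ that, from its initial configuration, nondeterministically guesses the successive nodes of $T$ one level at a time; because $T$ branches at most $b$-fold, each step requires only a bounded nondeterministic choice, and a node's membership in $T$ is decidable from the index of $T$. I would arrange that $M_T$ enters a distinguished ``recurring'' state exactly when the path it is building passes through a marked node, and halts (abandoning that branch) at any non-node. By construction the infinite computations of $M_T$ correspond bijectively to the infinite paths of $T$, and a computation visits the recurring state infinitely often iff the corresponding path carries infinitely many marks. Hence $T$ is recurrence-free iff $M_T$ has no recurring computation, i.e.\ $T \in B \iff M_T \in C$; padding the construction makes the notation map injective, giving a $1$-reduction.

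Conversely, for $C \leq_1 B$ I would read off the computation tree of a given NTM $M$ as a computable tree, first normalising $M$ so that its nondeterministic fan-out is bounded (encoding each step's finitely many successor configurations so the tree becomes a $b$-tree), and then marking precisely the nodes at which $M$ is in its recurring state. This $b$-tree is recurrence-free exactly when $M$ admits no computation entering the recurring state infinitely often, so $M \in C \iff T_M \in B$; again I would pad to secure injectivity.

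The main obstacle is the bookkeeping that makes the recurrence conditions match \emph{exactly} across the translation: I must ensure that ``an infinite path with infinitely many marked nodes'' on the tree side corresponds to the intended ``infinitely often in the recurring state'' behaviour on the machine side, rather than to mere nontermination, and simultaneously force the machine's branching into the fixed bound $b$ (e.g.\ by binarising each nondeterministic choice) without creating spurious infinite paths or corrupting the mark structure. Handling these two requirements together, while keeping both notation maps total computable and injective, is where the care lies; the remaining steps are routine.
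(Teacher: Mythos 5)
Your route is essentially the intended one: the paper gives no proof of this lemma at all, deferring to \cite{Harel1986}, and Harel's argument is exactly the factorisation you propose --- pass from well-founded $\omega$-trees to the marked recurrence-free $b$-trees of Lemma \ref{lemma:InfTreeRec} (a computable isomorphism, hence a $1$-equivalence), code a marked tree into an NTM whose visits to a distinguished recurring state track the marked nodes along the guessed branch, and code an NTM's binarised computation tree back, marking the recurring-state configurations. Your observation that binarising the fan-out introduces only finite-depth gadgets, so it creates no new infinite paths and does not disturb the mark structure, is the right check, and the machine-to-tree direction is unproblematic since every notation decodes to some NTM whose computation tree is uniformly computable.

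Two points need explicit care, one of which is a concrete failure mode in the direction $B \leq_1 C$ as you describe it. First, the theorem's statement of $C$ is elliptical: read literally (``the set of enumerated notation for all NTMs'') $C$ is decidable and the equivalence is false; consistently with property \textbf{C1} and the subsequent corollary asserting $\Pi^1_1$-completeness, $C$ must be the set of NTMs admitting \emph{no} computation that enters the recurring state infinitely often. Your proof implicitly and correctly adopts this reading. Second, a $1$-reduction must be correct on \emph{every} natural number, including indices $e$ for which $\varphi_e$ is partial or is total but not the characteristic function of a marked tree. Your machine $M_T$ simulates the membership test for $T$, so on such an index it hangs inside a simulation step: it then has no recurring computation and so lies in $C$, while $e \notin B$ --- the biconditional fails on exactly these junk indices, and this is not covered by the ``bookkeeping'' you flag. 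The remedy is to fix a presentation in which every index denotes a total marked tree (in the style of the uniformly primitive recursive trees $T_e$ of Chapter 1), or to prepend a marked analogue of the pre-processing Lemma \ref{lemma-pproc}; with that normalisation in place, your argument goes through and matches Harel's.
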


Recalling our definition of 1-reducibility in definition \ref{def:mred}, and let $A \equiv_1 B$ iff $A \leq_1 B$ and $B \leq_1 A$. A proof of this is found in \cite{Harel1986}. From here we get:

\begin{corollary}[\cite{Harel1986}, Corollary 6.2]
$C$ is $\Pi^1_1$ complete.
\end{corollary}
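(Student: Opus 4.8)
The plan is to transfer $\Pi^1_1$-completeness across the $1$-equivalence $A \equiv_1 C$ established in Theorem~\ref{lemma:AequivC}, using the $\Pi^1_1$-completeness of the computable well-founded trees from Corollary~\ref{cor:CompTreesPi11}. The set $A$ of computable well-founded $\omega$-trees is exactly the class identified as $\Pi^1_1$-complete in Corollary~\ref{cor:CompTreesPi11} (the $\omega$ merely records that the alphabet is $\omega$, matching the $\omega^{<\omega}$-trees used throughout those results), so $A$ may be treated simultaneously as a member of $\Pi^1_1$ and as $\Pi^1_1$-hard. Completeness is then purely a matter of chasing reducibilities, since $\equiv_1$ gives both $C \leq_1 A$ and $A \leq_1 C$, and $1$-reducibility is a special case of $m$-reducibility by Definition~\ref{def:mred}.

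First I would verify $C \in \Pi^1_1$. From $C \leq_1 A$ we obtain $C \leq_m A$, and since $\Pi^1_1$ is closed downward under $\leq_m$ (the analytic analogue of the closure of $\Sigma^0_1$ under $\leq_m$ proved earlier in this chapter), membership $A \in \Pi^1_1$ forces $C \in \Pi^1_1$. Next I would show $C$ is $\Pi^1_1$-hard: let $P$ be an arbitrary $\Pi^1_1$ set. The hardness half of Corollary~\ref{cor:CompTreesPi11} gives $P \leq_m A$, while the remaining direction of Theorem~\ref{lemma:AequivC}, namely $A \leq_1 C$, gives $A \leq_m C$; transitivity of $\leq_m$ then yields $P \leq_m C$. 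As $P$ was arbitrary, every $\Pi^1_1$ set $m$-reduces to $C$, so $C$ is $\Pi^1_1$-hard. Combining the two directions, $C$ is $\Pi^1_1$ and $\Pi^1_1$-hard, hence $\Pi^1_1$-complete.

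The deduction of the corollary itself is therefore entirely formal, relying only on transitivity and downward closure of the reducibilities together with the two cited results. The genuinely substantive input is the $1$-equivalence $A \equiv_1 C$ of Theorem~\ref{lemma:AequivC}, whose proof (carried out in \cite{Harel1986}) encodes the branching structure of an $\omega$-tree into the nondeterministic branching of a Turing machine, matching well-foundedness of the tree against the relevant computational condition on the NTM. If one were to reprove the statement from scratch, the main obstacle would lie there: checking that this tree-to-NTM translation is computable and that it correctly transports the well-founded/ill-founded dichotomy. Given that Theorem~\ref{lemma:AequivC} and Corollary~\ref{cor:CompTreesPi11} are already available, no such obstacle arises and the completeness transfer goes through immediately.
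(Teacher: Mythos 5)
Your proposal is correct and follows essentially the same route as the paper: the corollary is obtained there directly from the $1$-equivalence $A \equiv_1 C$ of Theorem \ref{lemma:AequivC} together with the $\Pi^1_1$-completeness of the computable well-founded trees (Corollary \ref{cor:CompTreesPi11}), with membership and hardness transferred along the reducibilities exactly as you spell out. Your write-up merely makes explicit the downward closure of $\Pi^1_1$ under $\leq_m$ and the transitivity step that the paper leaves implicit, which is a faithful elaboration rather than a different argument.
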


The intuition behind these results is to set the stage that the question:

\noindent
\textbf{C1}: ``for a given NTM $U$, does $U$ re-enter its starting state $q_0$ infinitely often?" 

\noindent 
is a $\Sigma^1_1$ link to our Recurring Tile Problem above (\textbf{RTP}). The proof of \ref{thm:RecDomProbSigma} thus proceeds as follows:

\begin{proof}[Proof of \ref{thm:RecDomProbSigma}]
To first see that \textbf{RTP} is $\Sigma^1_1$, let $\mathcal{S}$ and some $t \in \mathcal{S}$ be given. Construct and NTM $M$ that begins on a blank tape by initially constructing a blank tiling of $\mathbb{Z}^2$. At each step, $M$ iterates over the $\mathbb{Z}^2$ lattice in a spiral pattern, considering each point in turn. Non-deterministically, $M$ tries to tile each position with some tile from $\mathcal{S}$. $M$ rejects if the edge conditions fail to match, and signals a successful use of the tile $t$ by re-entering its starting state $q_0$. Otherwise, $M$ will never re-enter $q_0$. Thus, $M$ has the property \textbf{C1} iff $t$ occurs infinitely often in the $\mathcal{S}$-tiling.

The rest of the proof is showing that \textbf{RTP} is $\Sigma^1_1$-hard. This is done through the following three claims. First, define \textbf{R2} as follows:

\noindent
\textbf{R2} - Given $\mathcal{S}$ and $t \in \mathcal{S}$, can $\mathcal{S}$ tile the positive quadrant of $\mathbb{Z}^2$ with $t$ occurring infinitely often and with the borderlines coloured white?

\begin{claim}\label{claim:R2}
\textbf{R2} is $\Sigma^1_1$-hard.
\end{claim}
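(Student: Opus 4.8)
The plan is to exhibit a computable many-one reduction from a known $\Sigma^1_1$-complete set into the YES-instances of \textbf{R2}, i.e.\ those pairs $(\mathcal{S},t)$ for which $\mathcal{S}$ tiles the positive quadrant with $t$ occurring infinitely often and white borderlines. The most convenient source is already in hand: by \ref{cor:CompTreesPi11} the well-founded computable trees are $\Pi^1_1$-complete, so the ill-founded ones are $\Sigma^1_1$-complete, and reading \ref{lemma:InfTreeRec} contrapositively transfers this to the set of computable marked $b$-trees that \emph{do} contain a recurrence — that is, an infinite path meeting the marked nodes infinitely often. This latter set is therefore $\Sigma^1_1$-complete, and it is the natural partner of \textbf{C1}. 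So it suffices, given a computable marked $b$-tree $T$, to build uniformly a prototile set $\mathcal{S}_T$ together with a distinguished tile $t$ so that $\mathcal{S}_T$ admits a white-bordered tiling of the positive quadrant in which $t$ recurs infinitely often \emph{iff} $T$ has a recurrence.

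For the construction I would use the schema-tile / Turing-machine simulation method of \ref{thm:TMTilings}. The idea is to orient the quadrant so that successive rows (increasing $y$) simulate the descent along a single branch of $T$: each row carries, across its horizontal edges, the data of the current node address together with a local computation (embedded in the tiles exactly as in \ref{thm:TMTilings}) that checks membership in $T$ and reads off the marking. The white bottom border forces the simulation to begin at the root at level $0$, and the white left border fixes a canonical frame for the node addresses and for the position at which the marking is reported. A tile recording a parent node in one row may be extended vertically only by a tile recording an immediate successor in $T$, so any legal tiling of the whole quadrant spells out an infinite branch $\nu_0,\nu_1,\nu_2,\dots$ of $T$. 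The distinguished tile $t$ is made placeable in precisely one controlled position per level (say in the left column) and only when the node $\nu_y$ recorded at that level is marked; thus the number of occurrences of $t$ equals the number of visited marked levels.

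With this in place the two directions are routine: if $T$ has an infinite path with a recurrence, I read off the tiling that realises that path and place $t$ at each of the infinitely many marked levels, giving infinitely many occurrences with white borders intact; conversely any white-bordered quadrant tiling encodes an infinite branch of $T$, and $t$ can occur infinitely often only if infinitely many levels along that branch are marked, i.e.\ the branch is a recurrence. Since $T \mapsto (\mathcal{S}_T,t)$ is computable, this is the desired reduction and establishes \Cref{claim:R2}. The main obstacle — and where the white-border conditions and the vertical address-passing do the real work — is rigidity: I must force \emph{every} admissible tiling to be a faithful single-branch encoding and ensure that the $t$-count tracks the number of \emph{distinct} marked levels rather than merely ``marked at least once'', ruling out spurious tilings that exploit the horizontal freedom of the quadrant to manufacture extra copies of $t$ (or to drift off the simulated branch) without any corresponding recurrence in $T$.
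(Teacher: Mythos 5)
Your proposal is correct in outline, but it organises the reduction differently from the paper, and the difference is worth spelling out. The paper grounds $\Sigma^1_1$-hardness in the machine problem \textbf{C1} via Theorem \ref{lemma:AequivC} (computable well-founded $\omega$-trees are $1$-equivalent to the NTM recurrence problem), so its tiling construction only needs to simulate a nondeterministic machine with the schema of Theorem \ref{thm:TMTilings}: the distinguished tile $t$ is the one representing re-entry into $q_0$, and the white borders come from recolouring the blank $B$. You instead take the computable marked $b$-trees \emph{containing} a recurrence as the $\Sigma^1_1$-complete source, reading Lemma \ref{lemma:InfTreeRec} contrapositively, and encode the tree walk into tiles directly, folding the machine step inside the tile set. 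Both routes rest on the same tree-completeness facts and your two verification directions match the paper's. What the paper's NTM detour buys, and what your sketch glosses, is \emph{finiteness} of the prototile set: tiles that "record a parent node" and extend vertically only to immediate successors require one tile per node of $T \subseteq \omega^{<\omega}$, hence an infinite tile set --- legitimate in this thesis's Chapter 3 framework (compare the column tiles of Theorem \ref{thm:TILE-ILL}) but not in Harel's finite-tile-set problem, which is the setting of Theorem \ref{thm:RecDomProbSigma}. The repaired finite version, with node addresses spread along a row as tape data and membership in $T$ checked by an embedded TM computation, cannot run one tree level per row: the membership check for the level-$y$ node takes unboundedly many rows, so levels occupy blocks of rows, successor-enforcement must be done by the simulated machine rather than by local vertical matching, and your "one occurrence of $t$ per marked level" bookkeeping must be restated per block --- at which point you have reconstructed precisely the paper's NTM simulation. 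Your closing rigidity worry is correctly identified but should not be left as an obstacle: in the paper it is discharged by the white borders forcing a unique initial configuration containing a single simulated head (and is revisited with the arrow tiles in the proof of Claim \ref{claim:R3}), and your construction needs that mechanism made explicit to rule out spurious occurrences of $t$.
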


\begin{proof}[Proof of \ref{claim:R2}]
We sketch the following proof of this claim. By theorem \ref{lemma:AequivC} we have that for an NTM $M$ that computes from the right, the question of whether it enters its initial $q_0$ infinitely often will be a $\Sigma^1_1$-hard problem, as it will be equivalent to the well-foundedness of some $\omega$-tree. 

We then construct a tile set from a scheme such that for each $M$, the tile set we build from $M$ has the property \textbf{R2} iff $M$ has the property above.

Let $M$ be given, reserving $B$ as the `blank' symbol, and let $p,q$ be states, and $s,t$ be tape symbols, all in NTM quintuples as defined in chapter 1. Our prototile set $\mathcal{S}$ will consist of tiles generated by the schema defined in the proof of theorem \ref{thm:TMTilings}.

Given our translation of $M$ into tiles preserves the recurrent properties of $M$, if $M$ enters its starting state $q_0$ infinitely often, then the tile representing this will occur infinitely often in the tiling, so $\mathcal{S}$ satisfies \textbf{R2}, with the white borders guaranteed by substituting the blank colour $B$ for plain white quadrants in our prototiles.
\end{proof}

We modify \textbf{R2} to the following statement:

\noindent
\textbf{R3} - Given $\mathcal{S}$ and $t \in \mathcal{S}$, can $\mathcal{S}$ tile the positive quadrant of $\mathbb{Z}^2$ with $t$ occurring infinitely often?

\begin{claim}\label{claim:R3}
\textbf{R3} is $\Sigma^1_1$-hard.
\end{claim}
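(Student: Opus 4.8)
The plan is to reduce \textbf{R2} to \textbf{R3}; since \textbf{R2} is $\Sigma^1_1$-hard by Claim \ref{claim:R2}, this immediately yields the $\Sigma^1_1$-hardness of \textbf{R3}. Concretely, I would produce a computable map $\mathcal{S} \mapsto \mathcal{S}'$ (with a distinguished $t \mapsto t'$) so that $\mathcal{S}$ is a positive instance of \textbf{R2} if and only if $\mathcal{S}'$ is a positive instance of \textbf{R3}. The two problems differ only in that \textbf{R2} forces the borderlines of the quarter plane to be white while \textbf{R3} leaves the exposed edges of the positive quadrant of $\mathbb{Z}^2$ entirely free, so the whole task is to \emph{internalise} the white-border requirement into the tile set itself.

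One direction is immediate: any $\mathcal{S}'$-tiling witnessing \textbf{R2} (with white borders and $t'$ recurring) is in particular a tiling witnessing \textbf{R3}, since dropping the border constraint only enlarges the class of admissible tilings. The content is in the converse. Here the danger is that an \textbf{R3}-tiling may use an \emph{arbitrary} bottom row rather than the canonical initial configuration that the white border would have enforced in the construction of theorem \ref{thm:TMTilings}; a priori such a tiling could make $t'$ recur for spurious reasons having nothing to do with the underlying NTM re-entering $q_0$.

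To remove this danger I would engineer $\mathcal{S}'$ so that the distinguished occurrence itself carries the border information: design the encoded NTM so that each re-entry into the start state $q_0$ resets to the canonical configuration (blank tape, head at the left end), and let $t'$ be the tile recording this canonical re-entry. Then in \emph{any} $\mathcal{S}'$-tiling in which $t'$ occurs infinitely often, the portion of the tiling lying weakly above and to the right of the first occurrence of $t'$ is itself a tiling of a quarter plane whose bottom row is the canonical, hence white-bordered, initial configuration, and in which $t'$ still occurs infinitely often. Translating that corner to the origin exhibits a genuine \textbf{R2}-witness, giving the converse. Equivalently, one can adjoin an explicit frame of wall-tiles built from a fresh colour that propagates the blank colour $B$ along the left column and bottom row, forcing white borders locally.

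The hard part will be this forcing step: I must guarantee that re-entering $q_0$ (the event recorded by $t'$) is genuinely synonymous with the canonical blank-tape configuration using only the local Wang matching rules, and do so without either creating new ways for $t'$ to recur or destroying the correspondence between recurrences of $t'$ and the NTM re-entering $q_0$ infinitely often. Once that correspondence is pinned down the reduction is computable and the two directions above close the equivalence, so \textbf{R3} inherits $\Sigma^1_1$-hardness from \textbf{R2}.
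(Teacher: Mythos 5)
Your reduction has two fatal gaps, both stemming from the fact that \textbf{R3} quantifies \emph{existentially} over tilings of the quadrant. First, the ``wall-tile'' variant cannot work even in principle: adjoining fresh border-propagating tiles only \emph{enlarges} the class of admissible tilings, and a quadrant tiling is free to ignore the wall tiles entirely, so nothing is forced. To force structure you must modify the tile set so that \emph{every} quadrant tiling is constrained --- this is exactly what the paper's proof does by decorating the side quadrants with arrows so that each row carries a pattern $\cdots \rightarrow \rightarrow \leftarrow \leftarrow \cdots$, admitting at most one head per row. Your proposal contains no mechanism of this kind, and without one an \textbf{R3}-tiling may contain many (even infinitely many) independent head-threads seeded along the arbitrary bottom row, each contributing finitely many occurrences of $t'$; then $t'$ recurs although no single run of the machine re-enters $q_0$ infinitely often, and your extraction of ``the first occurrence of $t'$'' does not identify any coherent computation. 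Second, the ``reset to the canonical blank configuration'' step is not locally enforceable: a Turing machine can neither erase nor verify the unbounded unvisited portion of its tape, and in an unconstrained quadrant tiling those cells simply copy arbitrary garbage upward from the bottom row. Hence the row through an occurrence of $t'$ agrees with the canonical configuration only on the finitely many visited cells, its left column is not white, and the translated sub-quadrant is \emph{not} an \textbf{R2}-witness. This breaks the only-if direction: a machine run on a garbage tape may cycle through $q_0$ forever even when the canonical run does not.

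The paper sidesteps both problems by changing the \emph{source} problem rather than fighting the arbitrary border: it reduces from \textbf{C2} (``is there \emph{some} tape configuration and state from which $M$ re-enters $q_0$ from the right onto a blank cell infinitely often?''), which is $\Sigma^1_1$-hard by Theorem \ref{lemma:AequivC} precisely because a machine may be started from any configuration. With that choice the arbitrariness of the bottom row becomes a feature rather than a bug, the arrow decoration supplies single-headedness, and fixing $t$ to be the tile recording entry into $q_0$ from the right onto a blank cell completes the equivalence. Your underlying instinct --- engineering the machine so that recurrence of $t'$ from arbitrary configurations certifies canonical behaviour --- could conceivably be salvaged by a counter-and-resimulation trick (between consecutive $q_0$-entries, reformat a fresh region ignoring what is read and re-simulate $M$ from scratch for one more $q_0$-entry each time, in the spirit of the counter the paper uses to block purely upward recurrence), but that is a different and substantially more delicate construction than the one you describe, and it still requires the single-head forcing that your write-up omits.
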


\begin{proof}[Proof of \ref{claim:R3}]

Note that the border requirement in the previous claim was intended to force the initial starting state tile giving $q_0$ to appear in the right place. Consider the following machine problem:

\noindent
\textbf{C2} - Given NTM $M$, is there some tape configuration and state such that the following computation does not halt and re-enters $q_0$ from the right onto a blank tape cell infinitely often?

\textbf{C2} is $\Sigma^1_1$-hard by theorem \ref{lemma:AequivC} and the observation that a machine can be run from any starting tape configuration and state. We now adjust our schema prototiles as follows in order to produce prototiles for our $\mathcal{S}$ as follows:

For all symbols $s \in \Sigma$:

\begin{center}
\sampletile{$\rightarrow$}{$s$}{$\rightarrow$}{$s$} \sampletile{$\leftarrow$}{$s$}{$\leftarrow$}{$s$}
\end{center}

For symbols $s, s^\prime \in \Sigma$ and $q_i, q_j \in Q$:

\begin{center}
	\sampletile{${\rightarrow \atop q_i}$}{$s$}{$\leftarrow$}{$(s,q_i)$} \sampletile{$\rightarrow$}{$s$}{${\leftarrow \atop q_i}$}{$(s,q_i)$}
\end{center}

\begin{center}
	\sampletile{${\rightarrow \atop q_j}$}{$(s,q_i)$}{$\rightarrow$}{$s^\prime$} \sampletile{$\leftarrow$}{$(s,q_i)$}{${ \leftarrow \atop q_j}$}{$s^\prime$}
\end{center}

Fix $t$ to be 

\begin{center}
	\sampletile{$\rightarrow$}{$$B$$}{${\leftarrow \atop q_0}$}{$(B,q_0)$}
\end{center}

The addition of the arrows forces patterns of the form $$\cdots \rightarrow \rightarrow \leftarrow \leftarrow \cdots  $$

This is intended to force only one state to appear on each row in our NTM tiling. Thus $t$ occurring just once forces exactly one state per row, and so $(\mathcal{S}, t)$ satisfies \textbf{R3} iff $M$ satisfies \textbf{C2}. 
\end{proof}

To complete our proof, we need to extend these tilings out from one quadrant to full planar tilings. First, note that our NTM tapes are bi-infinite two way tapes, so we can extend our $\cdots \rightarrow \rightarrow \leftarrow \leftarrow \cdots$ pattern to the left half of the plane easily.

Extending to the upper half-plane is trickier. Note that there is nothing that requires $M$ to have infinite computations in the forwards or backwards directions by default. We can fix the backwards direction by requiring that $M$ will return repeatedly into some state $q_i$, requiring that $q_i \neq q_0$.

Likewise, we can prevent $\mathcal{S}$ from having $t$ appear infinitely often upwards but nowhere appearing downwards by having $M$ hold a counter variable that is incremented each time $M$ enters $q_0$. Thus, a planar tiling with infinitely many $q_0$ in the upper half of the grid would indicate a computation that checks the presence of increasingly smaller positive integers, which is impossible. 

Thus, for these modified machines, $M$ satisfies \textbf{C2} iff $(\mathcal{S},t)$ satisfies \textbf{RTP}. This completes our sketch of this proof for \ref{thm:RecDomProbSigma} from \cite{Harel1986}.

\end{proof}

In the following sections, we will deviate from asking if the Recurring Tile Problem from definition \ref{def:RecDP} is $\Sigma^1_1$, and instead ask if we can find some $\Pi^1_1$ properties that are equivalent to the original Domino Problem (\ref{def:domprob}).

\section{Domino Problems for Infinite Computable Sets of Prototiles}

Next, we will define our class of prototiles sets with total planar tilings as to not restrict ourselves to finite sets of prototiles. To this end, we define the set $TILE$ that will range over infinite sets of Wang prototiles.

\begin{definition}\index{$TILE$}\label{def:TILE}
\begin{align*}
TILE = \{ e : \, & \varphi_e \text{ is the characteristic function of some infinite}  \\
	      & \text{ Wang prototile set whose tilings are total in the plane.} \}
\end{align*}
\end{definition}

It is natural from our definition of $TILE$ that for any $e \in TILE$, the tiling that is generated by $e$ must be connected and infinite. 

We also define set $ILL$ which we will use later to get our $\Sigma^1_1$-completeness of $TILE$.

\begin{definition}\label{def:ILL}
$$ ILL = \{ e : \, \varphi_e \text{ is the characteristic function of an ill-founded tree } T \subseteq \omega^{< \omega} \} $$
\end{definition}
Note that by proposition \ref{prop:wellfddR}, specifically the converse argument, $ILL$ is $\Sigma^1_1$-complete.

\subsection{Filter for Computable Trees}

In order to adequately satisfy \ref{thm:TILE-ILL}, it is critical that our computable functions $\Phi_e$ do indeed actually compute trees. As such, we will need the following lemma to `filter out' the functions that do not compute trees.

\begin{lemma}[C. 2019]\label{lemma-pproc}
There is a computable $g: \omega \rightarrow \omega$ such that for every characteristic function $\varphi_e$ of some set $T \subseteq \omega^{<\omega}$:
\begin{enumerate}
\item if $\varphi_e$ is a tree, then $\varphi_{g(e)}$ is the same tree.
\item if $\varphi_e$ is total but not a tree, then $\varphi_{g(e)}$ is not total.
\item if $\varphi_e$ is not total then $\varphi_{g(e)}$ is not total.
\end{enumerate}
\end{lemma}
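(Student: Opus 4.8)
The plan is to build $g$ via the $s$-$m$-$n$ theorem (Theorem \ref{thm:smn}) from a single uniform procedure, arranging matters so that totality of the output index exactly captures the conjunction ``$\varphi_e$ is total \emph{and} codes a tree.'' First I would describe the procedure $\psi(e,\sigma)$ that $\varphi_{g(e)}$ is meant to compute, where $\sigma$ ranges over codes of strings in $\omega^{<\omega}$. On input $\sigma$, the procedure computes $\varphi_e$ on $\sigma$ itself and on every proper initial segment $\tau \prec \sigma$; since $|\sigma|$ is finite this is an effective, finite search. If any of these computations fails to halt, then $\psi(e,\sigma)$ also fails to halt. If they all halt, the procedure performs a \emph{local} tree check: if $\varphi_e(\sigma) = 1$ while $\varphi_e(\tau) = 0$ for some $\tau \prec \sigma$, then $\psi(e,\sigma)$ deliberately diverges (for instance by entering an unbounded search with no witness); otherwise it outputs $\varphi_e(\sigma)$.

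Next I would verify the three clauses. For clause (1), if $\varphi_e$ is the characteristic function of a tree, then $\varphi_e$ is total and closure under initial segments guarantees that no local violation is ever found, so $\psi(e,\sigma) = \varphi_e(\sigma)$ for every $\sigma$; hence $\varphi_{g(e)} = \varphi_e$ computes the same tree. For clause (3), if $\varphi_e$ is not total, choose $\sigma_0$ with $\varphi_e(\sigma_0)\uparrow$; the first step of $\psi(e,\sigma_0)$ already requires computing $\varphi_e(\sigma_0)$, so $\varphi_{g(e)}(\sigma_0)\uparrow$ and $\varphi_{g(e)}$ is not total. For clause (2), if $\varphi_e$ is total but $T$ is not a tree, there exist $\sigma \in T$ and $\tau \prec \sigma$ with $\tau \notin T$, i.e.\ $\varphi_e(\sigma)=1$ and $\varphi_e(\tau)=0$; on input $\sigma$ all the relevant computations halt and the local check detects this violation, so $\psi(e,\sigma)$ diverges and $\varphi_{g(e)}$ is not total.

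Finally, since $\psi$ is a single effective procedure in the two arguments $e$ and $\sigma$, the $s$-$m$-$n$ theorem yields a total computable $g$ with $\varphi_{g(e)}(\sigma) \simeq \psi(e,\sigma)$, completing the construction.

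The key observation making this work---and the only point requiring any care---is that a failure of initial-segment closure is always witnessed by the \emph{longer} string of a violating pair: if closure fails anywhere, it fails because of some specific $\sigma$ together with one of its own proper initial segments. This is precisely why a purely local check at each input $\sigma$ (inspecting only $\sigma$ and its initial segments) suffices to force divergence \emph{somewhere} in the non-tree case, without any unbounded global search that would wrongly make $\varphi_{g(e)}$ diverge in the genuine tree case. Everything else is routine halting/non-halting bookkeeping.
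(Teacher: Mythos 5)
Your proof is correct and takes essentially the same approach as the paper's: both define $\varphi_{g(e)}(\sigma)$ by querying $\varphi_e$ on $\sigma$ and all its initial segments (a finite, effective search) and forcing deliberate divergence exactly when a local violation of initial-segment closure is witnessed, so that totality of $\varphi_{g(e)}$ coincides with $\varphi_e$ being a total tree characteristic function. Your packaging differs only cosmetically --- you diverge precisely when $\varphi_e(\sigma)=1$ with some $\tau \prec \sigma$ of value $0$ and invoke the $s$-$m$-$n$ theorem explicitly, whereas the paper's case definition also diverges on some inputs with $\varphi_e(\sigma)=0$ above a violation --- and your verification of the three clauses, which the paper leaves implicit, is sound.
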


\begin{proof}
For any $\varphi_e$ define $g(e)$ as follows:
$$ 
    \varphi_{g(e)}(\sigma)= 
\begin{cases}
    1 & \text{if } \forall \tau \subseteq \sigma \  (\varphi_e(\tau)=1) \\ %notice the single backslash spacer before \Phi...
    0 & \text{if } \exists \tau \subseteq \sigma \text{ s.t. } \\ 
    & \ \ \ \forall \eta (\eta \subset \tau \rightarrow \varphi_e(\eta) = 1 \land \tau \subseteq \eta \rightarrow \varphi_e(\eta)=0 ) \\
    \uparrow & \text{otherwise}
\end{cases} $$
\end{proof}

\section{$\Pi^1_1$ and $\Sigma^1_1$ Domino Problems}

We will now present our results that show some equivalences between the domino problem for infinite prototile sets and well-founded trees.

\subsection{Equivalences to $TILE$}

\begin{theorem}[C. 2019]\label{thm:TILE-ILL}\index{$TILE$ equivalence to $ILL$}
\[ TILE \equiv_m ILL \]
\end{theorem}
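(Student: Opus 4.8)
The plan is to prove the two $m$-reductions separately, taking advantage of the fact that $ILL$ is already known (via the converse direction of Proposition \ref{prop:wellfddR}) to be $\Sigma^1_1$-complete. So one direction can be obtained abstractly and only the other requires an explicit tile construction.

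For $TILE \leq_m ILL$, I would argue that it suffices to check $TILE \in \Sigma^1_1$, since $\Sigma^1_1$-completeness of $ILL$ then supplies a computable reduction automatically. Unwinding Definition \ref{def:TILE}, the assertion $e \in TILE$ says: $\varphi_e$ is a total $\{0,1\}$-valued function (arithmetic, indeed $\Pi^0_2$); its support $X_{\mathcal{S}}$ codes an infinite set of legal Wang prototiles (arithmetic); and there exists a tiling function $f : \mathbb{Z}^2 \to \mathcal{S}$ satisfying every edge-meet condition. The last conjunct has the form $\exists f \, \forall (x,y)\,[\text{edge-meets hold at }(x,y)]$, whose matrix is arithmetic in $f$ and $\varphi_e$; this is a $\Sigma^1_1$ statement, and conjoining it with the arithmetic prefix keeps us in $\Sigma^1_1$. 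Hence $TILE \in \Sigma^1_1$, and the completeness of $ILL$ gives a computable $f_0$ with $e \in TILE \iff f_0(e) \in ILL$.

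The substantive direction is $ILL \leq_m TILE$, which I would establish by the explicit \textbf{AIT} construction advertised in the glossary. Given $e$, I first pass it through the filter $g$ of Lemma \ref{lemma-pproc}, so that $\varphi_{g(e)}$ computes a genuine tree $T = \{\sigma : \varphi_{g(e)}(\sigma)=1\}$ whenever $\varphi_e$ is one, and is non-total otherwise. I then describe a uniformly computable prototile set $\mathcal{S}_e$ whose columns are forced to spell out a branch of $T$: each tile carries a node $\sigma \in T$ on its lower edge and a child $\sigma ^\frown n \in T$ on its upper edge, with the vertical match criterion forcing the upper label of one row to equal the lower label of the row above it. Reading a column upward then yields $\sigma \prec \sigma ^\frown n \prec \cdots$, an honest path through $T$, while horizontal matching together with a reservoir of background tiles lets any such column be padded out to a total covering of all of $\mathbb{Z}^2$. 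By the $s$-$m$-$n$ theorem the map $e \mapsto (\text{an index for the characteristic function of } \mathcal{S}_e)$ is computable.

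For correctness: if $T$ is ill-founded it has an infinite path, which I lay down one column at a time and extend to a total tiling, so the $\mathcal{S}_e$-index lies in $TILE$; if $T$ is well-founded (or $\varphi_{g(e)}$ is non-total), every attempted column must terminate at a leaf after finitely many steps, no tile can continue it upward, and no total tiling can exist, so the index lies outside $TILE$. Combining the two reductions gives $TILE \equiv_m ILL$. The step I expect to be the main obstacle is the tile design itself: I must arrange the match criteria so that the \emph{only} way to tile the whole plane is to trace an infinite branch — ruling out spurious total tilings assembled from background tiles alone or from horizontal tricks that sidestep encoding a branch — while simultaneously guaranteeing that $\mathcal{S}_e$ is genuinely infinite, so that it is membership in $TILE$ (and not merely "tiles the plane") that I certify. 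Engineering the edge conditions to enforce exactly one branch per column, keep the set computable, and keep it infinite is the delicate part; the strictly increasing node labels along each column are what additionally make the resulting tilings aperiodic, which is the origin of the name AIT.
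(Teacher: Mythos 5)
Your proposal matches the paper's proof in both halves: the direction $TILE \leq_m ILL$ is obtained exactly as you describe, from $TILE \in \Sigma^1_1$ plus the $\Sigma^1_1$-completeness of $ILL$ (you in fact spell out the $\Sigma^1_1$-membership more explicitly than the paper does), and the direction $ILL \leq_m TILE$ uses the same construction — preprocessing via Lemma \ref{lemma-pproc}, column tiles carrying $\sigma$ on the lower edge and $\sigma^{\frown} n$ on the upper edge so vertical matching forces a branch, filler tiles elsewhere, $s$-$m$-$n$ for uniformity, and computable recovery of a path from any total tiling. The one obstacle you flag (spurious total tilings built from background tiles alone) is resolved in the paper exactly in the spirit of your closing remark: the quadrant colours $c^j_i$ and mid-row colours $M_i$ carry indices that strictly decrease toward a uniquely coloured root tile, so the filler tiles can never cover the plane by themselves and totality hinges on the central column, which only an infinite branch of $T_e$ can complete.
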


\begin{proof}
Firstly, we note that it follows from $\Sigma^1_1$-completeness of $ILL$ that anything $ILL$ is $m$-reducible to will be $\Sigma^1_1$-complete as well, and so anything in $ILL$ will likewise be found in the set we are reducing to. Thus, we get the converse $m$-equivalence essentially `for free' from this fact and a opposite argument to that found in lemma \ref{lemma:wellfddTreesPi11}.

As such, it suffices to prove $ILL \leq_m TILE$. For this, we will follow the shape of regular $m$-reducibility proofs, and show that there is a computable function $h$ such that $$\forall e (x \in ILL \iff h(x) \in TILE)$$.

We first fix the following colours/symbols:
\begin{itemize}
\item Let $\lambda$ denote the empty string, and let $\lambda^U, \lambda^D$ be unique colours.
\item Fix $M^L_0$ and $M^R_0$ as unique colours.
\item Fix unique colours for all $M_i$ for $i \in \omega$.
\item For $j \in \{1,2,3,4\}$ and $i \in \omega$, let each $c^j_i$ be unique colours.
\item Let $\alpha \in \omega^\omega$ be an infinite string, and for all $i \in \omega$ let $\sigma_i \in \omega^{< \omega}$ denote successive initial segments of $\alpha$ of length $i$ such that $\sigma_0 \prec \sigma_1 \prec \ldots \sigma_i \prec \ldots \prec \alpha$.
\item Let $\sigma_0 = \lambda$ by this notation.
\item For $\sigma \in \omega^{< \omega}$, let $\sigma^\frown n$ denote $\sigma$ concatenated with $n$ as defined before for some $n \in \omega$, and let $| \sigma |$ denote the length of $\sigma$.
\end{itemize}
With these defined, let $e \in ILL$ be given. We will construct the following schema tiles:

We start with the \textbf{root tile}:
\begin{center}
\sampletile{$M^L_0$}{$\lambda^U$}{$M^R_0$}{$\lambda^D$}
\end{center}

Next, we require {\bf column tiles}:

\begin{center}
\sampletile{$c^1_{i+1}$}{$\sigma_i^\frown n$}{$c^2_{i+1}$}{$\sigma_i$}
\sampletile{$c^4_{i+1}$}{$\sigma_i$}{$c^3_{i+1}$}{$\sigma_i^\frown n$}
%\sampletile{$c^4_{\left| \sigma_i \right|}$}{$\sigma_i$}{$c^3_{\left| \sigma_i \right|}$}{$\sigma_i^\frown n$}
\end{center}

We also define {\bf mid-row} tiles to be:

\begin{center}
\sampletilenearlabels{$M_{i+1}$}{$c^1_{i+1}$}{$M_i$}{$c^4_{i+1}$}
\sampletilenearlabels{$M_i$}{$c^2_{i+1}$}{$M_{i+1}$}{$c^3_{i+1}$}
\end{center}

We shall additionally define the following diagonal {\bf quadrant filling} tiles:

\begin{center}
\sampletile{$c^1_{i+1}$}{$c^1_{i+1}$}{$c^1_i$}{$c^1_i$}
\sampletile{$c^2_{i}$}{$c^2_{i+1}$}{$c^2_{i+1}$}{$c^2_i$}
\sampletile{$c^3_{i}$}{$c^3_{i}$}{$c^3_{i+1}$}{$c^3_{i+1}$}
\sampletile{$c^4_{i+1}$}{$c^4_{i}$}{$c^4_{i}$}{$c^4_{i+1}$}
\end{center}

We now construct a `library' $\mathcal{S}$ from which we will select the prototiles we need. To generate $\mathcal{S}$ we take all of the colours we fixed at the start of the proof, and colour the schema tiles as follows:
\begin{itemize}
\item We colour the root tile with the tuple $\langle M^L_0, \lambda^U, M^R_0, \lambda^D \rangle$ and put this tile into $\mathcal{S}$.
\begin{itemize}
\item \textbf{NB} - our root tile has distinctions for up/down and left/right in order to prevent trivial $S_e$-tilings using only the root tile.
\end{itemize}
\item For all the $c^j_i$ and $M_i$ colour the mid-row tiles.
\begin{itemize}
\item We must be careful to put the $M^L_0$ and $M^R_0$ tiles such that they will tile from the root tile.
\item specifically, we add the tiles $\langle M_1, c^1_1, M^L_0, c^4_1 \rangle$ and $\langle M^R_0, c^2_1, M_1, c^3_1 \rangle$.
\end{itemize}
\item For all $c^j_i$ colour all of the quadrant tiles, and put these into $\mathcal{S}$.
\end{itemize}

What now remains is to colour the column tiles and add the required ones to $\mathcal{S}$. To do this we will need to take our $e$ and ensure that it has been put through our pre-processing lemma \ref{lemma-pproc} in order to ensure it is a tree.

With this done, we have an $h$ that we will now use to construct a set of prototiles $S_e \subset \mathcal{S}$ as follows:
\begin{itemize}
\item Select all of the mid-row and quadrant filling tiles, along with the root tile, and add these into $S_e$. 
\item Next add all of the column tiles for all $\sigma_n \in \omega^{< \omega}$ such that $\varphi_e(\sigma_n)=1$.
\end{itemize}

We choose all of the column tiles such that there are two copies of each $\sigma_n \text{ such that } \varphi_e(\sigma_n) = 1$; one copy going up from the root tile, with $\sigma_0 = \lambda^U$ and one going down from the root tile with $\sigma_0 = \lambda^R$.

We now want to verify that for each $e \in ILL$ we will get an $S_e$ such that there exist $S_e$ tilings of the plane, giving $h(e) \in TILE$.

To see this, we first note that the quadrant tiles, root tile, and mid-row tiles form a near-complete tiling of the plane. Without the column tiles, we can tile the left and right halves of the plane, meaning that whether or not we have a total function $\Phi^{p}: \mathbb{Z}^2 \rightarrow S_e$ (defined below) is dependant on whether this central column is fully tiled. We now show that this is dependent on there being an infinite path through the tree computed by $\varphi_e$.

So show that this is the case, let $T_e$ be the tree computed by $\varphi_e$ - this is guaranteed by lemma \ref{lemma-pproc}. Given $e \in ILL$ it follows that there is an infinite $p \in [T_e]$. Thus, for all $n \in \omega$ there is some string $\sigma_n = p \upharpoonright n$. Given we added all of these $\sigma_n$ strings into $S_e$ as tiles that cover both the up and down directions from the root tile, $\varphi_{h(e)}$ will have contained all of the tiles that represent $\sigma_0 \prec \sigma_1 \prec \sigma_2 \prec \ldots p$ - in fact, there will be precisely two copies. Given $p$ is infinite, these column tiles will thus complete our tiling, making our $S_e$-tiling total in the plane.

Indeed, taking such a $p \in [T_e]$ as our oracle, for all $x,y \in \mathbb{Z}$, and given the output of $\varphi_{h(e)}$ from above as $S_e$, we define $\Phi^p$ as a fully as a total function $$\Phi^p : \mathbb{Z}^2 \rightarrow S_e$$ which can be fully defined algorithmically as follows:

\begin{itemize}
\item For $\Phi^p(0,0)$ will return the root tile, $\langle M^L_0, \lambda^U, M^R_0, \lambda^D \rangle$
\item For $\Phi^p(x,y)$, where $x,y \neq 0$, we will return the relevant quadrant tile.
\item For $\Phi^p(x,0)$ we will return the correct middle-row tile of the form:
\begin{itemize}
\item if $x$ is positive: $\langle M_{x-1},c^2_x,M_x,c^3_x \rangle$
\item if $x$ is negative: $\langle M_{x-1},c^1_x,M_x,c^4_x \rangle$
\end{itemize}
\item For $\Phi^p(0,y)$ we will use that $\sigma = p \upharpoonright y$, and then return the correct column tile of the form:
\begin{itemize}
\item if $y$ is positive: $\langle c^1_y, \sigma, c^2_y, \sigma \upharpoonright y-1 \rangle$
\item if $y$ is negative: $\langle c^4_y, \sigma \upharpoonright y-1, c^3_y, \sigma \rangle$
\end{itemize}
\end{itemize}

To show that $h(e) \in TILE \Rightarrow e \in ILL$ we first note that if $\Phi^p$ is total, then $\varphi_e$ must also be total - as such, if there are no gaps in our $S_e$-tiling following our construction of $S_e$, then it suffices to show that we can computably recover an infinite $p$ from an $S_e$-tiling for which we can assume that $e \in ILL$.

Let $\mathcal{I}$ be the class of all $S_e$-tilings of the plane. We take one total tiling $I \in \mathcal{I}$ - clearly existing by our assumption that $h(e) \in TILE$ - and try to recover an infinite path $p \in [T_e]$, where $T_e$ is again the tree computed by $\varphi_e$. Our goal is to use the tiling to show whether or not $e \in ILL$.

The following computable method will be our attempt to extract the path $p$ from our $S_e$-tiling:
\begin{enumerate}
\item If we choose the root tile, read upwards along the column of tiles, from which we can recover a path $p$. 
\item If we choose a mid-row tile, then we follow the descending chain of $M_i$ colours to the root tile, and then go to 1.
\item If we choose a quadrant tile, then for our given $i \in \omega$ from our chosen tile:
\begin{itemize}
\item If $c^1_i$ or $c^2_i$ then follow all the tiles down to the mid-row tiles, and go to 2.
\item If $c^3_i$ or $c^4_i$ then follow all the tiles up to the mid-row tiles, and go to 2.
\end{itemize}
\end{enumerate}

If our $S_e$-tiling $I$ is total, then the resulting $\tau$ from this process is infinite and corresponds to some $p \in [T_e]$. Thus, we have shown that for $h(e) \in TILE$ we can take any $S_e$-tiling and computably recover a path demonstrating that $e \in ILL$.
\end{proof}

\begin{figure}[t]
\begin{center}
\begin{tikzpicture}
	% built bottom up...
	\draw (0,0) rectangle (4,4) node[pos=.5] {$c^4_i$};
	\draw (4,0) rectangle (5,4) node[pos=.5,rotate=-90] {lower copy of $\sigma$};
	\draw (5,0) rectangle (9,4) node[pos=0.5] {$c^3_i$};
	\draw (0,4) rectangle (4,5) node[pos=0.5] {left mid-row $M_{-i}$};
	\draw (4,4) rectangle (5,5) node[pos=0.5] {$\lambda$};
	\draw (5,4) rectangle (9,5) node[pos=0.5] {right mid-row $M_i$};
	\draw (0,5) rectangle (4,9) node[pos=0.5] {$c^1_i$};
    \draw (4,5) rectangle (5,9) node[pos=.5,rotate=-90] {upper copy of $\sigma$};
    \draw (5,5) rectangle (9,9) node[pos=0.5] {$c^3_i$};
\end{tikzpicture}
\caption{Overall shape of our tiling construction in the proof of \ref{thm:TILE-ILL}.}
\label{fig:ShapeTilingTILE-ILL}
\end{center}
\end{figure}

We show in figure \ref{fig:ShapeTilingTILE-ILL} the overall shape of our tiling proposed in the proof of theorem \ref{thm:TILE-ILL}. The $c^j_i$'s occupy the upper left/right and lower left/right quarter planes of $\mathbb{Z}^2$, with the middle rows joining the upper/lower left quarter planes and upper/lower right quarter planes. Thus, our root tile connects the two planes with the paths from a tree coded in the upper and lower columns.

\begin{corollary}[C. 2019]
$TILE$ is $\Sigma^1_1$-Complete.
\end{corollary}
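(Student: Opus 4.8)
The plan is to deduce the corollary directly from Theorem \ref{thm:TILE-ILL} together with the $\Sigma^1_1$-completeness of $ILL$ recorded immediately after Definition \ref{def:ILL}. Recall that asserting $TILE$ is $\Sigma^1_1$-complete amounts to two claims: that $TILE \in \Sigma^1_1$, and that $TILE$ is $\Sigma^1_1$-hard, i.e. $A \leq_m TILE$ for every $A \in \Sigma^1_1$. Both will come cheaply once the $m$-equivalence $TILE \equiv_m ILL$ is in hand.

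First I would establish membership, $TILE \in \Sigma^1_1$. Theorem \ref{thm:TILE-ILL} gives in particular $TILE \leq_m ILL$, and $ILL \in \Sigma^1_1$ by definition. Since the class $\Sigma^1_1$ is closed downwards under $m$-reducibility — the analytic analogue of part (4) of the closure theorem following Definition \ref{def:mred}, proved by substituting the reducing function $f$ into the $\Sigma^1_1$ matrix exactly as in the $\Sigma^0_1$ case — membership of $TILE$ in $\Sigma^1_1$ follows at once.

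Next I would establish hardness. Theorem \ref{thm:TILE-ILL} also gives the other direction, $ILL \leq_m TILE$, and $ILL$ is $\Sigma^1_1$-hard by the remark after Definition \ref{def:ILL}, itself a consequence of Proposition \ref{prop:wellfddR} and Lemma \ref{lemma:wellfddTreesPi11}. Hence for any $A \in \Sigma^1_1$ we have $A \leq_m ILL$ and $ILL \leq_m TILE$, so transitivity of $\leq_m$ (the closure theorem after Definition \ref{def:mred}, part 2) yields $A \leq_m TILE$. Thus $TILE$ is $\Sigma^1_1$-hard.

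Combining the two facts shows $TILE$ is $\Sigma^1_1$-complete. I do not expect any real obstacle here: the only ingredients beyond Theorem \ref{thm:TILE-ILL} are the closure of $\Sigma^1_1$ under $\leq_m$ and the transitivity of $\leq_m$, both routine. The genuine mathematical content — in particular the construction of the prototile library $\mathcal{S}$ and the recovery of an infinite path from a total tiling — has already been discharged in the proof of Theorem \ref{thm:TILE-ILL}, so the corollary is a formal consequence.
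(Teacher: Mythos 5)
Your proposal is correct and follows essentially the same route as the paper's own proof, which likewise deduces the corollary from the $m$-equivalence $TILE \equiv_m ILL$ of Theorem \ref{thm:TILE-ILL} together with the $\Sigma^1_1$-completeness of $ILL$. Your version is in fact slightly more careful than the paper's, since you explicitly separate membership (using downward closure of $\Sigma^1_1$ under $\leq_m$) from hardness (using transitivity of $\leq_m$), where the paper compresses both into one informal sentence.
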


\begin{proof}
This follows immediately from the combination of facts that $TILE$ is $m$-equivalent to a $\Sigma^1_1$-complete set, namely $ILL$, which we obtain by the opposite argument shown in corollary \ref{cor:CompTreesPi11}. As such, everything expressible in $ILL$ is also expressible in $TILE$, so every $a \in \Sigma^1_1$ has some representation in $TILE$.
\end{proof}

We should point out that a key part of this proof is that we have not restricted ourselves to finite sets of prototiles, which we know from theorem \ref{thm:TMTilings} is $\Sigma^0_1$ complete. By allowing ourselves to consider infinite sets of prototiles, we have found a way to get $\Sigma^1_1$ completeness by a proof that gives an equivalence between familiar objects, namely the ill-foundedness of trees. In a sense, this result could be entirely expected.

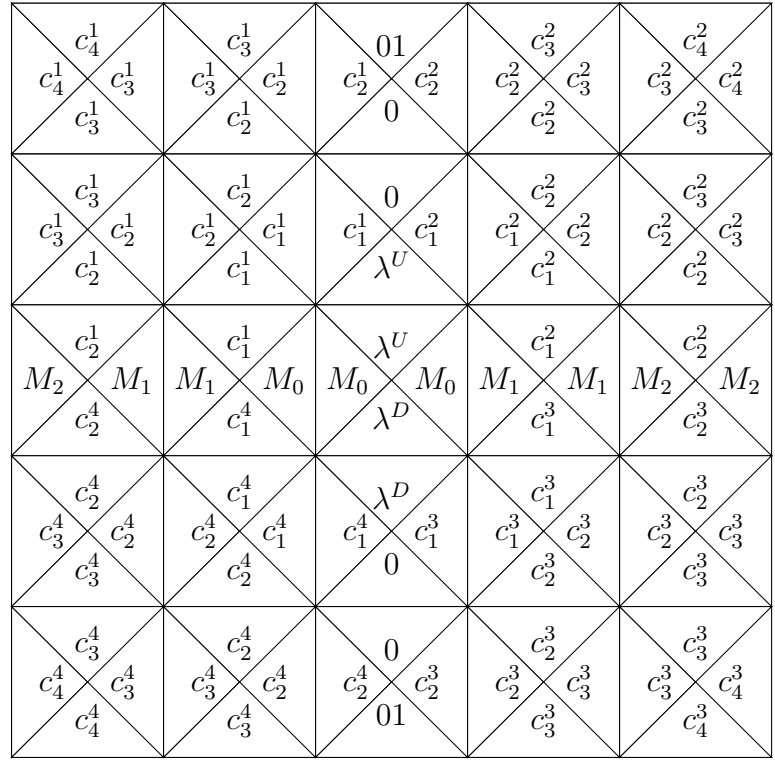
\begin{figure}[t]
  \centering
\begin{tikzpicture}
\begin{scope}[scale=2]
\foreach \x/\y/\l/\u/\r/\b in {0/0/$c^1_4$/$c^1_4$/$c^1_3$/$c^1_3$,1/0/$c^1_3$/$c^1_3$/$c^1_2$/$c^1_2$,2/0/$c^1_2$/$01$/$c^2_2$/$0$,3/0/$c^2_2$/$c^2_3$/$c^2_3$/$c^2_2$,4/0/$c^2_3$/$c^2_4$/$c^2_4$/$c^2_3$,
                               0/1/$c^1_3$/$c^1_3$/$c^1_2$/$c^1_2$,1/1/$c^1_2$/$c^1_2$/$c^1_1$/$c^1_1$,2/1/$c^1_1$/$0$/$c^2_1$/$\lambda^U$,3/1/$c^2_1$/$c^2_2$/$c^2_2$/$c^2_1$,4/1/$c^2_2$/$c^2_3$/$c^2_3$/$c^2_2$,
                               0/2/$M_2$/$c^1_2$/$M_1$/$c^4_2$,1/2/$M_1$/$c^1_1$/$M_0$/$c^4_1$,2/2/$M_0$/$\lambda^U$/$M_0$/$\lambda^D$,3/2/$M_1$/$c^2_1$/$M_1$/$c^3_1$,4/2/$M_2$/$c^2_2$/$M_2$/$c^3_2$,
                               0/3/$c^4_3$/$c^4_2$/$c^4_2$/$c^4_3$,1/3/$c^4_2$/$c^4_1$/$c^4_1$/$c^4_2$,2/3/$c^4_1$/$\lambda^D$/$c^3_1$/$0$,3/3/$c^3_1$/$c^3_1$/$c^3_2$/$c^3_2$,4/3/$c^3_2$/$c^3_2$/$c^3_3$/$c^3_3$,
                               0/4/$c^4_4$/$c^4_3$/$c^4_3$/$c^4_4$,1/4/$c^4_3$/$c^4_2$/$c^4_2$/$c^4_3$,2/4/$c^4_2$/$0$/$c^3_2$/$01$,3/4/$c^3_2$/$c^3_2$/$c^3_3$/$c^3_3$,4/4/$c^3_3$/$c^3_3$/$c^3_4$/$c^3_4$}
{
\draw[fill=white] (1+\x,1-\y) rectangle (0+\x,0-\y);
\filldraw[fill=white] (0+\x,0-\y) -- (0.5+\x,0.5-\y) -- (0+\x,1-\y) -- cycle;
\filldraw[fill=white] (0+\x,0-\y) -- (0.5+\x,0.5-\y) -- (1+\x,0-\y) -- cycle;
\filldraw[fill=white] (0+\x,1-\y) -- (0.5+\x,0.5-\y) -- (1+\x,1-\y) -- cycle;
\node at (0.5+\x,0.5-\y) [label=above:{\u},label=left:{\l},label=right:{\r},label=below:{\b}] {};
}
\end{scope}
\end{tikzpicture}
  \caption{Tile Path Construction}
  \label{fig:TT1}
\end{figure}

Figure \ref{fig:TT1} shows an example of a patch around the root tile for some $S_e$-tiling generated by the above algorithm. The first two bits of a path $\sigma$, with $\sigma \upharpoonright 2 = $`01'. Note that we can see in this diagram that if $| \sigma | < \omega$ then there will be gaps at some point going up/down from the root tile, there by such an $e$ will not be total, and so $e \notin TILE$.

\begin{definition}\label{def:WELL}
We define the set of well-founded computable trees: $$WELL = \{ e : \varphi_e \text{ is the characteristic function of a well-founded tree } T \subseteq \omega^{< \omega} \}$$
\end{definition}
Recall that by proposition \ref{prop:wellfddR} it follows that $WELL$ is $\Pi^1_1$-complete, which is an important fact we will use.

We let $\neg TILE$ be the set of computable characteristic functions of infinite sets of prototiles that do not have total tilings of plane. It is interesting that, by the same construction above, we can get that $WELL \equiv_m \neg TILE$, despite unequal complements and totality issues.

\begin{theorem}[C. 2019]\label{thm:nTILE-WELL}\index{$\neg TILE$ equivalence to $WELL$}
\[ (\neg TILE) \equiv_m WELL \]
\end{theorem}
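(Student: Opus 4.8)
The plan is to reuse verbatim the computable function $h$ built in the proof of Theorem \ref{thm:TILE-ILL} for the reduction $WELL \leq_m \neg TILE$, and to obtain the reverse reduction $\neg TILE \leq_m WELL$ abstractly from a quantifier count together with the $\Pi^1_1$-completeness of $WELL$ (Corollary \ref{cor:CompTreesPi11}). The point worth emphasising, and already flagged in the remark preceding the statement, is that $\neg TILE$ is \emph{not} the set-theoretic complement of $TILE$: membership in either set already demands that $\varphi_e$ be a genuine (total) characteristic function of an infinite prototile set. So I cannot merely negate the equivalence $TILE \equiv_m ILL$; I must check that the same $h$ respects these totality constraints.

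For $WELL \leq_m \neg TILE$ I would take the function $h$ from Theorem \ref{thm:TILE-ILL}, which first pushes $e$ through the filter $g$ of Lemma \ref{lemma-pproc} and then colours the library $\mathcal{S}$, including a pair of column tiles for each $\sigma$ with $\varphi_{g(e)}(\sigma)=1$. Here $\varphi_{h(e)}$ is the characteristic function of the resulting prototile set $S_e$, so deciding whether a given column tile indexed by $\sigma$ lies in $S_e$ amounts to evaluating $\varphi_{g(e)}(\sigma)$; the root, mid-row and quadrant tiles are always present, so $S_e$ is always infinite. I would then split into three cases. If $\varphi_e$ is the characteristic function of a well-founded tree, then $\varphi_{g(e)}$ is that same (total) tree, $\varphi_{h(e)}$ is total, $S_e$ is a genuine infinite prototile set, and since the tree has no infinite path the construction of Theorem \ref{thm:TILE-ILL} yields no total tiling; hence $h(e)\in \neg TILE$. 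If $\varphi_e$ codes an ill-founded tree then $e\in ILL$, so $h(e)\in TILE$ and therefore $h(e)\notin \neg TILE$. Finally, if $\varphi_e$ is not the characteristic function of a tree at all (so $e\notin WELL\cup ILL$), clause (2) or (3) of Lemma \ref{lemma-pproc} makes $\varphi_{g(e)}$ non-total, whence $\varphi_{h(e)}$ is non-total and $h(e)$ fails to be the characteristic function of any prototile set, so $h(e)\notin \neg TILE$. Collecting the cases gives $e\in WELL \iff h(e)\in \neg TILE$.

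For the reverse reduction I would simply count quantifiers to place $\neg TILE$ in $\Pi^1_1$. Being an element of $\neg TILE$ asks that $\varphi_e$ be total ($\Pi^0_2$), that it enumerate infinitely many legal prototiles ($\Pi^0_2$ relative to totality), and that there be \emph{no} total tiling, i.e. $\forall f\, \neg\bigl(f:\mathbb{Z}^2\to S_e \text{ meets every edge condition}\bigr)$, which is $\Pi^1_1$. The conjunction is $\Pi^1_1$, and by Corollary \ref{cor:CompTreesPi11} (via Lemma \ref{lemma:wellfddTreesPi11} and Proposition \ref{prop:wellfddR}) $WELL$ is $\Pi^1_1$-complete, so $\neg TILE \leq_m WELL$ comes for free, exactly mirroring how the converse of Theorem \ref{thm:TILE-ILL} was obtained. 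Together with the previous paragraph this yields $\neg TILE \equiv_m WELL$.

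The hard part will be the third case above — the ``totality issue''. Because $\neg TILE$ excludes indices that do not code a prototile set, I must ensure that whenever $\varphi_e$ is not a tree the construction produces an index $h(e)$ that is \emph{not} a valid prototile-set characteristic function, rather than one that accidentally lands in $\neg TILE$. This is precisely what Lemma \ref{lemma-pproc} buys me: it converts ``not a tree'' into ``not total'', and the column-tile step propagates non-totality from $\varphi_{g(e)}$ to $\varphi_{h(e)}$. Verifying this propagation, and confirming that the well-founded case genuinely yields an infinite prototile set with no total tiling, is the only place the argument departs from a mechanical negation of Theorem \ref{thm:TILE-ILL}.
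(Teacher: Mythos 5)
Your proposal is correct and takes essentially the same route as the paper: the paper likewise reuses the construction $h$ from Theorem \ref{thm:TILE-ILL} verbatim to get $WELL \leq_m \neg TILE$, and obtains the reverse reduction $\neg TILE \leq_m WELL$ for free from the $\Pi^1_1$-completeness of $WELL$ (via Lemma \ref{lemma:wellfddTreesPi11}). If anything, your explicit three-way case split (well-founded, ill-founded, not a tree) spells out the totality bookkeeping through Lemma \ref{lemma-pproc} more carefully than the paper's own verification, which argues the two directions of the biconditional only for tree-coding indices.
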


\begin{proof}
We proceed as for the proof of \ref{thm:TILE-ILL} - it suffices to show $WELL \leq_m \neg TILE$ as $(\neg TILE) \leq_m WELL$ will follow then by $\Pi^1_1$-completeness of $WELL$ and lemma \ref{lemma:wellfddTreesPi11}. Given this, we want computable $h$ such that $$ e \in WELL \iff h(e) \in \neg TILE $$

We derive the same $S_e \subset \mathcal{S}$ as we derive in the previous proof. Thus we have an $h$ such that $\Phi^p : \mathbb{Z}^2 \rightarrow S_e$ is given for any path $p \in [T_e]$.

If we have some $e \in WELL$, then by our construction, it must be the case that $\varphi_{h(e)}$ would not give a total tiling of the plane as the well-foundedness of $T_e$ would give that there is no infinite $p \in [T_e]$. Thus, there is no set of column tiles in $S_e$ that will tile the central column of our tilings. Thus it follows that $h(e) \in \neg TILE$.

Now suppose that we have some $h(e) \in \neg TILE$, and let $\mathcal{I}$ be the class of all $S_e$-tilings of the plane. For any given $I \in \mathcal{I}$ we know that $I$ is not a total tiling of the plane, but we know that by our construction both halves of the plane about the central column will be computably tiled. Thus, the gaps in our tiling that make it non-total must be along this central column for all $I \in \mathcal{I}$.

Given this central column is composed of tiles that code paths in $[T_e]$, it must be the case that there is no output of $\varphi_e$ that is an infinite path $p \in [T_e]$. Thus it follows that if $h(e) \in \neg TILE$ then $e \in WELL$.
\end{proof}

As we shall see in the next section, this construction gives rise to some interesting implications when it comes to equivalences of free Domino Problems and infinite sets of prototiles. 

\subsection{Further Equivalences for $WELL$ and $ILL$}

It was found that the equivalences in the previous section were not the only ones  we could construct when we consider infinite sets of prototiles. Indeed, when we consider other free Domino Problems, we can prove further equivalences using a similar framework. In order to do this analysis, we need the following definitions. 

\begin{definition}\label{def:WTILE}
\begin{align*}
WTILE = \{ e : \, & \varphi_e \text{ is the char. func. of a Wang prototile set that has tilings} \\
	       & \text{that are infinite, connected, but not necessarily total} \}
\end{align*}
\end{definition}

$WTILE$ is short for \emph{`weakly-tile'}, and intuitively stands for infinitely connected, but not total tilings. This notion of \emi{weakly tiling} the plane gives us a natural notion of \emi{strongly not tiling} the plane, which we define as follows:

\begin{definition}\label{def:SNT}
\begin{align*}
SNT = \{ e : \, & \varphi_e \text{ is the char. func. of a Wang prototile set whose} \\
		& \text{connected tilings are finite} \}
\end{align*}
\end{definition}

Intuitively we can think of $WTILE$ tilings as being everything in $TILE$ but plus other tilings up to infinite connected `snakes' of tiles that are connected. Though we are now considering tilings that are no longer necessarily total, the fact that they are infinite and connected is the key property we wish to analyse.

On the other hand, $SNT$ denotes tilings that form (potentially infinitely many) disconnected patches of tiles. We can picture disconnected colonies of mould, for example, as an intuition for what these tilings can look like.

As such, prototile sets that are in $SNT$ are necessarily disconnected, whereas tilings in $WTILE$ are necessarily connected, in a graph theoretic sense. We can use the following construction to analyse tilings of infinite sets of prototiles for these properties. Again, we will use $WELL$ and $ILL$ from previous proofs as fundamental tools.

\begin{theorem}[C. 2019]\label{thm:SNT-WELL}\index{$SNT$ equivalence to $WELL$}
\[ SNT \equiv_m WELL \]
\end{theorem}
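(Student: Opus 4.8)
The plan is to establish the two $m$-reductions $SNT \leq_m WELL$ and $WELL \leq_m SNT$ separately. The first is essentially free: the predicate ``all connected $S$-tilings are finite'' is the negation of ``there exists an infinite connected $S$-tiling'', and the latter is $\Sigma^1_1$, since it quantifies existentially over a tiling function $f : \mathbb{Z}^2 \to S$ and then asserts the (arithmetical) conditions that $f$ is edge-consistent, connected, and has infinite support. Hence $SNT \in \Pi^1_1$, and since the well-founded computable trees are $\Pi^1_1$-complete (by \ref{lemma:wellfddTreesPi11}, together with \ref{prop:wellfddR}), we immediately obtain $SNT \leq_m WELL$. All of the genuine work therefore lies in exhibiting a computable $h$ with $e \in WELL \iff h(e) \in SNT$.

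For that direction I would \emph{not} reuse the AIT construction of \ref{thm:TILE-ILL}: there the quadrant tiles already tile each half-plane on their own, so every output set admits an infinite connected (half-plane) tiling and could never lie in $SNT$. Instead I would build a prototile set in which the only possible infinite connected configurations are \emph{vertical columns} coding branches of the tree. Concretely, after passing $e$ through the filter $g$ of \ref{lemma-pproc} so that $\varphi_{g(e)}$ is the characteristic function of a tree $T_e \subseteq \omega^{<\omega}$, I would introduce, for each node $\sigma ^\frown n \in T_e$, a single tile whose bottom edge is coloured $\sigma$, whose top edge is coloured $\sigma ^\frown n$, and whose left and right edges carry two \emph{fixed, distinct} colours $L \ne R$ drawn from disjoint palettes, together with a distinguished root tile carrying a floor colour below $\lambda$. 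The map $h$ outputs the index of the characteristic function of this tile set; by the behaviour of $g$, the function $\varphi_{h(e)}$ is total exactly when $\varphi_e$ is genuinely a tree, so non-tree inputs (which lie outside $WELL$) produce non-total functions and hence indices outside $SNT$.

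The correctness argument then runs as follows. Because no tile's right colour equals any tile's left colour, no two tiles may ever be placed horizontally adjacent, so every connected component of any $S_{h(e)}$-tiling is a single vertical column. Reading such a column upward yields a $\prec$-increasing sequence $\sigma_0 \prec \sigma_1 \prec \cdots$ of nodes of $T_e$, i.e. an initial segment of a branch, and conversely any branch of $T_e$ can be laid down as exactly such a column. Consequently $S_{h(e)}$ admits an infinite connected tiling if and only if $T_e$ has an infinite branch, i.e. iff $T_e$ is ill-founded; equivalently, \emph{all} connected tilings of $S_{h(e)}$ are finite iff $T_e$ is well-founded, which is precisely $h(e) \in SNT \iff e \in WELL$.

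The main obstacle I anticipate is pinning down the geometry so that no \emph{unintended} infinite connected tiling can slip through: I must verify that forbidding horizontal adjacency really does force every connected patch to be a vertical column (handling isolated tiles, finite columns, and downward extension terminating at the root), and I must treat the ``garbage'' inputs carefully, using \ref{lemma-pproc} to guarantee that $\varphi_{h(e)}$ fails to be a total characteristic function precisely when $\varphi_e$ is not a tree, so that the reduction stays correct on indices that do not code trees at all. The remaining points — that finite columns and isolated single tiles are always admissible (consistent with both sides), and that $e \mapsto h(e)$ is computable and total — are routine.
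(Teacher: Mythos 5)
Your proposal is correct, and the easy direction ($SNT \leq_m WELL$, via the observation that ``all connected tilings are finite'' is the negation of a $\Sigma^1_1$ statement, combined with the $\Pi^1_1$-completeness of $WELL$) is exactly the paper's argument. Where you genuinely diverge is in the hard direction $WELL \leq_m SNT$: the paper builds a four-spoke construction --- a root tile, middle-row and middle-column tiles coding four copies of each branch emanating up, down, left, and right, plus quadrant-filling tiles between the spokes --- so that a well-founded tree yields only bounded diamond-shaped connected patches, while an ill-founded tree yields a \emph{total} planar tiling via the oracle function $\Psi^p$. You strip this down to vertical columns only, using fixed mismatched left/right colours to forbid horizontal adjacency outright, so that every connected component is a column coding an initial segment of a branch, and an infinite connected component exists iff $T_e$ is ill-founded; when $e \notin WELL$ you produce merely an infinite column rather than a total tiling, which is all the definition of $SNT$ requires. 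Your verification obligations discharge correctly (downward chains terminate since string lengths strictly decrease; garbage inputs are handled by \ref{lemma-pproc} exactly as in the paper), and your observation that the construction of \ref{thm:TILE-ILL} cannot be reused --- its quadrant tiles tile half-planes on their own, so every output set admits infinite connected tilings --- is exactly right. What the paper's heavier construction buys is reuse: the same $U_e$ with its filler tiles is redeployed for \ref{thm:WTILE-ILL} and again in the Weihrauch reductions of Chapter 5, and the paper explicitly remarks after the proof that it ``could have been shortened to just the tiles that enumerate the paths'' --- which is precisely your construction. What your version buys is a shorter and more transparent correctness argument for the single theorem at hand.
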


\begin{proof}
As before, we denote Wang prototiles through the 4-tuple $\langle l,u,r,b \rangle$, and for $\sigma \in \omega^{\omega}$, let $\sigma(n)$ denote the $n^{th}$ symbol of $\sigma$.

We prove these equivalences sequentially. Similarly to the previous proof, it follows from the $\Pi^1_1$-completeness of $WELL$ that for a $\Pi^1_1$ set $A$, $$(WELL \leq_m A) \rightarrow (A \equiv_m WELL)$$ As such, it suffices to show that $WELL \leq_m SNT$, as $SNT \leq_m WELL$ will follow from this, giving our $m$-equivalence.

We want some computable $g$ such that $$ e \in WELL \iff g(e) \in SNT $$ which will give us our $m$-reduction.

In order to carry out this proof, we will need to fix the following colours/symbols:
\begin{itemize}
\item Let $\lambda$ denote the empty string as before, and fix unique colours $\lambda^U, \lambda^D, \lambda^L$, and $\lambda^R$.
\item For $\sigma \in \omega^{< \omega}$ let $| \sigma |$ denote the length of $\sigma$, 
\item Let $\sigma^\frown n$ denote the concatenation of $\sigma$ with some $n \in \omega$.
\item For $j \in \{ 1,2,3,4 \}$ and $n \in \omega$ fix colours $\sigma^j_n$ for every $\sigma$.
\item Let $\sigma \in \omega^\omega$, and for all $i \in \omega$ let $\sigma_i \in \omega^{< \omega}$ denote successive initial segments of $\sigma$ of length $i$ such that $\sigma_0 \prec \sigma_1 \prec \ldots \prec \sigma$.
\item Let $\sigma_0 = \lambda$ by our notation above. 
\end{itemize}

With these colours and symbols fixed, let $e \in WELL$ be given. We construct the following schema tiles:

We start with the \textbf{root tile}:

\begin{center}
\sampletile{$\lambda^L$}{$\lambda^U$}{$\lambda^R$}{$\lambda^D$}
\end{center}

We will also need \textbf{middle column and row tiles}:

\begin{center}
\sampletile{$s^1$}{$s$}{$s^2$}{$\sigma$}
\sampletile{$\sigma$}{$s^2$}{$s$}{$s^3$}
\sampletile{$s^4$}{$\sigma$}{$s^3$}{$s$}
\sampletile{$s$}{$s^1$}{$\sigma$}{$s^4$}
\end{center}
Where $s = \sigma^\frown n$ for $\sigma \in \omega^{< \omega}$ and $n \in \omega$.

Lastly, we will also require \textbf{quadrant filling tiles}:

\begin{center}
\sampletile{$s^1_{i+1}$}{$s^1_{i+1}$}{$s^1_{i}$}{$s^1_{i}$}
\sampletile{$s^2_{i}$}{$s^2_{i+1}$}{$s^2_{i+1}$}{$s^2_{i}$}
\sampletile{$s^3_{i}$}{$s^3_{i}$}{$s^3_{i+1}$}{$s^3_{i+1}$}
\sampletile{$s^4_{i+1}$}{$s^4_{i}$}{$s^4_{i}$}{$s^4_{i+1}$}
\end{center}
Where for $j \in \{ 1,2,3,4 \}$ we have that $s^j_i = \sigma \in \omega^{< \omega}$ of length $i$, and $s^j_{i+1} = \sigma^\frown n$ for some $n \in \omega$ as before. Each colour $s^j_i$ thereby encodes some string in $\omega^{< \omega}$, and $s^j_{i+1}$ is the extension of this by 1 character, and both are initial segments of some infinite path.

We can now construct a library $\mathcal{U}$ of tiles from which we will select the relevant ones we need. To generate $\mathcal{U}$ we will take all of the colours we fixed earlier and apply them to the prototile schema above as follows:
\begin{itemize}
\item We colour the root tile with the colours we fixed to get the prototile $\langle \lambda^L, \lambda^U, \lambda^R, \lambda^D \rangle$ and put this tile into $\mathcal{U}$.
\begin{itemize}
\item As before, our root tile has unique colours for each direction to prevent trivial tilings of the plane from the root tile alone. 
\end{itemize}
\item With $j \in \{ 1,2,3,4 \}$, for each initial segment colour $s^j_n$ we fixed earlier, colour all of the possible quadrant tiles and put these into $\mathcal{U}$.
\begin{itemize}
\item For each $\sigma,\tau \in \omega^{< \omega}$, where $\tau = \sigma^\frown i$ is an ancestor for some $\sigma$ with $i \in \omega$, we fix 8 colours:
\begin{itemize}
\item[--] $\sigma^1, \sigma^2, \sigma^3, \sigma^4$
\item[--] $\tau^1, \tau^2, \tau^3, \tau^4$
\end{itemize}
\item We then proceed to create 4 prototiles:
\begin{enumerate}
\item $\langle \tau^1, \tau^1, \sigma^1, \sigma^1 \rangle$
\item $\langle \sigma^2, \tau^2, \tau^2, \sigma^2 \rangle$
\item $\langle \sigma^3, \sigma^3, \tau^3, \tau^3 \rangle$
\item $\langle \tau^4, \sigma^4, \sigma^4, \tau^4 \rangle$
\end{enumerate}
\end{itemize}
\item We also colour for every $\sigma_n \in \omega^{< \omega}$ of length $n$, and every $i \in \omega$ the following column tiles:
\begin{enumerate}
\item $\langle (\sigma_n^\frown i)^1, \sigma_n^\frown i, (\sigma_n^\frown i)^2, \sigma_n \rangle$
\item $\langle \sigma_n, (\sigma_n^\frown i)^2, \sigma_n^\frown i, (\sigma_n^\frown i)^3 \rangle$
\item $\langle (\sigma_n^\frown i)^4, \sigma_n, (\sigma_n^\frown i)^3, \sigma_n^\frown i \rangle$
\item $\langle \sigma_n^\frown i, (\sigma_n^\frown i)^1, \sigma_n, (\sigma_n^\frown i)^4 \rangle$
\end{enumerate}
\end{itemize}

We are now left with a requirement to colour the middle-row and middle-column prototiles. We again ensure that our $e \in WELL$ has been through the pre-processing lemma \ref{lemma-pproc}, which ensures there is a tree $T_e$ computed by $\varphi_e$.

Our $g$ will then construct $U_e \subset \mathcal{U}$ as follows:
\begin{enumerate}
\item Select the root tile, and add this into $U_e$.
\item Select all of the middle column and middle row tiles that correspond to each path $p \in [T_e]$ and add these also into $U_e$.
\item Select from the quadrant filling tiles with the relevant $\tau$ such that for any $\sigma \prec p \in [T_e]$, $\tau$ is the immediate ancestor $\sigma^\frown i$ for $i \in \omega$ such that $\sigma \prec \tau \prec \ldots \prec p$. We then add to this the quadrant tiles we need into $U_e$.
\end{enumerate}

The construction of the prototile set $U_e$ embeds some path $\sigma_n$, where $\varphi_e(\sigma_n) = 1$, 4 times from the root tile - each copy going one of the 4 directions up, down, left, or right, forming `spokes' from the root tile that represent a path through $T_e$. The quadrant tiles are then used to fill in the gaps between these spokes with the intention that we \emph{could} get a total $U_e$-tiling of the plane if $e \notin WELL$. As before, the root tile's empty strings are equivalent to $\sigma_0 = \lambda^L = \lambda^U = \lambda^R = \lambda^D$.

We first want to verify that $$e \in WILL \rightarrow g(e) \in SNT$$ This can be done by analysing the behaviour of the tiling function $\Psi^p : \mathbb{Z}^2 \rightarrow U_e$.

To do this, let $e \in WELL$ be given. Then we can construct $U_e$ as above, and then observe what will happen in a $U_e$-tiling of the plane. Given the well-foundedness of $\varphi_e$ means that there is no infinite $p \in [T_e]$ such that our $U_e$-tilings would have infinitely long spokes. Given this, each $U_e$-tiling will have a bound on the width and height of the tiling, and as such our tiling function $\Psi^p$ will not be total over $\mathbb{Z}^2$. 

Given this fact, $\varphi_{g(e)}$ will only generate a finite patch tiling that is connected. Thus we can say that $g(e)$ must only have connected tilings that are patches, and so we get $g(e) \in SNT$.

For the converse direction it suffices to show that $$ e \notin WELL \rightarrow g(e) \notin SNT$$ Given $e \notin WELL$ there exists an infinite $p \in [T_e]$ which we will use as our oracle. This follows by construction of $\Psi^p : \mathbb{Z}^2 \rightarrow U_e$ as a total TM as follows - let $\sigma_n = p \upharpoonright n$:
\begin{itemize}
\item For $\Psi^p(0,0)$ we return the root tile $\langle \lambda^L, \lambda^U, \lambda^R, \lambda^D \rangle$
\item For $\Psi^p(x,0)$ we return one of two tiles:
\begin{itemize}
\item If $x$ is negative: $\langle \sigma_n, \sigma^1_n, \sigma_{n-1}, \sigma^4_n \rangle$
\item If $x$ is positive: $\langle \sigma_{n-1}, \sigma^2_n, \sigma_n, \sigma^3_n \rangle$
\end{itemize}
\item For $\Psi^p(0,y)$ we return one of two tiles:
\begin{itemize}
\item If $y$ is negative: $\langle \sigma_n^4, \sigma_{n-1}, \sigma^3_n, \sigma_n \rangle$
\item If $y$ is positive: $\langle \sigma_n^1, \sigma_n, \sigma^2_n, \sigma_{n-1} \rangle$
\end{itemize}
\item For $\Psi^p(x,y)$ such that $x,y \neq 0$, we return tile for the correct quadrant such that $\sigma_{n-1}$ and $\sigma_n$ are present for $n = |x| + |y|$.
\end{itemize}

\textbf{NB} - we substitute $\lambda^L, \lambda^U, \lambda^R, \text{ and } \lambda^D$ as required for $\sigma_0$ to ensure that all of our tiles align in the plane.

With $p \in [T_e]$ infinite, given $e \notin WELL$, then $\Psi^p$ is total, which gives us immediately that there are total planar $U_e$-tilings. Thus, our connected tilings for $U_e$ are not patches, and so $g(e) \notin SNT$. 
\end{proof}

\begin{figure}[t]
  \centering
\begin{tikzpicture}
\begin{scope}[scale=2]
\foreach \x/\y/\l/\u/\r/\b in {2/0/$01^1$/$01$/$01^2$/$0$,
                               1/1/$01^1$/$01^1$/$0^1$/$0^1$,2/1/$0^1$/$0$/$0^2$/$\lambda^U$,3/1/$0^2$/$01^2$/$01^2$/$0^2_1$,
                               0/2/$01$/$01^1$/$0$/$01^4$,1/2/$0$/$0^1$/$\lambda^L$/$0^4$,2/2/$\lambda^L$/$\lambda^U$/$\lambda^R$/$\lambda^D$,3/2/$\lambda^R$/$0^2$/$0$/$0^3$,4/2/$0$/$01^2$/$01$/$01^3$,
                               1/3/$01^4$/$0^4$/$0^4$/$01^4$,2/3/$0^4$/$\lambda^D$/$0^3$/$0$,3/3/$0^3$/$0^3$/$01^3$/$01^3$,
                               2/4/$01^4$/$0$/$01^3$/$01$}
{
\draw[fill=white] (1+\x,1-\y) rectangle (0+\x,0-\y);
\filldraw[fill=white] (0+\x,0-\y) -- (0.5+\x,0.5-\y) -- (0+\x,1-\y) -- cycle;
\filldraw[fill=white] (0+\x,0-\y) -- (0.5+\x,0.5-\y) -- (1+\x,0-\y) -- cycle;
\filldraw[fill=white] (0+\x,1-\y) -- (0.5+\x,0.5-\y) -- (1+\x,1-\y) -- cycle;
\node at (0.5+\x,0.5-\y) [label=above:{\u},label=left:{\l},label=right:{\r},label=below:{\b}] {};
}
\end{scope}
\end{tikzpicture}
  \caption{Weakly Tiling Path Construction}
  \label{fig:WT1}
\end{figure}
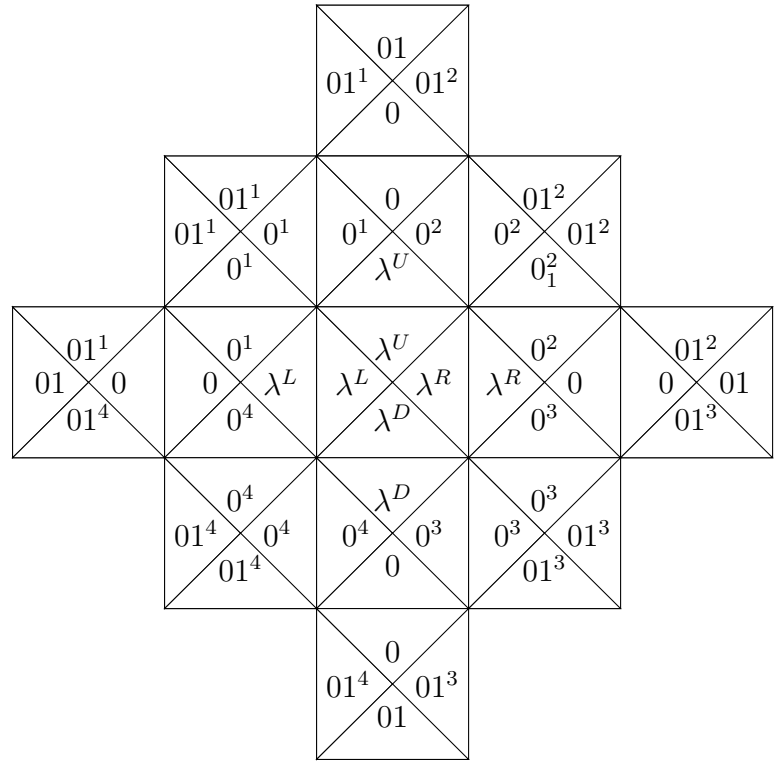

In Figure \ref{fig:WT1} we find the proposed construction, showing the four copies of some path $\sigma$ emanating from the central root tile. The absence of any tiles to complete the edges means that these can never join together to form a complete tiling of the plane - the only way for there to be a total tiling of the plane is for $e \notin WELL$, from which our result follows.

\begin{corollary}[C. 2019]
$SNT \equiv_m WELL$ implies that $SNT$ is $\Pi^1_1$-Complete.
\end{corollary}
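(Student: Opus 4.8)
The plan is to derive $\Pi^1_1$-completeness from the already-established $m$-equivalence $SNT \equiv_m WELL$ of Theorem \ref{thm:SNT-WELL}, together with the $\Pi^1_1$-completeness of $WELL$ recorded in Corollary \ref{cor:CompTreesPi11}. Completeness amounts to two separate claims, and I would verify each using one direction of the $m$-equivalence.

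First I would establish membership, namely $SNT \in \Pi^1_1$. From $SNT \equiv_m WELL$ we in particular have $SNT \leq_m WELL$, witnessed by some computable $f$. Since $WELL$ is $\Pi^1_1$, there is a computable relation $R$ with $e \in WELL \iff \forall g \, \exists x \, R(\overline{g}(x),e)$, and so $e \in SNT \iff \forall g \, \exists x \, R(\overline{g}(x),f(e))$, which is again a $\Pi^1_1$ predicate in $e$. This is simply the observation that $\Pi^1_1$ is closed under $m$-reducibility, the analytic analogue of the closure of $\Sigma^0_1$ under $\leq_m$ proved earlier in the excerpt; pre-composing the defining relation with the computable $f$ keeps the formula in $\Pi^1_1$ normal form.

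Next I would establish $\Pi^1_1$-hardness of $SNT$. By Corollary \ref{cor:CompTreesPi11} the set $WELL$ is $\Pi^1_1$-complete, so for every $\Pi^1_1$ set $P$ we have $P \leq_m WELL$. The other direction of the $m$-equivalence gives $WELL \leq_m SNT$, and composing the two computable reduction functions (using transitivity of $\leq_m$) yields $P \leq_m SNT$. Hence every $\Pi^1_1$ set $m$-reduces to $SNT$, which is exactly $\Pi^1_1$-hardness. Combining the two claims gives that $SNT$ is $\Pi^1_1$-complete.

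I do not expect any genuine obstacle here: the corollary is an immediate bookkeeping consequence of the theorem that precedes it, and the only point worth stating explicitly is the closure of $\Pi^1_1$ under $\leq_m$, which justifies transferring both membership and hardness across the equivalence. The entire content of the proof is the transitivity of $m$-reducibility applied in each direction.
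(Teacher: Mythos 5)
Your proposal is correct and follows essentially the same route as the paper: transfer $\Pi^1_1$-completeness from $WELL$ across the $m$-equivalence of Theorem \ref{thm:SNT-WELL}, using transitivity of $\leq_m$ for hardness and closure of $\Pi^1_1$ under $\leq_m$ for membership. The paper's own proof is a terser one-liner stating only the hardness transfer, so your version merely spells out explicitly what the paper leaves implicit.
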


\begin{proof}
This follows as a consequence that $SNT$ is equivalent to a $\Pi^1_1$-complete set, $WELL$. As such, every $b \in \Pi^1_1$ will have some representation in $SNT$, and as such, $SNT$ is also $\Pi^1_1$-complete.
\end{proof}

It should be noted that the proofs of theorem \ref{thm:SNT-WELL} could have been shortened to just the tiles that enumerate the paths that we are interested in - however, we will use the construction with filler tiles later in this thesis. We shall also utilise the quadrant filling tiles in this construction in the next theorem.

Following on from theorem \ref{thm:SNT-WELL}, we asked what the relationship to $WTILE$ was, and found that we can state the following theorem:

\begin{theorem}[C. 2019]\label{thm:WTILE-ILL}\index{$WTILE$ equivalence to $ILL$}
\[ WTILE \equiv_m ILL \]
\end{theorem}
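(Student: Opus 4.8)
The plan is to reuse, essentially verbatim, the computable map $g$ constructed in the proof of Theorem \ref{thm:SNT-WELL} and to observe that it does double duty. The key feature of the prototile sets $U_e$ produced there is a dichotomy: $U_e$ either admits a total planar tiling or admits only finite connected patches, and the former occurs exactly when the tree $T_e$ computed by $\varphi_e$ has an infinite path. Reading this dichotomy against the property ``all connected tilings are finite'' gave $WELL \equiv_m SNT$; reading the same dichotomy against the property ``there is an infinite connected tiling'' will give $ILL \equiv_m WTILE$. As with the previous equivalences, one direction is essentially free: an infinite connected $\mathcal{S}$-tiling is an existentially quantified second-order object (a function from an infinite connected region of $\mathbb{Z}^2$ into $\mathcal{S}$ all of whose edge-meets agree), so membership in $WTILE$ is $\Sigma^1_1$, and since $ILL$ is $\Sigma^1_1$-complete we get $WTILE \leq_m ILL$ at once.

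For the substantive direction $ILL \leq_m WTILE$ I would take the same $g$ and claim $e \in ILL \iff g(e) \in WTILE$, both implications being read straight off the work in Theorem \ref{thm:SNT-WELL}. If $e \in ILL$ then $T_e$ is ill-founded, so there is an infinite $p \in [T_e]$ and the tiling function $\Psi^p : \mathbb{Z}^2 \rightarrow U_e$ is total; a total tiling is in particular infinite and connected, whence $g(e) \in WTILE$. Conversely, if $e \notin ILL$ while $\varphi_e$ is the characteristic function of a tree, then $T_e$ is well-founded, and Theorem \ref{thm:SNT-WELL} shows every connected $U_e$-tiling is a bounded finite patch; hence $U_e$ has no infinite connected tiling and $g(e) \notin WTILE$.

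The one case requiring attention is when $\varphi_e$ is not the characteristic function of a tree at all. Here I would invoke the preprocessing Lemma \ref{lemma-pproc} exactly as in Theorem \ref{thm:SNT-WELL}: the column-tile membership tests inside the construction are routed through the filtered index supplied by that lemma, so when $\varphi_e$ is not a tree the filtered function is non-total, some column-tile membership query diverges, $\varphi_{g(e)}$ is itself non-total, and therefore $g(e) \notin WTILE$; since also $e \notin ILL$ in this case, the equivalence is preserved. Combining the two reductions yields $WTILE \equiv_m ILL$, and $\Sigma^1_1$-completeness of $WTILE$ then follows as a corollary in the same way that it did for $TILE$.

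The main obstacle is not the reduction, which is inherited wholesale, but confidence in the dichotomy that drives it — specifically that in the well-founded case there is genuinely no infinite connected $U_e$-tiling, rather than merely the canonical $\Psi^p$ failing to be total. This is exactly the forward direction of Theorem \ref{thm:SNT-WELL} (each of the four spokes emanating from the root tile has length bounded by the height of $T_e$, so any connected tiling is bounded), and so it transfers with no additional argument; the content of the present theorem is the recognition that the single construction $g$ already separates $ILL$ from $WELL$ through $WTILE$ just as it separated them through $SNT$.
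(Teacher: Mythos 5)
Your proposal is correct and follows essentially the same route as the paper's proof: the direction $WTILE \leq_m ILL$ is obtained for free from the $\Sigma^1_1$-completeness of $ILL$, and $ILL \leq_m WTILE$ reuses the function $g$ and the prototile sets $U_e$ from Theorem \ref{thm:SNT-WELL}, with an infinite path yielding a total (hence infinite connected) tiling and well-foundedness forcing all connected tilings to be bounded patches. Your explicit handling of indices $e$ where $\varphi_e$ is not a tree, via Lemma \ref{lemma-pproc}, is a point the paper's proof of this particular theorem leaves implicit, but it matches the intended use of that lemma and introduces no divergence in method.
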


\begin{proof}
We get $WTILE \leq_m ILL$ by the $\Sigma^1_1$-completeness of $ILL$. It is sufficient to then show that $ILL \leq_m WTILE$ by our construction for the proof of \ref{thm:SNT-WELL}. As such, we want computable $g$ such that $$ e \in ILL \iff g(e) \in WTILE $$

We derive the same $U_e \subset \mathcal{U}$ as given in the proof of theorem \ref{thm:SNT-WELL}, so we have a $g$ such that $\varphi_{g(e)} : \mathbb{Z}^2 \rightarrow U_e$ computable.

By our construction, we have that if $e \in ILL$ then $\varphi_{g(e)}$ will be a total function from the $\mathbb{Z}^2$ lattice into $U_e$. The resulting tiling will have 4 infinite spokes coming from the root tile, and these are infinite connected tilings that satisfy $g(e) \in WTILE$, even without knowing that $\varphi_{g(e)}$ is total.

For the converse direction it would suffice to show that $$e \notin ILL \rightarrow g(e) \notin WTILE$$ which can be seen through the following argument. Given $e \notin ILL$ then there exists no path $p \in [T_e]$ that is infinite. Thus, when we create $U_e$ by means of $g(e)$, we must create a tile set that has only connected patch of the plane, violating the requirements for $WTILE$, thus $g(e) \notin WTILE$.
\end{proof}

\subsection{Discussion of these Results}

These results differ from previous work by \cite{Harel1986} insofar as they do not rely on any knowledge of the properties of recurrent patterns within a tiling, but rather manage to specifically equate several forms of Domino Problems on infinite sets of prototiles.

It is worth noting some of the following facts about these results:
\begin{enumerate}
\item At no point to we restrict ourselves to requiring to use every tile in a generated prototile set.
\item We do not require any special conditions on how/where our tilings start. 
\end{enumerate}

For 1, it is of interest that we do not require every tile $t \in S_e$ or $t \in U_e$ to be used at all. To this end, we have specifically added extra colours the make specific alignments and prevent trivial planar tilings - specifically from the root tiles we defined. 

For 2, these tilings can be essentially tiled without stating specific initial criteria as we have been very careful to include design elements that essentially force the hand of the tiling function into only admitting certain tilings that code the precise behaviour we want.

Note also that the classes of tilings from either of these prototile sets effectively encode the paths down the trees computed by some $e$, once $e$ has been passed through our tree-filtering lemma \ref{lemma-pproc}.

It is also worth noting that our $m$-equivalences are such that they work despite the mismatch in complements for the sets we concern ourselves with - \emph{e.g.} the complements of $TILE$ and $WTILE$ are quite different, and yet they are both $m$-equivalent to $ILL$. This shows us that infinite computable sets of Wang prototiles are not rich enough to discern the differences that we are mathematically aware of.

\chapter{Aperiodicity, Tilings, and Logical Complexity}
\label{chap5}
% next resets the equation numbers to start at 1 at the start of the chapter
\setcounter{equation}{0}
\renewcommand{\theequation}{\thechapter.\arabic{equation}}

%------------------------------------------------------------------------------

\epigraph{Everything is simpler than you think and at the same time more complex than you imagine.}{\textit{Goethe (attrib.)}}

In this chapter we will explore and present results relating to tiling problems that ask about properties of total planar tilings - specifically whether they are periodic or aperiodic. We present first an overview of past results, and then provide new results inspired by our work in Chapter 3, culminating in a completeness result between periodicity/aperiodicity in infinite prototile sets and the class of problems of the form $(\Pi^1_1 \wedge \Sigma^1_1)$.

\section{Aperiodic Tilings and $\Sigma^1_1$/$\Pi^1_1$ Sets}

We will now look at aperiodicity in tilings and uncover some interesting facts about the $m$-reducibility of previously defined sets $WELL$ and $ILL$ to periodic and aperiodic tiling problems.

\subsection{Definitions of Periodic and Aperiodic Tilings}

We will use the following definitions in our analysis of aperiodic prototile sets derived from our definitions in Chapter 3.

\begin{definition}[Periodic Tilings]\label{def:PTile}
A tiling $T$ of the plane is a \emi{periodic tiling} iff there exists some non-zero vector $\mathbf{v}$ such that $\mathbf{v}$ defines a shift of $T$ such that $$ T = \mathbf{v} T $$

A set of prototiles $\mathcal{S}$ is \emph{periodic} iff it admits only periodic tilings of the plane. For computable $e$, let $PTile$ be as follows 
\begin{align*}
PTile = \{ e: & \, \varphi_e \text{ is the characteristic function for a set of prototiles} \\
	& \, \text{whose tilings are all periodic total tilings.} \} 
\end{align*}
\end{definition}

Our requirement that a periodic, set of prototile has \emph{only total} tilings that meet these criteria is how we avoid trivial periodic tilings by means of tilings that only tile some finite portion of the plane. 

Analogously we have the following definition for aperiodic tilings:

\begin{definition}\label{def:ATile}
A tiling $T$ of the plane is an \emi{aperiodic tiling} iff for any vector $\mathbf{v}$ necessary that $T \neq \mathbf{v} T$. Similarly, a set of prototiles $\mathcal{S}$ is \emph{aperiodic} iff it admits no periodic tilings of the plane.

For computable $e$, let $ATile$ be as follows:
\begin{align*}
ATile = \{ e: \varphi_e & \, \text{ is the characteristic function for a set of prototiles} \\
	&  \,\text{whose tilings are only aperiodic total tilings.}  \}
\end{align*}
\end{definition}

It is worth recalling that Simpson's equivalence of tiling problems on Wang prototiles with 2-dimensional subshifts of finite type in \cite{Simpson2007} is prophetic with respect to extending our gaze beyond domino problems and into questions of the existence of shifts of total tilings themselves.

\subsection{Overview of Aperiodicity} 

Whilst periodic tilings have been around since ancient times - of which a plethora of examples mathematical significance can be found in \cite{GrunbaumTP} - aperiodicity is relatively new. We will first discuss the origins of aperiodic tilings sets, and then set the scene and context in which some famous aperiodicity results find themselves.

\subsubsection{Origins of Aperiodic Prototile Sets}

As documented in \cite[P.520-600]{GrunbaumTP}, the study of aperiodicity in tilings did not occur until Robinson proved that such tilings must necessarily exist in 1968. Conway, Amman, and Penrose all made headways in the study of aperiodicity in tilings. One such result can be found in the following definitions and proposition - for which we shall use the presentation in \cite{Shen2010}:

\begin{definition}\label{def:macro-tile}
Let $S$ be a finite set of prototiles. Then a \emi{macro tile} is a square of size $n \times n$ filled with matching tiles from $S$.
\end{definition}

\begin{definition}
Let set of prototiles $S$ and a set of macro tiles $M$ be given. We say that $S$ \emph{implements} $M$ if any $S$-tiling can be split by horizontal and vertical cuts into macro-tiles $m \in M$.
\end{definition}

\begin{definition}
A set of prototiles $S$ is a \emi{self-similar prototile set} if it implements some macro-tile set $M$, with $M$ isomorphic to $S$, which we shall write $M \cong S$.
\end{definition}

Here, `isomorphic' means that we can find a one to one correspondence between the sets of $M$ and $S$ prototiles - that is, for some $m \in M$, we can find a corresponding $s \in S$ such that under a chosen mapping of the edge conditions of $m$, $s$ has the same edge conditions.

Note, that if $n$ exists and $S$ is self-similar, then $S$ will have total tilings of the plane, as for any  patch tiling, we can inflate the tilings with the substituted macro tilings to obtain arbitrarily large tilings of the plane by compactness. Though, we shall lose this compactness argument when we graduate from finite to infinite prototile sets.

\begin{proposition}[\protect{\cite[Sec. 4]{Shen2010}}]\label{prop:SHEN-Aperiodic}
A self-similar prototile set $S$ has only aperiodic tilings.
\end{proposition}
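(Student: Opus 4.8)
The plan is to argue by contradiction via an infinite descent on the length of a period vector. Suppose, toward a contradiction, that some $S$-tiling $T$ is periodic, so that there is a nonzero vector $\mathbf{v}$ with $T = \mathbf{v}T$. Since $S$ is self-similar it implements a macro-tile set $M \cong S$, so $T$ can be cut by horizontal and vertical lines spaced $n$ apart into macro-tiles drawn from $M$. The first key step is to observe that this decomposition is \emph{unique}: because the isomorphism $M \cong S$ is realised through the edge colours, the admissible positions of the cutting lines are forced by the tiling itself, so the grid of cuts is an invariant of $T$ rather than an arbitrary choice.

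From this uniqueness I would deduce that any period of $T$ must respect the cut grid. Translating $T$ by $\mathbf{v}$ carries $T$ to itself, hence carries its (unique) grid of cuts to itself; but the translations preserving a grid of lines of spacing $n$ in each direction are exactly those lying in $n\mathbb{Z}^2$. Therefore $\mathbf{v} = n\mathbf{w}$ for some nonzero integer vector $\mathbf{w}$, where I use $n \geq 2$, which is implicit in a macro-tile being a genuine $n \times n$ block.

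Next, using $M \cong S$, I would replace each macro-tile of $T$ by the single prototile of $S$ corresponding to it under the isomorphism, obtaining a new $S$-tiling $T'$ whose inflation by the substitution returns $T$. Because $T$ is $\mathbf{v}$-periodic and $\mathbf{v} = n\mathbf{w}$, the compressed tiling $T'$ is $\mathbf{w}$-periodic with $\mathbf{w} = \mathbf{v}/n$ a nonzero lattice vector of strictly smaller norm, the norm having been divided by $n \geq 2$. Since $T'$ is again an $S$-tiling and $S$ is self-similar, the very same argument applies to $T'$, producing periods $\mathbf{v}, \mathbf{v}/n, \mathbf{v}/n^2, \dots$, each a nonzero vector of $\mathbb{Z}^2$.

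The contradiction is then immediate: a nonzero lattice vector has norm at least $1$, yet after finitely many steps $\norm{\mathbf{v}}/n^k < 1$, which is impossible. Hence no periodic $S$-tiling exists and every $S$-tiling is aperiodic. I expect the main obstacle to be justifying the uniqueness of the macro-tile decomposition together with the resulting claim that a period of $T$ must lie in $n\mathbb{Z}^2$; this is the crux on which the descent rests, and it is exactly where the hypothesis $M \cong S$ (as opposed to $S$ merely implementing some unrelated $M$) and the colour-coded nature of the cuts are used. Once that is secured, the de-substitution map and the descent are routine.
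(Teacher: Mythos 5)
Your argument is essentially identical to the paper's (which follows Shen et al.): the unique macro-tile decomposition forces any period vector into $n\mathbb{Z}^2$, de-substitution (the paper's ``zooming out'') divides the period by $n$, and iterating yields the contradiction. The only cosmetic difference is that you finish with a norm-based infinite descent where the paper concludes that the period is a multiple of $n^k$ for every $k$, hence zero; these are the same argument, and your explicit flagging of the uniqueness of the splitting (which the paper simply asserts) and of the need for $n \geq 2$ is a fair sharpening rather than a deviation.
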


\begin{proof}
Proof from \cite{Shen2010}. Suppose for contradiction that a self-similar prototile set $S$ is periodic. We let $p \in \omega$ be the period of some $S$-tiling $T$. By definition, $T$ can be split uniquely into macro-tiles from $M \cong S$ by $n \times n$ cuts, for some unique $n \in \omega$. A shift by $p$ should respect this splitting, else we get a different splitting, so $p$ must be some multiple of $n$.

`Zooming out' from our tiling, by which we mean rescaling our tiling by some fixed factor, we can proceed in replacing each $M$ macro-tile by its corresponding $S$ tile, we get a $\frac{p}{n}$ shift of $T$. However, by the same reasoning $\frac{p}{n}$ must also be a multiple of n, so we can zoom out again, and continue this construction.

We must therefore conclude that $p$ is a multiple of $n^k$ for any $k$, meaning that $p$ is a zero vector. $\rightarrow \leftarrow$
\end{proof}

The classic instance of such results can be found in Penrose Tilings, specifically the presentation from \cite{Gummelt1996}, and the original article by Penrose in \cite{Penrose1979} - wherein Penrose shows how you can acquire aperiodic tilings of the plane from as few as two prototiles. Indeed, two distinct but related prototile sets are given: the Penrose Rhombi and Penrose Kite and Dart prototile sets.

Interesting tangents of study that have derived from the study of aperiodic tile sets has been found in the study of quasicrystals - crystalline lattice structures that are ordered but not periodic. Penrose tilings have been found to have given some insight into the icosahedral phases of quasicrystals - see \cite{Mackay1987}.

Their proofs of aperiodicity follow as analogous arguments to the above - by showing that the Penrose constructions `deflate' and `inflate' to copies of the tiling, we show that we can tile every arbitrary finite portion of the plane. Thus, by a basic compactness argument, we find that Penrose prototiles tile the plane. However, if they do so, then the inflation/deflation processes give the same bi-simulation argument as given by proposition \ref{prop:SHEN-Aperiodic}. As such, any Penrose tiling must then also be invariant under any linear shift, else they would fail to be self-similar in the way that there are, and so Penrose tilings are aperiodic.

There is a fantastic treatment of the underlying algebraic theory by de Bruijn in two papers: \cite{DEBRUIJN198139}, followed by \cite{DEBRUIJN198153} - both are dedicated to P\'{o}lya. The theory is quite exceptionally beautiful, but beyond the scope of this thesis to include. The essential idea that was given in this work is called the `cut and project' method, where a five-dimensional lattice is projected through a `window' onto the plane in order to acquire the corner points of a Penrose tiling. The original results can be found in \cite{DEBRUIJN198139}, with an excellent overview of this work and its relationship to actual physical phenomena can be found in the work in Au-Yang \etal \cite{AuYang2013}.

The existence of precisely 8 corner configurations in any Penrose tiling is also given in \cite{DEBRUIJN198153}, which is again work that is worthy of study but beyond the scope of this thesis. 

\begin{figure}[t]
\includegraphics[scale=0.75]{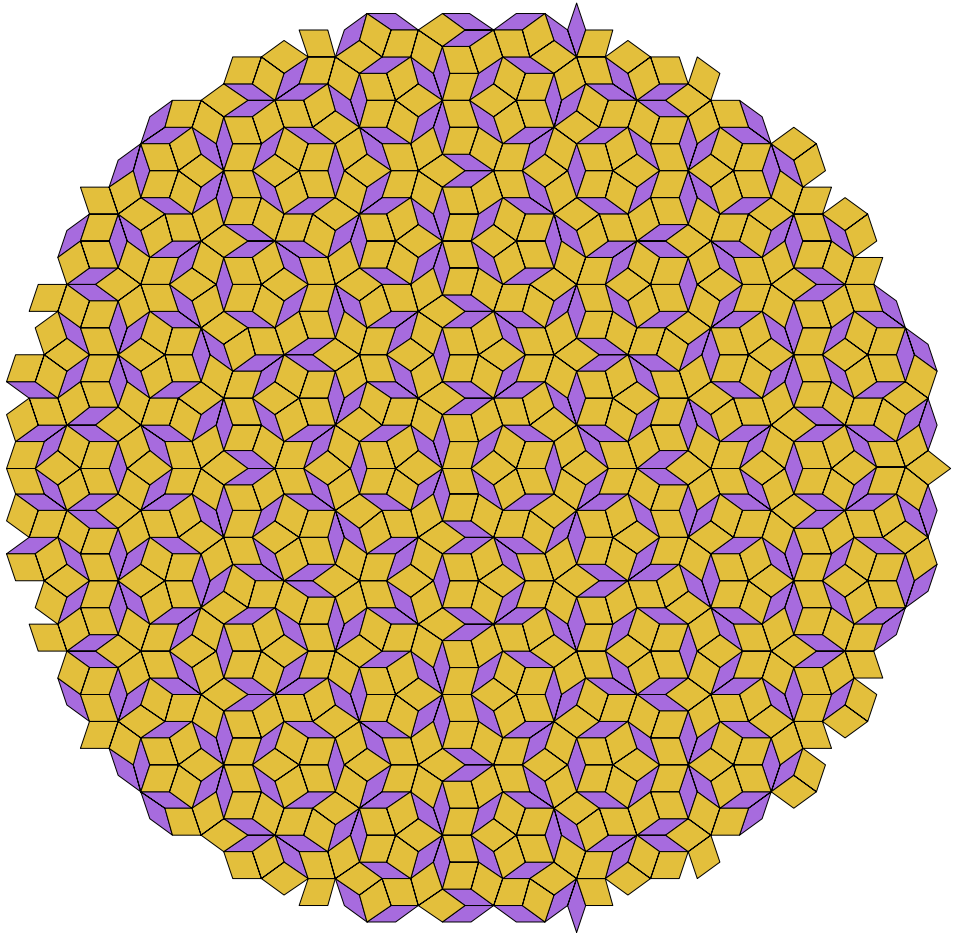}
\caption{A Penrose Tiling - generated online at \url{https://misc.0o0o.org/penrose/}}
\label{fig:PPT}
\end{figure}

In the continuation of their work we outlined above, Shen \etal in \cite{Shen2010} produced some very novel conditions under which aperiodic tilings could be found by means of fixed points - they show that it is possible to have some predicate $S$ that is isomorphic to the set of tiles $T$ that is used to implement it. This is analogous to the challenge of creating Quines in computer science - that is, computer programs whose output upon being run is to print their own source code. Just as Quines are necessarily existing, so are these Shen fixed-point tilings.

\subsubsection{Aperiodic Wang Prototiles}

As we quoted in the introduction, Simpson in \cite{Simpson2007} draws the equivalence between tiling problems in Wang prototile sets and 2-dimensional subshifts of finite type. Utilising this as our base intuition, we present now an overview of aperiodicity in Wang tiles, for which there have been some very interesting and recent developments.

Building on from this basis, the question was asked about what the \emph{smallest} aperiodic Wang prototile sets might be. The survey in \cite{Rao2015} gives a fascinating timeline: Berger originally came up with a set of 20,426 Wang prototiles that was aperiodic for his thesis. By \cite{GrunbaumTP}, a set of 24 aperiodic Wang prototiles was presented, with improvements by Robinson and Amman along the way.

\begin{figure}[t]
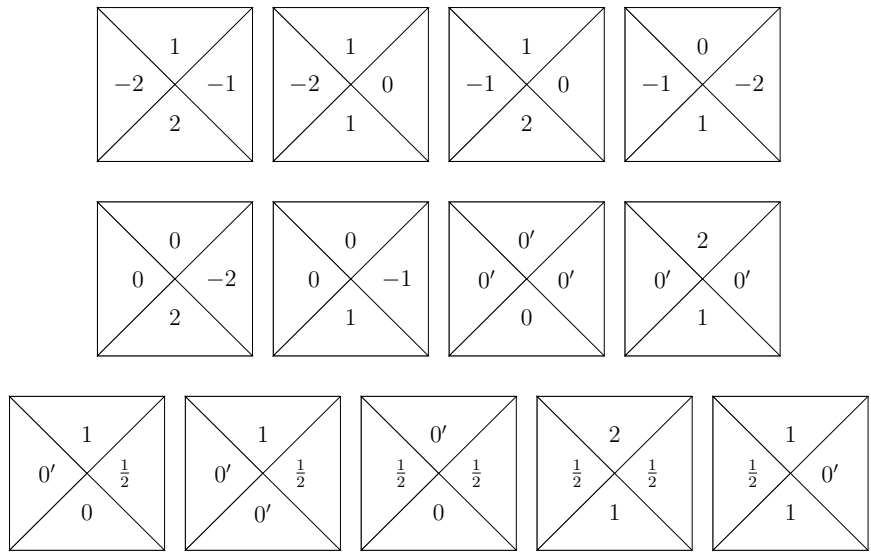

\begin{center}
\sampletile{$-2$}{$1$}{$-1$}{$2$}
\sampletile{$-2$}{$1$}{$0$}{$1$}
\sampletile{$-1$}{$1$}{$0$}{$2$}
\sampletile{$-1$}{$0$}{$-2$}{$1$}

\hspace{0.5cm}

\sampletile{$0$}{$0$}{$-2$}{$2$}
\sampletile{$0$}{$0$}{$-1$}{$1$}
\sampletile{$0'$}{$0'$}{$0'$}{$0$}
\sampletile{$0'$}{$2$}{$0'$}{$1$}

\hspace{0.5cm}

\sampletile{$0'$}{$1$}{$\frac{1}{2}$}{$0$}
\sampletile{$0'$}{$1$}{$\frac{1}{2}$}{$0'$}
\sampletile{$\frac{1}{2}$}{$0'$}{$\frac{1}{2}$}{$0$}
\sampletile{$\frac{1}{2}$}{$2$}{$\frac{1}{2}$}{$1$}
\sampletile{$\frac{1}{2}$}{$1$}{$0'$}{$1$}
\end{center}
	\caption{A set of 13 aperiodic Wang prototiles due to Culik \cite{CULIK1996245}.}
\label{fig:13-ATile}
\end{figure}

After a result by Kari \cite{Kari1996}, it was Culik who set a record in \cite{CULIK1996245} - an aperiodic set of 13 Wang prototiles, which we have included in figure \ref{fig:13-ATile}. These were derived from the states of automata transducers which can compute non-repeating reals. As such, any prototile set coding this behaviour will likewise be non-repeating, thereby aperiodic. 

The most significant breakthrough in this area has been a recent publication from Jeandel and Rao in \cite{Rao2015} where they proved the following two important results:

\begin{theorem}[\protect{\cite[Thm. 5]{Rao2015}}]
There exists an aperiodic set of 11 Wang prototiles.
\end{theorem}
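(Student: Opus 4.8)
The plan is to establish the result in two logically independent parts: an \emph{existence} part, exhibiting a concrete set of $11$ Wang prototiles over a $4$-colour palette that tiles the plane, and an \emph{aperiodicity} part, showing that every tiling by this set fails the periodicity condition of Definition \ref{def:ATile}. First I would fix the explicit tile set of Jeandel and Rao \cite{Rao2015}, listing all $11$ prototiles as $4$-tuples $\langle l,u,r,b \rangle$ drawn from a $4$-element colour set, and verify by inspection the edge-meet relations that each colour admits. The stronger claim that $11$ is in fact \emph{minimal} --- that no aperiodic set of $10$ or fewer Wang tiles exists --- is a separate assertion, and I would treat it only by appeal to the exhaustive computer-assisted search of \cite{Rao2015}, since it quantifies over all tile sets up to the relevant size and is not amenable to a short hand argument.

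For the existence of tilings I would not attempt to write down a single infinite tiling directly, but instead produce arbitrarily large finite square patches satisfying the edge-matching conditions and then invoke the Wang Extension Theorem (Theorem \ref{thm:WangExtension}), whose hypothesis is exactly that $n \times n$ blocks can be assembled for arbitrarily large $n$. The patches themselves are most cleanly obtained from an auxiliary numerical sequence: I would encode into the horizontal rows a mechanical (Sturmian) word of slope equal to a fixed quadratic irrational, and then check that the vertical colour constraints are met when passing from one row to the next. This reduces the combinatorial construction to an arithmetic verification about Beatty sequences.

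The hard part will be aperiodicity, and here the key is to argue that \emph{every} valid tiling carries an irrational structure, rather than merely to construct one aperiodic example. The strategy is to read off from the local matching rules a correspondence between tilings and orbits of a dynamical system on a torus: each column records a real parameter, a one-step shift to the right acts on that parameter by addition of a fixed irrational rotation, and the admissible vertical transitions force the horizontal rows to be precisely the mechanical words of the associated slope. Once this encoding is in place, a nonzero translation vector $\mathbf{v}$ with $T = \mathbf{v}T$ (in the notation of Definition \ref{def:PTile}) would descend to a periodic orbit of an irrational rotation, which is impossible; hence no such $\mathbf{v}$ exists and every tiling is aperiodic. I note that this mechanism differs from the self-similar route of Proposition \ref{prop:SHEN-Aperiodic}: the Jeandel--Rao set is not obtained by substitution, so aperiodicity is enforced dynamically rather than by inflation.

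The principal obstacle, and the step where essentially all of the work of \cite{Rao2015} is concentrated, is proving that the local constraints are \emph{rigid} enough to force this global Sturmian structure on an arbitrary tiling --- that is, ruling out tilings that satisfy all edge matches yet do not arise from the rotation encoding. This requires a careful transducer/automaton analysis of which bi-infinite rows are mutually compatible and of how compatible rows may be stacked, and it is exactly this forcing argument (together with the minimality search) that makes the complete proof computer-assisted. For the purposes of this thesis I would therefore present the tile set and the dynamical encoding explicitly, and cite \cite{Rao2015} for the exhaustive verification underlying both the rigidity and the minimality claims.
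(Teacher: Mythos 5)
Your proposal is sound, but note that the thesis itself offers no proof of this statement at all: it is imported verbatim from Jeandel and Rao, with the $11$ prototiles displayed in Figure \ref{fig:11-ATile} and a remark that the proofs (of both existence-with-aperiodicity and the companion minimality theorem) are computer-assisted, with the hard residual cases turning out to code transducers in the style of Kari and Culik. What you have written is therefore a genuine elaboration rather than a parallel of anything in the paper, and it is a faithful one: your division into an existence half (arbitrarily large matched patches plus the Extension Theorem \ref{thm:WangExtension}) and a rigidity half (every tiling forced to carry an irrational-rotation structure, so that a period vector $\mathbf{v}$ with $T = \mathbf{v}T$ would yield a periodic orbit of an irrational rotation) is exactly the mechanism underlying the Jeandel--Rao argument, and your decision to defer the exhaustive transducer analysis and the minimality search to \cite{Rao2015} matches how the thesis treats it. One caution: your aside that the Jeandel--Rao set ``is not obtained by substitution'' is true of how the set was \emph{found}, but as a structural claim it is delicate --- subsequent work (Labb\'e) exhibited a substitutive, self-similar structure for the Jeandel--Rao tilings, so you should phrase this as ``the original proof does not proceed via Proposition \ref{prop:SHEN-Aperiodic}-style inflation'' rather than asserting the tilings lack substitutive structure. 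With that wording fixed, your sketch is correct and, if anything, more informative than the paper's citation-only treatment.
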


\begin{figure}[b]
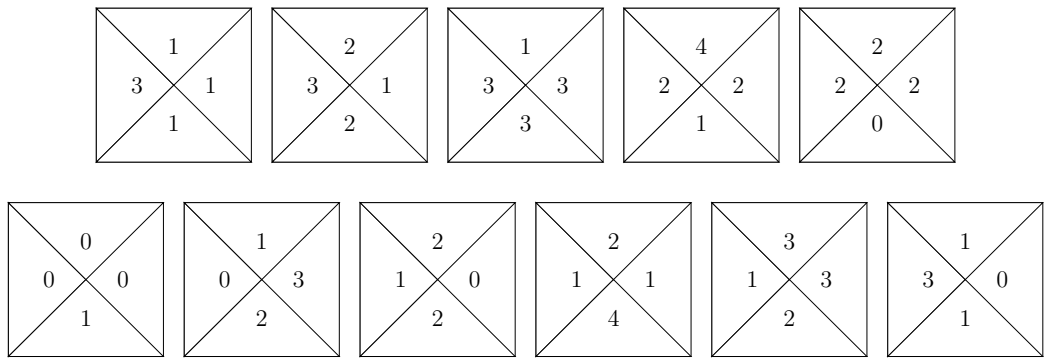

\begin{center}
\sampletile{$3$}{$1$}{$1$}{$1$}
\sampletile{$3$}{$2$}{$1$}{$2$}
\sampletile{$3$}{$1$}{$3$}{$3$}
\sampletile{$2$}{$4$}{$2$}{$1$}
\sampletile{$2$}{$2$}{$2$}{$0$}

\hspace{0.5cm}

\sampletile{$0$}{$0$}{$0$}{$1$}
\sampletile{$0$}{$1$}{$3$}{$2$}
\sampletile{$1$}{$2$}{$0$}{$2$}
\sampletile{$1$}{$2$}{$1$}{$4$}
\sampletile{$1$}{$3$}{$3$}{$2$}
\sampletile{$3$}{$1$}{$0$}{$1$}

\end{center}
	\caption{A set of 11 aperiodic Wang prototiles due to Jeandel and Rao \cite{Rao2015}.}
\label{fig:11-ATile}
\end{figure}

\begin{theorem}[\protect{\cite[Thm. 1]{Rao2015}}]
There is no aperiodic Wang prototile set with 10 tiles or fewer.
\end{theorem}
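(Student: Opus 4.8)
The plan is to reduce the statement to a finite, if enormous, case analysis and then to dispatch each case by a decision procedure establishing the dichotomy ``does not tile the plane, or tiles it periodically''. The first observation is that a set $\mathcal{S}$ of at most $10$ Wang tiles can involve at most $10$ distinct colours on its horizontal edges and at most $10$ on its vertical edges, and in fact far fewer once we quotient by colour-renaming; so up to the natural symmetries of the problem there are only finitely many tile sets to examine. First I would set up these symmetries explicitly: the dihedral action on the square (rotations and reflections applied uniformly to every tile), the independent renaming of the horizontal and vertical colour alphabets, and the transpose exchanging the two alphabets. Factoring the raw enumeration by this group is essential, since it is the only way to bring the count of candidate sets down to something a machine can traverse.

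Next I would represent each candidate tile set as a pair of relations, or equivalently as a transducer: reading a tiling column by column turns the vertical matching condition into a finite-state automaton whose transitions are labelled by admissible columns, while the horizontal matching condition becomes the requirement that successive columns be related. In this language a tiling of $\mathbb{Z}^2$ is a bi-infinite path in the transducer, a periodic tiling is an eventually periodic such path, and the whole question becomes one about the structure of the relation the transducer computes. This reformulation is what makes the per-case analysis tractable, because it replaces geometric reasoning with the combinatorics of finite automata.

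For each candidate $\mathcal{S}$ the deciding procedure then runs two searches in parallel. On the one hand I would search for a periodic witness, namely a tiling of some torus $\mathbb{Z}^2 / (n\mathbb{Z} \times m\mathbb{Z})$ of bounded dimensions; any such tiling unrolls to a periodic tiling of the plane, placing $\mathcal{S}$ outside the aperiodic class. On the other hand I would search for a finite obstruction, an $N \times N$ square that $\mathcal{S}$ cannot tile; by the Extension Theorem (\ref{thm:extension}, and its Wang-tile form \ref{thm:WangExtension}) the existence of such a square certifies that $\mathcal{S}$ does not tile the plane at all. If for every candidate one of the two searches succeeds within the chosen bounds, then no set of at most $10$ tiles is genuinely aperiodic and the theorem follows.

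The hard part is precisely the guarantee that these searches terminate correctly, since the general domino problem is undecidable (it is $\Sigma^0_1$-complete by \ref{thm:TMTilings}) and so there is no uniform a priori bound on either the period or the obstruction size. This is where the genuine content of the Jeandel--Rao argument lies: one must prove, using the structural analysis of the associated transducers and their minimisation, that for tile sets this small a set that tiles the plane necessarily admits a periodic tiling of controllably bounded period, so that the two bounded searches are together exhaustive. Establishing that no sufficiently small transducer can encode a genuinely aperiodic relation is the crux, and it is what forces the argument to be computer-assisted rather than carried out purely by hand.
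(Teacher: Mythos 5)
You should first be aware that the thesis does not prove this theorem at all: it is imported from Jeandel and Rao \cite{Rao2015}, and the surrounding text only summarises their method (computer-assisted enumeration, with a handful of residual cases done by hand via transducers). So there is no internal proof to measure you against, and your first three paragraphs do faithfully reconstruct the broad strategy that the cited work uses: enumeration of candidate sets modulo colour-renaming and dihedral symmetries, the transducer/automaton reformulation of the matching conditions, and a parallel search for a torus tiling (which unrolls to a biperiodic planar tiling) versus an untileable $N \times N$ square (which, by the contrapositive of Theorem \ref{thm:WangExtension}, certifies failure to tile). One small point you should make explicit in the positive search: periodicity in the sense of Definition \ref{def:PTile} is invariance under a single non-zero vector, and restricting the search to tori $\mathbb{Z}^2/(n\mathbb{Z} \times m\mathbb{Z})$ is only justified by the classical lemma that a Wang tile set admitting a singly periodic tiling also admits a doubly periodic one.

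The genuine gap is in your last paragraph, which is precisely where all the mathematical content lives and where you only assert rather than argue. The claim that one can prove in advance that every tile set with at most $10$ tiles which tiles the plane admits a periodic tiling of \emph{controllably bounded} period is not established by anything you wrote, and it also misdescribes the actual argument: Jeandel and Rao do not first prove a uniform period bound making two bounded searches exhaustive. They run the two \emph{unbounded} semi-decision procedures in parallel --- each halts on its respective class, but neither need halt on a genuinely aperiodic set --- and the finitely many candidates on which neither search succeeded were then eliminated one by one with bespoke hand analyses of the associated transducers (the thesis records four such cases, related to the Kari--Culik constructions). As stated, your plan is circular at the crux: a ``guarantee that these searches terminate correctly'' on every candidate is essentially equivalent to the theorem itself, since a candidate on which both searches run forever within your chosen bounds is exactly what an aperiodic set (or a set indistinguishable from one at that bound) looks like, and the general Domino Problem's undecidability (Theorem \ref{thm:TMTilings}) rules out any uniform bound valid for all tile sets. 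A correct write-up must therefore either supply the per-case eliminations of the surviving sets or actually prove the bounded-period statement for this size class; the proposal contains neither, so it is a proof plan rather than a proof.
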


The proof of both of these theorems are computer assisted, and they used a series of innovative techniques to check the tilings they generated - from the simple cases of repeating patterns, through to the complicated cases that were in fact subsets of the Kari and Culik constructions above. These more advanced cases - of which there were 4 - were not computer-checkable, so the proofs and checks were carried out by hand. It transpired that each of these aperiodic tilings were coding transducers in some way, and as such were given by similar reasoning to the aperiodicity results due to Kari and Culik. We have included the 11-prototile set in figure \ref{fig:11-ATile}.

It has been postulated, and subsequently answered to a lesser degree than expected in \cite{Socolar2011}, the question ``Does there exist a single-prototile that tiles the plane aperiodically?'' The Taylor-Socolar tile detailed in \cite{Socolar2011} achieves this, but by the use of a tile that is defined with gaps between its various pieces - though tilings of the plane utilising this tile cover every point.

In general, the literature has not, however, given any consideration to infinite sets of prototiles and their periodicity or aperiodicity. However, as seen in \cite{Harel1986}, the aperiodic properties of some finite prototile sets - specifically that if a specified tile appears only finitely often in a planar tiling, then this must be an aperiodic tiling - were found to code $\Pi^1_1$ statements, indicating that perhaps this would be some interesting candidate for further analysis and study. 

\subsubsection{Quasi-periodicity of tilings}\label{subsec:quasiperiodic}

When observing the properties of Penrose tilings, it is immediate that certain patterns recur regularly, even though the overall tiling is aperiodic. Such tilings are in the class of \emi{quasi-periodic} tilings, which we define as follows, from \cite{Delvenne2004}:

\begin{definition}\label{def:quasiperiodic}
For a given prototile set $S$, $S$ is \emph{quasi-periodic} iff each $S$-tiling of the plane is of the form such that for every pattern $u$ of the tiling there is an integer $k$ such that $u$ appears in every $(k \times k)$ patch of tiles. 
\end{definition}

Where here a \emi{pattern} is any valid, finite patch of tiles that occurs in our tiling. Intuitively, something is quasiperiodic if any finite patch can be found occurring infinitely often and within a bound in any tiling. As a reference, consider a star-like pattern in a Penrose tiling. 

There is a lot of interesting work found in papers such as Delvenne and Blondel \cite{Delvenne2004}, and survey papers connecting quasicrystals to quasi-periodic tilings like Schechtman \cite{Schechtman2013}. The most interesting parts of these are the way in which Penrose tilings mimic and indeed accurately code actual physical surfaces found in Shi \etal in \cite{Shi2017} - where we can note that their 7 diagrams of the ``angles and islands around each vertex'' line up with de Bruijn's derived unique vertex configurations for Penrose tilings found in \cite{DEBRUIJN198139} and \cite{DEBRUIJN198153}. We note that, although these 7 configurations are not the 8 identified by de Bruijn, we suspect that given two of the configurations in the mathematics are identical with edge-conditions removed, they look to be identical under the microscope in \cite{Shi2017}.

Such connections are found in other quasicrystals which we alluded to previously - \eg Subramanian \etal in \cite{Subramanian2016}, Shi \etal \cite{Shi2017} and Au-Yang \etal \cite{AuYang2013} are all readily accessible physics papers that make extensive use of the developed mathematics behind Penrose tilings as quasicrystals. This is, however, a digression from the main content of this thesis.

Indeed, the work of Socolar \etal in \cite{Socolar2011} is a very interesting way of determining the dynamics of this aperiodic tiling system. We will consider more the dynamics of tilings in Chapter 6 - but it is worth noting that it is an open problem as to whether the tile-by-tile tilings of the plane due to the method in \cite{Socolar2011} does indeed lead to planar tilings.

\section{Periodicity and Aperiodicity of $ILL$}

\begin{theorem}[C. 2019]\label{thm:ILL-ATile}
$$ ILL \leq_m ATile $$
\end{theorem}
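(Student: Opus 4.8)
**The plan is to reuse the AIT construction from Theorem \ref{thm:TILE-ILL}, but modify it so that the resulting prototile set, whenever it tiles at all, can only tile aperiodically.**

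Recall the goal: I must exhibit a computable $h$ such that $e \in ILL \iff h(e) \in ATile$. Now $ATile$ requires two things of $h(e)$: that every tiling it admits is total, and that every such total tiling is aperiodic. The construction in Theorem \ref{thm:TILE-ILL} already delivers the first half of the equivalence in the right direction — when $e \in ILL$, the tree $T_e$ has an infinite path $p$, the central column gets tiled by the two copies of the initial segments $\sigma_0 \prec \sigma_1 \prec \cdots \prec p$, and $\Phi^p$ becomes a total tiling; when $e \notin ILL$ (i.e.\ $T_e$ is well-founded) the central column cannot be completed and there is no total tiling. So the skeleton of the reduction is already present. What remains is to verify that the tilings produced are genuinely aperiodic, and this is where the real content lies.

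First I would recall why the construction is naturally aperiodic. The column tiles carry, in their upper and lower labels, the actual string values $\sigma_i$ and $\sigma_i^\frown n$ — these are the successive initial segments of a path, and crucially \emph{no two distinct initial segments carry the same colour}, since the $c^j_i$ indices and the embedded string data both grow with $i$. Likewise the quadrant-filling tiles are indexed by $i \in \omega$, which strictly increases as one moves outward from the central row. This means the tiling contains strictly increasing ``coordinate'' data as one moves away from the root tile, so a translation by any non-zero vector $\mathbf{v}$ would have to identify two tiles bearing different indices $i$ (or different string labels $\sigma_i$), which is impossible. Hence $T = \mathbf{v}T$ forces $\mathbf{v} = \mathbf{0}$, giving aperiodicity. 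I would make this precise by arguing: fix a total $S_e$-tiling; the root tile is unique (its four labels $M^L_0, \lambda^U, M^R_0, \lambda^D$ occur nowhere else), so any shift preserving the tiling must fix the root tile's position, forcing $\mathbf{v} = \mathbf{0}$.

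The plan of the proof is therefore: (1) define $h$ to be exactly the reduction producing $S_e$ from Theorem \ref{thm:TILE-ILL} (composed with the filter $g$ of Lemma \ref{lemma-pproc}); (2) observe that $e \in ILL \Rightarrow h(e) \in TILE$ already gives totality and existence of tilings; (3) add the new argument that every total $S_e$-tiling is aperiodic, via the uniqueness of the root tile and the strictly-increasing indexing just described; (4) for the reverse direction, note that $e \notin ILL \Rightarrow T_e$ is well-founded $\Rightarrow$ the central column cannot be tiled $\Rightarrow$ $h(e)$ has \emph{no} total tiling at all, so vacuously $h(e) \notin ATile$ (since $ATile$ requires total tilings to exist — or, depending on the convention, $h(e)$ fails the ``all tilings total'' requirement). \textbf{The main obstacle I anticipate} is the reverse direction's bookkeeping: I must confirm that the definition of $ATile$ in Definition \ref{def:ATile} excludes sets with no total tilings, so that a well-founded $T_e$ reliably lands $h(e)$ outside $ATile$; and I must check that the uniqueness-of-root-tile argument genuinely rules out \emph{all} non-trivial periods rather than just axis-aligned ones — this requires confirming that the only tile able to sit at the centre is the root tile, which follows because the $\lambda^U/\lambda^D$ and $M^L_0/M^R_0$ labels appear on no other tile in $\mathcal{S}$. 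Once that is established the aperiodicity is immediate, and the existing machinery of Theorem \ref{thm:TILE-ILL} supplies everything else.
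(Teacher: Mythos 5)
Your proposal is correct and follows essentially the same route as the paper: reuse the $h$ (AIT) construction from Theorem \ref{thm:TILE-ILL}, derive aperiodicity from the uniqueness of the root tile (so any shift $\mathbf{v}$ with $T = \mathbf{v}T$ must be zero), and handle the converse by noting that membership in $ATile$ forces a total tiling, which by the earlier equivalence forces $e \in ILL$. Your elaboration of the strictly increasing $c^j_i$ and $M_i$ indices, and your explicit flagging of the convention that prototile sets with no total tilings lie outside $ATile$, only make precise what the paper's terser proof takes for granted.
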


\begin{proof}
To see this fact, we note that the construction of our function $h$ in the proof of theorem \ref{thm:TILE-ILL} gives an infinite set of prototiles $\mathcal{S}$ that tiles the plane in such a way that the root tile will only occur once, and every point $(x,y)$ in the plane has some unique tile in $\mathcal{S}_e$ that covers it. As such, any ill-founded tree $e \in ILL$ coded into a $\mathcal{S}_e$ by $h$ in our given construction is necessarily aperiodic. Thus it follows that for any $e \in ILL$, our given $h(e) \in ATile$.

Conversely, any $h(e) \in ATile$ must tile the plane, and as such our $e$ must be in $ILL$ otherwise it would be a well-founded tree, and so not tile the plane as outlined in our previous proof.
\end{proof}

It was, however, found that the following additional result could also be obtained:

\begin{theorem}[C. 2019]\label{thm:ILL-PTile}
$$ ILL \leq_m PTile $$
\end{theorem}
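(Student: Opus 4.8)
The plan is to imitate the reduction behind \ref{thm:TILE-ILL} and \ref{thm:ILL-ATile}, but to replace the aperiodicity-forcing layout (the \emph{AIT} construction, whose root tile occurs exactly once and whose quadrant indices grow outward) by a new construction -- the \emph{PIT} construction -- that makes every total tiling horizontally uniform, hence periodic. As in the earlier proofs I would first pass $e$ through the filtering lemma \ref{lemma-pproc}, so that $\varphi_{g(e)}$ is the characteristic function of a genuine tree $T_e \subseteq \omega^{<\omega}$, and then exhibit a computable $h$ with $e \in ILL \iff h(e) \in PTile$. Since $ILL$ is $\Sigma^1_1$-complete, only the single reduction $ILL \leq_m PTile$ has to be built, exactly as the statement asks.

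The key idea is to let each tile carry its entire row-state in its horizontal colours. For every pair $\sigma, \sigma^\frown n \in \omega^{<\omega}$ with $\varphi_{g(e)}(\sigma) = \varphi_{g(e)}(\sigma^\frown n) = 1$ I would include an \emph{up-tile} $U_{\sigma,n} = \langle u_{\sigma,n}, \sigma^\frown n, u_{\sigma,n}, \sigma\rangle$ and a \emph{down-tile} $D_{\sigma,n} = \langle d_{\sigma,n}, \sigma, d_{\sigma,n}, \sigma^\frown n\rangle$, where $u_{\sigma,n}$ and $d_{\sigma,n}$ are unique colours encoding the transition $\sigma \prec \sigma^\frown n$. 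Because the left and right colours of each such tile are equal and determine $\sigma$ and $n$ uniquely, horizontal edge-matching forces every row of any total tiling to consist of a single repeated tile; consequently any total tiling is invariant under the horizontal unit shift $\mathbf{v} = (1,0)$, i.e.\ it is periodic in the sense of \ref{def:PTile}. The upper and lower half-planes are separated by a single \emph{base row} of tiles $\langle b, \lambda^U, b, \lambda^D\rangle$ with $\lambda^U \neq \lambda^D$, where (as in \ref{thm:TILE-ILL}) the empty string appears as the bottom colour $\lambda^U$ of the root up-tiles $U_{\lambda,n}$ and as the top colour $\lambda^D$ of the root down-tiles $D_{\lambda,n}$.

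Vertical edge-matching then fixes the shape of each column: above the base row the bottom/top colours $\sigma$ and $\sigma^\frown n$ force the encoded string to strictly grow, so tiling the upper half-plane upward forever is possible iff $T_e$ has an infinite path, and symmetrically below the base. Hence a total $h(e)$-tiling exists iff $T_e$ is ill-founded. For $e \in ILL$ I would take an infinite path $p \in [T_e]$, place the base row at height $0$, follow $p$ by up-tiles above and down-tiles below, and fill horizontally by repetition; every total tiling, being row-uniform, is periodic, so $h(e) \in PTile$. For the converse, if $h(e) \in PTile$ then some total tiling exists, its upper half exhibits an infinite strictly-growing chain of tree-nodes, and reading off this chain computably (as in \ref{thm:TILE-ILL}) recovers a path witnessing $e \in ILL$.

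The main obstacle is not the existence direction -- which mirrors \ref{thm:TILE-ILL} -- but guaranteeing that \emph{every} total tiling is periodic while simultaneously ruling out degenerate tilings that would exist independently of $e$ and so collapse the reduction. Row-uniformity handles the first point. For the second, I must ensure no single tile tiles $\mathbb{Z}^2$ on its own: the strict growth $\sigma \prec \sigma^\frown n$ prevents any up- or down-tile from stacking on itself, and choosing $\lambda^U \neq \lambda^D$ prevents the base tile from self-stacking and forbids an $e$-independent all-base tiling. Giving the base, up-transition, and down-transition colours pairwise disjoint ranges also rules out a `staircase' in which the base sits at different heights in different columns, forcing each of the three regions to be a globally contiguous band of uniform rows. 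With these safeguards the whole construction is computable in $e$ via the $s$-$m$-$n$ theorem, yielding the required $h$ and hence $ILL \leq_m PTile$.
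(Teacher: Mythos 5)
Your overall architecture matches the paper's \textbf{PIT} construction (row-uniform tiles carrying the tree data on their vertical edges, with a base row separating the two half-planes), but there is a genuine gap in the converse direction: your up- and down-tiles share the \emph{raw string colours} on their vertical edges, and this lets them interlock. Concretely, for any single tree edge $\sigma \prec \sigma^\frown n$ with $\sigma \neq \lambda$, the two tiles $U_{\sigma,n} = \langle u_{\sigma,n}, \sigma^\frown n, u_{\sigma,n}, \sigma\rangle$ and $D_{\sigma,n} = \langle d_{\sigma,n}, \sigma, d_{\sigma,n}, \sigma^\frown n\rangle$ tile all of $\mathbb{Z}^2$ by vertically alternating rows $\ldots, U, D, U, D, \ldots$: the top of a $U_{\sigma,n}$ row is $\sigma^\frown n$, which is exactly the bottom of a $D_{\sigma,n}$ row, and the top of a $D_{\sigma,n}$ row is $\sigma$, which is exactly the bottom of a $U_{\sigma,n}$ row. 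This tiling is total, horizontally and vertically periodic, and requires no infinite path -- it exists already for the well-founded tree $\{\lambda, 0, 00\}$. Hence $h(e) \in PTile$ for many $e \in WELL$, and the right-to-left direction of $e \in ILL \iff h(e) \in PTile$ fails. None of your listed safeguards blocks this: strict growth only forbids a tile stacking on \emph{itself}; $\lambda^U \neq \lambda^D$ only protects the root level; and pairwise-disjoint side-colour ranges constrain horizontal neighbours, not vertical ones. In particular your claim that the upper half of any total tiling "exhibits an infinite strictly-growing chain of tree-nodes" is false.

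The paper's proof avoids exactly this trap by splitting \emph{every} string colour into an `up' variant $\sigma^U$ and a `down' variant $\sigma^D$, not only the empty string: its column tiles going up carry only $U$-variant colours on their vertical edges and those going down only $D$-variant colours, the two palettes meeting solely through the root tile $\langle M, \lambda^U, M, \lambda^D\rangle$; the paper even flags this, remarking that the distinct colours $\sigma^U_i$ and $\sigma^D_i$ are selected "in order that we fail to tile the plane if $e \notin ILL$". The repair to your construction is the same one-line change: replace the shared vertical colours by $\sigma^U, (\sigma^\frown n)^U$ in up-tiles and $\sigma^D, (\sigma^\frown n)^D$ in down-tiles. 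Then no down-tile can ever abut an up-tile vertically, any total tiling must contain a base row with a strictly growing $U$-chain above and $D$-chain below, and your converse argument goes through as written. (Your transition-indexed side colours $u_{\sigma,n}$ are, incidentally, a mild strengthening of the paper's level-indexed $U_i, D_i$, since they force each row to repeat a single tile and so rule out distinct columns following distinct paths -- a point the paper handles less explicitly.)
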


\begin{proof}
We can obtain the result by an adapting the procedure in the proof from \ref{thm:TILE-ILL} in the following way. We require a computable $f$ such that $$ \forall e (e \in ILL \iff f(e) \in PTile ) $$

We start by defining our colours as the following:

\begin{itemize}
\item Let $\lambda$ denote the empty string, and let $\lambda^U, \lambda^D$ be unique colours.
\item Fix $M$ unique, and $U_i, D_i$ unique for all $i \in \omega$.
\item Let $\alpha \in \omega^\omega$, and for all $i \in \omega$, let $\sigma_i \in \omega^{< \omega}$ denote successive initial segments of $\sigma$ of length $i$ such that $ \sigma_0 \prec \sigma_1 \prec \ldots \prec \sigma_i \ldots \prec \alpha$. 
\item We fix for each $\sigma_i$ an `up' $\sigma^U_i$ and `down' $\sigma^D_i$ colour that will be used in the prototile set construction.
\item Let $\sigma_0 = \lambda$ as before.
\end{itemize}

With these fixed, let $e \in ILL$ be given. We will construct our prototile set from the following schema tiles:

We start with a modified \textbf{root tile}:
\begin{center}
\sampletile{$M$}{$\lambda^U$}{$M$}{$\lambda^D$}
\end{center}

Next, we require \textbf{column tiles} of the following form:
\begin{center}
\sampletile{$U_i$}{$\sigma_i^{U \frown} n$}{$U_i$}{$\sigma_i^U$} \sampletile{$D_i$}{$\sigma^D$}{$D_i$}{$\sigma_i^{D\frown} n$}
\end{center}

We then construct our prototile set $\mathcal{S}_e$ similarly to the previous proof, by colouring the above schema tiles as follows:

\begin{itemize}
\item Colour the root tile with the tuple $\langle M, \lambda^U, M, \lambda^D \rangle$ and put this into $\mathcal{S}_e$.
\begin{itemize}
\item \textbf{NB} - we still maintain the difference between the `up' and `down' variants of our empty string symbol in order to prevent trivial root-tile only tilings of the plane, though they would be undoubtedly periodic.
\end{itemize}
\item We fix some path $p \in \varphi_e$ such that $\sigma_n \prec p$ for $\sigma_n \in \omega^{< \omega}$, and add a column tile where it holds that $\varphi_e(p \upharpoonright n) = 1$.
\begin{itemize}
\item For $\sigma_0$ we use the appropriate placement of $\lambda^U$ and $\lambda^D$ as before.
\item We also select distinct colours for $\sigma^U_i$ and $\sigma^D_i$ in order that we fail to tile the plane if $e \notin ILL$.
\end{itemize}
\end{itemize}

We can now verify that for each $e \in ILL$ we get $f(e) \in PTile$. The core idea in this construction is to have infinitely many copies of our central column tilings from our previous proof, laid out in such as way that for left or right shift of our tiling, we get the same tiling back, thus $f(e)$ would be periodic. 

As before, we can define our tiling function $\Phi^p : \mathbb{Z}^2 \rightarrow S_e$ as follows:
\begin{itemize}
\item For $\Phi^p(x,0)$ return the root tile $\langle M, \lambda^U, M , \lambda^D \rangle$.
\item For $\Phi^p(x,y)$, with $\sigma = p \upharpoonright y$, 
\begin{itemize}
\item If $y > 0$ return the tile $ \langle U_y, \sigma^{U \frown} n , U_y, \sigma^U \rangle$
\item If $y < 0$ return the tile $ \langle D_y, \sigma^D , D_y ,\sigma^{D \frown} n \rangle$
\end{itemize}
\end{itemize}

To see that our tilings are periodic, note that all of our root tiles will form an infinite middle-row of tiles that can be left or right shifted. We then build up our tilings, noting that each successive column will have prototiles selected that code specifically some copy of our path $p$ upwards or downwards. Thus, every $\mathcal{S}_e$-tiling will have infinitely many leftwards or rightwards shifts. 

Thus, if $\mathbf{v}$ is a `shift right one' vector, then we have that an $\mathcal{S}_e$-tiling $T_e$ has the property $$T_e = \mathbf{v} T_e$$ meaning that $f(e) \in PTile$.

Suppose we have some $f(e) \in PTile$, then it follows that from any root tile we can extract some infinite path moving upwards that gives us that $e \in ILL$. We can also locate a root tile from any tile we select in our $\mathcal{S}_e$-tilings by moving appropriately down our $UM_i$'s or up our $DM_i$'s until a root tile is reached.

From this position we can then follow our tiling upwards in order to extract an infinite path that was given by $e$. As such, if our tiling is total and total, $e \in ILL$.
\end{proof}

\begin{figure}[t]
  \centering
\begin{tikzpicture}
\begin{scope}[scale=2]
\foreach \x/\y/\l/\u/\r/\b in {0/0/$U_2$/$01^U$/$U_2$/$0^U$,1/0/$U_2$/$01^U$/$U_2$/$0^U$,2/0/$U_2$/$01^U$/$U_2$/$0^U$,3/0/$U_2$/$01^U$/$U_2$/$0^U$,4/0/$U_2$/$01^U$/$U_2$/$0^U$,
                               0/1/$U_1$/$0^U$/$U_1$/$\lambda^U$,1/1/$U_1$/$0^U$/$U_1$/$\lambda^U$,2/1/$U_1$/$0^U$/$U_1$/$\lambda^U$,3/1/$U_1$/$0^U$/$U_1$/$\lambda^U$,4/1/$U_1$/$0^U$/$U_1$/$\lambda^U$,
                               0/2/$M$/$\lambda^U$/$M$/$\lambda^D$,1/2/$M$/$\lambda^U$/$M$/$\lambda^D$,2/2/$M$/$\lambda^U$/$M$/$\lambda^D$,3/2/$M$/$\lambda^U$/$M$/$\lambda^D$,4/2/$M$/$\lambda^U$/$M$/$\lambda^D$,
                               0/3/$D_1$/$\lambda^D$/$D_1$/$0^D$,1/3/$D_1$/$\lambda^D$/$D_1$/$0^D$,2/3/$D_1$/$\lambda^D$/$D_1$/$0^D$,3/3/$D_1$/$\lambda^D$/$D_1$/$0^D$,4/3/$D_1$/$\lambda^D$/$D_1$/$0^D$,
                               0/4/$D_2$/$0^D$/$D_2$/$01^D$,1/4/$D_2$/$0^D$/$D_2$/$01^D$,2/4/$D_2$/$0^D$/$D_2$/$01^D$,3/4/$D_2$/$0^D$/$D_2$/$01^D$,4/4/$D_2$/$0^D$/$D_2$/$01^D$}
{
\draw[fill=white] (1+\x,1-\y) rectangle (0+\x,0-\y);
\filldraw[fill=white] (0+\x,0-\y) -- (0.5+\x,0.5-\y) -- (0+\x,1-\y) -- cycle;
\filldraw[fill=white] (0+\x,0-\y) -- (0.5+\x,0.5-\y) -- (1+\x,0-\y) -- cycle;
\filldraw[fill=white] (0+\x,1-\y) -- (0.5+\x,0.5-\y) -- (1+\x,1-\y) -- cycle;
\node at (0.5+\x,0.5-\y) [label=above:{\u},label=left:{\l},label=right:{\r},label=below:{\b}] {};
}
\end{scope}
\end{tikzpicture}
  \caption{$PTile$ for $e \in ILL$ Construction}
  \label{fig:PTileILL}
\end{figure}

In figure \ref{fig:PTileILL} we give an example of the tiling construction for theorem \ref{thm:ILL-PTile} for the initial segment $\sigma = 01$. This illustrates the way in which we create vertical dual copies of the given path from our ill-founded tree in such a way that any left shift vector $\mathbf{l}$, or right shift vector $\mathbf{r}$ and a given $T_e$, we have that $$ \mathbf{l}T = T = \mathbf{r}T$$

\begin{figure}[t]
\begin{center}
\begin{tikzpicture}
	% built bottom up...
	\draw (0,0) rectangle (1,4) node[pos=.5,rotate=-90] {$\cdots$};
	\draw (0,4) rectangle (1,5) node[pos=0.5] {$\cdots$};
    \draw (0,5) rectangle (1,9) node[pos=.5,rotate=-90] {$\cdots$};
	\draw (1,0) rectangle (2,4) node[pos=.5,rotate=-90] {lower copy of $\sigma$};
	\draw (1,4) rectangle (2,5) node[pos=0.5] {$\lambda$};
    \draw (1,5) rectangle (2,9) node[pos=.5,rotate=-90] {upper copy of $\sigma$};	
    \draw (2,0) rectangle (3,4) node[pos=.5,rotate=-90] {lower copy of $\sigma$};
	\draw (2,4) rectangle (3,5) node[pos=0.5] {$\lambda$};
    \draw (2,5) rectangle (3,9) node[pos=.5,rotate=-90] {upper copy of $\sigma$};
	\draw (3,0) rectangle (4,4) node[pos=.5,rotate=-90] {lower copy of $\sigma$};
	\draw (3,4) rectangle (4,5) node[pos=0.5] {$\lambda$};
    \draw (3,5) rectangle (4,9) node[pos=.5,rotate=-90] {upper copy of $\sigma$};
    \draw (4,0) rectangle (5,4) node[pos=.5,rotate=-90] {lower copy of $\sigma$};
	\draw (4,4) rectangle (5,5) node[pos=0.5] {$\lambda$};
    \draw (4,5) rectangle (5,9) node[pos=.5,rotate=-90] {upper copy of $\sigma$};
	\draw (5,0) rectangle (6,4) node[pos=.5,rotate=-90] {$\cdots$};
	\draw (5,4) rectangle (6,5) node[pos=0.5] {$\cdots$};
    \draw (5,5) rectangle (6,9) node[pos=.5,rotate=-90] {$\cdots$};
\end{tikzpicture}
\caption{Overall shape of our tiling construction in the proof of \ref{thm:TILE-ILL}.}
\label{fig:ShapeTilingILL-PTile}
\end{center}
\end{figure}

Figure \ref{fig:ShapeTilingILL-PTile} shows the overall shape of this tiling construction used in the proof of theorem \ref{thm:ILL-PTile}. This diagram is complimentary to the previous figure \ref{fig:PTileILL}.

Note that we were required to preserve the up vs. down directions of our paths, which we were not required to do before. The reason being is that we wanted to preserve that the existence of a tiling derived with $f(e)$ implies that our original $e \in ILL$. We could very well have constructed periodic tilings of $e$'s that are either in $WELL$ or $ILL$. This realisation drove the results in the next section \ref{sec:PAWell}.

\subsection{Periodicity and Aperiodicity of $WELL$}\label{sec:PAWell}

Before we carry on with the proofs in this section we will need the following tool - the ability to take disjoint unions of prototile sets. Our requirement for this construction can be outlined in the following definition and subsequent proposition:

\begin{definition}
We say that two prototile sets $S_1$ and $S_2$ have \emph{common edge meets} iff for some tile $t_i \in S_1$, with $t_i = \langle l_i, u_i, r_i, b_i \rangle $, there exists a tile $s_i \in S_2$ such that one of the following hold:
\begin{itemize}
\item $s_i = \langle r_i, \cdot, \cdot, \cdot \rangle$
\item $s_i = \langle \cdot, b_i, \cdot, \cdot \rangle$
\item $s_i = \langle \cdot, \cdot, l_i, \cdot \rangle$
\item $s_i = \langle \cdot, \cdot, \cdot, u_i \rangle$
\end{itemize}
where $\cdot$ denotes a `wildcard placeholder' for any other possible colour.
\end{definition}

We say that two prototiles $S_1$ and $S_2$ have no common edge meets if the above definition does not hold - intuitively, you cannot place any tile from $S_1$ next to any tile from $S_2$, and vice versa. The following proposition demonstrates an important consequence of two prototile sets being edge-meet disjoint.

\begin{proposition}[C. 2019]\label{prop:DisjointEdgeMeets}
If two periodic (aperiodic) prototile sets $S_1, S_2$ share no common edge meets, then their union $S_1 \cup S_2$ is also periodic (aperiodic).
\end{proposition}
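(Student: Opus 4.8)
The plan is to reduce the proposition to a single structural observation: because $S_1$ and $S_2$ share no common edge meets, no total $(S_1 \cup S_2)$-tiling can mix tiles from both sets. Once this \emph{purity} is established, the result follows almost immediately from the definitions of periodicity and aperiodicity, since both definitions are quantifications over all total tilings.

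First I would prove the following claim: every total tiling $f : \mathbb{Z}^2 \rightarrow S_1 \cup S_2$ is either a total $S_1$-tiling or a total $S_2$-tiling. To see this, partition the lattice into $A = f^{-1}(S_1)$ and $B = f^{-1}(S_2)$, and suppose for contradiction that both are non-empty, say $p \in A$ and $q \in B$. Since $\mathbb{Z}^2$ under the von Neumann (4-neighbour) adjacency is connected, there is a finite path $p = p_0, p_1, \ldots, p_k = q$ of lattice points in which successive points are adjacent. Taking $i$ to be the largest index with $p_i \in A$, we have $i < k$ and $p_{i+1} \in B$, with $p_i$ and $p_{i+1}$ adjacent. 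But then $f(p_i) \in S_1$ and $f(p_{i+1}) \in S_2$ are placed next to one another with matching edges (as $f$ is a valid tiling), which is exactly a common edge meet between $S_1$ and $S_2$, contradicting the hypothesis. Hence one of $A, B$ is empty and the tiling is pure.

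With the claim in hand, both cases are routine. For the periodic case, let $T$ be any total $(S_1 \cup S_2)$-tiling; by the claim it is a total $S_1$-tiling or a total $S_2$-tiling, and since each of $S_1, S_2$ admits only periodic tilings, $T$ is periodic. As $T$ was arbitrary, $S_1 \cup S_2$ admits only periodic tilings and is therefore periodic. For the aperiodic case, the same dichotomy shows that every total $(S_1 \cup S_2)$-tiling is a pure tiling drawn from a set admitting no periodic tiling, so no $(S_1 \cup S_2)$-tiling can be periodic; hence $S_1 \cup S_2$ is aperiodic.

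The main obstacle, and the only place where any care is needed, is the purity claim, and within it the well-definedness of the partition: I would note that the no-common-edge-meets hypothesis, together with the disjointness implicit in taking a \emph{disjoint} union, guarantees that no single tile lies in both $S_1$ and $S_2$ in a way that could be placed edge-matched next to itself, so that $A$ and $B$ genuinely partition $\mathbb{Z}^2$. Everything else is a direct unwinding of the definitions of periodic and aperiodic prototile sets, with the connectedness of $\mathbb{Z}^2$ doing the real work.
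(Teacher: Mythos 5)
Your proof is correct and takes essentially the same route as the paper's: both rest on the observation that the absence of common edge meets forces every total tiling to be pure, drawn entirely from $S_1$ or entirely from $S_2$, after which both the periodic and aperiodic cases follow directly from the definitions — you simply make explicit, via the connectedness of $\mathbb{Z}^2$, the step the paper compresses into one line. One small repair to your final remark: the proposition concerns the plain union $S_1 \cup S_2$ rather than the disjoint union $\sqcup$, so instead of appealing to disjointness you should note that under the hypothesis any tile $t \in S_1 \cap S_2$ could never appear in a total tiling at all (each of its matched neighbours, whether from $S_1$ or $S_2$, would witness a common edge meet with $t$ viewed as a member of the other set), so the partition $A$, $B$ is well defined in every case that matters.
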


\begin{proof}
Let periodic prototile sets $S_1, S_2$ be given. If $S_1$ and $S_2$ share no common edge meets, then for any selection of a tile $t \in S_1 \cup S_2$, the resultant tiling must be formed from only tiles from $S_1$ if $t \in S_1$ or $S_2$ otherwise, as the edge-meet criteria from each prototile set is incompatible. Thus any tiling from such a $S_1 \cup S_2$ is periodic.

We note that the same argument holds for $S_1$ and $S_2$ being aperiodic. 
\end{proof}

To illustrate an example of where this fails - which is essentially the canonical case that we wish to avoid - we provide the following:

\begin{example}\label{example:UnionTileSets}
Let it be given that a periodic tiling consisting of squares can be made aperiodic by the bisection of a single randomly chosen square into two rectangles. Thus we give the following example to illustrate how this can be done in Wang prototile sets, and thereby show the importance of the lack of edge-meets between prototile sets. 

Let $S_1$ be given by the prototile 
\begin{center}
\sampletilefill{red}{blue}{red}{blue}
\end{center}
and let $S_2$ be given by the prototiles
\begin{center}
\sampletilefill{red}{blue}{green}{green} \sampletilefill{green}{blue}{red}{green} \\
\vspace{2mm}
\sampletilefill{red}{green}{green}{blue} \sampletilefill{green}{green}{red}{blue}
\end{center}

Clearly both $S_1$ and $S_2$ are periodic by themselves. However, $S_1 \cup S_2$ will have tilings that, say, feature only finitely many of the patch tilings given by the prototiles in $S_2$, and would therefore be aperiodic. The same could be done by a single column of tiles from the prototile in $S_1$ being inserted into an $S_2$-tiling, which would also make it aperiodic.
\end{example}

As such, given the example above, we present a construction that provides a way of combining prototile sets, yet preserving the periodicity and aperiodicity conditions we wish to. 

\begin{definition}[Disjoint Union of Tile Sets]\index{disjoint union of prototile sets}
Let the \emph{disjoint union of prototile sets} $A$ and $B$, denoted $A \sqcup B$, be given as follows:
\begin{itemize}
\item For each prototile $t \in A$, let $t = \langle a,b,c,d \rangle$ then this gets mapped to $$ \langle a,b,c,d \rangle \mapsto \langle (1,a), (1,b), (1,c), (1,d) \rangle $$
\item For each prototile $s \in B$, let $s =\langle e,f,g,h \rangle$ then we map this similarly: $$ \langle e,f,g,h \rangle \mapsto \langle (2,e), (2,f), (2,g), (2,h) \rangle $$
\end{itemize}

Likewise, for any arbitrary number of prototile sets $S_i$ for $i \in \omega$ the disjoint union $ \bigsqcup_{i \in \omega} S_i $ is given by mapping each $t_j \in S_i$, with $t_j = \langle l_j, u_j, r_j, b_j \rangle$ by $$  \langle l_j, u_j, r_j, b_j \rangle \mapsto  \langle (i,l_j), (i,u_j), (i,r_j), (i,b_j) \rangle $$
\end{definition}

The intuition behind this disjoint union is the ability to take two sets of (potentially infinite) prototile sets and `apply a tint' to each prototile in each prototile set, thereby placing us in the position given in proposition \ref{prop:DisjointEdgeMeets}. Thus, we can talk about the tiling properties of the resultant disjoint union, but each subset will be incompatible for tiling with any others. 

Our intention is to be able to talk about the disjoint union of two prototile sets $A$ and $B$ in the following way, after proposition \ref{prop:DisjointEdgeMeets}:
\begin{itemize}
\item If both $A$ and $B$ are periodic (aperiodic) then the disjoint union $A \sqcup B$ will be periodic (aperiodic), and so will likewise belong to $PTile$ ($ATile$).
\item If $A$ is periodic and $B$ is aperiodic, or vice versa, then $A \sqcup B$ will have both periodic and aperiodic tilings and so will belong to neither $PTile$ nor $ATile$.
\end{itemize}

In our previous example \ref{example:UnionTileSets}, were we to take $S_1 \sqcup S_2$, then we would only have periodic tilings, given both $S_1$ and $S_2$ are periodic, total planar tilings, and would fail to share edge-meet conditions in $S_1 \sqcup S_2$.

Prototile sets that are not in either $PTile$ nor $ATile$ are relatively easy to find. A straightforward example is the set consisting of the following sixteen prototiles:
\begin{center}
\sampletile{$0$}{$0$}{$0$}{$0$} 
\sampletile{$0$}{$0$}{$0$}{$1$}
\sampletile{$0$}{$1$}{$0$}{$0$}
\sampletile{$0$}{$1$}{$0$}{$1$} \\
\smallskip
\sampletile{$0$}{$0$}{$1$}{$0$}
\sampletile{$0$}{$0$}{$1$}{$1$}
\sampletile{$0$}{$1$}{$1$}{$0$}
\sampletile{$0$}{$1$}{$1$}{$1$} \\
\smallskip
\sampletile{$1$}{$0$}{$0$}{$0$}
\sampletile{$1$}{$0$}{$0$}{$1$}
\sampletile{$1$}{$1$}{$0$}{$0$}
\sampletile{$1$}{$1$}{$0$}{$1$} \\
\smallskip
\sampletile{$1$}{$0$}{$1$}{$0$}
\sampletile{$1$}{$0$}{$1$}{$1$}
\sampletile{$1$}{$1$}{$1$}{$0$}
\sampletile{$1$}{$1$}{$1$}{$1$}
\end{center}

These prototiles allow us to encode two binary strings - one going vertically, and another horizontally. Thus, if we place tiles such that they encode periodic repeating strings, such as ``$0101010101\ldots$" using these prototiles in our tiling of the plane, then our tiling will clearly be periodic. 

However if we use non-repeating, aperiodic strings - such as using a Martin-L\"{o}f random string vertically and the binary version of Champernowne's constant\footnote{This is constructed by concatenating every binary number: $0110111001011101111000\ldots$} horizontally - then our tiling will be clearly aperiodic. 

Essentially, in this tiling we code two binary strings - $\sigma$ going left to right and $\tau$ going up and down. If either $\sigma$ or $\tau$ (or both) are periodic, then the tiling is periodic. Else, the tiling is aperiodic.

We will use our previous constructions, and fix the following construction names.

\begin{definition}\label{def:AITPIT}
Let the following short hand definitions be given:
\begin{itemize}
\item \textbf{AIT} (Aperiodic Ill-founded Tilings) - the construction found in the proof of theorem \ref{thm:TILE-ILL}.
\item \textbf{PIT} (Periodic Ill-founded Tilings) - the construction found in the proof of theorem \ref{thm:ILL-PTile}.
\end{itemize}
\end{definition}

Recall, our constructions here take any ill-founded tree and generate either periodic or aperiodic prototile sets as required. We shall use these constructions in the following sections in conjunction with our notion of disjoint union of prototile sets (`prototile set tinting') in order to obtain the following results.

\begin{theorem}[C. 2019]\label{thm:WELL-PTile}
$$ WELL \leq_m PTile $$
\end{theorem}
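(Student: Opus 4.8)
The plan is to reduce from the $\Pi^1_1$-complete set $WELL$ by combining the aperiodic construction AIT (from the proof of Theorem \ref{thm:TILE-ILL}) with a fixed, trivially periodic prototile set, using the disjoint union operation so that Proposition \ref{prop:DisjointEdgeMeets} governs the periodicity of the result. The guiding idea is that AIT already behaves in exactly the bifurcated way we want: on an ill-founded tree it yields an aperiodic total tiling, while on a well-founded tree it yields no total tiling at all. Tinting it against an always-periodic baseline then sends well-founded trees into $PTile$ and ill-founded trees out of it.

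Concretely, I would first fix a prototile set $P_0$ that tiles the plane only periodically, for instance the single monochromatic Wang tile $\langle 0,0,0,0\rangle$, whose unique planar tiling is periodic. Recalling that the AIT construction already passes its input through the tree-filter of Lemma \ref{lemma-pproc}, I would define $f(e)$ to be an index for the disjoint union $\mathrm{AIT}(e) \sqcup P_0$. This is computable, and by the definition of $\sqcup$ the two summands share no common edge-meets, so Proposition \ref{prop:DisjointEdgeMeets} applies: no total tiling can mix tiles from the two tinted summands.

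The verification splits into three cases. If $e \in WELL$, then $\varphi_e$ is a well-founded tree, so by the analysis in Theorems \ref{thm:TILE-ILL} and \ref{thm:nTILE-WELL} the AIT summand admits no total tiling of the plane; since the disjoint union forbids any total tiling from straddling the two summands, every total $f(e)$-tiling comes entirely from $P_0$ and is therefore periodic, giving $f(e)\in PTile$. The presence of $P_0$ also guarantees that some total tiling exists, so there is no vacuous reading of the definition. If $\varphi_e$ is an ill-founded tree, the AIT summand admits an aperiodic total tiling, which survives in the disjoint union as a total tiling using only the AIT component; hence $f(e)$ has an aperiodic total tiling and $f(e)\notin PTile$. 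Finally, if $\varphi_e$ is not a tree, Lemma \ref{lemma-pproc} makes the filtered function non-total, so the characteristic function of $\mathrm{AIT}(e)\sqcup P_0$ is itself non-total and $\varphi_{f(e)}$ is not the characteristic function of a prototile set, whence $f(e)\notin PTile$. Together these give $e\in WELL \iff f(e)\in PTile$.

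The main obstacle, and the reason the disjoint union is needed rather than AIT alone, is the well-founded case: $\mathrm{AIT}(e)$ by itself has no total tiling, which leaves its $PTile$-membership ambiguous. The point demanding the most care is confirming that no total tiling of the union can use tiles from both colour-tinted summands simultaneously, which is precisely what the disjoint union construction and Proposition \ref{prop:DisjointEdgeMeets} secure; once that is in hand, the periodic baseline $P_0$ supplies the required periodic total tiling in the well-founded case while the aperiodic AIT behaviour is confined to, and detects, the ill-founded case.
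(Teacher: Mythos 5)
Your proposal is correct and follows essentially the same route as the paper's proof: form the disjoint union of the AIT image of $e$ with a fixed, unconditionally periodic prototile set, and let Proposition \ref{prop:DisjointEdgeMeets} confine any total tiling to a single tinted summand, so that well-foundedness leaves only the periodic baseline's tilings while ill-foundedness injects an aperiodic total tiling. The only difference is cosmetic --- the paper obtains its periodic baseline by running the \textbf{PIT} construction on a fixed recursive ill-founded tree $R$, whereas you use the trivial tile $\langle 0,0,0,0\rangle$, which works just as well (and your explicit handling of the non-tree case via Lemma \ref{lemma-pproc} is, if anything, more careful than the paper's).
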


\begin{proof}
As before, we want some recursive function $k$ such that $$ e \in WELL \iff k(e) \in PTile $$

We begin by fixing some recursive ill-founded tree $R$ and feeding this through the \textbf{PIT} construction to obtain a set of prototiles $\mathcal{R}$ that has only periodic tilings of the plane for any infinite path in $R$.

We next take our $e$ and pass this through the \textbf{AIT} construction to get a prototile set $U_e$ that tiles the plane only if $e \notin WELL$. We then let our desired prototile set $S_e$ generated by this recursive method be $$ S_e = \mathcal{R} \sqcup U_e$$

If $e \in WELL$ then the only tilings of the plane will be given by $\mathcal{R}$, and as such, $k(e) \in PTile$. 

If $e \notin WELL$ then both $\mathcal{R}$ and $U_e$ will give tilings of the plane, meaning that $k(e) \notin PTile$, as it would have both periodic \emph{and} aperiodic tilings.
\end{proof}

By a nearly identical argument we shall obtain the following result:

\begin{theorem}[C. 2019]\label{thm:WELL-ATile}
$$ WELL \leq_m ATile $$
\end{theorem}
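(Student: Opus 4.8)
The statement to prove is $WELL \leq_m ATile$, and the author explicitly signals that the argument is ``nearly identical'' to the proof of Theorem~\ref{thm:WELL-PTile} just completed. My plan is to mirror that proof exactly, swapping the roles of periodicity and aperiodicity throughout.

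The plan is as follows. First I would fix a recursive ill-founded tree $R$ and feed it through the \textbf{AIT} construction (rather than \textbf{PIT}) to obtain a prototile set $\mathcal{R}$ whose planar tilings are all \emph{aperiodic}; this uses the fact, established in the proof of Theorem~\ref{thm:ILL-ATile}, that the \textbf{AIT} construction applied to an ill-founded tree yields an aperiodic prototile set with genuine total tilings of the plane (since $R$ is ill-founded, such tilings exist). Next I would pass the input index $e$ through the \textbf{AIT} construction to obtain a prototile set $U_e$ that tiles the plane if and only if $e \notin WELL$ (equivalently $T_e$ is ill-founded, giving an infinite path), exactly as in the proof of Theorem~\ref{thm:TILE-ILL}. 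I would then define the desired recursive reduction $k$ by setting
\[ S_e = \mathcal{R} \sqcup U_e, \]
so that $k(e)$ is an index for this disjoint union.

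The correctness argument then splits into the two directions and leans on Proposition~\ref{prop:DisjointEdgeMeets} together with the disjoint-union (``tinting'') construction. If $e \in WELL$, then $U_e$ has no total planar tilings (well-foundedness of $T_e$ blocks the central column), so by the edge-meet disjointness guaranteed by $\sqcup$, every total $S_e$-tiling must come entirely from $\mathcal{R}$; since $\mathcal{R}$ is aperiodic and the two sets share no common edge meets, $S_e = \mathcal{R} \sqcup U_e$ is aperiodic by Proposition~\ref{prop:DisjointEdgeMeets}, giving $k(e) \in ATile$. Conversely, if $e \notin WELL$, then both $\mathcal{R}$ and $U_e$ admit total tilings; $\mathcal{R}$ contributes aperiodic tilings while $U_e$ contributes tilings that are periodic (or at any rate not all aperiodic), so $S_e$ has tilings that are not aperiodic and hence $k(e) \notin ATile$. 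Thus $e \in WELL \iff k(e) \in ATile$, as required.

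The one point that genuinely needs care — the only real obstacle, though a mild one — is ensuring that $U_e$ in the $e \notin WELL$ case actually contributes a tiling that \emph{fails} to be aperiodic, so that the union is excluded from $ATile$. The cleanest route is to take $U_e$ to be the \emph{periodic} \textbf{PIT}-style tilings of $e$ rather than the aperiodic \textbf{AIT} ones, paralleling the previous proof where the ``target'' set $\mathcal{R}$ and the ``witness'' set $U_e$ carried opposite periodicity types; this guarantees that when $e \notin WELL$ the disjoint union genuinely contains both an aperiodic tiling (from $\mathcal{R}$) and a periodic tiling (from $U_e$), placing it in neither $ATile$ nor $PTile$. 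Everything else is routine: the recursiveness of $k$ follows from the recursiveness of the \textbf{AIT}/\textbf{PIT} constructions and of $\sqcup$, and no compactness is needed since the relevant tilings are produced directly from infinite paths as in Theorem~\ref{thm:TILE-ILL}.
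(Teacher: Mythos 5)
Your final, corrected construction --- $\mathcal{R}$ obtained by applying \textbf{AIT} to a fixed recursive ill-founded tree, the witness set built from $e$ via \textbf{PIT}, and the reduction given by the disjoint union $\mathcal{R} \sqcup U_e$ --- is exactly the paper's proof of Theorem~\ref{thm:WELL-ATile} (the paper names the witness set $V_e$). Be aware that your initial draft, which passed $e$ through \textbf{AIT}, would genuinely have failed in the $e \notin WELL$ case (both components would then yield only aperiodic tilings, leaving $k(e) \in ATile$ and breaking the equivalence), so the switch to \textbf{PIT} in your closing paragraph was not merely cosmetic but essential, and it is precisely the choice the paper makes.
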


\begin{proof}
We proceed exactly as above, to construct a recursive $l$ such that $$e \in WELL \iff l(e) \in ATile$$ but with our argument switching the periodic and aperiodic constructions from our previous proof.

We fix a recursive ill-founded tree $R$ and now feed this through the \textbf{AIT} construction, giving us a new $\mathcal{R}$ we shall use. Likewise, we will take our $e$ and pass this through the \textbf{PIT} construction to get $V_e$. Our prototile set $S_e$ is now given by $$ S_e = \mathcal{R} \sqcup V_e $$

If $e \in WELL$ then as above, the only tilings of the plane will come from $\mathcal{R}$, except that this time they shall be aperiodic, and so $l(e) \in ATile$. 

Similarly, if $e \notin WELL$ then both $\mathcal{R}$ and $V_e$ will give tilings of the plane, and given $V_e$ gives periodic tilings, we have that $l(e) \notin ATile$.
\end{proof}

\subsubsection{An Alternative Proof}

We note that there exist alternative and more intuitive ways that we can prove both \ref{thm:WELL-PTile} and \ref{thm:WELL-ATile} that we shall provide here.

\begin{proof}[Alternative Proof for \ref{thm:WELL-PTile}, C. 2019]
We begin by using the construction in \ref{thm:SNT-WELL} - the finite diamond-shaped patches of tiles that will not tile the plane iff the tree whose paths it tiles is well-founded. To this tiling set, we add the following prototile schemes:

\textbf{Corner tiles:}
\begin{center}
\sampletile{$\sigma_n$}{$\sigma_n$}{$\sigma_n$}{$\sigma_n$}
\end{center}
for each $\sigma \in \omega^{<\omega}$, with $|\sigma| = n$ and $\sigma \in \varphi_e$.

\textbf{Edge Connecting tiles:}

\begin{center}
\sampletile{$\sigma^2_n$}{$\sigma^4_n$}{$\sigma^4_n$}{$\sigma^2_n$}
\sampletile{$\sigma^3_n$}{$\sigma^3_n$}{$\sigma^1_n$}{$\sigma^1_n$}
\end{center}
for each $\sigma_n$ as above.

The idea of these tiles are, as we shall see, to fill in the gaps between fragments of our original prototile set construction, and provide total and periodic tilings of the plane.

We construct our library $\mathcal{U}$ as before, and extract $U_e$ as before, adding in the requisite Corner tiles and Edge Connecting tiles, being careful to remove the quadrant filling tiles we had included so far for paths $\sigma^j_n$. We then note that we only require two pairs of quadrant tile types that will meet in the total planar tiling - $\sigma^2_n$ tiles will meet with $\sigma^4_n$ tiles, and $\sigma^3_n$ tiles will meet with $\sigma^1_n$ tiles.

The resulting $U_e$ then takes each of our previous patch tilings and allows us to join them together by the addition of the connective tiles. Thus, we are effectively tiling with our `meta-tiles' formed from the patch tilings we constructed above. 

So, we can let this above procedure be a computable function $p$. If $e \in WELL$ then $p(e)$ will construct a $U_e$, all of whose tilings are periodic total tilings of the plane. Thus $p(e) \in PTile$.

Likewise, if $e \notin WELL$ then only one path will be tiled, and will be infinite and total. However, as it will only use the root tile once in any tiling, it follows that there are no linear shifts of our tiling that can be performed. Thus, $p(e) \notin PTile$.

As such, we have \[ e \in WELL \iff p(e) \in PTile  \] which gives us our $m$-reduction \[ WELL \leq_m PTile \]
\end{proof}

\section{Completeness of $PTile$ and $ATile$}

Given we have assessed the relationship of $WELL$ and $ILL$ to tiling problems regarding periodicity and aperiodicity, it is natural to next seek some completeness for this general class of problems. In this spirit, we present the following theorem:

\begin{theorem}[C. 2019]\label{thm:X-ATile}
Let $X \subset \omega$ be in $( \Pi^1_1 \wedge \Sigma^1_1 )$, that is $$ X = \{ n : \chi(n) \wedge \psi(n) \}$$ such that $\chi \in \Sigma^1_1$ and $\psi \in \Pi^1_1$, then $$X \leq_m ATile$$
\end{theorem}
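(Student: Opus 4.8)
The plan is to reduce the two conjuncts of $X$ separately and then glue the resulting prototile sets together with the disjoint union $\sqcup$, exploiting the fact that the \textbf{AIT} and \textbf{PIT} constructions (definition \ref{def:AITPIT}) behave oppositely with respect to periodicity. First I would invoke $\Sigma^1_1$-completeness of $ILL$ to obtain a computable $a$ with $\chi(n) \iff a(n) \in ILL$, and $\Pi^1_1$-completeness of $WELL$ to obtain a computable $b$ with $\psi(n) \iff b(n) \in WELL$. These reductions already deliver indices of computable trees, but to be safe I would compose each with the filtering map of lemma \ref{lemma-pproc}, so that $\varphi_{a(n)}$ and $\varphi_{b(n)}$ genuinely compute trees $T_{a(n)}, T_{b(n)} \subseteq \omega^{<\omega}$. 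Note in particular that after this step $b(n) \in WELL$ is equivalent to $b(n) \notin ILL$.

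Next I would set $S_n$ to be the disjoint union $\textbf{AIT}(a(n)) \sqcup \textbf{PIT}(b(n))$ and let $m$ be the computable function, obtained via the $s$-$m$-$n$ theorem \ref{thm:smn}, sending $n$ to an index for the characteristic function of $S_n$. Recall from the proofs of \ref{thm:ILL-ATile} and \ref{thm:ILL-PTile} that $\textbf{AIT}(a(n))$ admits a total tiling iff $a(n) \in ILL$, and that every such tiling is aperiodic, whereas $\textbf{PIT}(b(n))$ admits a total tiling iff $b(n) \in ILL$ (that is, iff $b(n) \notin WELL$), and every such tiling is periodic.

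The verification then splits into cases on the truth values of $a(n) \in ILL$ and $b(n) \in WELL$. The crucial observation, supplied by proposition \ref{prop:DisjointEdgeMeets} together with the connectedness of any total tiling of $\mathbb{Z}^2$, is that the tinting built into $\sqcup$ forbids any edge-meet between the two summands, so every total tiling of $S_n$ lies wholly inside a single summand and retains its periodicity type. Hence: if $\chi(n) \wedge \psi(n)$ holds, then only the $\textbf{AIT}$ summand admits total tilings and they are all aperiodic, giving $m(n) \in ATile$; if $\psi(n)$ fails, so $b(n) \in ILL$, then the $\textbf{PIT}$ summand contributes a periodic total tiling and $m(n) \notin ATile$; and if $\chi(n)$ fails, so $a(n) \notin ILL$, then the $\textbf{AIT}$ summand has no total tiling, whence $S_n$ either has no total tiling at all (when $\psi(n)$ holds) or only the periodic ones coming from $\textbf{PIT}$, and in either case $m(n) \notin ATile$. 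Combining the cases yields $m(n) \in ATile \iff n \in X$, as required.

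I expect the main obstacle to be pinning down the convention that membership in $ATile$ requires the existence of at least one total tiling, rather than merely the vacuous absence of periodic ones, since this is exactly what forces the case ``$\chi(n)$ false, $\psi(n)$ true'' — where $S_n$ has no total tiling whatsoever — to fall outside $ATile$. Once this convention is fixed in line with the earlier proofs of \ref{thm:ILL-ATile} and \ref{thm:WELL-ATile}, the remaining bookkeeping (computability of $m$ and the preservation of periodicity type under tinting) is routine.
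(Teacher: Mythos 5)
Your proposal is correct and follows essentially the same route as the paper's own proof: you reduce $\chi$ to $ILL$ and $\psi$ to $WELL$, feed the resulting tree indices (after lemma \ref{lemma-pproc}) through the \textbf{AIT} and \textbf{PIT} constructions respectively, take the disjoint union $\sqcup$, and verify the same four truth-value cases, exactly matching the paper's functions $f$, $g$ and $h$. Your explicit handling of the convention that membership in $ATile$ requires the existence of a total tiling --- so that the case $\neg\chi(n) \wedge \psi(n)$, with no total tilings at all, falls outside $ATile$ --- is a detail the paper leaves implicit, but it does not alter the argument.
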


Intuitively, this proof arises from the fact that our definitions of $PTile$ and $ATile$ are both of the form ``there exists a tiling" followed by some general statement about all of the tilings given by that prototile set. 

In this proof, we will pass each statement through the periodic or aperiodic construction for the ill-founded ($\Pi^1_1$) side of the conjunction as desired. We then take the disjoint union of this with the $\Sigma^1_1$ side of the construction being passed through the opposite (a)periodic construction to obtain the result. The formal proof now follows.

\begin{proof}
To show that $X \leq_m ATile$, we want some computable $h$ such that $$n \in X \iff h(n) \in ATile$$. 

First let us define our two recursive functions $f:X \rightarrow \omega$ and $g : X \rightarrow \omega$ as follows:
\begin{itemize}
\item $f(n)$ be such that $(\varphi_{f(n)}$ is a tree $\wedge f(n) \in ILL) \leftrightarrow \chi(n)$
\item $g(n)$ be such that $(\varphi_{g(n)}$ is a tree $\wedge g(n) \in WELL) \leftrightarrow \psi(n)$
\end{itemize}

Our function $f$ holds only if the $\Sigma^1_1$ side of our formula given by $\chi(n)$ and constructs index that computes the tree $T \subseteq \omega^{<\omega}$ given by this formula, resulting in an index $f(n) \in ILL$.

Likewise the function $g$ holds if the $\Pi^1_1$ side of our formula given by $\psi(n)$ holds, and constructs index that computes the tree $T \subseteq \omega^{<\omega}$ given by this formula, resulting in an index $g(n) \in WELL$.

Now let the $U,V$ be defined as follows:
\begin{itemize}
\item $U$ is the set of prototiles obtained by passing $\varphi_{f(n)}$ through the \textbf{AIT} construction to create an aperiodic prototile set for $\varphi_{f(n)}$ being ill-founded.
\item $V$ is the set of prototiles obtained by passing $\varphi_{g(n)}$ through the \textbf{PIT} construction to create a periodic prototile set for $\varphi_{g(n)}$ not being well-founded.
\end{itemize}

Both of these constructions are given by the previous results, and so are known computable reductions. $h(n)$ be then the function that produces the prototile set that is the disjoint union $S_n = U \sqcup V$. 

These two infinite sets of prototiles have both been passed through constructions designed for total planer tilings intended for ill-founded trees. Thus, the prototile set corresponding to our well-founded prototiles, $V$, will only tile the plane if $\neg \psi(n)$ holds. Given this, we now utilise our disjoint union in obtaining $S_n$ in order to restrict the behaviour of our combined prototile sets to obtain the result we want. 

We thus have the following 4 cases: 
\begin{enumerate}
\item $\chi(n) \wedge \psi(n)$ - In this case, everything is as we would like it to be, as the only planar $S_n$-tilings will be given by $U$, which are aperiodic.
\item $\neg \chi(n) \wedge \psi(n)$ - In this case we will get no total $S_n$-tilings of the plane.
\item $\chi(n) \wedge \neg \psi(n)$ - In this case we will get both periodic and aperiodic $S_n$-tilings of the plane.
\item $\neg \chi(n) \wedge \neg \psi(n)$ - In this case we will only get periodic $S_n$-tilings of the plane.
\end{enumerate}

Given by our construction of $h$ we only get aperiodic tilings of the plane for $n$ precisely when $(\chi(n) \wedge \psi(n))$, it follows that $n \in X \rightarrow h(n) \in ATile$.

For the converse argument, take that $h(n) \in ATile$ is given. For the class of $S_e$-tilings $\mathcal{T}$ given by $h(e)$ we take some $T \in \mathcal{T}$ and ask if $T$ is total. If $T$ is a total tiling, then we can extract (as described in \ref{thm:TILE-ILL}) an infinite path corresponding to the ``$\varphi_{f(n)} \leftrightarrow \chi(n)$" part of the definition of $n \in X$. 

If $T$ is not a total tiling, then we know that we have infinitely many copies of the path given by $\varphi_{g(n)}$ corresponding to the ``$\varphi_{g(n)} \leftrightarrow \psi(n)$" part of the definition of $n \in X$. 

Thus, by examining the class of $S_n$-tilings given by $h(n) \in ATile$ we can get that $n \in X$, for any $X$ of the desired form in the theorem.
\end{proof}

\begin{theorem}[C. 2019]\label{thm:X-PTile}
For $X = \{ n : \chi(n) \wedge \psi(n) \}$, with $\chi(n) \in \Sigma^1_1$ and $\psi(n) \in \Pi^1_1$, then $$ X \leq_m PTile $$
\end{theorem}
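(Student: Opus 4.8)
The plan is to mirror the proof of Theorem \ref{thm:X-ATile} almost verbatim, swapping the roles played by the periodic and aperiodic constructions. As there, I would begin by fixing two computable functions $f$ and $g$ witnessing the two halves of the conjunction: $f(n)$ produces an index computing a tree with $(\varphi_{f(n)} \text{ a tree} \wedge f(n) \in ILL) \leftrightarrow \chi(n)$, capturing the $\Sigma^1_1$ content, while $g(n)$ produces an index computing a tree with $(\varphi_{g(n)} \text{ a tree} \wedge g(n) \in WELL) \leftrightarrow \psi(n)$, capturing the $\Pi^1_1$ content. Both exist by the normal-form machinery of Chapter 1 (Lemma \ref{lemma:wellfddTreesPi11} and its dual), and both should be routed through the filtering Lemma \ref{lemma-pproc} to guarantee that they genuinely compute trees.

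Next I would apply the two constructions in the pairing opposite to that of Theorem \ref{thm:X-ATile}. Let $U$ be obtained by feeding $\varphi_{f(n)}$ through the \textbf{PIT} construction (Definition \ref{def:AITPIT}), so that $U$ admits (periodic) total tilings precisely when $f(n) \in ILL$, that is, when $\chi(n)$ holds. Let $V$ be obtained by feeding $\varphi_{g(n)}$ through the \textbf{AIT} construction, so that $V$ admits (aperiodic) total tilings precisely when $g(n) \in ILL$, equivalently when $\neg \psi(n)$ holds. I would then set $h(n)$ to output the disjoint union $S_n = U \sqcup V$; by the disjoint-union construction these share no common edge-meets, so Proposition \ref{prop:DisjointEdgeMeets} applies and the periodicity status of each summand is preserved independently inside $S_n$.

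The heart of the argument is the same four-way case analysis. When $\chi(n) \wedge \psi(n)$ holds, $U$ tiles periodically while $V$ (whose tree is well-founded) fails to tile at all, so every total $S_n$-tiling is periodic and $h(n) \in PTile$. When $\neg \chi(n) \wedge \psi(n)$, neither summand tiles; when $\chi(n) \wedge \neg \psi(n)$, both summands tile, yielding periodic and aperiodic tilings simultaneously, so $h(n) \notin PTile$; and when $\neg \chi(n) \wedge \neg \psi(n)$, only $V$ tiles, aperiodically, so again $h(n) \notin PTile$. Hence $h(n) \in PTile$ exactly in the first case, giving $n \in X \iff h(n) \in PTile$. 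For the converse direction I would, given $h(n) \in PTile$, select a tiling from the class of $S_n$-tilings and recover the relevant infinite paths exactly as in Theorem \ref{thm:TILE-ILL}: a total tiling yields the path witnessing $\chi(n)$, while the periodic copies yield the witness for $\psi(n)$.

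The main obstacle, as in the $ATile$ case, is not any single computation but making the case analysis airtight. In particular I must confirm that the disjoint-union tinting genuinely prevents any interaction between $U$-tiles and $V$-tiles, so that a periodic region and an aperiodic region cannot be spliced into one tiling (the pathology flagged in Example \ref{example:UnionTileSets}), and that the "no total tiling" verdicts in cases two and three really hold rather than leaving room for some stray infinite connected patch. Both points are discharged by the edge-meet disjointness of Proposition \ref{prop:DisjointEdgeMeets} together with the totality behaviour of \textbf{AIT} and \textbf{PIT} already established in Theorems \ref{thm:TILE-ILL} and \ref{thm:ILL-PTile}.
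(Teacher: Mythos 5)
Your proposal is correct and matches the paper's own proof essentially verbatim: the paper likewise obtains Theorem \ref{thm:X-PTile} by rerunning the argument of Theorem \ref{thm:X-ATile} with the roles of \textbf{PIT} and \textbf{AIT} swapped (applying \textbf{PIT} to $\varphi_{f(n)}$ and \textbf{AIT} to $\varphi_{g(n)}$), taking the disjoint union, and carrying out the identical four-case analysis, with the converse recovered by path extraction as in Theorem \ref{thm:TILE-ILL}. Your explicit appeals to Lemma \ref{lemma-pproc} and Proposition \ref{prop:DisjointEdgeMeets} merely spell out details the paper inherits implicitly from the $ATile$ case.
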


\begin{proof}
Our proof proceeds precisely as for \ref{thm:X-ATile} in order to give a recursive $k$ such that $$ n \in X \iff k(n) \in PTile $$ except that we differ in constructing $U$ and $V$ as follows:
\begin{itemize}
\item $U$ is the set of prototiles obtained by passing $\varphi_{f(n)}$ through the \textbf{PIT} construction to create a periodic prototile set for $\varphi_{f(n)}$ being ill-founded.
\item $V$ is the set of prototiles obtained by passing $\varphi_{g(n)}$ through the \textbf{AIT} construction to create an aperiodic prototile set for $\varphi_{g(n)}$ not being well-founded.
\end{itemize}
Wherein we have essentially swapped the roles of \textbf{PIT} and \textbf{AIT} in order to achieve our result. We can then re-analyse the outcomes as follows:
\begin{enumerate}
\item $\chi(n) \wedge \psi(n)$ - In this case, we only get periodic $S_n$-tilings of the plane.
\item $\neg \chi(n) \wedge \psi(n)$ - In this case we will get no total $S_n$-tilings of the plane.
\item $\chi(n) \wedge \neg \psi(n)$ - In this case we will get both periodic and aperiodic $S_n$-tilings of the plane.
\item $\neg \chi(n) \wedge \neg \psi(n)$ - In this case we will only get aperiodic $S_n$-tilings of the plane.
\end{enumerate}

Thus, our $k$ has precisely the same properties as our previous $h$, with the periodicity properties reversed. As such, the forwards and reverse directions of our implication are precisely the same, giving our result.
\end{proof}

Once we define our constructions in these results, the entire proofs are essentially captured in the four cases. The fact that both $ATile$ and $PTile$ have interchangeably periodic and aperiodic $\Sigma^1_1$ and $\Pi^1_1$ parts was unexpected, but actually quite natural.

The background intuition for these results was the observation that the existence of a tiling, and the fact that all tilings either have exclusively or no periodic/aperiodic parts. If we allow ourselves to use quantification of sets in the analytic hierarchy as above, we obtain the following corollary:

\begin{corollary}[C. 2019]\label{cor:ATilePTileComplete}
Aperiodicity and periodicity for infinite prototile sets is $(\Sigma^1_1 \wedge \Pi^1_1)$-complete
\end{corollary}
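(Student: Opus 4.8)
The plan is to establish $(\Sigma^1_1 \wedge \Pi^1_1)$-completeness of both $ATile$ and $PTile$ by separating the two halves of the definition of completeness. Recall that calling a set complete for a class requires both membership in the class and hardness for it. The hardness half is already in hand: Theorem \ref{thm:X-ATile} and Theorem \ref{thm:X-PTile} show that for every $X$ of the form $\{n : \chi(n) \wedge \psi(n)\}$ with $\chi \in \Sigma^1_1$ and $\psi \in \Pi^1_1$ we have $X \leq_m ATile$ and $X \leq_m PTile$. So every member of $(\Sigma^1_1 \wedge \Pi^1_1)$ $m$-reduces to each of $ATile$ and $PTile$, giving hardness immediately. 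The remaining task, and the real content of this corollary, is to verify that $ATile$ and $PTile$ themselves lie in $(\Sigma^1_1 \wedge \Pi^1_1)$; the corollary then follows by combining the two directions.

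For membership I would write $e \in ATile$ as an explicit conjunction $\chi(e) \wedge \psi(e)$ extracted from Definition \ref{def:ATile}. Let $\chi(e)$ assert that $S_e$ admits at least one total tiling, and let $\psi(e)$ assert that $\varphi_e$ is total and that every total $S_e$-tiling is aperiodic. A total tiling is a point $f$ of Baire space coding a map $\mathbb{Z}^2 \to S_e$, and ``$f$ is a valid total tiling'' is an arithmetic predicate of $f$ and $e$ (a universal number quantifier over lattice points checking that $\varphi_e(f(x,y)) = 1$ and that the von Neumann edge-match conditions hold, per the earlier characterisation of total tilings). Hence $\chi(e) = \exists f\,[\,f \text{ is a valid total } S_e\text{-tiling}\,]$ is $\Sigma^1_1$. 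The key observation keeping $\psi$ low is that the shift vector $\mathbf{v}$ ranges over $\mathbb{Z}^2$, so ``$f$ is aperiodic'', namely $\forall \mathbf{v} \neq 0\ \exists p\,[\,f(p) \neq f(p + \mathbf{v})\,]$, is merely $\Pi^0_2$ in $f$; prefixing the universal tiling quantifier, $\forall f\,[\,f \text{ a valid total tiling} \rightarrow f \text{ aperiodic}\,]$ is $\Pi^1_1$, and conjoining the $\Pi^0_2$ totality of $\varphi_e$ leaves $\psi \in \Pi^1_1$. A case check on whether $\varphi_e$ is total and whether a tiling exists confirms $\chi(e) \wedge \psi(e)$ matches Definition \ref{def:ATile} exactly, so $ATile \in (\Sigma^1_1 \wedge \Pi^1_1)$.

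The argument for $PTile$ is identical with ``aperiodic'' replaced by ``periodic'' throughout: here ``$f$ is periodic'', namely $\exists \mathbf{v} \neq 0\ \forall p\,[\,f(p) = f(p + \mathbf{v})\,]$, is $\Sigma^0_2$ in $f$, so $\forall f\,[\,f \text{ a valid total tiling} \rightarrow f \text{ periodic}\,]$ is again $\Pi^1_1$; taking $\chi(e)$ to be the same existence-of-a-tiling statement as before yields $PTile \in (\Sigma^1_1 \wedge \Pi^1_1)$. Combining membership with the hardness supplied by Theorems \ref{thm:X-ATile} and \ref{thm:X-PTile} gives that both $ATile$ and $PTile$ are $(\Sigma^1_1 \wedge \Pi^1_1)$-complete.

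I expect the main obstacle to be bookkeeping rather than conceptual: pinning down the exact quantifier complexity of ``valid total tiling'' and of (a)periodicity, and in particular being careful that the shift $\mathbf{v}$ contributes only a number quantifier, so that (a)periodicity stays arithmetic in $f$ and the single second-order quantifier lands with the correct polarity ($\exists f$ for the $\Sigma^1_1$ conjunct and $\forall f$ for the $\Pi^1_1$ conjunct). The secondary subtlety is making the conjunction $\chi \wedge \psi$ agree with the definition on the degenerate indices $e$ — those for which $\varphi_e$ is not total, or for which $S_e$ is a legitimate prototile set but admits no total tiling — which is handled by placing the totality requirement inside the $\Pi^1_1$ conjunct and the existence requirement inside the $\Sigma^1_1$ conjunct.
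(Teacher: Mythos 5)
Your proposal is correct, and on the hardness half it coincides exactly with the paper's own proof: the argument given for Corollary \ref{cor:ATilePTileComplete} consists solely of invoking Theorems \ref{thm:X-ATile} and \ref{thm:X-PTile} to reduce an arbitrary set of the form $\{ n : \chi(n) \wedge \psi(n) \}$, with $\chi \in \Sigma^1_1$ and $\psi \in \Pi^1_1$, to $ATile$ and to $PTile$. Where you genuinely go beyond the paper is on the membership half. The paper never writes out a verification that $ATile$ and $PTile$ themselves lie in $(\Sigma^1_1 \wedge \Pi^1_1)$; it only gestures at this in the remark preceding Theorem \ref{thm:X-ATile}, that the definitions are ``of the form `there exists a tiling' followed by some general statement about all of the tilings.'' Your quantifier count supplies precisely what that remark needs: the predicate ``$f$ is a valid total $S_e$-tiling'' is arithmetic in $f$ and $e$ (each edge-match and membership check $\varphi_e(f(x,y)) = 1$ is $\Sigma^0_1$, universally quantified over lattice points), the shift vector $\mathbf{v}$ ranges over $\mathbb{Z}^2 \setminus \{0\}$ and hence contributes only a number quantifier, so aperiodicity of a fixed $f$ is $\Pi^0_2$ and periodicity $\Sigma^0_2$ in $f$; consequently $\exists f [\cdot]$ lands in $\Sigma^1_1$ and $\forall f [\cdot]$ in $\Pi^1_1$, with the arithmetic side conditions (totality of $\varphi_e$, being the characteristic function of an infinite prototile set) correctly absorbed into the $\Pi^1_1$ conjunct. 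Your placement of the degenerate cases — existence of a tiling in the $\Sigma^1_1$ conjunct, totality in the $\Pi^1_1$ conjunct — also matches how the four-case analyses in the proofs of Theorems \ref{thm:X-ATile} and \ref{thm:X-PTile} are actually used (their case 2, in which no total tilings exist, must force non-membership, which confirms that $ATile$ and $PTile$ require at least one total tiling). Since completeness requires both membership and hardness, and the paper proves only the latter while asserting the former informally, your write-up is if anything more complete than the paper's; the only caution is that the exact arithmetic complexity of the side conditions is bookkeeping, as you say, and none of it threatens the single second-order quantifier on each side.
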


\begin{proof}
This follows from our previous theorem \ref{thm:X-ATile} and theorem \ref{thm:X-PTile} working in tandem. Any problem given in the form $$\zeta(n) \leftrightarrow (\chi(n) \wedge \psi(n))$$ for $\chi(n) \in \Sigma^1_1$ and $\psi(n) \in \Pi^1_1$ has a representation as a tiling problem on infinite prototile sets by our constructions above, thereby having both periodic and aperiodic total tilings being given.
\end{proof}

In fact, we can choose which of aperiodic or periodic tilings we would like for our infinite prototile sets.

As an aside, the author did attempt to find other problems that share this same or similar syntactical form or structure. The closest that we could find was a definition and corollary in Bagaria \etal \cite[def. on p.6, Cor. 6.8]{Bagaria2015} wherein they show that Vop\v{e}nka's Principle for $\Sigma_{n+2}$ classes is equivalent for $(\Sigma_{n+1} \wedge \Pi_{n+1})$ classes, which naively seems to be a weaker form. However, these only work for $n \geq 1$, so are not an exact match, and indeed were superseded by the work by Bagaria \etal in \cite[Cor 4.13]{Bagaria2012}, where the result was weakened further to $\Pi_{n+1}$.\footnote{We would like to thank Dr. Andrew Brooke-Taylor for these references.} Aside from these references, it does indeed seem to be the case that very little in logic has $(\Sigma^1_1 \wedge \Pi^1_1)$ as the natural syntactic shape.

\section{Aperiodicity and Periodicity for Finite Prototile Sets}

\begin{definition}\label{def:PTileFIN}
Let the set of periodic finite prototile sets be
\begin{align*}
PTile_{FIN} = \{ e : & \, e \text{ tiles the plane from a finite set of prototiles}  \\
		     & \,  \text{ all of whose tilings are periodic} \}
\end{align*}
\end{definition}

\begin{definition}\label{def:ATileFIN}
Let the set of aperiodic finite prototile sets be
\begin{align*}
ATile_{FIN} = \{ e : & \, e \text{ tiles the plane from a finite set of prototiles}  \\
                     & \,  \text{ all of whose tilings are aperiodic} \}
\end{align*}
\end{definition}

\begin{definition}
Let a \emi{megatile} $M$ be a finite patch of tiles such that $M$ can be considered to be a tile at scale.
\end{definition}

Note, we differentiate this from a \emph{macrotile} we used earlier, as we are not interested specifically in simulating the original prototile set in our megatiles. We wish to be able to treat blocks of tiles as individual units.

\begin{proposition}[C. 2019][Rectangularisation of Megatiles]
For any non-rectangular megatile $M$ made up of Wang tiles in a periodic tiling $T$, there is a rectangular megatile $M^*$ that tiles $T$ precisely the same as $M$.
\end{proposition}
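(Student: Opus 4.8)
The plan is to reduce the statement to the elementary structure theory of finite-index sublattices of $\mathbb{Z}^2$, using the megatile $M$ only to guarantee that $T$ is doubly periodic. The first step is to observe that the very existence of a \emph{finite} megatile $M$ tiling $T$ forces $T$ to have a full-rank period lattice. Writing $D \subseteq \mathbb{Z}^2$ for the footprint of $M$, the translated copies of $M$ partition $\mathbb{Z}^2$ and reproduce $T$; since $D$ is finite, copies of $M$ recur with bounded gaps in two independent directions, which yields two linearly independent translation vectors fixing the decomposition, and hence two independent period vectors of $T$ itself. Thus $T$ is invariant under a full-rank sublattice $\Lambda \le \mathbb{Z}^2$, necessarily of some finite index $n = [\mathbb{Z}^2 : \Lambda]$.

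Next I would extract a rectangular sublattice from $\Lambda$ by a one-line group-theoretic argument. The quotient group $\mathbb{Z}^2/\Lambda$ has order $n$, so by Lagrange's theorem every element has order dividing $n$; in particular $n\cdot(1,0)$ and $n\cdot(0,1)$ lie in $\Lambda$, giving $n\mathbb{Z}^2 \subseteq \Lambda$. Because $T$ is $\Lambda$-invariant it is \emph{a fortiori} invariant under the rectangular sublattice $n\mathbb{Z}^2$.

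With this in hand the rectangular megatile is immediate: take $M^*$ to be the axis-aligned $n \times n$ block of tiles of $T$ with footprint $\{0,1,\dots,n-1\}^2$, which is a fundamental domain for $n\mathbb{Z}^2$. Since $T$ is $n\mathbb{Z}^2$-invariant, the translates $\{M^* + \mu : \mu \in n\mathbb{Z}^2\}$ partition $\mathbb{Z}^2$ and carry exactly the tiles of $T$. Hence $M^*$ is a rectangular megatile that reproduces $T$ precisely as the copies of the non-rectangular $M$ do, which is the assertion of the proposition.

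The main obstacle I anticipate is the first step: passing from the paper's one-vector definition of periodicity (Definition~\ref{def:PTile}) to a genuine full-rank period lattice $\Lambda$ with finite index. This has to be argued from the finiteness of the megatile — a single finite shape cannot tile the infinite plane while recurring in only one direction, so its copies must recur syndetically in two independent directions, forcing double periodicity. Once $\Lambda$ is established everything else is routine, and the rectangularity of $M^*$ is built into its construction as a square block; one may of course pass from the $n \times n$ square to a smaller rectangular fundamental domain if a tighter megatile is wanted, but the square already suffices for the statement as given.
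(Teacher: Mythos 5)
Your Steps 2 and 3 are sound as conditional reasoning: if $T$ were invariant under a full-rank sublattice $\Lambda \le \mathbb{Z}^2$ of finite index $n$, then Lagrange's theorem applied to $\mathbb{Z}^2/\Lambda$ does give $n\mathbb{Z}^2 \subseteq \Lambda$, and the $n \times n$ block on $\{0,1,\dots,n-1\}^2$ is a rectangular fundamental domain whose $n\mathbb{Z}^2$-translates reproduce $T$. The genuine gap is Step 1, and the justification you offer for it is invalid: syndetic recurrence of congruent copies of $M$ does \emph{not} yield translation vectors fixing the tiling or the decomposition. Concretely, take Wang tiles $a = \langle 0, c, 1, c\rangle$ and $b = \langle 1, c, 0, c\rangle$, so each row must read $\ldots abab\ldots$ while rows stack freely; build $T$ by offsetting row $j$ horizontally by $\epsilon_j \in \{0,1\}$ with $(\epsilon_j)$ chosen so that no difference sequence $j \mapsto \epsilon_{j+y} - \epsilon_j \bmod 2$ is constant (e.g.\ $\epsilon_j = 1$ iff $j$ is a perfect square). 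The horizontal domino megatile $M = [a\,b]$ tiles $T$, its copies recur with gap at most $2$ in both coordinate directions, and $T$ is periodic in the sense of Definition \ref{def:PTile} via the vector $(2,0)$ --- yet the period group of $T$ is the rank-one lattice generated by $(2,0)$, the quotient $\mathbb{Z}^2/\Lambda$ is infinite, no $n$ with $n\mathbb{Z}^2 \subseteq \Lambda$ exists, and your $n \times n$ construction never closes up. Note that your argument nowhere uses the hypothesis that $M$ is non-rectangular, so if it were valid it would prove double periodicity for this example too; hence the implication ``finite megatile plus one period vector implies full-rank period lattice'' is exactly the lemma you would need to prove, your syndeticity argument cannot supply it, and whether non-rectangularity of $M$ rescues it is not addressed in your write-up at all.

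This is also precisely where you part company with the paper, which never asserts double periodicity. Its proof works directly from the single given vector $\mathbf{v}$: it splits $\mathbf{v}$ into horizontal and vertical components, attaches to each tile $t_i \in M$ the $|\mathbf{x}| \times |\mathbf{y}|$ rectangle tracking its $\mathbf{v}$-translates, and stacks these rectangles in a column (row) until the bottom row matches the top row --- a wraparound the paper attributes to periodicity together with the finiteness of the prototile set, as the remark following its proof indicates --- then trims the duplicated row to obtain the rectangle $M^*$. If you want to keep your lattice-theoretic endgame (which is cleaner than the paper's stacking-and-trimming once double periodicity is available), you must first prove an honest lemma extracting a second, independent period vector from the stated hypotheses, for instance by a pigeonhole argument transverse to $\mathbf{v}$ exploiting the finiteness of the prototile set; as written, your reduction to $n\mathbb{Z}^2$ rests on a false step.
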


\begin{proof}
Let $\mathbf{v}$ be the periodicity vector for $T$ such that $[ \mathbf{v} T = T]$ for every non-zero $\mathbf{v}$-shift. Clearly we can rewrite $\mathbf{v}$ in the normal Cartesian orthogonal left-right, up-down basis - let $\mathbf{xy} = \mathbf{v}$.

We first select a tile $t \in T$, our tiling, and begin with the rectangle formed by one application on $t$. This rectangle will have sides of length $|\mathbf{x}| \text{ and } |\mathbf{y}|$, and will capture the translation of this one tile $t$. For each $t_i \in M$, a megatile in our periodic tiling, we can get a sequence $r_1, r_2, \ldots$ of rectangles tracking the motion of each rectangle.

We take either a column (row) of each $r_i$'s such that they overlap at the boundary. We keep appending $r_i$'s under (to the right of) each other until we get the bottom row (right-most column) matches the top row (left-most column). Once we have this, which is guaranteed by the periodicity of our tilings, we can trim the duplicated column (row) and we obtain a single rectangle that has captured all of the translations of each $t_i \in M$ under $\mathbf{v}$.

\end{proof}

The resultant rectangle in the proof has at least two opposite edges that are some permutation of an integer multiple of the $t_i \in M$. Thus, our theorem is guaranteed by the finiteness of our prototile set.

We will now explore the logical complexity of whether finite prototile sets are periodic or aperiodic. Our first result in this endeavour is somewhat unexpected:

\begin{theorem}[C. 2019]\label{thm:ATileFIN-Pi01}
$$ ATile_{FIN} \in \Pi^0_1 $$
\end{theorem}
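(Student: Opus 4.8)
The plan is to reduce membership in $ATile_{FIN}$ to the conjunction of two $\Pi^0_1$ conditions. Write $S_e$ for the finite Wang prototile set determined by $e$; for finite sets we take $e$ to be a canonical index, so that $S_e$ is uniformly computable from $e$ (this is the natural convention for finite data, and it is what makes a $\Pi^0_1$ bound meaningful). By Definition \ref{def:ATile}, $e \in ATile_{FIN}$ holds exactly when (i) $S_e$ admits a tiling of the plane, and (ii) $S_e$ admits no periodic tiling. The first conjunct is already $\Pi^0_1$: by the Wang Extension Theorem \ref{thm:WangExtension}, $S_e$ tiles the plane iff for every $n$ it can tile an $n \times n$ block, and for each fixed $n$ the existence of a valid $n \times n$ block is a decidable finite search. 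So the entire argument turns on showing that the second conjunct is also $\Pi^0_1$, i.e. that the \emph{existence} of a periodic tiling is merely $\Sigma^0_1$.

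The key lemma I will prove is: a finite Wang prototile set $S$ admits a periodic tiling (one invariant under some nonzero $\mathbf v \in \mathbb Z^2$) if and only if there is some $n \ge 1$ for which $S$ tiles the torus $\mathbb Z^2/n\mathbb Z^2$, meaning an $n \times n$ block all of whose edge-matches hold, including the cyclic left-to-right and top-to-bottom wraparound matches. The backward direction is immediate, since a torus tiling repeats to a doubly periodic, hence periodic, tiling of the plane. For the forward direction I will establish the classical fact that a singly periodic Wang tiling forces a doubly periodic one. Given $T$ invariant under $\mathbf v \ne 0$, the quotient $\mathbb Z^2/\mathbb Z\mathbf v$ is a cylinder isomorphic to $\mathbb Z \times (\mathbb Z/g)$ with $g$ the $\gcd$ of the coordinates of $\mathbf v$; $T$ descends to a configuration on this cylinder, and collapsing each finite cross-section into a single super-symbol over the finite alphabet $S^{\mathbb Z/g}$ presents it as a nonempty one-dimensional subshift of finite type. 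A nonempty one-dimensional SFT over a finite alphabet always contains a periodic point (pigeonhole on the finitely many windows of the constraint range), and lifting such a periodic configuration back to $\mathbb Z^2$ yields a valid $S$-tiling invariant under $\mathbf v$ and under a transverse period, hence under a finite-index sublattice $L$. Since $k\mathbb Z^2 \subseteq L$ for $k = [\mathbb Z^2 : L]$, this tiling factors through the square $k$-torus, giving the desired torus tiling.

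With the lemma in hand, the conclusion is routine. For fixed $n$ and $e$ the predicate ``$S_e$ tiles $\mathbb Z^2/n\mathbb Z^2$'' is a decidable finite check, so ``$S_e$ admits a periodic tiling'' is equivalent to $\exists n\,(S_e \text{ tiles the } n\text{-torus})$, which is $\Sigma^0_1$, and its negation ``$S_e$ admits no periodic tiling'' is $\Pi^0_1$. Combining with the first conjunct gives $e \in ATile_{FIN} \iff \forall n\,[\,R_1(e,n) \wedge R_2(e,n)\,]$, where $R_1(e,n)$ says $S_e$ tiles an $n \times n$ block and $R_2(e,n)$ says $S_e$ does not tile the $n$-torus; both relations are computable, so $ATile_{FIN} \in \Pi^0_1$.

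I expect the one genuine obstacle to be the forward direction of the key lemma, the passage from a singly periodic to a doubly periodic tiling; everything else is decidable finite search together with the already-available Extension Theorem. The point requiring care is the non-primitive case, when $\mathbf v$ is not a primitive lattice vector: this is exactly where the cylinder acquires its finite $\mathbb Z/g$ factor and forces the super-symbol reformulation rather than a naive factoring through $\mathbb Z$. The remaining ingredients, namely the existence of periodic points in nonempty one-dimensional SFTs and the containment $k\mathbb Z^2 \subseteq L$ for a finite-index sublattice, are standard.
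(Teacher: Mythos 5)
Your proof is correct, and it takes a genuinely more rigorous route than the one in the thesis. The paper's proof also proceeds by complementation, defining $EPTile_{FIN}$ (finite sets admitting \emph{some} periodic tiling) and asserting it is $\Sigma^0_1$ because a ``periodicity finding machine'' halts; but as written, ``there exists a tiling $T$ and a nonzero $\mathbf{v}$ with $T = \mathbf{v}T$'' quantifies existentially over an infinite object --- a singly periodic tiling is still infinite transverse to $\mathbf{v}$ --- so the statement is \emph{prima facie} $\Sigma^1_1$, and the paper never exhibits the finite witness that brings it down to $\Sigma^0_1$ (its remark that the $S$-tilings are ``enumerable given $S$ is finite'' is not correct as stated). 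Your key lemma --- a finite Wang set admits a periodic tiling iff it tiles some $n$-torus --- supplies exactly this missing witness, and your proof of the hard direction is sound: the quotient $\mathbb{Z}^2/\mathbb{Z}\mathbf{v} \cong \mathbb{Z} \times \mathbb{Z}/g$ with $g = \gcd$ of the coordinates (Smith normal form), the super-symbol recoding over $S^{\mathbb{Z}/g}$ into a nonempty one-dimensional SFT (the constraints have bounded range in the $\mathbb{Z}$-direction since the images of $(1,0)$ and $(0,1)$ shift that coordinate by fixed integers), the pigeonhole periodic point, and $k\mathbb{Z}^2 \subseteq L$ for the resulting finite-index stabilizer lattice all check out, including your flagged non-primitive case. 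Two further points in your favour: you explicitly include the conjunct that $S_e$ tiles the plane at all ($\Pi^0_1$ via Theorem \ref{thm:WangExtension}), whereas the paper's identity $ATile_{FIN} = \overline{EPTile_{FIN}}$ silently drops this requirement from Definition \ref{def:ATileFIN} (a set with no tilings whatsoever lies in $\overline{EPTile_{FIN}}$ but not in $ATile_{FIN}$ as defined --- harmless for the final classification, since the extra conjunct is itself $\Pi^0_1$, but a genuine mismatch); and your canonical-index convention for finite prototile sets is the reading under which the theorem is actually true, since ``$\varphi_e$ is the characteristic function of a finite set'' is not itself a decidable hypothesis. In short: the paper buys brevity at the cost of a gap at precisely the $\Sigma^0_1$ claim; your torus-tiling lemma is the classical fact needed to close it.
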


\begin{proof}
Let $S$ be a finite prototile set, and define the following set: $$ EPTile_{FIN} = \{ e : \text{ there exist periodic tilings given by } \varphi_e \}$$ Given it is equivalent to the halting state of a TM that finds the period of some $S$-tiling $T$, specifically $$ \psi(S) = \exists s ( s \text{ is the period of an $S$-tiling } T ) $$ it naturally follows that $$ EPTile_{FIN} \in \Sigma^0_1$$

Note that this computable search across all possible tilings can proceed iteratively along a sequence of $S$-tilings, which are enumerable given $S$ is finite, given by $$ T_0, T_1, T_2, \ldots $$ We only require that our search stops once for $S$ to be in $EPTile_{FIN}$.

We now note that $\neg \psi(S)$ is equivalent to saying that our periodicity finding machine will not halt for any $S$-tiling, noting that this does not require set comprehension. Thus, $$\neg \psi(S) \in \Pi^0_1$$ and given this is equivalent to saying every $S$-tiling is aperiodic, the theorem follows by: $$ ATile_{FIN} = \overline{EPTile_{FIN}} $$

\end{proof}

\begin{theorem}[C. 2019]\label{thm:PTileFINPi11}
$$ PTile_{FIN} \in \Pi^1_1 $$
\end{theorem}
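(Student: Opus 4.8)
The plan is to read the defining conditions for $e \in PTile_{FIN}$ off the definition directly, bound the logical complexity of each, and then assemble them using the closure properties of $\Pi^1_1$. Writing $S = \{x : \varphi_e(x) = 1\}$ for the prototile set coded by $e$, membership $e \in PTile_{FIN}$ unfolds as the conjunction of three clauses: (i) $\varphi_e$ is a total $\{0,1\}$-valued function whose set $S$ is finite; (ii) $S$ admits a total tiling of the plane; and (iii) every total $S$-tiling is periodic. Since $\Pi^1_1$ is closed under finite conjunctions and contains every arithmetical set, it suffices to show that (i) and (ii) are arithmetical and that (iii) is $\Pi^1_1$.

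For clause (i), finiteness together with being a characteristic function is the $\Sigma^0_2$ statement $\exists N\,[\forall x \le N\,(\varphi_e(x)\downarrow \in \{0,1\}) \wedge \forall x > N\,(\varphi_e(x)\downarrow = 0)]$, which is arithmetical. Clause (ii) is where the hypothesis of finiteness does the essential work. For an infinite prototile set one genuinely needs a function quantifier to witness a tiling, and indeed the analogous condition for $PTile$ contributes the $\Sigma^1_1$ half of the $(\Sigma^1_1 \wedge \Pi^1_1)$-completeness of the infinite case. For a finite $S$, however, the Extension Theorem for Wang tiles (\ref{thm:WangExtension}) lets me replace ``there is a planar tiling'' by the compactness statement ``for every $n$ there is a validly edge-matched $n \times n$ block built from $S$''. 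For fixed $n$ this inner statement is a finite search over the finitely many $n \times n$ arrays of tiles from the finite set $S$, hence decidable, so clause (ii) becomes $\forall n\,(\exists\ n \times n\text{ block})$, which is $\Pi^0_1$ and in particular arithmetical.

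For clause (iii) I encode a tiling as a function $f : \mathbb{Z}^2 \to S$, fixing a computable pairing $\mathbb{Z}^2 \cong \omega$ so that $f$ ranges over Baire space. The statement is $\forall f\,[\,f\text{ is a valid }S\text{-tiling} \to f\text{ is periodic}\,]$. Validity, namely that every adjacent pair of cells meets the edge-matching criteria and every $f(p)$ codes a tile of $S$, is $\Pi^0_1$ in $f$; periodicity, $\exists v \neq 0\,\forall p\,(f(p) = f(p+v))$, is $\Sigma^0_2$ in $f$. Hence the matrix in brackets is $\Sigma^0_2$ in $f$, i.e.\ arithmetical in $f$, and prefixing the single universal function quantifier $\forall f$ yields a $\Pi^1_1$ predicate of $e$.

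Combining, $PTile_{FIN}$ is the conjunction of two arithmetical clauses and one $\Pi^1_1$ clause, hence lies in $\Pi^1_1$. The main conceptual point, and the step most worth getting right, is clause (ii): it is precisely the finiteness of the prototile set, via the compactness argument of the Extension Theorem, that collapses ``tiles the plane'' from the $\Sigma^1_1$ level it occupies for infinite sets down to $\Pi^0_1$, leaving the universal quantification over tilings in clause (iii) as the only genuinely analytic content. The only routine care needed is the bookkeeping that the validity and periodicity predicates are arithmetical in $f$, so that the leading $\forall f$ does not push the complexity past $\Pi^1_1$.
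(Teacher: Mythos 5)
Your proof is correct, and its skeleton matches the paper's: the only genuinely analytic content is a single universal function quantifier over tilings with an arithmetical matrix, which places $PTile_{FIN}$ in $\Pi^1_1$. You differ from the paper's proof in two respects, both to your advantage. First, the paper's proof consists essentially of the one displayed equivalence $S \in PTile_{FIN} \iff (\forall T_S)(\exists \mathbf{v})[T_S = \mathbf{v}T_S]$, silently dropping the definition's requirement that $e$ actually tiles the plane (and that $\varphi_e$ codes a finite set); your clauses (i) and (ii) supply these, and your use of the Extension Theorem \ref{thm:WangExtension} to collapse ``there exists a planar tiling'' to the compactness statement $\forall n\,(\exists\ \text{valid } n \times n \text{ block})$ is exactly the right move --- it is also, as you note, precisely where finiteness of $S$ enters, in contrast with the $\Sigma^1_1$ existence clause in the infinite case. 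Second, the paper handles $\exists \mathbf{v}$ by claiming a periodicity vector has length bounded in terms of $|S|$; you instead leave $\exists v$ unbounded and observe that it is merely a number quantifier, so periodicity is arithmetical in $f$ and the leading $\forall f$ still yields $\Pi^1_1$. This is more robust, since the paper's bound is at best unproven (minimal periods of periodic tilings by $k$ Wang tiles can greatly exceed $k$) and in any case unnecessary for the complexity count. One bookkeeping quibble: some of your levels are slightly off --- ``$\varphi_e(x)\downarrow = 0$'' is $\Sigma^0_1$, so your clause (i) is $\Sigma^0_3$ rather than $\Sigma^0_2$, and the tile-membership conditions in clause (ii) and in the validity predicate for $f$ each add a $\Sigma^0_1$ layer, making those clauses $\Pi^0_2$ rather than $\Pi^0_1$ --- but since only ``arithmetical'' is used before the $\forall f$ prefix, none of this affects the conclusion.
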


\begin{proof}
 For a any prototile $S$ and any $S$-tiling $T_S$ we have $$ S \in PTile_{FIN} \iff (\forall T_S) (\exists \mathbf{v}) [T_S = \mathbf{v} T_S] $$ We also notice that for any finite prototile set $S$, the maximal shift is given by every tile of $S$ in a line, thus a periodicity vector $\mathbf{v}$ has a maximal length determined by $| S |$. Given that $\mathbf{v}$ is bounded by the size of $S$, we get that $$ PTile_{FIN} \in \Pi^1_1 $$
\end{proof}

However, given our previous result in theorem \ref{thm:ATileFIN-Pi01}, we may consider that there is some arithmetical representation of $PTile_{FIN}$. But after some searching, we pose the following conjecture:

\begin{conjecture}[C. 2019]
$PTile_{FIN}$ has no arithmetical representation.
\end{conjecture}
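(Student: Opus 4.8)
The plan is to prove the sharper statement that $PTile_{FIN}$ is in fact $\Pi^1_1$-complete; since no $\Pi^1_1$-complete set is arithmetical (arithmetical sets lie in $\Delta^1_1 \subseteq \Sigma^1_1$, and a $\Pi^1_1$-complete set cannot be $\Sigma^1_1$), this together with \ref{thm:PTileFINPi11} settles the conjecture. The upper bound $PTile_{FIN} \in \Pi^1_1$ is already in hand, so the whole content is a hardness argument. I would aim for a computable $m$-reduction $WELL \leq_m PTile_{FIN}$, using that $WELL$ is $\Pi^1_1$-complete (Corollary \ref{cor:CompTreesPi11}). Concretely, I want a computable map $e \mapsto S_e$ sending an index for a computable tree $T_e \subseteq \omega^{<\omega}$ to a \emph{finite} Wang prototile set with
$$ e \in WELL \iff S_e \in PTile_{FIN}, $$
that is, $T_e$ well-founded iff $S_e$ tiles the plane and every $S_e$-tiling is periodic.

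The essential difficulty, and the reason the earlier chapters do not apply directly, is finiteness: the constructions behind \ref{thm:TILE-ILL} and \ref{thm:SNT-WELL} (hence \textbf{AIT}/\textbf{PIT} from \ref{def:AITPIT}) spend one colour, and one tile, per node of the tree, so they are intrinsically infinite. To stay finite I would instead encode $T_e$ \emph{hierarchically}, via the self-similar, fixed-point tile machinery of Durand, Romashchenko and Shen \cite{Shen2010}: a fixed finite tile set can be forced to lay down a nested hierarchy of macro-tiles on which an arbitrary Turing computation is simulated at every scale. I would use this skeleton to run, along an emerging vertical `spine', the standard search for an infinite descending path through $T_e$, arranging the matching rules so that (i) an infinite branch of $T_e$ can be carried by a genuinely non-periodic hierarchical tiling, while (ii) absent any infinite branch the self-similar structure cannot be sustained and the only remaining tilings form a controlled periodic `background'. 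Then ill-foundedness of $T_e$ produces an aperiodic $S_e$-tiling (so $S_e \notin PTile_{FIN}$), whereas well-foundedness leaves only periodic tilings (so $S_e \in PTile_{FIN}$).

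The main obstacle is direction (ii): with a finite tile set one must control \emph{every} tiling at once, and finite sets are notoriously prone to spurious tilings---shifted fault lines, degenerate periodic fillers, or independent background configurations---exactly the phenomenon isolated in Example \ref{example:UnionTileSets}. Forcing periodicity of \emph{all} tilings when $T_e$ is well-founded, with no stray aperiodic configuration slipping through, is the crux and is where a naive argument breaks down. A related warning is that ``every tiling is periodic'' does not, for finite sets, yield a uniform bound on the periods (the bound by $|S|$ asserted in the proof of \ref{thm:PTileFINPi11} is too optimistic in general): a sequence of periodic tilings with minimal periods tending to infinity has, by compactness, a limit tiling in the same subshift that may be aperiodic. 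This failure of a uniform period bound is precisely what blocks an arithmetical description and makes the conjecture plausible, but it also means the hardness reduction must itself guarantee that the well-founded case admits only \emph{boundedly} periodic tilings.

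Should full $\Pi^1_1$-completeness prove too delicate, I would fall back to the weaker but still sufficient target of showing $PTile_{FIN}$ is not arithmetical directly---for instance by exhibiting, for each $n$, a reduction witnessing $\Sigma^0_n$-hardness (so that $PTile_{FIN}$ sits cofinally above the arithmetical hierarchy), or by showing merely that $PTile_{FIN} \notin \Sigma^1_1$. I would also recast the problem through its complement among tiling sets: ``$S$ admits an aperiodic tiling'' is readily seen to be $\Sigma^1_1$, and proving \emph{that} property $\Sigma^1_1$-complete for finite Wang sets is equivalent to the conjecture and may be the more natural object to attack, since it asks only for a single aperiodic witness rather than control of all tilings. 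The contrast with $ATile_{FIN} \in \Pi^0_1$ (\ref{thm:ATileFIN-Pi01}) is instructive here: ``admits no periodic tiling'' stays arithmetical because it is the negation of a $\Sigma^0_1$ search (cf. $EPTile_{FIN} \in \Sigma^0_1$ in that proof), whereas ``all tilings periodic'' genuinely needs the set quantifier over the continuum of tilings.
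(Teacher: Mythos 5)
There is no proof in the paper to compare you against: the statement is presented there explicitly as a conjecture, and the surrounding text says outright that a concrete proof has not been found. Your proposal, by your own account, does not close it either. Everything is staked on a hardness reduction $WELL \leq_m PTile_{FIN}$ whose decisive half---your step (ii), that well-foundedness of $T_e$ forces \emph{every} tiling of the finite set $S_e$ to be periodic, with no stray aperiodic configuration---is exactly the point you yourself label as where a naive argument breaks down, and the fallback targets ($\Sigma^0_n$-hardness for every $n$, or $PTile_{FIN} \notin \Sigma^1_1$) are named but not attempted. So what you have is a sensible research programme with a correctly identified crux, not a proof. Your peripheral observations are sound and worth keeping: the \textbf{AIT}/\textbf{PIT} constructions of \ref{def:AITPIT} are indeed intrinsically infinite (one colour per tree node), Example \ref{example:UnionTileSets} is the right warning about spurious tilings, the fixed-point machinery of \cite{Shen2010} is the right tool family once finiteness is imposed, and your distrust of the period bound asserted in the proof of \ref{thm:PTileFINPi11} is justified---a period vector of a tiling is not bounded by $|S|$, and the $\Pi^1_1$ upper bound should be argued without that remark.

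Beyond incompleteness, there is a substantive worry that your sharper target is aimed at a false statement. For finite $S$ the $S$-tilings form an effectively compact subshift of finite type, and structure theorems for $\mathbb{Z}^2$-SFTs without aperiodic configurations (in the line of Ballier--Durand--Jeandel, and more recent work of Grandjean, Hellouin de Menibus and Vanier on aperiodic points) show that such subshifts are severely rigid; results of this kind threaten to pull ``$S$ admits an aperiodic tiling'' down into the arithmetical hierarchy for \emph{finite} tile sets, in which case $PTile_{FIN}$ would be arithmetical and both the conjecture and your intended $\Pi^1_1$-completeness would fail. Note that your own compactness observation cuts the same way: the whole question turns on whether ``all tilings periodic'' forces a uniform period bound, and if it does, the set is arithmetical. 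Finally, your closing claim of an \emph{equivalence} is too strong in both directions: $\Sigma^1_1$-completeness of ``admits an aperiodic tiling'' would essentially suffice for the conjecture, but only modulo a vacuity issue you do not address---tile sets with no tilings at all satisfy ``all tilings are periodic'' vacuously, so a reduction into $PTile_{FIN}$ must simultaneously guarantee tileability---and conversely non-arithmeticity of $PTile_{FIN}$ is strictly weaker than, and does not yield, any such completeness.
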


The intuition for this follows from the fact that we are required to quantify over every possible $S$-tiling for some prototile set $S$, and thereby guarantee that there is no such $S$-tiling where there is no periodicity vector. As such, this would appear to consistently give $PTile_{FIN} \in \Pi^1_1$ as given above. A concrete proof that there is no arithmetical representation of $PTile_{FIN}$ has not been found, so the possibility remains open.

\chapter{Weihrauch Reducibility and Tiling Problems}
\label{chap5}
% next resets the equation numbers to start at 1 at the start of the chapter
\setcounter{equation}{0}
\renewcommand{\theequation}{\thechapter.\arabic{equation}}

%------------------------------------------------------------------------------

\epigraph{An algorithm must be seen to be believed, and the best way to learn what an algorithm is all about is to try it.}{\textit{Donald Knuth, \\ The Art of Computer Programming, Vol. 1, 1999}}

In this chapter we will show how our constructions in the previous section can be utilised as tiling principles on represented spaces of Wang prototile sets and tilings. We present several Weihrauch reductions between these tiling problems for Wang tiles and closed choice problems.

\section{Weihrauch Reducibility}

For this section, we use \cite{Brattka2011} and \cite{Marcone2008} as our primary source material. We give a brief background overview of the theory surrounding Weihrauch reductions and their recent uses, primarily from the viewpoint of computable analysis.

\subsection{Core Concepts in Weihrauch Reducibility}

Computable analysis lends notions of computability and incomputability to computable separable metric spaces by means of notions of effective approximation. The aim is to study multi-valued functions between these spaces and to deal with their non-unique solutions. Indeed, in papers such as \cite{Weihrauch2001}, techniques from computability and reverse mathematics were combined in order to tackle a problem in computable analysis.

As Weihrauch points out in \cite{Weihrauch1995}, a core technique in computable analysis is to take notions of topological continuity and replace them with notions of computability - indeed, the explicit definition of `topologically reducible' is precisely the notion of (computably) reducible in that paper, with `computable' substituted for `continuous'. 

As such we give the following definition of reducibility for multi-valued functions (from \cite{Marcone2008}). Let $f : \subseteq X \rightrightarrows Y$ denote that $f$ is a multi-valued function with $dom(f) \subseteq X \wedge ran(f) \subseteq Y$. The idea is to take $\Pi_2$ theorems of the form $$ (\forall x \in X) \, (\exists y \in Y) \, [ (x,y) \in A ] $$ as operations $f : \subseteq X \rightrightarrows Y$ such that $$x \mapsto \{ y \in Y : (x,y) \in A \}$$ Note that the `$: \subseteq$' here indicates the (potential) partiality of our functions.

\textbf{Core Idea}: As given by \cite{Brattka2011}, the core idea for Weihrauch reducibility in relation to the choice and boundedness conditions we will study here is that, rather than defining our problems directly, we ask instead what can be understood by means of negative information. That is - if we obtain a set $X$ by negative information, say by enumeration of the complement of $X$, then how difficult is it to actually find a member of $X$? Can we define $\chi_X$ this way?

We shall put these ideas more formally:

\begin{definition}
A \emi{represented space} \textbf{X} is a pair $(X, d_X)$ where $X$ is a set and $d_X :\subseteq \omega^\omega \rightarrow \textbf{X}$ is a partial surjective function.
\end{definition}

An intuitive definition is given by Weihrauch in \cite{Weihrauch1995}:

\begin{definition}[Notations and Representations]
Using the notation for surjective partial functions above, and with $\Sigma$ denoting a finite alphabet, with $\Sigma^{<\omega}$ and $\Sigma^\omega$ denoting finite and infinite strings from $\Sigma$ respectively.
\begin{enumerate}
\item A \emi{naming system} of a set, $M$, is a surjective function $\nu : \subseteq \Sigma^{<\omega} \rightarrow M$, essentially naming every element of $M$ with finite strings.
\item A \emi{representation} is a surjective function $\delta : \subseteq \Sigma^\omega \rightarrow M$, essentially naming by infinite sequences.
\end{enumerate}
\end{definition}

Weihrauch then gives the following definition of \emph{reducibility}:
\begin{definition}
For $Y, Y^\prime \in \{ \Sigma^{<\omega}, \Sigma^\omega \}$, and for functions $\gamma : \subseteq Y \rightarrow M$ and $\gamma^\prime : \subseteq Y^\prime \rightarrow M$, we say that $ \gamma \leq \gamma^\prime $ if and only if $$ \forall y \in dom(\gamma) \, [ \gamma(y) = \gamma^\prime(f(y)) ] $$ for some computable function $f : \subseteq Y \rightarrow Y^\prime$.
\end{definition}

Likewise, $(\gamma \equiv \gamma^\prime)$ if and only if $(\gamma \leq \gamma^\prime \wedge \gamma^\prime \leq \gamma)$. However, Brattka \etal in \cite{Brattka2011} give some more general, and arguably applicable, definitions. These notions of Weihrauch reducibility will require the following notion of a realizer:

\begin{definition}
For represented spaces \textbf{X} and \textbf{Y},	
\begin{itemize}
\item For some function $f : \subseteq \textbf{X} \rightrightarrows \textbf{Y}$, a function $F:\subseteq \omega^\omega \rightarrow \omega^\omega$ is a \emi{realizer} of $f$, written $F \vdash f$, if and only if $$\forall p \in d_X^{-1}(dom(f)) \, [ d_Y(F(p)) \in f(d_X(p))]$$
\item $f$ is computable if and only if it has a computable realizer.
\item $f$ is continuous if and only if it has a continuous realizer.
\end{itemize}
\end{definition}

This is more easily summarised in the following commutative diagram:

\begin{center}
\begin{tikzpicture}[every node/.style={midway}]
  \matrix[column sep={4em,between origins}, row sep={2em}] at (0,0) {
    \node(R) {$\omega^\omega$}  ; & \node(S) {$\omega^\omega$}; \\
	\node(R/I) {$\textbf{X}$}; & \node (T) {$\textbf{Y}$};\\
  };
  \draw[<-] (R/I) -- (R) node[anchor=east]  {$d_X$};
  \draw[->] (R) -- (S) node[anchor=south] {$F$};
  \draw[->] (S) -- (T) node[anchor=west] {$d_Y$};
  \draw[->] (R/I) -- (T) node[anchor=north] {$f$};
\end{tikzpicture}
\end{center}

\begin{definition}[Weihrauch Reducibility]\label{def:Weihrauch}\index{Weihrauch reducibility}
Let $f:\subseteq \textbf{X} \rightrightarrows \textbf{Y}$ and $g :\subseteq \textbf{U} \rightrightarrows \textbf{V}$. We say that $f$ is \emph{Weihrauch reducible} to $g$, written $$ f \leq_W g$$ if there exist computable $H,K : \subseteq \omega^\omega \rightarrow \omega^\omega$, such that $$ F = K \langle id_{\omega^\omega}, GH \rangle$$ is a realizer of $f$ for every realizer $G$ of $g$.

We say that $f$ is \emi{strongly Weihrauch reducible} to $g$, written $f \leq_{sW} g$, if $$F = K(GH)$$ is a realizer for $f$.
\end{definition}

Here $\langle \cdot \rangle$ is the pairing function, as before, and $id_{\omega^\omega}$ is the identity function on Baire space. We can also say that the single-valued function $F$ is \emph{Weihrauch reducible} to $G$, also written $F \leq_W G$ if there exist single-valued computable functions $H$ and $K$ such that $$ F = K \langle id, GH \rangle $$

In \cite{Brattka2011}, these functions $H$ and $K$ are described as `functions of adaption' - $H$ being an `input adaption' and $K$ being an `output adaption'. The key idea here is to note that $H$ is the input adjustment into problems that $G$ understands, and likewise, $K$ is the transformation of the output of $G$ into an equivalent output of $F$. Thus, if $K$ does not need to know what the original input to $H$ was, represented by $id$ in Weihrauch reducibility, then the reducibility is thus defined to be stronger with respect to not needing to be `reminded' about the input that was originally fed into $H$.

Given these definitions, the following commutative diagram summarises the Weihrauch reducibility of some $f \leq_{W} g$:

\begin{center}
\begin{tikzpicture}[scale=2.5]
% reducing to
\node (1) at (0,1) {$\omega^\omega$};
\node (2) at (1.5,1) {$\textbf{U}$};
\node (3) at (0,0) {$\omega^\omega \rangle$};
\node (4) at (1.5,0) {$\textbf{V}$};
% reducing from
\node (A) at (-1.5,1) {$\omega^\omega$};
\node (B) at (-3,1) {$\textbf{X}$};
\node (C) at (-1.5,0) {$\omega^\omega$};
\node (D) at (-3,0) {$\textbf{Y}$};

% id
\node (id) at (-0.3,0) {$\langle id ,$};

\path[->,font=\scriptsize]
% reducing from comm. diag
(A) edge node[above]{$d_{\textbf{X}}$} (B)
(A) edge node[left]{$F$} (C)
(B) edge node[left]{$f$} (D)
(C) edge node[below]{$d_{\textbf{Y}}$} (D);

\path[->,font=\scriptsize]
% reducing to comm. diag
(1) edge node[above]{$d_{\textbf{U}}$} (2)
(1) edge node[right]{$G$} (3)
(2) edge node[right]{$g$} (4)
(3) edge node[below]{$d_\textbf{V}$} (4);

% reduction path
\path[->,font=\scriptsize]
(A) edge node[above]{$H$} (1)
(-0.2,-0.1) edge [bend left] node[below]{$K$} (C);

%id path
\draw [->,font=\scriptsize] (A) edge [bend left] node[below]{$id$} (-0.3,0.15);

\end{tikzpicture}
\end{center}

Note that the input it the arrows for $H$ and $id$ must be identical in order for the reducibility to work. Recall that for Weihrauch reducibility to be strong, we can do without this $id$ arrow and requirement, giving us the following commutative diagram which illustrates strong Weihrauch reducibility for some $f \leq_{sW} g$:

\begin{center}
\begin{tikzpicture}[scale=2.5]
% reducing to
\node (1) at (0,1) {$\omega^\omega$};
\node (2) at (1.5,1) {$\textbf{U}$};
\node (3) at (0,0) {$\omega^\omega$};
\node (4) at (1.5,0) {$\textbf{V}$};
% reducing from
\node (A) at (-1.5,1) {$\omega^\omega$};
\node (B) at (-3,1) {$\textbf{X}$};
\node (C) at (-1.5,0) {$\omega^\omega$};
\node (D) at (-3,0) {$\textbf{Y}$};

% id
%\node (id) at (-0.4,0) {$\langle id ,$};

\path[->,font=\scriptsize]
% reducing from comm. diag
(A) edge node[above]{$d_{\textbf{X}}$} (B)
(A) edge node[left]{$F$} (C)
(B) edge node[left]{$f$} (D)
(C) edge node[below]{$d_{\textbf{Y}}$} (D);

\path[->,font=\scriptsize]
% reducing to comm. diag
(1) edge node[above]{$d_{\textbf{U}}$} (2)
(1) edge node[right]{$G$} (3)
(2) edge node[right]{$g$} (4)
(3) edge node[below]{$d_\textbf{V}$} (4);

% reduction path
\path[->,font=\scriptsize]
(A) edge node[above]{$H$} (1)
(3) edge node[below]{$K$} (C);

%id path
%\draw [->,font=\scriptsize] (A) edge [bend left] node[below]{$id$} (-0.4,0.2);

\end{tikzpicture}
\end{center}

We state the following notion of \emi{realizer reducibility} from \cite{Brattka2011}:

\begin{definition}[realizer reducibility]
For $F : \subseteq \omega^\omega \rightarrow \omega^\omega$, a realizer for $f : \subseteq \textbf{X} \rightrightarrows \textbf{Y}$ ($F \vdash f$ in our notation). Let $f,g$ be multi-valued functions on represented spaces. Then $f$ is \emph{Weihrauch reducible} to $g$, $f \leq_W g$ as before, if and only if $$ \{ F : F \vdash f \} \leq_W \{ G : G \vdash g \} $$
\end{definition}

This single-valued function $F$ can be parallelized, written $\widehat{F}$, by letting $$\widehat{F} (x_0, x_1, x_2, \ldots) := F(x_0) \times F(x_1) \times F(x_2) \times \ldots$$ for some $F : \omega^\omega \rightarrow \omega^\omega$. It is shown in \cite{Brattka2011} that such parallelization is a closure operator for Weihrauch reducibility, as well as the fact that a resulting parallelized partial order forms a lattice into which the Turing and Medvedev degrees can be embedded.

Indeed, we can obtain the following proposition from \cite{Brattka2011}:
\begin{proposition}[\protect{\cite[Prop. 2.5]{Brattka2011}}]
Let $f$ and $g$ be multi-valued functions on represented spaces. Then
\begin{itemize}
\item $f \leq_W \widehat{f}$.
\item If $f \leq_W g$ then $\widehat{f} \leq_W \widehat{g}$.
\item $\widehat{f} \equiv_W \widehat{\widehat{f}}$.
\end{itemize}
\end{proposition}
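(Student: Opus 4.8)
The plan is to establish each of the three items by exhibiting explicit computable input- and output-adaptations $H$ and $K$ as demanded by Definition \ref{def:Weihrauch}, and to verify the realizer condition directly. Throughout I write a name of a point of a product space as $\langle p_0,p_1,\ldots\rangle$, the $\langle\cdot\rangle$-coding of the sequence of coordinate-names $p_i$, and I write $\widehat{f}(x_0,x_1,\ldots)=\prod_i f(x_i)$ for the product of the solution sets. For the first item, $f\leq_W\widehat{f}$, I would take the diagonal as input-adaptation: let $H(p)=\langle p,p,p,\ldots\rangle$, which is computable and sends a name of $x\in\dom(f)$ to a name of the constant sequence $(x,x,\ldots)\in\dom(\widehat{f})$. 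For any realizer $G\vdash\widehat{f}$ the string $G(H(p))$ names some $(y_0,y_1,\ldots)$ with each $y_i\in f(x)$, so taking $K(r):=r_0$ to be projection onto the first coordinate (which ignores the original input) yields a name of $y_0\in f(x)$. Hence $F=K(GH)\vdash f$, and this in fact proves the stronger $f\leq_{sW}\widehat{f}$.

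For the second item, suppose $f\leq_W g$ is witnessed by computable $H,K$, so that $K\langle\id,GH\rangle\vdash f$ for every $G\vdash g$. I would define the adaptations for $\widehat{f}\leq_W\widehat{g}$ coordinatewise: on input $q=\langle p_0,p_1,\ldots\rangle$ set $H'(q)=\langle H(p_0),H(p_1),\ldots\rangle$, and on a paired output set $K'\langle q,\langle r_0,r_1,\ldots\rangle\rangle=\langle K\langle p_0,r_0\rangle,K\langle p_1,r_1\rangle,\ldots\rangle$, both clearly computable. If $q$ names $(x_0,x_1,\ldots)\in\dom(\widehat{f})$ then each $H(p_i)$ names a point $u_i\in\dom(g)$ (as forced by the reduction $f\leq_W g$), so $H'(q)$ names a point of $\dom(\widehat{g})$; for any $G'\vdash\widehat{g}$ the $i$-th output block $r_i:=[G'(H'(q))]_i$ names some $v_i\in g(u_i)$. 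The crux is then to show that $K\langle p_i,r_i\rangle$ names an element of $f(x_i)$ for each $i$.

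The main obstacle is exactly this last verification: an arbitrary realizer $G'$ of $\widehat{g}$ need not be the coordinatewise parallelization of a single realizer of $g$, so I cannot simply quote $f\leq_W g$ on one component. I would resolve this with a substitution realizer: fixing $q$ and an index $i$, define $G_i(s):=[G'(\langle H(p_0),\ldots,H(p_{i-1}),s,H(p_{i+1}),\ldots\rangle)]_i$. Since every slot other than the $i$-th is already a name of a point of $\dom(g)$, for any name $s$ of $w\in\dom(g)$ the argument lies in $\dom(\widehat{g})$ and $G_i(s)$ names an element of $g(w)$; thus $G_i\vdash g$. As $G_i(H(p_i))=r_i$, the hypothesis $K\langle\id,G_iH\rangle\vdash f$ forces $K\langle p_i,r_i\rangle$ to name an element of $f(x_i)$, so componentwise $K'\langle\id,G'H'\rangle$ names an element of $\widehat{f}(x_0,x_1,\ldots)$ and is a realizer of $\widehat{f}$. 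Note $G_i$ need not be computable, which is harmless since the realizer quantifier in $\leq_W$ ranges over all realizers.

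Finally, for $\widehat{f}\equiv_W\widehat{\widehat{f}}$, the reduction $\widehat{f}\leq_W\widehat{\widehat{f}}$ is immediate by applying the first item with $\widehat{f}$ in place of $f$. For the reverse $\widehat{\widehat{f}}\leq_W\widehat{f}$ I would use a flattening argument: fixing the computable bijection $\omega\cong\omega\times\omega$ induced by the Cantor pairing function, let $H$ reindex a name of a double sequence $(x_{i,j})_{i,j}$ into a name of the single sequence enumerated along that bijection, apply a realizer of $\widehat{f}$, and let $K$ reindex the resulting single sequence of solutions back into double-sequence form. Since $K$ is a pure re-coding that does not consult the input, this gives $\widehat{\widehat{f}}\leq_{sW}\widehat{f}$, and combining the two directions yields the claimed equivalence.
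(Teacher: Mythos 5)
Your proof is correct, but there is nothing in the thesis to compare it against: the document states this proposition with a citation to Brattka--Gherardi \cite[Prop.~2.5]{Brattka2011} and gives no proof of its own, so your argument is judged on its own merits against the standard one in the literature, with which it essentially coincides. The one place where a blind attempt at this proposition typically fails is the second item, and you identified and handled it exactly right: an arbitrary realizer $G'$ of $\widehat{g}$ need not split as a parallelization of realizers of $g$, and your substitution realizer $G_i(s):=[G'(\langle H(p_0),\ldots,H(p_{i-1}),s,H(p_{i+1}),\ldots\rangle)]_i$ is the standard repair --- together with the correct observation that $G_i$ may be non-computable and that this is harmless because the realizer quantifier in Definition \ref{def:Weihrauch} ranges over all realizers, and that the paired input $\langle q, G'H'(q)\rangle$ lets $K'$ computably recover each $p_i$, which is precisely why this item is stated for $\leq_W$ rather than proved via $K'$ alone. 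Your first and third items are likewise sound, and your remarks that the diagonal argument actually yields $f\leq_{sW}\widehat{f}$ and that the Cantor-pairing flattening yields $\widehat{\widehat{f}}\leq_{sW}\widehat{f}$ are slight sharpenings of the statement as quoted, consistent with how the result is proved in \cite{Brattka2011}. The only cosmetic caveat is that the thesis defines $\widehat{F}$ via products of a single-valued $F$ on Baire space, whereas your proof works with the parallelization of the multi-valued $f$ on represented spaces; your reading is the one the proposition actually requires, so no change is needed.
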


Much is also made of the study of various kinds of choice in this setting, which is the subject of the next section.

\section{Weihrauch Reducibility and Choice Principles}

We now look to the Weihrauch reducibility of specific choice principles, objects that have much relevance in computable analysis. Let $C$ denote the choice principle given by 
\begin{center}
``For any set $A \subseteq \mathbb{N}$ has a characteristic function $\chi_A : \mathbb{N} \rightarrow \{0,1\}$."
\end{center}

A \emi{choice principle} or \emi{choice function} is given by this definition, and a significant amount of study is given as to the Weihrauch degrees of these functions, \eg in \cite{BrattkaPauly2010} and \cite{Brattka2011}. We shall state some of these results presently.

\begin{definition}[Compact Choice]\index{compact choice}
Let $X$ be a computable metric space, and $\mathcal{K}(X)$ be the set of compact subsets of $X$. The multivalued operation $$ CC_{\mathcal{K}(X)} : \subseteq \mathcal{K}(X) \rightrightarrows X, A \mapsto A $$ with $$dom(CC_{\mathcal{K}(X)}) := \{ A \subseteq X : A \neq \emptyset \text{ compact} \} $$ is called the \emph{compact choice} of $X$.
\end{definition}

Note, in this definition we have the inclusion of the notation ``$A \mapsto A$'', which might give the incorrect impression that we are simply mapping members in $A$ to members in $A$, but our aim is in fact rather to convey that we are mapping a given closed set $A$ to the set of its members in a multi-valued way. Our $\mathcal{K}(X)$ denotes the set of compact subsets of $X$, which are represented by enumerations of finite rational open covers which are not necessarily minimal.

\begin{definition}[Omniscience Principles]
We introduce the following principles: 
\begin{itemize}
\item Limited Principle of Omniscience (LPO) - For any sequence $\sigma \in \omega^\omega$ there exists $n \in \omega$ such that $\sigma(n) = 0 \text{ or } \sigma(n) \neq 0$ for all $n \in \omega$.
\item Lesser Limited Principle of Omniscience (LLPO) - For any sequence $\sigma \in \omega^\omega$ such that $\sigma(k) \neq 0$ for at most one $k \in \omega$, it follows that 
\begin{itemize}
\item $\sigma(2n) = 0$ for all $n \in \omega$, or 
\item $\sigma(2n+1) = 0$ for all $n \in \omega$.
\end{itemize}
\end{itemize}
\end{definition}

These notions may seem unusual, but their motivation is firmly rooted in constructive mathematics - LPO and LLPO translate the usually `forbidden' principle of excluded middle and de Morgan's laws, respectively. Though intuitionistic reasoning rejects such ideas, their representations as LPO/LLPO have realizers that correspond to discontinuous operations of varying degree of discontinuity - see \cite{Brattka2011} for details on how these and other principles, such as Markov's Principle, become somewhat unproblematic owing to their continuous, and thereby computable, realizers in this setting.

To illustrate how such a notion of choice is handled in the literature, we state the following theorem, for which the proof can be found in \cite{Brattka2011}:

\begin{theorem}[\protect{\cite[Thm. 2.10]{Brattka2011}}]\label{thm:compmetspaceLLPO}
Let $X$ be a computable metric space. Then $CC_{\mathcal{K}(X)} \leq_W \widehat{LLPO}$. If there is a computable embedding $\iota : 2^\omega \hookrightarrow X$, then $CC_{\mathcal{K}(X)} \equiv_W \widehat{LLPO}$.
\end{theorem}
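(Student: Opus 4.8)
The plan is to route everything through the standard identification of $\widehat{LLPO}$ with closed choice on Cantor space. First I would record the equivalence $\widehat{LLPO} \equiv_W C_{2^\omega}$ (citable from \cite{Brattka2011}, or reproved directly): a single instance of $LLPO$ is exactly closed choice $C_2$ on the two-point space $\{0,1\}$, since a sequence with at most one nonzero entry is precisely negative information ruling out at most one of the two options, and naming a non-excluded option is choosing a point; parallelising then gives $\widehat{LLPO} \equiv_W \widehat{C_2} \equiv_W C_{2^\omega}$. With this in hand it suffices to prove $CC_{\mathcal{K}(X)} \leq_W C_{2^\omega}$ for the first assertion, and $C_{2^\omega} \leq_W CC_{\mathcal{K}(X)}$ under the embedding hypothesis for the reverse.

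For $CC_{\mathcal{K}(X)} \leq_W C_{2^\omega}$ I would exploit effective total boundedness. From a name of a nonempty compact $A \subseteq X$ (the enumeration of all finite rational open covers of $A$) one can, for each $n$, search out a finite family of rational balls $B^n_1,\dots,B^n_{k_n}$ of radius $2^{-n}$ covering $A$. Build the tree $T$ of addresses $(i_1,\dots,i_m)$ with $i_j \le k_j$ whose closed region $C_{(i_1,\dots,i_m)} := \bigcap_{j\le m}\overline{B^{j}_{i_j}}$ meets $A$. The decisive observation is that "$C \cap A = \emptyset$" is $\Sigma^0_1$ relative to the name of $A$: disjointness means $A$ lies in the open complement of $C$, and by compactness this is witnessed by some finite rational cover of $A$ by balls avoiding $C$, which the cover-enumeration eventually lists. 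Hence "region meets $A$" is co-c.e., so $T$ is a $\Pi^0_1$ tree, i.e. a negative-information name of the closed set $[T]$. Since $A\neq\emptyset$, any $x\in A$ determines an infinite branch, so $[T]\neq\emptyset$; conversely every infinite branch has all finite regions nonempty and meeting $A$, their diameters shrink like $2\cdot 2^{-m}$, and the finite-intersection property against the compact $A$ forces $\bigcap_m C_{(i_1,\dots,i_m)}\cap A$ to be a single point $x\in A$, whose approximating ball-centres form a fast Cauchy sequence computable from the branch. Recoding the finitely-branching $T$ into a binary tree effectively turns this into a reduction to $C_{2^\omega}$: the closed-choice realizer returns a branch, and the output map reads off the centres and outputs their limit in $A$.

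For the reverse reduction under a computable embedding $\iota:2^\omega\hookrightarrow X$, I would show $C_{2^\omega}\leq_W CC_{\mathcal{K}(X)}$. Given a nonempty closed $P\subseteq 2^\omega$ by a tree, its image $\iota(P)$ is compact and nonempty; the key step is to compute a $\kappa_X$-name of $\iota(P)$ from the tree of $P$. A finite rational family covers $\iota(P)$ iff $P\subseteq\iota^{-1}(\bigcup_i B_i)$, and since $\iota$ is computable this preimage is an effectively open subset of $2^\omega$; the inclusion $P\subseteq U$ of a compact set in an open set is a c.e. condition in the tree of $P$, because by compactness only finitely many cylinders are needed and the tree certifies that every branch enters $U$. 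Thus the covers of $\iota(P)$ can be enumerated. Feeding this name to $CC_{\mathcal{K}(X)}$ yields some $y\in\iota(P)$, and $\iota^{-1}(y)\in P$ is computable since a computable embedding has computable inverse on its range; this in fact gives $\leq_{sW}$. Combining the two directions yields the claimed equivalence.

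The hard part will be the bookkeeping in the first direction: certifying that "region meets $A$" is genuinely co-c.e. in the compactness representation (rather than relying on distance information that the representation may not supply), verifying that an infinite branch of $T$ yields an honest point of $A$ through the finite-intersection property, and checking that the finite-to-binary recoding simultaneously preserves the $\Pi^0_1$ character of the tree and the computability of the point-extraction map.
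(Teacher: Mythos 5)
A point of comparison first: the thesis does not actually prove this theorem. It is stated as a background result, with the proof explicitly omitted and deferred to \cite{Brattka2011}. So there is no in-paper proof to match; what your proposal does is reconstruct the literature argument, and it does so along the standard lines: factor through $\widehat{LLPO} \equiv_W C_{2^\omega}$, reduce $CC_{\mathcal{K}(X)}$ to closed choice on Cantor space by turning the negative-information name of $A$ into a $\Pi^0_1$ tree of shrinking ball-regions, and use the embedding of $2^\omega$ for the converse. That is the same architecture as the cited proof. (One half-sentence of caution on the preliminary step: the direction $C_{2^\omega} \leq_W \widehat{C_2}$ is not obtained merely by ``parallelising''; it is the nontrivial $WKL \leq_W \widehat{LLPO}$ argument, so it should indeed be cited rather than waved through.)

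The one step that needs repair is exactly the one you flagged as the hard part. You prune a node when the enumeration lists a finite rational cover of $A$ ``by balls avoiding $C$''; such a cover will eventually be listed, but your procedure must also \emph{recognize} avoidance, and actual disjointness of a rational ball from $C = \bigcap_{j} \overline{B^{j}_{i_j}}$ is not semi-decidable in a general computable metric space. The fix is to replace per-ball avoidance of $C$ by a formally verifiable condition: write $X \setminus \bigcap_j \overline{B^{j}_{i_j}} = \bigcup_j (X \setminus \overline{B^{j}_{i_j}})$ and use that the exterior of a metric closed ball is effectively open, being the union of all rational balls $B(d,s)$ that are \emph{formally} exterior to it, i.e. $d(d,c_j) > r_j + s$, which is a c.e. condition on the data. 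If $C \cap A = \emptyset$, compactness yields a finite cover of $A$ by such formally exterior balls; this is itself a finite rational cover, hence appears in the $\kappa$-name, and its formal exteriority is recognizable. Hence ``$C$ meets $A$'' is genuinely co-c.e., your tree is $\Pi^0_1$ with computably bounded branching, and the remainder of your first direction goes through: each branch's regions are nested with diameter at most $2^{1-m}$, witnesses $a_m \in C_m \cap A$ form a Cauchy sequence in the compact (hence complete) set $A$, and the ball centres give a fast Cauchy name of the limit point of $A$. In the converse direction, note that ``a computable embedding has computable inverse on its range'' is false for arbitrary computable injections; here it holds either because it is built into the definition of computable embedding or because $2^\omega$ is computably compact, which makes the inverse of a computable injection computable — with that said explicitly, your argument is sound and, as you observe, even yields $\leq_{sW}$ in that direction.
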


We omit the proof of this, but it can be found in \cite{Brattka2011}. The following definitions are taken from \cite{Brattka2009}.

\begin{definition}[Weakly Computable]\index{weakly computable}
A function $F : \subseteq X \rightrightarrows Y$ on represented spaces $X$ and $Y$ is called \emph{weakly computable} if $F \leq_W \widehat{LLPO}$. Similarly, we also call functions like $F$ \emph{weakly continuous} given $F \leq_W \widehat{LLPO}$ holds with respect to some oracle.
\end{definition}

Based off the previous theorem \ref{thm:compmetspaceLLPO} and definition we can get the following corollary:

\begin{corollary}[\protect{\cite[Cor. 2.11]{Brattka2011}}]
Let $X$ be a represented space and let $Y$ be a computable metric space. Any weakly computable single-valued operation $F :\subseteq X \rightarrow Y$ is computable.
\end{corollary}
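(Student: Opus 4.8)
The plan is to reduce everything to compact choice on Cantor space and then exploit single-valuedness. By Theorem~\ref{thm:compmetspaceLLPO}, applied with the underlying space $2^\omega$ and the identity embedding $\iota = id_{\omega^\omega}$, one has $\widehat{LLPO} \equiv_W CC_{\mathcal{K}(2^\omega)}$; since $F$ is weakly computable, $F \leq_W \widehat{LLPO}$, and transitivity of $\leq_W$ gives $F \leq_W CC_{\mathcal{K}(2^\omega)}$. Unfolding Definition~\ref{def:Weihrauch}, there are computable $H, K :\subseteq \omega^\omega \rightarrow \omega^\omega$ such that $F = K\langle id_{\omega^\omega}, GH\rangle$ is a realizer of $F$ for every realizer $G$ of $CC_{\mathcal{K}(2^\omega)}$. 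Reading off the types: for a name $p$ of a point $x \in \dom(F)$, the string $H(p)$ is a $\mathcal{K}(2^\omega)$-name of a nonempty compact set $A_p \subseteq 2^\omega$, a realizer $G$ selects some $q \in A_p$, and $K\langle p, q\rangle$ is then a name of $F(x)$. The decisive observation is that, because $F$ is single-valued, $d_Y(K\langle p, q\rangle)$ equals the one point $F(x)$ for every admissible $q \in A_p$, not merely for the particular $q$ that $G$ happens to choose.

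First I would fix a name $p$ of some $x \in \dom(F)$ and produce, uniformly and computably, a name of $F(x)$. For each precision $n$ it suffices to output a rational $r$ with $d_Y(r, F(x)) \le 2^{-n}$. Since $K$ is computable, the map $q \mapsto K\langle p, q\rangle$ is continuous, so each $q \in A_p$ has a basic clopen neighbourhood (a cylinder in $2^\omega$) on which $K\langle p, \cdot\rangle$ has already committed to the $n$-th rational of its output Cauchy name. These cylinders cover the compact set $A_p$, so finitely many of them already cover $A_p$, and such a finite subcover can be found effectively: the $\mathcal{K}(2^\omega)$-name of $A_p$ lets us enumerate finite clopen covers, while for each cylinder the event ``$K\langle p, \cdot\rangle$ fixes the $n$-th rational using only that cylinder's information'' is semidecidable, so dovetailing locates a finite cover of the required kind. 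Each cylinder in the cover meets $A_p$, and for any witnessing $q \in A_p$ the rational it produces is the $n$-th term of a genuine name of $F(x)$, hence lies within $2^{-n}$ of $F(x)$; thus any one of the finitely many rationals so produced is an acceptable $n$-th approximation. Stringing these together across all $n$ yields a computable realizer of $F$, that is, $F$ is computable.

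The hard part will be making the ``find a finite clopen cover on which $K$ has stabilised'' step both correct and provably terminating. Termination rests entirely on compactness of $A_p$ together with continuity of $K$: these guarantee that a suitable finite cover exists, and the compact-set representation (enumerations of finite rational/clopen covers) is precisely what makes the search semidecidable and hence, given existence, effective. Correctness rests on two points I would state with care: that $Y$ being a computable metric space makes it Hausdorff, so the unique limit of the approximations is forced to be $F(x)$; and that the single-valuedness hypothesis is used exactly to secure $d_Y(K\langle p, q\rangle) = F(x)$ simultaneously for all $q \in A_p$, which is what allows us to discard the nondeterministic choice made by $G$ and read the answer off any branch of the compact search.
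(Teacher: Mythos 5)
Your overall route is the standard one from Brattka--Gherardi: pass from $\widehat{LLPO}$ to compact choice $CC_{\mathcal{K}(2^\omega)}$ via Theorem \ref{thm:compmetspaceLLPO}, note that every point $q$ of the compact advice set $A_p$ is chosen by \emph{some} realizer $G$, so single-valuedness forces $K\langle p,q\rangle$ to name the unique value $F(x)$ for all $q\in A_p$ simultaneously, and then extract $F(x)$ by an effective compactness search. Since the thesis states this corollary without proof (it only cites \cite{Brattka2011}), yours is the only argument on the table, and in outline it is the right one. However, there is one step that fails as written.

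The claim ``each cylinder in the cover meets $A_p$'' is unjustified, and in general false. The representation of $\mathcal{K}(2^\omega)$ enumerates finite covers that are, as the paper itself stresses, \emph{not necessarily minimal}; so the finite cover your dovetailing locates may contain cylinders $[w]$ disjoint from $A_p$ on which $K\langle p,\cdot\rangle$ has nevertheless committed to an $n$-th rational, and for $q\notin A_p$ the string $K\langle p,q\rangle$ need not name $F(x)$ or anything at all, so that rational is unconstrained. Nor can you filter such cylinders out: from a compact-set name, $[w]\cap A_p=\emptyset$ is semidecidable (it is the event that the complementary clopen set covers $A_p$), but $[w]\cap A_p\neq\emptyset$ is not, so ``discard cylinders missing $A_p$'' is not an effective operation. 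Hence ``any one of the finitely many rationals so produced'' can be arbitrarily far from $F(x)$. The gap is repairable by strengthening the search condition: demand in addition that the committed $n$-th rationals of all cylinders in the candidate cover lie pairwise within $2^{-n+1}$ of one another. The search still terminates, witnessed by the cover consisting of the length-$m$ prefixes of points of $A_p$ for $m$ large enough that $K$ has committed, since each of its rationals lies within $2^{-n}$ of $F(x)$ by the fast Cauchy convention. And any cover passing the test is now correct: $A_p\neq\emptyset$ (forced, since $H(p)$ must lie in $\operatorname{dom}(CC_{\mathcal{K}(2^\omega)})$ for $GH(p)$ to be defined for every realizer $G$), so at least one cylinder of the cover meets $A_p$, its rational is within $2^{-n}$ of $F(x)$, and the coherence condition pins every other rational to within $2^{-n+2}$ of $F(x)$; reindexing $n$ finishes the proof. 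With that patch, your argument is sound.
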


For our tiling problem equivalences, we will need the following definition, taken from \cite{Brattka2012} and \cite{BrattkaPauly2010}:

\begin{definition}[Closed Choice]\index{closed choice}\label{def:ClosedChoice}
Let $(\textbf{X}, d_X)$ be a represented space. Then the \emi{closed choice} operation of this space is defined by \[ C_\textbf{X} : \subseteq \mathcal{A}(\textbf{X}) \rightrightarrows \textbf{X}, A \mapsto A \] where $\mathcal{A}(\textbf{X})$ are the closed subsets of \textbf{X}, and our choice function takes some non-empty closed subset $A \in \mathcal{A}(\textbf{X})$ and outputs some point $x \in A$. We therefore have $dom(C_X) := \{ A \in \mathcal{A}(X) : A \neq \emptyset\}$
\end{definition}

We will be specifically interested in closed choice for Baire space - as this is where the trees we have been considering so far are found. \cite{Brattka2011} demonstrates how this is, in a sense, the `hardest' kind of choice, by the following definitions and theorem below. 

\begin{definition}
We define the following choice maps as follows:
\begin{enumerate}
%\item \textbf{Co-finite choice} - $$C_F : \subseteq \mathcal{A}(\omega) \rightrightarrows \omega,dom(C_F) = \{ A \subset \omega : \text{ co-finite}\} $$
\item \textbf{Discrete choice} - $$C_{\omega} : \subseteq \mathcal{A}(\omega) \rightrightarrows \omega,dom(C_{\omega}) = \{ A \subset \omega :  A \neq \emptyset\} $$
\item \textbf{Interval choice} -  $$C_I : \subseteq \mathcal{A}([0,1]) \rightrightarrows [0,1],dom(C_I) = \{ [a,b]:  0 \leq a \leq b \leq 1\} $$
\item \textbf{Proper interval choice} - $$C_I^- : \subseteq \mathcal{A}([0,1]) \rightrightarrows [0,1] ,dom(C_I^-) = \{ [a,b]: 0 \leq a \leq b \leq 1 \} $$ 
\item \textbf{Compact choice} -  $$C_K : \subseteq \mathcal{A}([0,1]) \rightrightarrows [0,1],dom(C_K) = \{ K \subseteq [0,1] : K \neq \emptyset, K \text{ compact} \} $$
\end{enumerate}

However, Brattka and Gherardi also present choice principles as boundedness principles instead of principles of choice over intervals. The intuition here stems from a similar question asked in \cite{BrattkaPauly2010}:
\begin{center}
``Given information about what does not constitute a solution, find a solution."
\end{center}
\begin{flushright}
\emph{-- Brattka, Brecht, Pauly in \cite{BrattkaPauly2010}}
\end{flushright}

So, by seeing choice principles as boundedness principles, we shift our view to the defined negative information about the represented set $A$, which is then given explicitly in the form of a finite number of bounds. It turns out that this is very useful in reducing problems in analysis - they often turn out to have a `boundedness representation'.

We find that the boundedness principle analogues of the above choice principles, given in \cite{Brattka2011}, are as follows:

\begin{enumerate}
%\item $B_F : \mathbb{R}_< \rightrightarrows \mathbb{R}, x \mapsto [x,\infty)$
\item $B : \mathbb{R}_< \rightarrow \mathbb{R}, x \mapsto x$
\item $B_I : \mathbb{R}_< \times \mathbb{R}_> \rightrightarrows \mathbb{R}, (x,y) \mapsto [x,y], dom(B_I) = \{ (x,y):x \leq y\}$
\item $B_I^- : \mathbb{R}_< \times \mathbb{R}_> \rightrightarrows \mathbb{R}, (x,y) \mapsto [x,y], dom(B_I^-) = \{ (x,y) : x < y \}$
\item $B_I^+ : \mathbb{R}_< \times \overline{\mathbb{R}_>} \rightrightarrows \mathbb{R}, (x,y) \mapsto [x,y], dom(B_I^+) = \{ (x,y) : x \leq y \}$
\end{enumerate}

Where $\mathbb{R}, \mathbb{R}_<, \mathbb{R}_>$ are equipped with ordinary Cauchy representations $\rho$ of the real numbers, the left $\rho_<$, and right $\rho_>$ respectively.

\end{definition}

These various notions of choice illustrate the degree of detail we can command in this theory. Brattka \etal in \cite{Brattka2011} illustrate the relationships between these choice operators in the following theorem. They denote these \emi{choice chains}, indicating the relationships between choice principles, boundedness principles, and our omniscience principles.

\begin{theorem}[\protect{\cite[Thm. 3.10]{Brattka2011}}][Choice Chains]
It is obtained in \cite{Brattka2011} that:
\begin{enumerate}
\item $LLPO \leq_W C_I^- \leq_W C_I \leq_W C_K \equiv_W \widehat{LLPO} \leq_W C_A$
\item $LPO \leq_W C_\omega \leq_W B_I^+ \leq_W C_A \leq_W C \equiv_W \widehat{LPO}$
\item $LLPO \leq_W LPO$, $C_I^- \leq_W C_\omega$, $C_I \leq_W B_I^+$
\end{enumerate}
\end{theorem}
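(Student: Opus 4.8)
The plan is to treat the statement as a concatenation of individual Weihrauch reductions and to verify each link in the two chains separately, exactly as done in \cite{Brattka2011}; for each claimed $f \leq_W g$ the task reduces to exhibiting the pair of computable adaption maps $H$ and $K$ demanded by Definition \ref{def:Weihrauch}, so that $K\langle id_{\omega^\omega}, GH\rangle \vdash f$ holds for every realizer $G \vdash g$. A useful first observation is that several links are essentially trivial: since $\dom(C_I^-) \subseteq \dom(C_I)$ and every closed interval is a compact set, the reductions $C_I^- \leq_W C_I \leq_W C_K$ are witnessed by (essentially) identity adaptions, with $K$ doing no genuine computation.

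Next I would dispatch the two equivalences that anchor the right-hand ends of the chains, since the rest propagates from them. The equivalence $C_K \equiv_W \widehat{LLPO}$ is precisely Theorem \ref{thm:compmetspaceLLPO} instantiated at the computable metric space $[0,1]$, using the standard computable embedding $\iota : 2^\omega \hookrightarrow [0,1]$; the equivalence $C \equiv_W \widehat{LPO}$ is proved by the same template with $LLPO$ replaced by $LPO$, reading a characteristic-function instance of $C$ componentwise as a parallel family of $LPO$ queries. The parallelization facts of \cite[Prop. 2.5]{Brattka2011} then let me move these equivalences along the hierarchy.

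For the omniscience links I would argue directly from the definitions. We get $LLPO \leq_W LPO$ because an $LPO$ realizer returns a definite bit deciding whether the input sequence is identically zero, from which $K$ reads off the weaker disjunctive answer required by $LLPO$. The reductions $LPO \leq_W C_\omega$, $LLPO \leq_W C_I^-$, and $C_I^- \leq_W C_\omega$ are then obtained by encoding the single admissible nonzero position of an omniscience (or interval) instance as, respectively, a nonempty closed subset of $\omega$ and a proper closed subinterval of $[0,1]$, whose chosen point or chosen natural number pins down the answer; here $H$ produces the closed datum from the omniscience input and $K$ extracts the bit or index from the chosen element.

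The genuinely delicate links, and where I expect the main obstacle, are those threading through the boundedness principle $B_I^+$ and the one-sided real spaces $\mathbb{R}_<$, $\mathbb{R}_>$ carrying the representations $\rho_<$ and $\rho_>$: namely $C_\omega \leq_W B_I^+$, $C_I \leq_W B_I^+$, and $B_I^+ \leq_W C_A$. Here $H$ must transform the available negative (enumeration-of-complement) information about a closed set into a pair of converging one-sided approximations $(x,y)$ with $x \leq y$, while $K$ reverses this faithfully; the care needed lies exactly in matching the asymmetry of $\rho_<$ against $\rho_>$ and in producing the endpoints computably and in the correct limit, so that the realizer equation of Definition \ref{def:Weihrauch} holds for every $G$. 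Once these boundedness reductions are established, the remaining top links $\widehat{LLPO} \leq_W C_A$ and $C_A \leq_W C$ follow from the fact, recorded in \cite{Brattka2011}, that closed choice (Definition \ref{def:ClosedChoice}) sits at the top of this ordering, which completes both chains.
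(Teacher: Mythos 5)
The thesis itself offers no proof of this theorem: it is imported verbatim from \cite{Brattka2011} as background material, so there is no internal argument to compare yours against. That said, your link-by-link plan is a faithful reconstruction of how the result is actually proved in the cited source: domain-inclusion reductions with identity adaptions for $C_I^- \leq_W C_I \leq_W C_K$, Theorem \ref{thm:compmetspaceLLPO} instantiated at $X = [0,1]$ (via the Cantor middle-thirds embedding $2^\omega \hookrightarrow [0,1]$) for $C_K \equiv_W \widehat{LLPO}$, componentwise $LPO$ queries against the enumerated complement for $C \equiv_W \widehat{LPO}$, and explicit encodings for the omniscience links. Your $LLPO \leq_W LPO$ argument is correct precisely because plain (not strong) Weihrauch reducibility lets $K$ re-read the original input through the $id_{\omega^\omega}$ component: once $LPO$ certifies that a nonzero entry exists, $K$ finds its unique position by unbounded search and reads off the parity.

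One caution: your closing sentence, deriving $\widehat{LLPO} \leq_W C_A$ and $C_A \leq_W C$ ``from the fact that closed choice sits at the top of this ordering,'' is circular as phrased, since those two inequalities are themselves constituents of the chains being proved. They need their own short arguments: $C_K \leq_W C_A$ holds because every compact subset of $[0,1]$ is closed and a name by finite rational covers computably yields a negative-information name; and $C_A \leq_W C$ holds because the characteristic-function power of $C$ decides, for each rational interval, whether it meets the given closed set, after which a member is computed by nested-interval search. With those supplied, and the $B_I^+$ links (which you rightly flag as the delicate ones, hinging on matching the one-sided representations $\rho_<$ and $\rho_>$) carried out as in the source, the proposal is complete and matches the cited proof.
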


As a finale, Brattka proves the following theorem:

\begin{proposition}[\protect{\cite[Prop. 3.7]{Brattka2011}}]
$$ B \equiv_W C \equiv_W \widehat{LPO} $$
\end{proposition}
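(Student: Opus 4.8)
The plan is to reduce the whole statement to the single non-trivial equivalence $B \equiv_W \widehat{LPO}$. The equivalence $C \equiv_W \widehat{LPO}$ is already supplied by chain~(2) of the Choice Chains theorem \cite[Thm. 3.10]{Brattka2011}, so once $B \equiv_W \widehat{LPO}$ is in hand the full claim $B \equiv_W C \equiv_W \widehat{LPO}$ follows by transitivity of $\leq_W$ (composing the input- and output-adaptations). Recall that a name for $x \in \mathbb{R}_<$ is an enumeration of the rationals below $x$, that $B$ asks to upgrade this one-sided information to a full $\rho$-name of $x$, that $LPO$ decides for $p \in \omega^\omega$ whether $\exists n\,(p(n) \neq 0)$, and that $\widehat{LPO}$ answers countably many such instances in parallel.

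\textbf{The reduction $B \leq_W \widehat{LPO}$.} Given a $\rho_<$-name $(q_n)_n$ of $x$, I would use the input-adaptation $H$ to produce, for each rational $r$ (indexed via a fixed enumeration of $\mathbb{Q}$), the $LPO$-instance $p_r$ with $p_r(n) = 0$ if $q_n \le r$ and $p_r(n) = 1$ otherwise. Since the name lists exactly the rationals below $x$, we have $LPO(p_r) = 1 \iff \exists n\,(q_n > r) \iff r < x$. Feeding the family $(p_r)_r$ to $\widehat{LPO}$ returns, for every rational $r$, a decided answer to ``$r < x$?'', i.e. the now two-sidedly decided lower cut of $x$. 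The output-adaptation $K$ then reconstructs a $\rho$-name of $x$ by locating rationals $a < x < b$ with $b - a$ arbitrarily small. Note $K$ does not consult the original input, so the reduction is in fact strong.

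\textbf{The reduction $\widehat{LPO} \leq_W B$} is the crux. From inputs $(p_i)_i$ with intended answers $b_i = LPO(p_i)$, the input-adaptation $H$ encodes the bit string into the single real
\[ x = \sum_{i} 2\, b_i\, 4^{-(i+1)}. \]
Because $b_i = 1$ is only positively observable (confirmed upon seeing some $p_i(n) \neq 0$) while $b_i = 0$ is never confirmed, the running sum over the bits discovered so far is non-decreasing and converges to $x$ from below; this is precisely a computable $\rho_<$-name of $x$, which I feed to $B$. Applying $B$ returns a full $\rho$-name of $x$, and the output-adaptation $K$ recovers each $b_i$ from a sufficiently accurate rational approximation of $x$.

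\textbf{The main obstacle} is this last step: extracting the bits $b_i$ from the exact real is exactly where a naive binary encoding fails, since digit extraction is not computable at carry boundaries. The base-$4$ encoding with weight $2$ is chosen so that the total contribution of all bits beyond position $i$ is at most $\sum_{j>i} 2\cdot 4^{-(j+1)} = \tfrac23 4^{-(i+1)} < 4^{-(i+1)}$, which is strictly smaller than the contribution $2\cdot 4^{-(i+1)}$ made by bit $i$ itself when $b_i = 1$. Hence, after the higher bits have been decided and subtracted, the cases $b_i = 0$ and $b_i = 1$ are separated by a gap exceeding $4^{-(i+1)}$, so each bit is decidable from any $\rho$-name of $x$. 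Verifying this gap estimate and the computability of $K$ is the one calculation to carry out with care; as before $K$ ignores the original input, so the reduction is strong. Combining the two directions yields $B \equiv_W \widehat{LPO}$, and together with $C \equiv_W \widehat{LPO}$ from \cite[Thm. 3.10]{Brattka2011} we conclude $B \equiv_W C \equiv_W \widehat{LPO}$.
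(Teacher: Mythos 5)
Your proof is correct, and it is worth noting that the thesis itself states this proposition without proof, importing it from Brattka and Gherardi; so there is no in-paper argument to compare against, and your write-up supplies a complete, self-contained proof where the paper has none. Both directions are sound: the reduction $B \leq_W \widehat{LPO}$ via deciding the cut ``$r < x$?'' in parallel for all rationals $r$ and then reconstructing a Cauchy name by grid search is exactly right, and your gap calculation in the converse direction checks out ($\sum_{j>i} 2 \cdot 4^{-(j+1)} = \tfrac{2}{3}4^{-(i+1)}$, strictly below the $2 \cdot 4^{-(i+1)}$ contributed by bit $i$, leaving disjoint closed intervals separated by more than $4^{-(i+1)}$, so each bit is decidable by recursion from any $\rho$-name). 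The base-$4$, weight-$2$ encoding is precisely the standard device for circumventing the non-computability of binary digit extraction at carry boundaries, and your observation that both $K$'s ignore the original input, upgrading the reductions to strong ones, is a correct bonus beyond what the statement requires.

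One small caution about your appeal to chain~(2) of the Choice Chains theorem for $C \equiv_W \widehat{LPO}$: in the cited source that theorem appears \emph{after} the present proposition and may well depend on it, so tracing your citation back to Brattka--Gherardi risks circularity. The fix is cheap and worth recording: $C \leq_W \widehat{LPO}$ by asking, in parallel for each $n$, the $LPO$-instance ``is $n$ ever enumerated into $A$?'', and $\widehat{LPO} \leq_W C$ by enumerating the set $A = \{ i : \exists n \, (p_i(n) \neq 0) \}$ (which is computably enumerable relative to the input) and reading the answers off $\chi_A$. With that two-line argument inserted, your proof of the full statement $B \equiv_W C \equiv_W \widehat{LPO}$ is independent of any result that could depend on it.
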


This is somewhat surprising when read out loud - all of our boundedness principles are Weihrauch equivalent to all of our (closed) choice principles, both of which are equivalent to the Limited Principle of Omniscience. This result and the background theory and definitions in Weihrauch reducibility provide the backdrop for our result we present in the next subsection.

\section{Weihrauch Reducibility and Tiling Problems}

We will look specifically at Closed Choice on Baire Space, denoted $C_{\omega^{\omega}}$, defined above.

We require a proper intuition for $C_{\omega^\omega}$ - namely that any realizer for this principle in Baire space takes a tree $T \subset \omega^{< \omega}$ as input, and returns a path through it, in keeping with our definitions above.

\begin{definition}
Let the following notations be given:
\begin{itemize}
\item Let $\mathbb{W}$ denote the set of all possible Wang tiles, represented as 4-tuples.
\item Let $\mathscr{T}_{\mathbb{W}}$ denote the class of all possible tilings of all possible Wang tiles.
\end{itemize}
\end{definition}

In the spirit of our previous definitions, we define the following class.

\begin{definition}\label{def:ChooseTiling}\index{$ChooseTiling$}
Let $ChooseTiling$ or $CT$ be a multivalued operator such that $$ CT : \subseteq \mathcal{P}(\mathbb{W}) \rightrightarrows \mathscr{T}_{\mathbb{W}}, S \mapsto \mathcal{T}_S$$
where $S$ is a set of prototiles, and $\mathcal{T}_S$ is an $S$-tiling. $ChooseTiling$ as an operator/principle takes some subset of all possible Wang prototiles $S \subset \mathbb{W}$ and returns a total planar $S$-tiling $\mathcal{T}_S$, as a tiling function $f: \mathbb{Z}^2 \rightarrow S$.
\end{definition}

Note that we do not use definition of $TILE$ from \ref{def:TILE} in this definition, as we defined $TILE$ to be a set of indices for Turing Machines. However, our realizer for $CT$ will not be computable.

Intuitively this operator takes some set of prototiles and returns a total planar tiling. Thus, a realizer for $CT$ is a function $$F :\subseteq \omega^\omega \rightarrow \omega^\omega$$ which takes some set of prototiles $S$, and outputs some infinite sequence corresponding to a total planar $S$-tiling given by the tiling function $f: \mathbb{Z}^2 \rightarrow S$

We present the following result:

\begin{theorem}[C. 2019]\label{thm:CT-SW-ClosedC}
$$ CT \equiv_{sW} C_{\omega^{\omega}} $$
\end{theorem}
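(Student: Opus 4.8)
The plan is to establish the strong Weihrauch equivalence $CT \equiv_{sW} C_{\omega^\omega}$ by proving reductions in both directions. The key conceptual point, foreshadowed in the introduction and in the proof of Theorem~\ref{thm:WangExtension}, is that Wang tilings correspond to infinite paths through a suitable ``tiling tree'' in Baire space. So the heart of both directions is a computable, uniform translation between prototile sets together with their tilings on one side, and trees together with their paths on the other. Throughout I would use the representations: a set of Wang prototiles $S \subseteq \mathbb{W}$ is named by an element of $\omega^\omega$ coding its characteristic function (via the $4$-tuple coding), and a tiling $\mathcal{T}_S$ is named by the tiling function $f : \mathbb{Z}^2 \to S$, itself coded as an element of $\omega^\omega$ after fixing the spiral enumeration of $\mathbb{Z}^2$ used in Theorem~\ref{thm:WangExtension}.

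\textbf{Direction $CT \leq_{sW} C_{\omega^\omega}$.} First I would describe the computable input adaptation $H$. Given (a name for) a prototile set $S$, $H$ should compute (a name for) the closed set $A_S \subseteq \omega^\omega$ consisting of all total $S$-tilings, presented as the set of branches $[T_S]$ of the tiling tree $T_S$. Following the alternative tree construction in the proof of Theorem~\ref{thm:WangExtension}, the level-$k$ nodes of $T_S$ are the partial tilings of the first $k$ lattice points (under the spiral enumeration) that satisfy all edge-meet constraints among already-placed tiles; this tree is computable uniformly in $S$, so $H$ is computable and $A_S = [T_S]$ is a (uniformly) closed subset of Baire space, i.e.\ a legitimate input to $C_{\omega^\omega}$. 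A realizer $G$ for $C_{\omega^\omega}$ then returns some path $p \in [T_S]$, and the output adaptation $K$ decodes this path back into the tiling function $f : \mathbb{Z}^2 \to S$ (simply reading off which tile was placed at each lattice point). Since $K$ acts on $G$'s output alone and needs no memory of the original input, we obtain $F = K(GH) \vdash CT$, which gives the \emph{strong} reduction.

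\textbf{Direction $C_{\omega^\omega} \leq_{sW} CT$.} Here I would reverse the translation. Given (a name for) a nonempty closed set $A = [T] \subseteq \omega^\omega$ for a computable tree $T \subseteq \omega^{<\omega}$, the input adaptation $H$ computes a prototile set $S_T$ whose total tilings encode exactly the infinite paths of $T$. This is precisely the content of the construction in the proof of Theorem~\ref{thm:TILE-ILL} (the \textbf{AIT} construction of Definition~\ref{def:AITPIT}): the column tiles code successive initial segments $\sigma_n$ of a path, the root, mid-row and quadrant-filling tiles complete the plane, and a total $S_T$-tiling exists iff $T$ has an infinite path, with the path recoverable computably from the tiling. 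So $H$ is computable and maps $A$ to $S_T$; a realizer $G$ for $CT$ returns a total $S_T$-tiling; and the output adaptation $K$ applies the computable path-extraction procedure from that same proof (read the central column upward from the root tile) to produce a path $p \in [T] = A$. Again $K$ depends only on $G$'s output, so $F = K(GH) \vdash C_{\omega^\omega}$ and the reduction is strong.

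\textbf{The main obstacle} I anticipate is verifying that the two adaptations $H$ are genuinely computable \emph{at the level of represented spaces} rather than merely index-computable: in particular, in the first direction I must check that the name of the closed set $A_S = [T_S]$ can be produced from the name of $S$ by a single computable $H : \subseteq \omega^\omega \to \omega^\omega$ that respects the standard representation $\mathcal{A}(\omega^\omega)$ of closed subsets of Baire space (i.e.\ by enumerating the tree, equivalently by negative information about which finite strings leave the tree), and in the second direction I must confirm the \textbf{AIT} construction is uniform in the tree name. The constructions of Theorems~\ref{thm:WangExtension} and~\ref{thm:TILE-ILL} are each already computable and uniform, so the remaining work is bookkeeping: fixing the coding conventions for $\mathbb{W}$, $\mathscr{T}_{\mathbb{W}}$, and $\mathcal{A}(\omega^\omega)$, and checking that the path-to-tiling and tiling-to-path decodings $K$ are total computable maps needing no access to the original input, which is exactly what upgrades the equivalence from $\leq_W$ to $\leq_{sW}$.
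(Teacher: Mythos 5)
Your proposal is correct and follows essentially the same route as the paper's proof: the reduction $CT \leq_{sW} C_{\omega^\omega}$ via the tiling tree of Theorem~\ref{thm:WangExtension} with a path-to-tiling decoding that needs no access to the input, and the reduction $C_{\omega^\omega} \leq_{sW} CT$ via the construction from Theorem~\ref{thm:TILE-ILL} together with its computable path-extraction procedure. Your closing remarks on uniformity and on the representation of closed sets by negative information are, if anything, slightly more careful about the represented-space bookkeeping than the paper itself.
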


\begin{proof}
As per the definition of strong Weihrauch reducibility, we require to show the following reductions hold in order to get that $CT$ and $C_{\omega^\omega}$ are strong Weihrauch equivalent:
\begin{align*}
(CT \leq_{sW} C_{\omega^{\omega}}) \, \wedge \, & (C_{\omega^{\omega}} \leq_{sW} CT) \\
\end{align*}

Denote realizers $C \vdash C_{\omega^\omega}$ and $T \vdash CT$, we thus require computable $H,K :\subseteq \omega^\omega \rightarrow \omega^\omega$ such that $$ C = K(TH)$$ as well as computable $I,J :\subseteq \omega^\omega \rightarrow \omega^\omega$ such that $$ T = J(CI)$$ which we can represent with the following commutative diagram:

\begin{center}
\begin{tikzpicture}[scale=2.5]
% reducing to
\node (1) at (0,1) {$\omega^\omega$};
\node (2) at (1.5,1) {$\mathbb{W}$};
\node (3) at (0,0) {$\omega^\omega$};
\node (4) at (1.5,0) {$\mathscr{T}_{\mathbb{W}}$};
% reducing from
\node (A) at (-1.5,1) {$\omega^\omega$};
\node (B) at (-3,1) {$\mathcal{A}(X)$};
\node (C) at (-1.5,0) {$\omega^\omega$};
\node (D) at (-3,0) {$X$};

% id
%\node (id) at (-0.4,0) {$\langle id ,$};

\path[->,font=\scriptsize]
% reducing from comm. diag
(A) edge node[above]{$d_{\mathcal{A}(X)}$} (B)
(A) edge node[left]{$C$} (C)
(B) edge node[left]{$C_{\omega^\omega}$} (D)
(C) edge node[below]{$d_{X}$} (D);

\path[->,font=\scriptsize]
% reducing to comm. diag
(1) edge node[above]{$d_{\mathbb{W}}$} (2)
(1) edge node[right]{$T$} (3)
(2) edge node[right]{$CT$} (4)
(3) edge node[below]{$d_{\mathscr{T}_{\mathbb{W}}}$} (4);

% reduction path
\path[->,font=\scriptsize]
(A) edge [bend left] node[above]{$H$} (1)
(3) edge [bend left] node[below]{$K$} (C)
(1) edge [bend left] node[above]{$I$} (A)
(C) edge [bend left] node[below]{$J$} (3);

%id path
%\draw [->,font=\scriptsize] (A) edge [bend left] node[below]{$id$} (-0.4,0.2);

\end{tikzpicture}
\end{center}

We will utilise our constructions in the proof of \ref{thm:TILE-ILL}, and notice that in that construction of tilings from trees, all the parts of our construction used in the proof are computable. This important detail will inform much of the proof of this theorem. 

Note that underlying actions of $C$ and $T$: 
\begin{itemize}
\item $C$ takes some closed subset of Baire space, a tree, and finds some infinite path through it, and returns this as its output.
\item $T$ takes some finite or infinite set of prototiles $S$ and finds some infinite sequence of tiles that corresponds to a total tiling of the plane using tiles from $S$ respecting all edge meet conditions.
\end{itemize}

We will first show that there are computable $I,J : \subseteq \omega^\omega \rightarrow \omega^\omega$ such that $T = J(CI)$ in order to prove $CT \leq_{sW} C_{\omega^\omega}$. We first notice that $T$ is a function that takes a set of prototiles and produces and infinite sequence corresponding to some infinite planer tiling. Thus, $I$ will encode information about the possible $S$-tilings in a way that we can ask $C$ to process this and give us an answer that $J$ will translate back into some tiling.

Given some prototile set $S$ as input to $T$, our computable $I$ will construct the tree of possible tilings as we saw constructed in the proof of Wang's Extension Theorem, theorem \ref{thm:WangExtension} in this thesis. Specifically, $I$ will code uniquely each tile in $S$, and then proceed to code each successively larger sequence of possible tiles in square rings of tiles, joining them into the tiling tree $T_S$ based on the required edge-match criteria. With this done, we have a full tree of valid tilings given by successively larger rings that properly extend the previous finite square patch of tiles. This encoded tiling tree $T_S$ is a subset of Baire space, and so it is this tiling tree that we supply to $C$.

Given trees are closed subsets of Baire space, this is a problem that $C$ will be able to provide an answer for. Thus for our tiling tree $T_S$, $C(T_S) = p$, with $p$ some infinite path through $T_S$. This path will represent a planar tiling, by the construction of $T_S$ by $I$.

We can take the infinite path $p$ through our tiling tree generated by $I$ and then decode this as a tiling by assigning all of the tiles for initial segment of $p$ by decoding the specific arrangement of tiles coded by $I$ into the tiling tree for our $S$. Given we computably generated $T_S$, we can match up each successive initial segment of $p$ by decoding each of the encoded tiles in $p$ without knowledge of the input to $I$, with the tiles that should be placed around the previous patch of tiles being coded in each successive segment of $p$. This is our computable function $J$ that will complete our reduction, and is essentially a computable inverse of the operation of $I$, taking a coded sequence corresponding to edge-matched finite patches of $S$-tilings and recovering a planar $S$-tiling from this.

With this now done, we have successfully shown that $T$ can be computed by means of translation of tree sets into input for $C$ by $I$, and the output of $C$ can then be reinterpreted by $J$, such that we have satisfied the requirement and shown that $T = J(CI)$ is a realizer for $CT$.

Next we prove $C_{\omega^\omega} \leq_{sW} CT$, by finding computable $H,K : \subseteq \omega^\omega \rightarrow \omega^\omega $ such that $C = K(TH)$. By our intuition above we require a computable function $H$ to convert some tree into sets of prototiles, which will then allow our realizer $T$ to construct a total planar tiling, and return this to us as output. We then require a computable $K$ to take this tiling and recover from it an infinite path, which will then be returned as one of the possible paths from our original tree.

We take the two constructions in our proof of theorem \ref{thm:TILE-ILL} to be the computable functions that we need. We will explicitly show which parts relate to this reduction for this part of the proof. 

First, note that for a given tree, converting each path into the library $\mathcal{S}$ is a computable task. Although in the previous proof, we require a path to then choose the $S_e \subset \mathcal{S}$ for our original tiling, here we can pass this prototile set to our realizer $T$ and it will give us a sequence corresponding to a tiling of the plane. Explicitly, we construct this library as follows from the proof of theorem \ref{thm:TILE-ILL}:

\begin{itemize}
\item Fix a root tile with the tuple $\langle M^L_0, \lambda^U, M^R_0, \lambda^D \rangle$ and put this tile into $\mathcal{S}$.
\item For all the $c^j_i$ and $M_i$ colour the mid-row tiles.
\item For all $c^j_i$ colour all of the quadrant tiles, and put these into $\mathcal{S}$.
\item For each point (a string) $p$ in our input tree we add column tiles for each initial segment $\sigma$ and $\sigma^\frown n$ in $p$ - note, we still take two copies of each and construct two tiles for each successive symbol in each path, as one is required to go up and the other in the mirror position downwards.
\end{itemize}

With our full library $\mathcal{S}$ constructed, we now have an infinite set of prototiles which we can pass as the input to our realizer $T$. The output from this will be a planar tiling about which we already know the useful properties, namely that from this we can recover the path coded in each of the $\mathcal{S}$-tilings.

We can extract the path from an $\mathcal{S}$-tiling in the following manner. Our following computable method will be the same as the method to extract the path $p$ from our $S_e$-tiling in the proof of theorem \ref{thm:TILE-ILL}:
\begin{enumerate}
\item If we choose the root tile, read upwards along the column of tiles, from which we can recover a path $p$.
\item If we choose a mid-row tile, then we follow the descending chain of $M_i$ colours to the root tile, and then go to step 1.
\item If we choose a quadrant tile, then for our given $i \in \omega$ from our chosen tile:
\begin{itemize}
\item If $c^1_i$ or $c^2_i$ then follow all the tiles down to the mid-row tiles, and go to step 2.
\item If $c^3_i$ or $c^4_i$ then follow all the tiles up to the mid-row tiles, and go to step 2.
\end{itemize}
\end{enumerate}

Thus we have computable functions $H$, that creates from a tree a valid input for $T$, and a computable $K$, that takes the output from $T$ and extracts an infinite path $p$ for our original tree. This is satisfying the same function as the realizer $C$, thus $C = K(TH)$ is satisfied and is a realizer for $C_{\omega^\omega}$, completing our theorem.

\end{proof}

\section{Weihrauch Reductions for Weak Planar Tilings}

We can also prove a similar result for the following tiling principle, based around the definition for $WTILE$ we originally gave in definition \ref{def:WTILE}. 

\subsection{Weihrauch Equivalence for $CWPT$}

We first need the following definition of a `wild card':
\begin{definition}
Let $*$ denote the \emi{wild card} that satisfies the edge meet conditions of any Wang prototile in $\mathbb{W}$ in a tiling function $f : \mathbb{Z}^2 \rightarrow S \cup \{ * \}$, for a given set of prototiles $S \subset \mathbb{W}$.
\end{definition}

The wild card tile is intended to give us a way of handling the `blank', or `no tile', possibility that we first encountered in our definition of $WTILE$. Thus, an infinite region that is not tiled will be mapped by infinitely many wild cards. We can now continue on and define non-total tilings of the plane by adding this wild card to our prototile sets.

\begin{definition}\label{ref:CWPT}\index{$ChooseWeakPatchTiling$}
Let $ChooseWeakPatchTiling$, shortened to $CWPT$, be such that $$ CWPT :\subseteq \mathcal{P}(\mathbb{W}) \rightrightarrows \mathscr{T}_{\mathbb{W}}, S \mapsto \mathcal{T}_S $$ where $S$ is a set of prototiles, and $\mathcal{T}_S$ is an $S$-tiling. Similar to $CT$ defined in \ref{def:ChooseTiling}, $ChooseWeakPatchTiling$, is an operator/principle that takes some subset of all possible Wang prototiles $S \subset \mathbb{W}$ such that $S$-tilings returns a connected planar, but not necessarily total, $S$-tiling $\mathcal{T}_S$ given by $$f: \mathbb{Z}^2 \rightarrow S \cup \{ * \}$$ where $*$ is the `tiling wild card' defined above. $CWPT$ also returns an infinite connected region $R \subseteq \mathbb{Z}^2$ which is covered by this infinite connected patch of tiles.
\end{definition}

The following result can now be demonstrated:

\begin{theorem}[C. 2019]\label{thm:CWPT-SW-ClosedC}
$$ C_{\omega^\omega} \equiv_{sW} CWPT $$
\end{theorem}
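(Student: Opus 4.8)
The plan is to follow the template of the proof of Theorem \ref{thm:CT-SW-ClosedC}, establishing the two strong reductions $CWPT \leq_{sW} C_{\omega^{\omega}}$ and $C_{\omega^{\omega}} \leq_{sW} CWPT$ separately. As in that proof, the guiding observation is that every combinatorial ingredient we use — building tiling trees from prototile sets, building prototile sets from trees, and reading tilings off paths or paths off tilings — is computable and self-describing, so the output adaptations never need to consult the original input, which is exactly what strong reducibility demands.

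For $CWPT \leq_{sW} C_{\omega^{\omega}}$, fixing realizers $T \vdash CWPT$ and $C \vdash C_{\omega^{\omega}}$, I would produce computable $I, J$ with $T = J(CI)$. The input adaptation $I$ takes a prototile set $S$ and constructs a tree $T_S \subseteq \omega^{<\omega}$ whose nodes code finite connected patches of $S$-tiles grown one tile at a time: a node at level $k$ records the placement, at a lattice cell adjacent to the existing patch, of an actual $S$-tile respecting all edge-meet conditions with its already-placed neighbours, so that the actual tiles always form a connected region gaining one cell per level. Leaves are patches admitting no matching boundary extension. An infinite path through $T_S$ then codes an infinite connected $S$-tiling (an actual tile on its region $R$, the wild card $*$ elsewhere), and conversely any infinite connected patch tiling yields an infinite path by enumerating its tiles in breadth-first order; hence $T_S$ is ill-founded precisely on the domain of $CWPT$. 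Since $T_S$ is a closed subset of Baire space, $C(T_S) = p$ returns such a path, and the output adaptation $J$ decodes $p$ into the tiling function $f : \mathbb{Z}^2 \rightarrow S \cup \{*\}$ together with the region $R$ of cells carrying actual tiles, all without reference to $S$.

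For $C_{\omega^{\omega}} \leq_{sW} CWPT$, I would reuse verbatim the library $\mathcal{U}$ and the prototile set $U_e$ constructed in the proof of Theorem \ref{thm:WTILE-ILL}. The input adaptation $H$ takes a tree $T$ (a closed subset of Baire space) and computably produces the prototile set $U_T$ whose tilings embed each path of $T$ as four spokes emanating from the root tile, padded by the quadrant-filling tiles. On the domain of $C_{\omega^{\omega}}$ the tree $T$ is ill-founded, so by the construction $U_T$ admits an infinite connected tiling, and $CWPT(U_T)$ returns such a tiling together with its region $R$. The output adaptation $K$ then extracts an infinite path by the same path-recovery procedure used in Theorems \ref{thm:TILE-ILL} and \ref{thm:WTILE-ILL}: locate the root tile via the connectivity data in $R$, follow a spoke, and read off the successive initial segments $\sigma_0 \prec \sigma_1 \prec \cdots$ that those tiles encode. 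Because the spoke tiles literally carry the initial segments of the path, $K$ recovers a genuine branch through $T$ from the tiling alone, giving $C = K(TH)$ strongly.

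The main obstacle I expect is the $CWPT \leq_{sW} C_{\omega^{\omega}}$ direction, specifically the design of the connected-patch tree $T_S$. Unlike the total-tiling tree used for $CT$, here $I$ must simultaneously enforce edge-matching among actual tiles, graph-theoretic connectivity of the actual tiles, and genuine growth of the patch, while keeping the correspondence ``infinite paths $\leftrightarrow$ infinite connected patch tilings'' exact so that ill-foundedness matches the domain of $CWPT$. Verifying that a connectivity-preserving (breadth-first) enumeration of any infinite connected patch yields a branch, and that no spurious eventually-wild-card branches arise, is the delicate bookkeeping; the remaining computability and strongness checks are routine and parallel the proof of Theorem \ref{thm:CT-SW-ClosedC}.
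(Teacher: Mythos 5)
Your first direction, $CWPT \leq_{sW} C_{\omega^\omega}$, is sound, and in fact takes a genuinely different route from the paper: where you build the connected-patch tree $T_S$ explicitly (nodes record placements of single tiles adjacent to the growing patch, and the patch/branch correspondence is verified by breadth-first enumeration), the paper instead writes ``$S$ weakly tiles'' as a $\Sigma^1_1$ statement and invokes the Sacks normal form to extract a $\Pi^0_1$ tree for $C$ to act on. Your explicit construction is arguably cleaner: it yields a concrete decoding map $J$ (the nodes carry the tile codes themselves, so $J$ never consults $S$, as strongness requires, and can output the region $R$ alongside $f$), and your worry about eventually-wild-card branches is correctly dispatched, since nodes only ever record actual tile placements. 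Ironically, though, the direction you flagged as delicate is the one that works; the gap is in the other direction.

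In the direction $C_{\omega^\omega} \leq_{sW} CWPT$, reusing the $U_e$ construction of Theorems \ref{thm:SNT-WELL}/\ref{thm:WTILE-ILL} verbatim fails, and this is exactly where the paper's proof departs from yours: the paper does \emph{not} reuse the spoke construction, but builds a fresh minimal prototile set consisting only of a root tile $\langle R, \lambda^U, B, P \rangle$ and column tiles $\langle R, \sigma^\frown n, B, \sigma \rangle$ with fixed side colours. The trouble with $U_e$ is that $CWPT$ only promises \emph{some} infinite connected non-total tiling, and the quadrant-filling tiles of $U_e$ admit infinite connected patches that encode no branch at all. Concretely, for any nodes $\sigma^- \prec \sigma \prec \tau$ of $T$, the two quadrant-one tiles $a = \langle \sigma^1, \sigma^1, (\sigma^-)^1, (\sigma^-)^1 \rangle$ and $b = \langle \tau^1, \tau^1, \sigma^1, \sigma^1 \rangle$ tile an infinite staircase: place $b$ at $(0,0)$, $a$ at $(1,0)$ (right edge $\sigma^1$ of $b$ meets left edge $\sigma^1$ of $a$), $b$ at $(1,1)$ (top edge $\sigma^1$ of $a$ meets bottom edge $\sigma^1$ of $b$), and so on, with wild cards everywhere else. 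This is a valid infinite connected $U_e$-tiling containing no root tile and no spoke, whose tiles carry only the three bounded-length strings $\sigma^-, \sigma, \tau$; a realizer for $CWPT$ may legitimately return it, and your $K$ (``locate the root tile, follow a spoke'') then has nothing to read, so $C = K(TH)$ fails. The paper's snake construction avoids this precisely because its side colours $R$ and $B$ forbid all horizontal adjacency and the string lengths strictly decrease downward, forcing every infinite connected tiling to be a single upward column carrying strictly increasing initial segments of one branch, which $K$ reads off directly. Your argument can be repaired along the same lines by discarding the quadrant and mid-row tiles and keeping only one upward family of column tiles -- which is in effect what the paper does.
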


\begin{proof}
We first reiterate that we are explicitly after two reductions to obtain our equivalence, explicitly:
\begin{align*}
(C_{\omega^\omega} \leq_{sW} CWPT ) \wedge ( CWPT \leq_{sW} C_{\omega^\omega} )
\end{align*}

Let our realizers be $C \vdash C_{\omega^\omega}$ and $W \vdash CWPT$, with $C,T : \subseteq \omega^\omega \rightarrow \omega^\omega$. As before, we want computable $H,K,I,J : \subseteq \omega^\omega \rightarrow \omega^\omega$ such that the following diagram commutes:

\begin{center}
\begin{tikzpicture}[scale=2.5]
% reducing to
\node (1) at (0,1) {$\omega^\omega$};
\node (2) at (1.5,1) {$\mathbb{W}$};
\node (3) at (0,0) {$\omega^\omega$};
\node (4) at (1.5,0) {$\mathscr{T}_{\mathbb{W}}$};
% reducing from
\node (A) at (-1.5,1) {$\omega^\omega$};
\node (B) at (-3,1) {$\mathcal{A}(X)$};
\node (C) at (-1.5,0) {$\omega^\omega$};
\node (D) at (-3,0) {$X$};

% id
%\node (id) at (-0.4,0) {$\langle id ,$};

\path[->,font=\scriptsize]
% reducing from comm. diag
(A) edge node[above]{$d_{\mathcal{A}(X)}$} (B)
(A) edge node[left]{$C$} (C)
(B) edge node[left]{$C_{\omega^\omega}$} (D)
(C) edge node[below]{$d_{X}$} (D);

\path[->,font=\scriptsize]
% reducing to comm. diag
(1) edge node[above]{$d_{\mathbb{W}}$} (2)
(1) edge node[right]{$W$} (3)
(2) edge node[right]{$CWPT$} (4)
(3) edge node[below]{$d_{\mathscr{T}_{\mathbb{W}}}$} (4);

% reduction path
\path[->,font=\scriptsize]
(A) edge [bend left] node[above]{$H$} (1)
(3) edge [bend left] node[below]{$K$} (C)
(1) edge [bend left] node[above]{$I$} (A)
(C) edge [bend left] node[below]{$J$} (3);

%id path
%\draw [->,font=\scriptsize] (A) edge [bend left] node[below]{$id$} (-0.4,0.2);

\end{tikzpicture}
\end{center}

We will prove the more straightforward of the two first, namely that $CWPT \leq_{sW} C_{\omega^\omega}$. To do this, we will require our two computable functions $I,J$ to be such that $$ W = J(CI) $$ This will be achieved in the same way as for the proof of theorem \ref{thm:CT-SW-ClosedC}.

We begin by using our intuition from the proof of theorem \ref{thm:SNT-WELL}, where we can think of our prototile sets as coding paths through trees. As for the proof there, we let $I$ be the function that codes the tree of all possible tilings from our given prototile set, but this time we allow for each boundary enumerated into this tree to be incomplete - as we only care that our tilings are connected, not that they are total. 

Despite this, we still arrive at a tree that is some subset of $\omega^\omega$. This follows from noticing that for a given prototile set $S$, our tiling functions $f : \mathbb{Z}^2 \rightarrow S$ can be extended in the following way $$ f: \mathbb{Z}^2 \rightarrow (S \cup \{ * \}) $$ where $*$ stands for the ``no tile here" option we have now allowed for $f$ to be a weak tiling of the plane. 

For this $f$ there exists an infinite patch $P \subseteq \mathbb{Z}^2$ such that
\begin{itemize}
\item $f$ follows the tiling rules.
\item $f$ is not $*$ on $P$.
\item $P$ is connected.
\item $|P| = \infty$.
\end{itemize}

This gives us some $\xi \in \Sigma^1_1$ such that $$\exists f \, \exists P \, (\xi(f,P))$$ is true if and only if $f$ weakly tiles the plane according to our definition of $CWPT$. By \cite[p.4]{sacks_2017} we have a $\Sigma^1_1$-normal form given which allows us to rewrite this formula as $$ \exists f \, \exists P \, \exists X (\psi(f,P,X)) $$ is true for $X$ a sequence of Skolem functions and $\psi \in \Pi^0_1$. This gives us a tree with which $C$ can find a path for. Thus, our $I$ is defined and computable.

Once $C$ returns a path, this path will correspond to an infinite sequence of tiles in the plane, and so our $J$ will take this and reconstruct our tiling from the selected paths through the generated tiling tree from $I$ that $C$ has provided a path from. Thus, the input and output adaption functions $I,J$ are both computable, and by utilising $C$ we have that $W  = J(CI)$, giving us $CWPT \leq_{sW} C_{\omega^\omega}$.

Next, we will prove that $C_{\omega^\omega} \leq_{sW} CWPT$. To begin, we will once again require that our computable $H,K$ be such that $$ C = K(WH) $$ This naturally comes about given that our definition of $CWPT$ includes not just the tiling function, but the knowledge of which region is an infinite patch of the $\mathbb{Z}^2$ lattice that is tiled by tiles from $S$.

Our computable $H$ will be given by the prototile set construction similar to that given in \ref{thm:SNT-WELL} - we take the input that is some tree $T \subseteq \omega^{< \omega}$, and then generate the tile set $S$ as follows. Fix $R$ $B$, and $P$ to be `red', `blue', and `purple' respectively - effectively making certain quadrants of Wang prototiles fixed colours. The prototiles we need to create for $S$ are:

\begin{itemize}
\item Add a unique root tile $\langle R, \lambda^U, B, P \rangle $ into $S$:
\begin{center}
\sampletile{$R$}{$\lambda^U$}{$B$}{$P$}
\end{center}
\item For each path $\sigma \in [ T ]$ add the tile: $\langle R, \sigma^\frown n, B, \sigma \rangle$:
\begin{itemize}
\item \textbf{NB}: We identify the empty string $\lambda^U$ with $\sigma(0)$
\end{itemize}
\begin{center}
\sampletile{$R$}{$\sigma^\frown n$}{$B$}{$\sigma$}
\end{center}
\end{itemize}

With this, we then give this $S$ as input to $W$, which will return two things:
\begin{enumerate}
\item A tiling function $f : \mathbb{Z}^2 \rightarrow S$.
\item A region $R \subseteq \mathbb{Z}^2$ containing an infinite patch of tiles.
\end{enumerate}

Intuitively, our tilings given by the coding above are long snakes of tiles where an infinite path is coded going up from the root tile. Given we have all this information available to $K$, we can make $K$ the computable function that first chooses the minimum point in $R$ - i.e. the point $(x,y)$ that has the lowest values for $x$ and $y$ - which is the point closest to $(0,0)$. 

With this point given, we can then follow the tiles from this point down until we reach the root tile $\langle R, \lambda^U, B, P \rangle $. This is done by fixing the $x$ co-ordinate from this point, and then subtracting one from $y$ until we find the $m$ such that $ (x,y-m) = \langle R, \lambda^U, B, P \rangle $.

With this found, we can then read each initial segment of an infinite path $\sigma \in [T]$. With this recovered, we can return this as an infinite path through the original tree $T$ that has been obtained by our realizer $W$. Thus we have satisfied $C = K(WH)$ as required. 

Finally, we note that both directions give our result, $C_{\omega^\omega} \equiv_{sW} CWPT$.
\end{proof}

\subsection{Weihrauch Reducibility for Other Weak Tiling Principles}

We will first state a neat notion of compositional product used in Weihrauch reducibility - Brattka and Pauly give the following theorem in \cite{Brattka2016}:

\begin{theorem}[\protect{\cite[Prop. 3.5 \& Thm. 4.1]{Brattka2016}}]\label{thm:supCompProd}
For every $f$ and $g$, the following supremum exists:
$$ sup\{ f_0 \circ g_0 : f_0 \leq_W f \wedge g_0 \leq_W g \} $$
\end{theorem}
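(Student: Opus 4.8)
The plan is to prove existence of the supremum by exhibiting a concrete operator, the \emph{compositional product} $f * g$, and showing that it is the \emph{maximum} of the set $\{ f_0 \circ g_0 : f_0 \leq_W f \wedge g_0 \leq_W g\}$; since a maximum is automatically a supremum, this settles the theorem. First I would construct $f * g$ at the level of realizers, following the intuition that it should encode ``run $g$, perform some computable work, then run $f$.'' Concretely, I would let $f * g$ act on inputs of the form $\langle q, e \rangle$, where $q$ names an instance in $\dom(g)$ and $e$ is an index for a partial computable map serving as the \emph{glue} between $g$ and $f$. A realizer proceeds by running $g$ on the instance named by $q$ to obtain a solution $v$; applying $\varphi_e$ (via the universal machine) to $\langle q, v\rangle$ to produce a name for an instance of $f$; running $f$ on that instance to obtain $y$; and returning $\langle v, y\rangle$. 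The crucial design feature is that $e$ ranges over \emph{all} indices, so the single problem $f * g$ silently quantifies over every computable intermediate step. I would fix $\dom(f*g)$ to be exactly those $\langle q,e\rangle$ with $q \in \dom(g)$ for which $\varphi_e$ sends each $g$-solution of $q$ into $\dom(f)$.

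Next I would verify the upper-bound property: for any $f_0 \leq_W f$ and $g_0 \leq_W g$, that $f_0 \circ g_0 \leq_W f * g$. Let $H_g,K_g$ witness $g_0 \leq_W g$ and $H_f,K_f$ witness $f_0 \leq_W f$. Define the outer pre-processing by $H(x) = \langle H_g(x), e_x\rangle$, where, by the $s$-$m$-$n$ theorem (\ref{thm:smn}), $e_x$ is an index computed uniformly in $x$ for the computable map that takes a $g$-solution $v$ of the instance $H_g(x)$ to the $f$-instance $H_f\bigl(K_g(\langle x, v\rangle)\bigr)$. Running $f*g$ on $\langle H_g(x), e_x\rangle$ then performs precisely the steps ``solve $g_0$, feed the result into $f_0$,'' and the outer post-processing $K$, built from $K_f$, recovers the $f_0 \circ g_0$-solution. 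All glue is computable and so is absorbed into $H$ and $K$, giving the reduction.

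The slick final step is the least-upper-bound property, which I would establish not by confronting an arbitrary upper bound but by showing that $f * g$ is \emph{itself} equivalent to a composite of reducts. Define $\tilde g_0$ on $\langle q,e\rangle$ to run $g$ on $q$ and output $\langle \langle q, v\rangle, e\rangle$, carrying the index $e$ along inertly; since the identity component can be recovered by the output-adaption map (using the $\id$ part available in $\leq_W$), one checks $\tilde g_0 \leq_W g$. Define $\tilde f_0$ on $\langle \langle q,v\rangle, e\rangle$ to compute $\varphi_e(\langle q,v\rangle)$ (a uniformly computable step) to obtain an $f$-instance, run $f$, and output $\langle v, y\rangle$; this is pure computable pre- and post-processing around $f$, so $\tilde f_0 \leq_W f$. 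By construction $\tilde f_0 \circ \tilde g_0$ reproduces exactly the behaviour of $f * g$, whence $f * g \equiv_W \tilde f_0 \circ \tilde g_0$. Thus $f * g$ lies (up to $\equiv_W$) in the set whose supremum we seek, while also bounding every member of that set, so it is the maximum and therefore the supremum.

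The main obstacle throughout is the bookkeeping of partiality and domain conditions: I must ensure that the encoded glue map $\varphi_e$ halts and lands in $\dom(f)$ on exactly the pairs $\langle q,v\rangle$ arising from legitimate $g$-solutions, so that all realizer diagrams genuinely commute. This is why $\dom(f*g)$ must be defined with the compatibility clause above rather than naively. The combinatorics of the $\langle\cdot\rangle$-encodings, the universal machine, and the uniform indexing via \ref{thm:smn} must be threaded carefully, but once the construction is pinned down they present no conceptual difficulty; the genuine content is the observation that the compositional product is simultaneously an upper bound for, and an instance of, the family $\{f_0 \circ g_0\}$.
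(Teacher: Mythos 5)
Your overall architecture is sound and is essentially the standard one: the thesis itself gives no proof of this statement (it is imported from Brattka--Pauly, where the supremum is likewise established by exhibiting a concrete representative that is a \emph{maximum}), and your three-step plan --- build a concrete $f \star g$, show it is an upper bound, then show it is itself Weihrauch-equivalent to a composition $\tilde f_0 \circ \tilde g_0$ of reducts of $f$ and $g$ --- is exactly the right shape, as is your domain-compatibility clause. However, there is one genuine gap, and it sits precisely at the point you flag as routine: the glue cannot be a natural-number index $e$ for a partial computable map, and the $s$-$m$-$n$ theorem cannot rescue this. In your upper-bound step the glue that must be executed between $g$ and $f$ is $v \mapsto H_f\bigl(K_g(\langle x, v\rangle)\bigr)$, which depends on the \emph{original input} $x \in \omega^\omega$. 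You propose $H(x) = \langle H_g(x), e_x\rangle$ with $e_x \in \omega$ ``computed uniformly in $x$'', but a computable map $\omega^\omega \to \omega$ is continuous, hence each value $e_x$ is determined by a finite prefix of $x$; a single natural number can never encode the infinite parameter $x$, and $x$ is in general not recoverable from $H_g(x)$ or from the $g$-solution $v$ (the $s$-$m$-$n$ theorem gives uniformity in \emph{number} parameters only). So the problem you defined is too weak: there are $f_0 \leq_W f$, $g_0 \leq_W g$ (e.g.\ with $K_g$ genuinely consulting $x$, as in $g_0 = \langle \mathrm{id}, g \rangle$) for which no choice of index-valued glue makes your reduction $f_0 \circ g_0 \leq_W f \star g$ go through.

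The repair is standard and leaves the rest of your argument intact: let the second input component range over \emph{names of continuous partial functions}, i.e.\ points $w \in \omega^\omega$ (equivalently, index--oracle pairs $\langle e, x \rangle$ interpreted via relativized Turing functionals $\Phi_e^x$), so that instances of $f \star g$ are pairs $\langle q, w\rangle$ with $q$ a $g$-instance and $\Phi_w$ mapping each pair $\langle q, v\rangle$, $v$ a $g$-solution, into $\mathrm{dom}(f)$. In the upper-bound step you then set $H(x) = \langle H_g(x), w_x\rangle$ where $w_x$ is a name, computable from $x$ because it may simply carry $x$ as its oracle part, of the continuous map $v \mapsto H_f(K_g\langle x, v\rangle)$; the post-processing $K\langle x, \langle v, y\rangle\rangle = K_f\langle K_g\langle x, v\rangle, y\rangle$ is computable as you intended. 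Your final step survives verbatim under this change: $\tilde g_0$ carries $w$ along inertly (recovering it through the $\mathrm{id}$ component of $\leq_W$, which is why this works for $\leq_W$ though not $\leq_{sW}$), and $\tilde f_0$ evaluates $\Phi_w\langle q, v\rangle$ --- evaluation of a continuous function given its name is computable --- so $f \star g \equiv_W \tilde f_0 \circ \tilde g_0$ and the maximum, hence the supremum, exists. This corrected representative is in substance the one in the literature; Brattka--Pauly reach the same conclusion by a slightly different packaging (cylindrification, under which the identity component transports $x$ past $g$, achieving exactly what the oracle part of $w_x$ achieves here).
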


As such, we will define the following compositional product:

\begin{definition}[Compositional Product]
The \emi{compositional product} of $f$ and $g$, written $f \star g$, is precisely this supremum from theorem \ref{thm:supCompProd}.
\end{definition}

The core idea in this compositional product is that we can find two sub-principles $f_0$ and $g_0$ that are each Weihrauch reducible to the principles we are interested in, and then use the composition of these new principles to achieve a Weihrauch reducibility of the two original principles composed. This enables us to sequentially apply different principles in order to obtain new Weihrauch reductions - a technique that we will now utilise.

%\begin{lemma}
%$f \star g$ is a cylinder.
%\end{lemma}
%
%\begin{proof}
%Given an input pair $(p,h) \in \omega^\omega \times \mathcal{M}(\textbf{V}, \omega^\omega \times \textbf{X})$ to $id \times (f \star g)$, we can compute some $h_p \in \mathcal{M}(\textbf{V}, \omega^\omega \times \textbf{X})$ such that $(\langle q,r \rangle, u) \in h_p(y) \if and only if q = p$ and $(r,u) \in h(y)$. As such, this implies that $id \times (f \star g) \leq_{sW} f \star g$ which satisfies our claim.
%\end{proof}

Note that the proof of theorem \ref{thm:CWPT-SW-ClosedC} requires that we provide the exact location of the infinite patch containing our infinite patch tiling is returned in addition to our tiling function. However, given we expected our weakly tiling prototile set $S$ to be non-total we knew how to `read' an $S$-tiling when given a known-infinite region $R$ which was tiled by $S$.

We now explore what happens if we change these requirements to take some prototile set $S$ that is total, and return an infinite region $R \subset \mathbb{Z}^2$ and a tiling function $f : R \rightarrow S $ - thereby reducing a total tiling to a weaker tiling of the plane.

\begin{definition}[$CIPT$]\index{$CIPT$}\label{def:CIPT}
Let $ChooseInfinitePatchTiling$, or $CIPT$ be defined similarly as before $$ CIPT : \subseteq \mathcal{P}(\mathbb{W}) \rightrightarrows \mathscr{T}_{\mathbb{W}} $$ with $CIPT$ taking a set of prototiles $S$ that gives total tilings of the plane, and returning the pair $(R,t)$, composed of an infinite connected region $R \subset \mathbb{Z}^2$ with a tiling function $t : R \rightarrow S$ such that we have an infinite $S$-tiling on $R$.
\end{definition}

We now have the machinery we need to state the following theorem:

\begin{theorem}[C. 2019]\label{thm:CC-WKLstarCIPT}
$$ C_{\omega^\omega} \leq_W C_{2^\omega} \star CIPT $$
\end{theorem}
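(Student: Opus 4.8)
The plan is to use the compositional product directly: by \ref{thm:supCompProd} and the definition of $\star$, to establish $C_{\omega^\omega} \leq_W C_{2^\omega} \star CIPT$ it suffices to exhibit three computable maps --- a preprocessing $H$ turning an ill-founded tree into a prototile set, a middle map $M$ turning the output of $CIPT$ into an instance of $C_{2^\omega}$, and a postprocessing $K$ turning a $C_{2^\omega}$-answer back into a path. The guiding intuition is that $CIPT$ will absorb the infinite branching of the Baire-space tree --- by committing, through the tiles it actually lays down, to particular children --- leaving only a geometric, bounded-branching search that $C_{2^\omega}$ (Weak K\"onig's Lemma) can finish.

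First I would fix the preprocessing $H$. Given an ill-founded $T \subseteq \omega^{<\omega}$ (after passing it through the filter of \ref{lemma-pproc}), I would build a prototile set $S_T$ by a layered variant of the construction in the proof of \ref{thm:TILE-ILL}: partition $\mathbb{Z}^2$ into concentric square rings, and let every tile on the ring at radius $m$ carry, as its label, a node $\sigma_m \in T$ with $|\sigma_m| = m$. The edge conditions are chosen so that (i) two adjacent cells on the same ring must carry the \emph{same} label, and (ii) a radial meet between ring $m$ and ring $m+1$ forces $\sigma_m$ to be a proper initial segment of $\sigma_{m+1}$ with $\sigma_{m+1} \in T$. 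Then a total planar tiling exists precisely when $T$ has an infinite path, so $S_T$ is a legitimate input to $CIPT$ for every $T$ in the domain of $C_{\omega^\omega}$, and this map is plainly computable in $T$.

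Next I would run $CIPT(S_T)$ to obtain a pair $(R,t)$ --- an infinite connected region $R \subseteq \mathbb{Z}^2$ together with a valid partial tiling $t$ on $R$ --- and define the middle map $M$. Viewing $R$ as a graph under cell-adjacency, every vertex has degree at most $4$, so the tree $T'$ of finite self-avoiding walks in $R$ starting from the first cell of $R$ is boundedly branching; since $R$ is infinite and connected, K\"onig's Lemma guarantees $T'$ is infinite, hence a nonempty compact instance for $C_{2^\omega}$ after the routine encoding of $T'$ into $2^{<\omega}$. Feeding $T'$ to $C_{2^\omega}$ returns an infinite ray $c_0, c_1, c_2, \ldots$ through $R$. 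For the postprocessing $K$, I would read the labels of $t$ along this ray: by conditions (i) and (ii), consecutive labels are either equal (tangential steps) or prefix-extensions (radial steps), so the labels form a chain increasing under the initial-segment order; since any infinite ray in $\mathbb{Z}^2$ is unbounded it meets rings of arbitrarily large radius, and the chain therefore has cofinal length, yielding a genuine infinite path $p \in [T]$ that $K$ outputs (filling in any missing initial segments by taking prefixes of the observed labels, which are forced to lie in $T$).

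The main obstacle I anticipate is exactly the justification of the middle step: one must show that whatever infinite connected patch $CIPT$ chooses to return --- which may be a non-extendable partial tiling and may, as a labelled object, encode an entire finitely-branching subtree rather than a single path --- nonetheless always admits an infinite ray along which the labels collapse to one coherent path, and that selecting such a ray is neither computable nor harder than $C_{2^\omega}$. The first half is secured by the consistency conditions (i)--(ii) baked into $S_T$; the second half is the heart of the matter and is where the bounded degree of the $\mathbb{Z}^2$ lattice is essential, since it is precisely what converts the otherwise infinitely-branching extraction into a Weak-K\"onig's-Lemma problem. The remaining verifications --- that $H$, $M$, and $K$ are computable, that $T'$ is a correct instance of closed choice on Cantor space, and that the three maps assemble into a realizer of the shape demanded by \ref{def:Weihrauch} and the definition of $\star$ --- are then routine.
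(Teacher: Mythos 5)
Your overall architecture coincides with the paper's: the same compositional-product decomposition in the same order, with $CIPT$ applied first to a computably constructed prototile set whose tilings encode paths of the input tree (the paper reuses the spokes construction from the proof of Theorem \ref{thm:SNT-WELL}, where you substitute a bespoke concentric-ring construction --- a legitimate variant, since either way the tile labels carry initial segments and total tilings exist exactly for ill-founded trees, which is all inputs in the domain of $C_{\omega^\omega}$), followed by $C_{2^\omega}$ applied to a bounded-branching tree of walks on the returned region $R$ (the paper uses a breadth-first spanning tree where you use the tree of self-avoiding walks; this difference is immaterial). Your framing of where the work lies --- that $R$ may encode a whole finitely-branching subtree of labels rather than one path, and that the degree-$4$ geometry of $\mathbb{Z}^2$ is what converts extraction into a Weak K\"onig's Lemma problem --- is exactly the paper's intuition.

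However, one step of your postprocessing $K$ fails as written. You claim that along the ray returned by $C_{2^\omega}$ ``consecutive labels are either equal (tangential steps) or prefix-extensions (radial steps), so the labels form a chain increasing under the initial-segment order.'' This is false: a self-avoiding ray may step radially \emph{inward}, in which case the label is properly \emph{restricted}, and the label sequence can oscillate between incomparable branches (go out along $\sigma$, retreat, go out along $\tau$ with a short common prefix). Nor can you repair this by pruning $T'$ to outward-monotone walks: if $R$ is a thin snake that meanders in and out while tending to infinity, the unique infinite ray in $R$ is forced to oscillate and no monotone infinite ray exists, so the pruned tree could be well-founded. The correct repair is a stabilization argument: the ring of radius $m$ contains only $8m$ cells, so a self-avoiding ray crosses it finitely often and, being unbounded, eventually remains permanently outside it; past that time every label has length greater than $m$, and since consecutive labels are always prefix-comparable, they all agree on their first $m+1$ coordinates. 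These stabilized prefixes are nested as $m$ increases, lie in $T$ (which is prefix-closed after Lemma \ref{lemma-pproc}), and are computable from the ray, yielding the desired $p \in [T]$. With this substitution your proof is complete and is essentially the paper's proof of Theorem \ref{thm:CC-WKLstarCIPT}; indeed your ring construction makes the coherence issue more visible than the paper's own extraction step, which asserts rather than argues the analogous point.
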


Here, $C_{2^\omega}$ denotes the Closed Choice principle on Cantor Space which is equivalent to Weak K\"onig's Lemma (WKL) which we defined in section \ref{sec:KLemmaWKL}. As such, we can pass $C_{2^\omega}$ a finitely branching infinite tree, and it will return a path through it. Our use of this in the compositional product is due to the fact that we cannot always guarantee in a weak tiling of the plane that we can easily find our infinite path in a computable way given an input prototile set that is total.

\begin{proof}
We require two principles $f$ and $g$ such that $f \leq_W C_{2^\omega}$ and $g \leq_W CIPT$ and aligned in such a way that $f \circ g \geq_W C_{\omega^\omega}$. Let our $g$ and $f$ be defined as follows:
\begin{itemize}
\item $g$ will be the principle of taking some tree $\mathcal{T} \subseteq \omega^{< \omega}$ and returning some pair $(R,t)$ with $R \subset \mathbb{Z}^2$ an infinite connected region, tiled by $t : R \rightarrow S$, and $S$ is a prototile set with total tilings of the plane for any ill-founded tree $\mathcal{T}$.
\item $f$ will be the principle that will take a pair $(R,t)$ as above, and return an infinite sequence of tiles through the infinite connected region $R$ based on the tiling $t$.
\end{itemize}
For realizers $G \vdash g$ we will make use of the construction from the proof of theorem \ref{thm:SNT-WELL}, and in the final part of the proof, we will decode an infinite path through $\mathcal{T}$ from an infinite sequence of tiles from this construction.

Our proof will come in three main parts:
\begin{enumerate}
\item We first require computable $H,K$ such that $G = K(\langle id, TH \rangle)$ is a realizer for $g$, given $T \vdash CIPT$.
\item Next, we require computable $I,J$ such that $F = J(\langle id, WI \rangle)$ is a realizer for $f$, given $W \vdash C_{2^\omega}$
\item Finally we then require computable $X,Y$ such that $C = Y(\langle id, AX \rangle)$ as a realizer for $C_{\omega^\omega}$ given $A \vdash f \circ g$.
\end{enumerate}
With $H,K,J,I,X,Y,A : \subseteq \omega^\omega \rightarrow \omega^\omega$.

By $A = FG$ from the above, we will aim to arrive at the final form $$ C = Y(\langle id, FGX \rangle) $$ is a realizer for $C_{\omega^\omega}$. We will later prove in theorem \ref{thm:DPW-CC} that $CIPT \leq_{sW} C_{\omega^\omega}$, hence we only focus on this particular direction for our theorem.

Here follows the general plan for our proof. Recall that a realizer $C$ for $C_{\omega^\omega}$ takes some Baire space tree $\mathcal{T} \subseteq \omega^{< \omega}$ and returns a path through it. Thus, we need to align our realizers such that we take this tree $\mathcal{T}$, construct some prototile library $S$ that gives total planar tilings, and then show that if we restrict our planar $S$-tilings to some infinite region $R \subset \mathbb{Z}^2$, we can still recover an infinite path through our original $\mathcal{T}$ by means of $C_{2^\omega}$, which we recall is Weak K\"onig's Lemma. We do this last step by finding some infinite sequence of tiles through $R$, and set up the construction of our $S$ such that we can recover the path through $\mathcal{T}$ by means of a path through the spanning tree of $R$.

Intuitively we want to show that even if we remove much of the structural information of a total tiling of the plane but retain some infinite part, we can still find some reduction for $C_{\omega^\omega}$ by utilising a weaker closed choice principle to `fix' the damage we did to our original tiling.

\begin{proof}[Proof of (1)] - Let $g$ be the principle that takes some tree $\mathcal{T} \subseteq \omega^{< \omega}$ as input, and returns $(R,t)$, composed of an infinite connected region $R \subset \mathbb{Z}^2$, and a tiling function $t : R \rightarrow S$, where $S$ is the prototile set that has a total planar tiling for some path $p \in [\mathcal{T}]$ given $\mathcal{T}$ is ill-founded.

Let our $H$ be the computable function that takes as input some tree in Baire space, $\mathcal{T} \subseteq \omega^{<\omega}$ and produces a prototile set $S$ given by the construction we used in the proof of theorem \ref{thm:SNT-WELL}. The resulting prototile set gives a set with a total tiling for a path in $\mathcal{T}$, given that $\mathcal{T}$ is ill-founded.

$H$ passes this prototile set $S$ to our realizer $T \vdash CIPT$ which returns our $(R,t)$ as desired for our output. As such, our computable $K$ does nothing to this, and we have that $g \leq_W CIPT$.
\end{proof}

\begin{proof}[Proof of (2)] - For this, we want $f$ to be the principle of taking some pair $(R,t)$, comprised as above of a tiling for an infinite connected region $R \subset \mathbb{Z}^2$ given by a $t : R \rightarrow S$, and we wish to return some infinite sequence of tiles through this infinite connected tiling over $R$.

To obtain our reduction, we let our computable $I$ be the function that takes some tiling on an infinite region $R$ and computably constructs a spanning tree in the graph theoretic sense by starting at some point closest to $(0,0)$ and enumerating each tile based on the von Neumann neighbourhood of the edge meets for each successive tile that has not already been enumerated. This algorithm is generally a breadth-first search along the following lines:
\begin{enumerate}
\item Choose some tile in the $S$-tiling of $R$, and set the root node of $\mathcal{T}_R$ as the empty string $\lambda$.
\item Enumerate the tiles to the upper, lower, right, and left sides if they are available as successors in the resulting tree and have not yet been enumerated into the tree.
\item Group each of the successors by whether they are upper/lower or left/right in order to obtain binary branching.
\end{enumerate}
By the end of this process we have some $\mathcal{T}_R$, a finitely branching tree, which is bounded given the finite bound on the neighbourhood around each tile. We can pass $\mathcal{T}_R$ to a realizer $W \vdash C_{2^\omega}$. This will take our bounded branching tree and give us some infinite path through it. 

With this returned, we pass this to a computable function $J$ which takes the path returned by $W$ and decodes the infinite sequence of tiles through the tiled region $R$ that $W$ has found. $J$ can computably recover this by the fact that we can program it to decode each 4-tuple as a Wang tile in our tiling, and so obtain the full infinite sequence of tiles. $J$ finally outputs this infinite sequence of tiles through $R$.
\end{proof}

Now that we have our two subordinate principles defined and shown to be Weihrauch reducible to our desired components in our compositional product, we can now complete the proof by showing how these two principles work to give our desired reduction of $C_{\omega^\omega} \leq_W C_{2^\omega} \star CIPT$.

\begin{proof}[Proof of (3)] - The final stage of this proof will show that using a realizer $A \vdash f \circ g$ will be such that $C = Y(\langle id, AX \rangle)$ is a realizer for $C_{\omega^\omega}$ for computable $X,Y : \omega^\omega \rightarrow \omega^\omega$.

Our input adaption $X$ is a `do nothing' function, passing the input tree $\mathcal{T} \subseteq \omega^{< \omega}$ to a realizer for $G$.

$G$ returns a tiling $t : R \rightarrow S$ for an infinite region $R \subset \mathbb{Z}^2$ that contains some infinite path through $\mathcal{T}$ by the process described above. However, we cannot predict enough about the structure of the tiling of $R$, and so pass this to our realizer $F \vdash f$ that can take such a tiling on an infinite region $R$ and locate an infinite sequence of tiles through this region. Given our construction of $F$ we know that we will always locate the path by means of the bounded branching on the spanning tree across $R$ which is procured by means of a breadth-first search, and given $R$ is infinite we will get our tile sequence accordingly in this composition.

Our output adaption $Y$ works as follows: Recall that the prototile set $S$ generated inside $G$, taken from the proof of theorem \ref{thm:SNT-WELL}, has some coding of an initial segment $\sigma$ of the path $p$ we desire in every prototile, thus we can take the infinite sequence of tiles given by the realizer $F$ and computably decode each initial segment of $p$ in turn.

Our tile sequence may begin on any tile, but this will give us some initial segment $\sigma \prec p$, and although its immediate neighbours may not give us additional bits, it is certain that some tile at some point will give us some additional bit $i \in \omega$ that such that $\sigma^\frown i \prec p$. This is guaranteed by the fact that the construction of our prototile set $S$ has longer initial segments of $p$ found in any direction you care to look, so as long as $R$ is infinite and we find some infinite path through it, we will certainly recover an infinite path in $p \in [\mathcal{T}]$ by means of this process.

As such, the composition of $f \circ g$ and the corresponding composition of the various realizers and input and output adaption functions $X,Y$, we can conclude that that input is a tree $\mathcal{T}$ in Baire space, and the output is a path through this tree $\mathcal{T}$, which is precisely the function of $C_{\omega^\omega}$, completing our reduction.
\end{proof}

Given this, we have our final result by the combination of $f \circ g$ as the compositional product equivalent to $C_{\omega^\omega}$ giving our desired result $$ C_{\omega^\omega} \leq_W C_{2^\omega} \star CIPT $$
\end{proof}

It should be noted that the \textbf{AIT} and \textbf{PIT} constructions given by definition \ref{def:AITPIT} could not be utilised in this proof, at least not without significant rework. The most immediate construction was that given for the proof of \ref{thm:SNT-WELL}.

\section{General Weihrauch Reducibility for Wang Domino Problems}

Let the following definition for the general ``Domino Problem for Wang Tiles'' principle be given as follows.

\begin{definition}\label{def:DPW}\emi{Domino Problem for Wang Tiles Principle}
Let $\mathbb{W}$ denote the set of all possible Wang prototiles, and $\mathscr{T}_\mathbb{W}$ be the set of all possible tilings given by all possible Wang prototiles. Let the general principle of ``Domino Problems for Wang Prototile Sets'', $DPW$, be given by $$ DPW : \subseteq \mathcal{P}(\mathbb{W}) \rightrightarrows \mathscr{T}_\mathbb{W}, S \mapsto \mathcal{T}_S $$ where $S \subset \mathbb{W}$ and $\mathcal{T}_S$ is the class of all $S$-tilings. 

Our input for $DPW$ is some prototile set $S \subset ( \mathbb{W} \cup \{ * \} )$, and our output is a planar tiling, given as a tiling function $f : \mathbb{Z}^2 \rightarrow (S \cup \{ * \} )$ that meets our edge requirements and has some infinite connected patch.
\end{definition}

Note that we intend $DPW$ to be the universal multivalued function from any set of prototiles $S$ to any possible $S$-tiling that has an infinite connected region. Our aim is to show that any additional requirements on Wang tilings are essentially captured by the closed choice principle on Baire space.

Given this is the general principle that governs any domino problem for sets of Wang prototiles, the following reducibility will apply to any given Domino Problem as a general case of sections of the proofs in this chapter.

\begin{theorem}[C. 2019]\label{thm:DPW-CC}
$$ DPW \leq_{sW} C_{\omega^\omega} $$
\end{theorem}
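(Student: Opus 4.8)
The goal is to show $DPW \leq_{sW} C_{\omega^\omega}$, where $DPW$ is the most general tiling principle: it takes an arbitrary prototile set $S \subseteq (\mathbb{W} \cup \{*\})$ and returns a tiling function $f : \mathbb{Z}^2 \rightarrow (S \cup \{*\})$ respecting the edge conditions and containing some infinite connected patch. Since this is strong Weihrauch reducibility, the plan is to exhibit computable $I, J : \subseteq \omega^\omega \rightarrow \omega^\omega$ such that for every realizer $C \vdash C_{\omega^\omega}$, the composite $F = J(CI)$ is a realizer for $DPW$. This mirrors exactly the easy direction already carried out in the proofs of \ref{thm:CT-SW-ClosedC} and \ref{thm:CWPT-SW-ClosedC}, so the work is to package those ideas at the correct level of generality.

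First I would construct the input adaptor $I$. Given (a code for) a prototile set $S$, the function $I$ builds a tree $\mathcal{T}_S \subseteq \omega^{<\omega}$ whose infinite paths correspond precisely to the admissible $DPW$-tilings. The natural route, reusing the machinery from Wang's Extension Theorem \ref{thm:WangExtension}, is to enumerate the cells of $\mathbb{Z}^2$ in a spiral out from $(0,0)$ and let level $k$ of the tree record a choice of element from $S \cup \{*\}$ for the $k$-th cell, with a node admitted exactly when all edge-meet criteria among already-placed tiles are satisfied (the wild card $*$ trivially satisfies every edge condition). One subtlety relative to the total case: because $DPW$ only demands an infinite \emph{connected} patch rather than a total tiling, I must ensure the tree also encodes the connectedness-and-infinitude requirement. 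I would fold this into the $\Sigma^1_1$ normal-form argument used in \ref{thm:CWPT-SW-ClosedC}: the statement ``$f$ respects the edge rules and has an infinite connected patch'' is $\Sigma^1_1$, so by the Skolem/Herbrand normalisation (citing \cite{sacks_2017}) it rewrites as $\exists f\,\exists P\,\exists X\,(\psi(f,P,X))$ with $\psi \in \Pi^0_1$, yielding a computable tree in $\omega^{<\omega}$ whose paths carry exactly the data of a valid weak tiling together with its infinite patch $P$. This $\mathcal{T}_S$ is the closed subset of Baire space fed to the realizer $C$.

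Then $C(\mathcal{T}_S)$ returns some infinite path, and the output adaptor $J$ decodes it. Since $I$ encoded, along each path, the successive cell assignments (and the witnessing patch $P$ via the Skolem data), $J$ computably reads off each initial segment of the path and reconstructs the tiling function $f : \mathbb{Z}^2 \rightarrow (S \cup \{*\})$ together with the connected infinite region. Crucially $J$ needs no knowledge of the original input to $I$, which is what makes the reduction \emph{strong}: the decoding is a fixed computable inverse of the encoding scheme, uniform in $S$. Thus $F = J(CI)$ realizes $DPW$.

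The main obstacle I anticipate is the connectedness requirement in the definition of $DPW$. In the total case (\ref{thm:CT-SW-ClosedC}) every path automatically gives a total, hence connected, tiling, and in \ref{thm:CWPT-SW-ClosedC} the infinite patch is returned as part of the output by the realizer itself; here I must arrange the tree $\mathcal{T}_S$ so that \emph{every} infinite branch witnesses an infinite connected patch, not merely an infinite collection of tiles that might fragment into bounded pieces. The $\Sigma^1_1$-normalisation handles this in principle by quantifying over the patch $P$ and its connectivity, but the delicate point is keeping the resulting tree genuinely computable and ensuring the Skolem witnesses for connectivity are decodable by $J$ without appeal to the input. I would address this by having $I$ build the spanning/connectivity structure of $P$ directly into the path data (as in the breadth-first spanning-tree idea of \ref{thm:CC-WKLstarCIPT}), so that $J$ recovers both the tiling and an explicit certificate that $P$ is connected and infinite. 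Everything else is a routine adaptation of the already-established reductions.
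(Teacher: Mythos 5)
Your proposal is correct and takes essentially the same route as the paper: the paper likewise normalises the weak tiling condition (valid prototile set plus the existence of an infinite connected patch, wild card included) into a $\Pi^0_1$-definable tree in $\omega^{<\omega}$ via the Skolem normal form from \cite{sacks_2017} and Lemma \ref{lemma:trees-rec-rels}, feeds that tree to $C_{\omega^\omega}$, and reads the returned path off as a tiling. If anything, your explicit construction of the adaptors $I,J$ and your handling of the connectedness witness in the path data are more detailed than the paper's own sketch, which leaves the decoding and uniformity implicit.
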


This would appear to be intuitively true, given that our method for capturing all possible tilings of any Wang prototile set on a tiling tree, that this tree is always constructable. 

\begin{proof}
We first note that every tiling problem is generally of the following form
\begin{align}\label{eq:DPW-Pi12}
	\forall X \, \exists Y \, (\varphi(X) \rightarrow \psi(X,Y)) 
\end{align}
where $$X \subset ( \mathbb{W} \cup \{ * \} ) $$ is a set of Wang prototiles that also permits the wild card $*$, which we used in the proof of theorem \ref{thm:CWPT-SW-ClosedC}, and $Y$ is a set that encodes a tiling of the $\mathbb{Z}^2$ lattice, and $\varphi$ and $\psi$ are arithmetical functions such that:
\begin{itemize}
\item $\varphi(X)$ holds if $X$ is a valid set of Wang prototiles.
\item $\phi(X,Y)$ holds if $Y$ is a valid $X$-tiling.
\end{itemize}

By this formulation, we see that the formula \ref{eq:DPW-Pi12} is in $\Pi^1_2$, and we can thus obtain the following normal form for this (see \cite[p.6]{sacks_2017} for how this is done) given by: $$ \forall X \, \exists Y \, \theta(X,Y) $$ where $\theta \in \Pi^0_1$, $X$ is our prototile set as before, and $Y$ captures all Skolem functions that give our $X$-tilings.

By Lemma \ref{lemma:trees-rec-rels}, it follows that any domino problem can be defined in this way, and is thereby representable by a path $p = [T]$ for some $\Pi^0_1$ tree $T \subset \omega^{< \omega}$. 

Given $C_{\omega^{\omega}}$, by definition, takes a tree that is a subset of Baire space and returns a path, our Weihrauch reduction follows.
\end{proof}

Intuitively, this theorem shows that our tiling trees that we have made use of are always of the correct kind for $C_{\omega^\omega}$ to process and return a path that encodes a planar tiling.

\subsection{Further Weak Tiling Problems}\label{sec:FurtherWeakTilingProblems}

There are other weak tiling problems we can consider, although they are currently just outside the scope of this thesis. Take the following example, $WeakInfinitePatchTiling$:

\begin{definition}\label{def:WIPT}[$WIPT$]
Let $WeakInfinitePatchTilings$, shortened to $WIPT$, be defined similarly as before $$ WIPT : \subseteq \mathcal{P}(\mathbb{W}) \rightrightarrows \mathscr{T}_{\mathbb{W}} $$ with $WIPT$ taking a set of prototiles $S$, and returning a tiling function $f : \mathbb{Z}^2 \rightarrow S \cup \{ * \}$ that we know has an infinite patch, but not where that patch is.
\end{definition}

Because of the lack of any knowledge of the resultant tiling, we do not have enough structure to gain enough information in order to extract an infinite tree without a lot of help. An initial estimate is that we would need the following in order to have a Weihrauch reducibility: $$C_{\omega^\omega} \leq_W C_{2^\omega} \star C_\omega \star WIPT$$ where $C_{\omega^\omega}$ and $C_{2^\omega}$ are closed choice on Baire space and Cantor space respectively, as before, and $C_\omega$ is the principle that takes some function $f: \omega \rightarrow \omega$ with $ran(f) \neq \omega$ as input, and outputs some $n \notin ran(f)$.

\chapter{Small ECA Tilings}
\label{chap6}
% next resets the equation numbers to start at 1 at the start of the chapter
\setcounter{equation}{0}
\renewcommand{\theequation}{\thechapter.\arabic{equation}}

%------------------------------------------------------------------------------

\epigraph{In mathematics you don’t understand things. You just get used to them.}{\textit{John von Neumann (attrib.)}}

This chapter presents a small tiling that encodes any Elementary Cellular Automaton in 15 prototiles. We also present some results about this class of automata that show that these prototile sets have interesting properties, specifically that they can be chaotic or Turing complete.

\section{Elementary Cellular Automata}

In this section, we will give formal definitions for Elementary Cellular Automata (ECAs) in preparation for coding them into small tiling sets. Our motivation for this originally was work that was ultimately carried out to its full completion in \cite{Rao2015} - aiming to find small, aperiodic tiling sets by means of coding small chaotic Elementary Cellular Automata into prototile sets. 

However, as we shall show in theorem \ref{thm:ECA-15-Hex}, we found a different way of encoding 3-ary functions as dynamical systems into prototile sets that represent their behaviour in the plane. We maintain the usual structure from previous work on coding Turing Machines into the plane - the 1-dimensional state is given left to right, with subsequent iterations going vertically.

We first give the background theory on ECAs, as well as a basic primer on the relevant pieces of chaos theory, and then proceed to detail results from Cook and Cattaneo \etal about Turing completeness and chaos in ECAs, respectively. Finally we give our representations of any ECA in prototile sets of only 15 tiles using our new construction, replete with diagrams and relevant corollaries. 

\subsection{Elementary Cellular Automata}

We will define a cellular automaton, and elementary cellular automaton (ECA) as per \cite{ECAMathworld}. They have appeared in a considerable amount of research, in areas as varied as computer science, symbolic dynamics, and as we shall see, chaos theory.

\begin{definition}
A \emi{cellular automaton} is pair $( X, R )$ where $X$ is a grid of some specific boundary topology\footnote{These grids can have joined boundaries, fixed boundaries, be bi-infinite, \etc \etc.} and $R$ is the `rule' that is applied successively to the grid. Each row is coloured based on the state of the colours on the previous row.
\end{definition}

We will specifically look at the subclass of cellular automata known as Elementary Cellular Automata, or ECAs. These were first introduced and studied by Wolfram in \cite{NKS}. 

\begin{definition}
\sloppy An \emi{elementary cellular automaton}, or ECA, is a cellular automaton $( X, R_n )$ where the rules in $R$ are derived from the binary representation of $n$. An ECA's rules for a cell at position $i$ on row $j$, written $c_{i,j}$, is determined by the triple $(c_{i-1,j-1} , c_{i,j-1} , c_{i+1,j-1})$. Thus, our rule set $R_n$ is given by a function $r_n : \{ 0,1 \}^3 \rightarrow \{ 0,1 \}$. 
\end{definition}

To acquire our rules for $R_n$ we first take the binary representation of $n$, and then send each of our 8 possible inputs sequentially to each bit of the binary representation of $n$, starting with the least significant bit.

To illustrate how this works, take $R_{30}$. We start with the 8-bit binary representation of 30, 00011110, and then map the inputs to $r_{30}$ as per table \ref{tbl:rule30}.

\begin{center}
\begin{table}[t]
\begin{tabular}{ c | c | c | c | c | c | c | c }
  111 & 110 & 101 & 100 & 011 & 010 & 001 & 000 \\
\hline
  0 & 0 & 0 & 1 & 1 & 1 & 1 & 0 \\  
\end{tabular}
\caption[Rule 30 Automaton Rules]{Rule 30 Automaton Rules}\label{tbl:rule30}
\end{table}
\end{center}

If we let our grid be the full $\mathbb{Z}^2$ lattice, then for each row $x \in \mathbb{Z}^2$, we can define $R_n : \mathbb{Z}^2 \rightarrow \mathbb{Z}^2$ as the successive application of $r_n$ to every triple $(x(i-1), x(i), x(i+1))$, for each cell $x(i) \in x$. 

\section{Some Results about ECAs}

We will define and discuss some background results for our work on ECAs and tilings. With ECA's already being an interesting and fertile area of study, we will give some background theory to the results, and then demonstrate that these results can also be realized as tiling problems by means of coding ECA's into prototile sets.

\subsection{ECAs and Chaos}

We will view ECAs as Discrete Time Dynamical Systems (DTDS) - that is, an iterated system that has discrete time steps. We write these as above, $(X,F)$, where $X$ is the \emi{phase space}, which is equipped with a distance function $d$, and a \emi{next state map} $F:X \mapsto X$, continuous on $X$ according to the topology on $X$ induced by $d$. We also assume that such a metric space $(X,d)$ is perfect - i.e. has no isolated points.

\begin{definition}[Sensitivity]\label{def:sensitivity}\index{topological sensitivity}
A DTDS $(X,F)$ is \emph{sensitive to initial conditions} if and only if there exists $\delta > 0$ such that $$ (\forall x \in X) \, (\forall \epsilon > 0) \, (\exists y \in X) \, (\exists n \in \mathbb{N}) [d(x,y) < \epsilon \wedge d(F^n(x),F^n(y)) \geq \delta] $$
\end{definition}

More intuitively, this definition states that the iterated map has the property that there exist points arbitrarily close to some point $x \in X$ that eventually separate away from $x$ by at least $\delta$. 

We will need, for our definitions of chaos, definitions of the following terms:

\begin{definition}
A dynamical system $(X,F)$ has a \emi{dense orbit} if and only if $$ (\exists x \in X) \, (\forall y \in X) \, (\forall \epsilon > 0) \, (\exists n \in \mathbb{N}) \, [d(F^n(x),y) < \epsilon] $$
\end{definition}

\begin{definition}
A dynamical system $(X,F)$ is \emi{topologically transitive} if and only if for all non-empty open subsets $U,V$ of $X$, $$ (\exists n \in \mathbb{N}) \, [F^n(U) \cap V \neq \emptyset]$$
\end{definition}

For a perfect DTDS $(X,F)$, the existence of a dense orbit necessarily implies topological transitivity. This is an important result in reference to the 1-dimensional dynamical systems we wish to represent in tilings later on - it shows us that the barrier to achieving chaotic behaviour is reassuringly low, which somewhat naturalises our results.

\begin{definition}
A dynamical system $(X,F)$ has \emi{dense periodic points} if and only if the set of all the periodic points given by $$ Per(F) = \{ x \in X : (\exists k \in \omega ) \, F^k(x) = x \} $$ is a dense subset of $X$. Specifically, $$ (\forall x \in X) \, (\forall \epsilon > 0) \, (\exists p \in Per(F)) \, [ d(x,p) < \epsilon ] $$
\end{definition}

Following on from these definitions, Devaney in \cite{Devaney1989} formulated the most well-known definition of chaos as follows:

\begin{definition}[Devaney Chaos]\index{Devaney chaos}
The dynamical system $(X,F)$ is \emph{chaotic} if
\begin{enumerate}
\item $F$ is topologically transitive,
\item $F$ has dense periodic points,
\item $F$ is sensitive to initial conditions.
\end{enumerate}
\end{definition}

Meanwhile, other formulations of chaos came about - the most notable for this work is due to Knudson \cite{Knudson1994}, which is nonperiodicity-free: 

\begin{definition}[Knudson Chaos]\index{Knudson chaos}
The dynamical system $(X,F)$ is chaotic if
\begin{enumerate}
\item $F$ has a dense orbit,
\item $F$ is sensitive to initial conditions.
\end{enumerate}
\end{definition}

This formulation that came about when Knudson proved there existed a dynamical system which is chaotic according to Devaney's definition, but which the restriction of the set to its periodic points was also Devaney Chaotic.

It will be useful later to consider similar restrictions, such as \cite{VellekoopBerglund} that demonstrates the following proposition:

\begin{proposition}[\protect{\cite[Prop. 1, p.353]{VellekoopBerglund}}]
Let $I$ be a (potentially infinite) interval - a 1-dimensional space - and $F: I \mapsto I$ be a continuous, topologically transitive map. Then
\begin{enumerate}
\item The periodic points of $F$ are dense in $I$,
\item $F$ has sensitivity to initial conditions.
\end{enumerate}
\end{proposition}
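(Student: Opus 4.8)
The plan is to follow the elementary route of Vellekoop and Berglund, exploiting the special feature of one-dimensional domains that a continuous map sends subintervals to subintervals (by the intermediate value theorem). The whole proposition reduces to a single hard ingredient---density of periodic points---after which sensitivity is comparatively routine. First I would record the covering consequence of transitivity: since $F$ is continuous on an interval, each forward image $F^n(J)$ of a nondegenerate subinterval $J$ is again an interval, and transitivity forces $\bigcup_{n} F^n(J)$ to be dense in $I$. From this I would extract the key lemma: for every nondegenerate closed subinterval $J \subseteq I$ there are an iterate $n$ and a closed subinterval $K \subseteq J$ with $K \subseteq F^n(K)$. The idea is that the spreading images $F^n(J)$ must eventually grow so as to overlap $J$ on both sides of some interior point, producing a subinterval that $F^n$ stretches across itself.

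\textbf{Density of periodic points.} Granting the lemma, the intermediate value theorem settles part (1): a continuous self-map of a closed interval whose image contains that interval has a fixed point, so $F^n$ fixes some $p \in K \subseteq J$, that is, $p$ is a periodic point lying in $J$. Since $J$ was an arbitrary nondegenerate subinterval, $Per(F)$ is dense in $I$.

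\textbf{Sensitivity.} For part (2) I would fix two disjoint periodic orbits---two distinct ones exist because periodic points are now dense and the space is perfect (no isolated points)---and let $\delta$ be a fixed fraction of the distance between them, chosen so that every point of $I$ is at distance at least $\delta$ from at least one of the two orbits. Given any $x \in I$ and any $\epsilon > 0$, transitivity lets me steer a point $y$ within $\epsilon$ of $x$ so that some iterate $F^m(y)$ lands near whichever of the two orbits is far from $F^m(x)$, which forces $d(F^m(x), F^m(y)) \ge \delta$. This realises sensitivity with the uniform constant $\delta$ of Definition \ref{def:sensitivity}; alternatively one may simply invoke the general fact that transitivity together with dense periodic points yields sensitivity on any infinite metric space.

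The hard part will be the covering lemma $K \subseteq F^n(K)$: this is where the genuine one-dimensional structure enters, and it needs careful bookkeeping of how the interval images $F^n(J)$ expand, together with attention to the endpoints of $I$ and to the case where $I$ is unbounded (the ``potentially infinite interval'' hypothesis). Everything downstream---the fixed-point extraction and the two-orbit separation argument for sensitivity---is then a routine packaging of the intermediate value theorem and of the standard transitivity-plus-dense-periodic-points reasoning.
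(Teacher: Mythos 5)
Note first that the thesis itself contains no proof of this proposition: it is stated as imported background, cited verbatim from \cite{VellekoopBerglund}, so the only thing to compare your plan against is the cited source. Measured against that, your decomposition is genuinely different. Vellekoop and Berglund prove density of periodic points by contradiction rather than by a covering lemma: if a nondegenerate subinterval $J$ contained no periodic point, then for every $n$ the continuous function $F^n(x)-x$ would be nonvanishing, hence of constant sign, on $J$; returns of an orbit to $J$ would therefore drift strictly monotonically, and transitivity, which supplies orbits re-entering $J$ arbitrarily close to an earlier visit on the wrong side, gives the contradiction. For sensitivity they do exactly what your fallback sentence does: they invoke the general theorem of Banks, Brooks, Cairns, Davis, and Stacey that transitivity plus dense periodic points implies sensitivity on any infinite metric space. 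Your route through the lemma $K \subseteq F^n(K)$ plus the intermediate-value fixed-point extraction is also a standard and correct path, and the extraction step is exactly right (if $F^n(K)\supseteq K$ for a compact interval $K$, there are $u,v\in K$ with $F^n(u)\le u$ and $F^n(v)\ge v$, so $F^n(x)-x$ vanishes in $K$). But you have correctly identified where the weight sits, and your heuristic for the covering lemma has a concrete gap as sketched: transitivity gives you times $n_1$ and $n_2$ at which $F^{n_1}(J)$ overlaps $J$ from one side and $F^{n_2}(J)$ from the other, and these need not coincide, so manufacturing a \emph{single} iterate that stretches a subinterval of $J$ across itself requires the expanding-image machinery for transitive interval maps (or the dichotomy between mixing maps and maps exchanging two half-intervals, applying the argument to $F^2$ in the latter case). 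That is strictly more work than the constant-sign contradiction it replaces, which is presumably why Vellekoop and Berglund's one-page argument avoids it. One further small slip: in your two-orbit sensitivity sketch you compare $F^m(y)$ against $F^m(x)$, whose location you do not control; the standard argument instead anchors on a periodic point $p$ within $\epsilon$ of $x$ and concludes by the triangle inequality that at least one of $p$ or the steered point $y$ separates from $x$ by $\delta$. Since you also invoke the Banks-et-al.\ theorem wholesale, your part (2) stands regardless.
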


Thus, for 1-dimensional systems, topological transitivity is `enough' for a dynamical system to be chaotic. Given our ECAs are being considered as essentially 1-dimensional DTDS it becomes clear that our requirements for such a system to be chaotic are quite surprisingly minimal. 

In order to fully describe this, we need notions of `permutivity' for an ECA, which we get from \cite{CattaneoFM00}:

\begin{definition}[Permutivity]\index{permutivity}
A cellular automaton local rule $f$ is \emi{permutive} in $x_i$, for $-k \leq i \leq k$, if and only if for any given sequence $x_{-k}, \ldots , x_{i-1},x_{i+1},\ldots,x_k \in X$ we have $$ \{ f(x_{-k},\ldots,x_{i-1},x_i,x_{i+1},\ldots,x_k) : x_i \in X \} = X $$
\end{definition}

We can refine this idea to leftmost (rightmost) as follows:

\begin{definition}[Leftmost (Rightmost) Permutive]\index{left/rightmost permutive}
A local CA rule $f$ is said to be \emph{leftmost} (\emph{rightmost}) permutive if and only if there is an integer $i$, $-k \leq i \leq 0$ ($0 \leq i \leq k$) such that:
\begin{enumerate}
\item $i \neq 0$,
\item $f$ is permutive in the $i^{th}$ variable, 
\item $f$ does not depend on $x_j$ for $j < i$ ($j > i$).
\end{enumerate}
\end{definition}

As pointed out in \cite{CattaneoFM00}, for ECAs this means that when an ECA is leftmost-permutive, it follows that $$(\forall x_i, x_{i+1}) \, [f(0,x_i, x_{i+1}) \neq f(1, x_i, x_{i+1})] $$ namely, if two strings differ in the $x_{i-1}^{th}$ position, they differ in the $x_i^{th}$ position under $f$. Likewise, when an ECA is rightmost-permutive, the mirror argument follows, specifically $$(\forall x_{i-1}, x_{i}) \, [f(x_{i-1},x_i, 0) \neq f(x_{i-1}, x_i, 1)] $$

We can now use the following result from Cattaneo \etal (Cor. 3.3 in \cite{CattaneoFM00}):

\begin{corollary}[\protect{\cite[Cor. 3.2]{CattaneoFM00}}]
Let $( \mathbb{Z}^2, R_n ) $ be an ECA based on the local rule $r_n$. Then the following are equivalent:
\begin{enumerate}
\item $r_n$ is leftmost or rightmost permutive, or both.
\item $r_n$ is Devaney Chaotic.
\item $r_n$ is Knudson Chaotic
\item $r_n$ is surjective and non-trivial.
\end{enumerate}
\end{corollary}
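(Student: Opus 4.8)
The plan is to prove that the four conditions are equivalent by establishing a single cycle of implications, $(1) \Rightarrow (2) \Rightarrow (3) \Rightarrow (4) \Rightarrow (1)$, working throughout in the phase space $\{0,1\}^{\mathbb{Z}}$ of bi-infinite configurations with the usual Cantor metric, which is compact, perfect, and carries a countable basis of clopen cylinder sets. The organising observation is the combinatorial reformulation of permutivity already recorded in the excerpt: for a leftmost permutive rule one has $f(0,x_i,x_{i+1}) \neq f(1,x_i,x_{i+1})$ for all $x_i,x_{i+1}$ (and the mirror statement for rightmost permutivity), so flipping the relevant end bit always flips the output. This is precisely the freedom needed to back-solve for preimages one cell at a time, and it is what powers every nontrivial arrow below.

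First I would prove $(1) \Rightarrow (2)$, which carries most of the dynamical content. The core lemma is that a leftmost- or rightmost-permutive rule induces a topologically transitive global map $R_n$: given two cylinder sets $U$ and $V$ determined by finite central windows, I would use the end-permutivity to construct, stage by stage, a configuration lying in $U$ whose $n$-th iterate realises the pattern defining $V$, for $n$ large enough that the information fronts do not interfere. Transitivity on this perfect, countable-basis space then yields a dense orbit by the Birkhoff transitivity argument. Dense periodic points I would obtain directly from permutivity, by periodising an approximating central window and using the end-permutivity to close the resulting finite pattern up into a spatially periodic configuration; sensitivity then follows from transitivity together with dense periodic points by the standard argument, which is the analogue for this Cantor phase space of the one-dimensional Proposition quoted above. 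Assembling these three clauses gives Devaney chaos.

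The implications $(2) \Rightarrow (3) \Rightarrow (4)$ are comparatively soft. For $(2) \Rightarrow (3)$, Devaney chaos already supplies sensitivity, while topological transitivity on a space with no isolated points and a countable basis is equivalent to the existence of a dense orbit, which is Knudson's remaining clause. For $(3) \Rightarrow (4)$, a dense orbit $\{R_n^k(x)\}$ has dense forward image, so $R_n(X)$ is a dense closed subset of the compact space $X$ and hence equal to $X$, giving surjectivity; non-triviality follows because a shift or a constant rule cannot be sensitive, so the sensitivity clause of Knudson chaos excludes exactly the trivial rules.

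The hard step, and the main obstacle, is $(4) \Rightarrow (1)$: that a surjective, non-trivial elementary rule must be leftmost or rightmost permutive. Unlike the other arrows this is genuinely special to the elementary (radius one, two state) setting, and I do not expect a slick abstract argument. My plan is to exploit the balance condition forced by surjectivity, namely that a surjective one-dimensional cellular automaton is balanced so that each output symbol has equally many length-three preimages, and then to run a finite case analysis over the admissible local rules, showing that simultaneous failure of leftmost and rightmost permutivity forces the rule to ignore one end variable in a manner that either destroys surjectivity or collapses the map to a shift-like or constant rule. This is the point at which I would most expect to lean on the explicit classification of surjective ECAs rather than on a purely structural argument.
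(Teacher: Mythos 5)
The thesis does not actually prove this corollary: it is imported verbatim from Cattaneo, Finelli and Margara \cite{CattaneoFM00}, and the text immediately defers to ``Table 1 and the analysis in Section 3.3'' of that paper, whose proof combines the general theorem that leftmost/rightmost permutive CAs are chaotic with a finite inspection of the thirty surjective elementary rules. So your overall architecture --- a cycle of implications with the dynamical content concentrated in $(1)\Rightarrow(2)$ and a balance-plus-case-analysis argument for $(4)\Rightarrow(1)$ --- has the right shape and matches the cited source. Your transitivity argument, back-solving one cell at a time with the permuted end variable once the dependence cones are disjoint, is the standard one and is sound, as are $(2)\Rightarrow(3)$ and the surjectivity half of $(3)\Rightarrow(4)$.

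Two steps, however, are genuinely broken. First, you repeatedly classify the shift as a trivial, non-sensitive rule: in $(3)\Rightarrow(4)$ you assert that ``a shift or a constant rule cannot be sensitive,'' and in $(4)\Rightarrow(1)$ you speak of collapsing to a ``shift-like'' rule. But the shift $f(x_{-1},x_0,x_{+1})=x_{+1}$ (rule 170) is rightmost permutive, so it satisfies condition (1), and the full two-sided shift is the canonical Devaney-chaotic system: sensitive, transitive, with dense periodic points. On your reading the cycle is internally inconsistent at rule 170, since your $(1)\Rightarrow(2)\Rightarrow(3)$ chain makes it Knudson chaotic while your $(3)\Rightarrow(4)$ argument would declare it trivial. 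In the intended statement ``non-trivial'' excludes only the centre-only rules, identity (204) and negation (51); these induce equicontinuous global maps, and it is exactly these that the sensitivity clause (or the failure of a dense orbit) rules out. Second, your density-of-periodic-points step conflates spatial with temporal periodicity: periodising a central window into a spatially periodic configuration only confines its orbit to a finite set, which yields \emph{eventual} temporal periodicity, not a periodic point through the prescribed word --- rule 90 already maps both period-one configurations to the all-zero one, so spatially periodic points need not lie on cycles. Obtaining genuinely temporally periodic points densely requires a closing-type argument (in the style of Boyle--Kitchens, or the explicit permutive construction in \cite{CattaneoFM00}) in which end-permutivity is used to solve for a configuration lying on a temporal cycle; this is essential, as density of periodic points for general surjective CAs is a well-known open problem, so nothing softer than permutivity can carry this step.
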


By Table 1 and the analysis in Section 3.3 in \cite{CattaneoFM00}, it becomes clear that there exist a set of rules that exhibit chaotic behaviour, the most well known of which is $R_{30}$, having been studied in some depth originally by Wolfram in \cite{NKS}.

\subsection{ECAs and Turing Universality}

We now wish to extend results from earlier in this thesis to very small dynamical systems, for which we will need the following definitions:

\begin{definition}
A \emi{cyclic tag system} is a computational system consisting of the following arrangement:
\begin{itemize}
\item A set $P \subset 2^{< \omega}$ of \emph{productions}.
\item A finite binary string $d = d_0, d_1, \ldots d_j$ called the \emph{data string}.
\item A transformation map $$ (i, d) \rightarrow (i+1 (\mathrm{mod}\ n), (d_1, d_2, \ldots, d_k)^\frown P^{d_0}_i) $$ where $i$ is a counter, $n = |P|$, and for all $i$:
\begin{align*}
P^0_i & = \emptyset \\
P^1_i & = P_i
\end{align*}
\end{itemize}
\end{definition}

Intuitively, a cyclic tag system operates as follows:
\begin{enumerate}
\item If $d_0 = 0$, then we delete $d_0$ and do nothing.
\item If $d_0 = 1$, then we delete $d_0$ and append the $i^{th}$ member of $P$, $P_i$.
\item If $d = \emptyset$ then we halt. 
\end{enumerate}

An example computation is as follows. Let $P = \{101, 110, 10\}$ and $d = 11$, our computation is as given in table 2.
\begin{table}[]\label{tbl:cyclicTagSystem}
\begin{tabular}{ll}
$P_i$  & $d$ \\
101  & 11 \\
110  & 1101 \\
11   & 101110 \\
101  & 0111011 \\
110  & 111011 \\
11   & 11011110 \\
101  & $\ldots$
\end{tabular}
\caption{This table shows the development of a cyclic tag system for initial $d$ of $11$ and $P_i$'s in sequence as given in the text. The development of the contents of $d$ is given at each line.}
\end{table}

It is proved in \cite{CookUniversality} that a cyclic tag system is Turing Universal - this was done by showing a Universal Turing Machine can be coded into a 2-tag system, and 2-tag systems can be coded into Cyclic tag systems. The proof is omitted here, but a clear proof can be found in \cite{Neary2006}. 

In 2004, Cook proved in \cite{CookUniversality} the following theorem:

\begin{theorem}[\protect{\cite[Sec 4]{CookUniversality}}]\label{thm:CookECA}
The ECA $R_{110}$ is Turing Universal.
\end{theorem}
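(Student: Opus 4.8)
The plan is to establish Turing universality of $R_{110}$ by reducing from cyclic tag systems, which we have already noted are Turing universal via the chain UTM $\to$ 2-tag system $\to$ cyclic tag system (see \cite{CookUniversality} and \cite{Neary2006}). It therefore suffices to exhibit, for an arbitrary cyclic tag system with production set $P$ and data string $d$, an initial configuration of the $\mathbb{Z}^2$ lattice whose evolution under $R_{110}$ faithfully simulates the transformation map of the system, halting precisely when $d = \emptyset$.

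First I would set up the \emph{ether}: the spatially periodic background pattern into which $R_{110}$ settles, against which localized persistent structures --- the \emph{gliders}, or particles --- propagate at fixed velocities. I would catalogue the finitely many glider types required (the standard families together with the glider-gun structures), recording for each its velocity and period, and then tabulate the outcomes of the relevant pairwise collisions. These collision data are the primitive operations out of which the whole simulation is assembled.

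Next I would encode the cyclic tag system into three interacting streams. The current data string $d$ is carried by a central column of gliders; the productions $P_i$ enter as periodic glider packets from the left acting as \emph{ossifiers} that can solidify appended data; and from the right come periodic structures carrying the \emph{table} information that interacts with the leading symbol $d_0$. The heart of the argument is to verify that the collision between the rightward table structure and the leading data glider realises exactly the two cases of the transformation map: when $d_0 = 1$ the appropriate production $P_i$ is appended (ossified into new central data), and when $d_0 = 0$ the incoming production packet is annihilated so that nothing is appended, with the counter advancing modulo $n = |P|$ in both cases.

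The hard part will be the collision verification: one must check that every glider interaction arising in the simulation produces precisely the required output gliders with the correct phases and offsets, so that the streams remain synchronised over unboundedly many simulation steps and no spurious structures accumulate. This is a large but finite case analysis --- essentially the content of Cook's original construction in \cite{CookUniversality} --- and it additionally requires showing that the entire apparatus can be launched from a single finite seed embedded in the ether. I would follow the detailed glider taxonomy and collision tables of \cite{CookUniversality}, and so I omit the exhaustive case-checking here, recording only that the reduction yields $R_{110}$ simulating the given cyclic tag system and hence, by the universality of cyclic tag systems, the theorem.
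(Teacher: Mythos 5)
Your proposal follows essentially the same route as the paper's own proof sketch: reduce from cyclic tag systems (already Turing universal via the UTM $\to$ 2-tag $\to$ cyclic tag chain) and implement the tag system in $R_{110}$ via glider systems, deferring the exhaustive collision case-analysis to Cook's construction in \cite{CookUniversality}. Your account is in fact somewhat more detailed than the paper's (naming the ether, ossifier and table streams explicitly), but the decomposition and the key lemmas are identical, so there is nothing further to flag.
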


This is done by combination of the following theorem and Lemmas:

\begin{lemma}[\protect{\cite[Sec 3]{CookUniversality}}]
A cyclic tag system is Turing Complete.
\end{lemma}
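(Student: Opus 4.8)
The plan is to prove the lemma by exhibiting a chain of faithful simulations, following Cook \cite{CookUniversality} (see also \cite{Neary2006}). Since Turing machines are the benchmark for Turing completeness, it suffices to show that an arbitrary Turing machine can be simulated by a cyclic tag system. I would factor this through the classical result of Cocke and Minsky that every Turing machine is simulated by a $2$-tag system --- a tag system whose deletion number is $2$, which at each step reads the leading symbol $\sigma$ of its data string, appends the production word associated to $\sigma$, and then deletes the first two symbols. Granting this, the entire content of the lemma reduces to the single new step: simulating an arbitrary $2$-tag system by a cyclic tag system.

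First I would set up the encoding. Let the $2$-tag system have alphabet $\{\sigma_0, \ldots, \sigma_{m-1}\}$ with production $\sigma_i \mapsto w_i$. Encode each symbol $\sigma_i$ by the length-$m$ binary block $E(\sigma_i) \in \{0,1\}^m$ having a single $1$ in position $i$ and $0$ elsewhere, and extend $E$ to words by concatenation, so a data string $\sigma_{i_1}\sigma_{i_2}\cdots$ is encoded as $E(\sigma_{i_1}) E(\sigma_{i_2}) \cdots$. I would then define a cyclic tag system of period $2m$ whose cyclic list of productions is $(E(w_0), E(w_1), \ldots, E(w_{m-1}), \emptyset, \ldots, \emptyset)$, where the last $m$ entries are all the empty word.

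Next I would verify that one full period ($2m$ steps) of this cyclic tag system reproduces exactly one step of the $2$-tag system, maintaining the invariant that at the start of each simulated step the counter sits at $0$ and the data string is a valid encoding of a tag configuration. During the first $m$ steps the counter sweeps positions $0,\ldots,m-1$ across the leading block $E(\sigma_{i_1})$; since a cyclic tag system appends only when the leading bit is $1$, and the unique $1$ of this block sits in position $i_1$, exactly the production $E(w_{i_1})$ is appended, matching $\sigma_{i_1}\mapsto w_{i_1}$. During the next $m$ steps the counter sweeps the second block against the empty productions, deleting it with no append, which realises the deletion of the second tag symbol. Because each block has length $m$ and one period equals $2m$, that is two blocks, the counter returns to $0$ precisely at the next block boundary, so the invariant is preserved and the halting behaviour transfers, the simulation stalling exactly when the tag system can no longer delete two symbols. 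Concatenating this with the Cocke--Minsky simulation yields a cyclic tag system for every Turing machine, proving the lemma.

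I expect the main obstacle to be the bookkeeping in the synchronisation argument rather than any conceptual difficulty: one must check that the counter stays aligned with block boundaries throughout the computation (which is why the period is doubled to $2m$ to absorb the deletion number $2$), that appends always land at the end of a well-formed encoding, and that the halting condition of the $2$-tag system is faithfully reflected. Making the invariant precise and discharging it by induction on the number of simulated steps is the crux; the encoding and the production list themselves are routine once the period-doubling idea is in place.
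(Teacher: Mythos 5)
Your proposal is correct and follows exactly the route the paper gestures at: the thesis itself omits the proof, citing Cook's Section 3 (and Neary) for the chain Turing machine $\rightarrow$ 2-tag system (Cocke--Minsky) $\rightarrow$ cyclic tag system, and your unit-vector block encoding with period $2m$ (first $m$ productions the encoded appendants, last $m$ empty, with the counter realigning at block boundaries) is precisely Cook's construction. The one detail to nail down in your invariant is the halting convention --- with halting defined as $d = \emptyset$, a 2-tag configuration that stalls with a single residual symbol does not immediately empty the cyclic system's data string, so halting should be routed through a dedicated halt symbol with empty production, which the Cocke--Minsky simulation permits and which you correctly flagged as bookkeeping rather than a conceptual obstacle.
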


This is a somewhat surprising result, owing to the very minimal nature of cyclic tag systems, but the proof shows that by careful construction of the production sets $P$ it is possible to emulate the tag systems, due to Post, of a small number of states easily. The proof of this coding is fairly straightforward, but is omitted here owing to length.

\begin{lemma}[\protect{\cite[Sec 4]{CookUniversality}}]
A Cyclic Tag system can be implemented in a glider system.
\end{lemma}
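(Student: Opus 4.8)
The plan is to realise the abstract dynamics of a cyclic tag system entirely through the collision behaviour of gliders travelling against a fixed periodic background. First I would fix a glider repertoire: a collection of localised, spatially periodic defects in a common ``ether'' pattern, each moving at a characteristic velocity. The key structural facts I would need to establish about this repertoire are (i) that several families of gliders of distinct slopes coexist against the same ether, and (ii) that whenever two gliders cross, the outcome is deterministic and readable from a finite collision table --- some collisions annihilate both gliders, some pass them through unchanged, and some convert the incoming gliders into outgoing gliders of other types. These facts are exactly what the explicit glider arithmetic of the underlying automaton (here Rule $110$, via Theorem \ref{thm:CookECA}) supplies.

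Next I would lay out three ``lanes'' of signals. The data string $d = d_0 d_1 \cdots d_j$ is encoded as a column of slowly drifting gliders, one block per symbol, with two distinct glider configurations encoding $0$ versus $1$. Coming in from one side I would arrange a periodic stream of gliders encoding the cyclic list of productions $P_0, P_1, \ldots, P_{n-1}$, repeated indefinitely, so that the counter step $i \mapsto i+1 \ (\mathrm{mod}\ n)$ is realised automatically by the geometry of the stream. From the other side I would send a stream of auxiliary ``leader'' and ``ossifier'' gliders whose role is to mark where the current leading data symbol sits and to convert transient appended material into stable data gliders.

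The heart of the argument is to show that a single pass of these streams across the leading data symbol reproduces exactly one transformation step. Concretely: when an incoming production block meets the leading data glider, a $1$-symbol should let the block survive, be rotated into the correct orientation by a leader collision, and be ossified into new trailing data, whereas a $0$-symbol should annihilate the entire block; in both cases the leading symbol itself is consumed, implementing the deletion of $d_0$. I would verify this by tabulating the finitely many relevant collision sequences and checking that the net effect on the data lane matches the map $(i,d) \rightarrow (i+1\ (\mathrm{mod}\ n),\ (d_1,\ldots,d_k)^\frown P_i^{d_0})$ from the definition of a cyclic tag system.

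The hard part will be the timing and spacing discipline. The velocities, periods, and launch offsets of the three streams must be chosen so that every intended collision occurs, no spurious collision occurs, and the spatial period of the production stream stays synchronised with the shrinking-and-growing data lane as the computation advances. Establishing this alignment self-consistently over \emph{all} steps, rather than just the first, is the genuinely delicate engineering, and it is where the fine-grained glider velocities of the chosen automaton are indispensable: the collision tabulation above certifies only the local outcomes, not their global coherence, so the bulk of the work lies in the quantitative synchronisation rather than in the combinatorial bookkeeping.
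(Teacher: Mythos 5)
Your proposal is correct and follows essentially the same route as the paper, which at this point simply sketches and defers to Cook's construction in \cite{CookUniversality}: your three-lane layout of data gliders, a cyclically repeating production stream, and leader/ossifier gliders with a finite collision table is precisely Cook's implementation, and you rightly identify the global timing and spacing synchronisation as the genuinely delicate part that the local collision bookkeeping does not certify. Your account is in fact more detailed than the thesis's own sketch, but it is the same argument, not a different one.
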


\begin{proof}[Sketch of proof of \ref{thm:CookECA}]
Rule 110 has the ability to carry a state of 1's and 0's left and right depending on careful setup of the strings - such patterns that shift iteratively left and right down our ECA state are called `gliders'. A `glider system' is some arrangement of these gliders such that they then propagate left and right. There are 5 glider types documented in \cite{CookUniversality}, and these are crafted into different arrangements of glider systems in order to achieve the result we are interested - specifically, coding the $P$ and $d$ of any cyclic tag system.

\begin{figure}[t]
\includegraphics[scale=0.6]{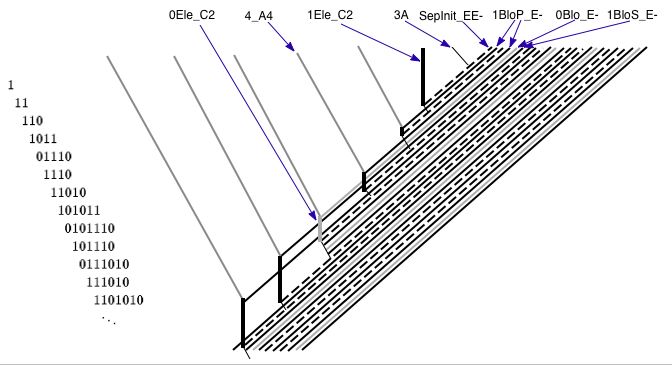}
\label{fig:CookSchema}
\caption{The schematic diagram for Cook's encoding of Cyclic Tag Systems in Rule 110, taken from \cite{GenaroUWE}}
\end{figure}

By carefully implementing a glider system in the input row for an ECA, Cook was able to code Turing Machine computations into the dynamics of $R_{110}$, thereby showing this ECA to be Turing Universal. 

An additional aside, which will be useful in our discussion of ECA tilings, is that the halting state of some TM coded into $R_{110}$ is equivalent to whether the dynamics of the system become aperiodic or remain periodic, equivalent to halting or not halting, respectively.
\end{proof}

An overall schematic diagram can be found in Figure \ref{fig:CookSchema} 

However, we note that there are some cases where simply expecting aperiodicity or continued periodicity is not sufficient. Take a TM that calculates some non-repeating sequence, such as the Champernowne's Constants used earlier in this thesis. The output of this computation will necessarily be aperiodic in any given tiling encoding of this computation.

Thus we have to resolve the issue surrounding this - if our tiling is going to be aperiodic whether we have halted or not, then how can we tell if our computation is running or if it has halted?

Firstly, we note that Rule 110 is not left or right permutive, so any tiling will not naturally be aperiodic by the criteria in the previous section. We next need to note that we can stratify these two notions of `aperiodicity' by means of a straightforward argument on the underlying mechanics of our resultant tilings \emph{in vicem} of the Turing Machines and cyclic tag machines we are representing.

We note that any non-repeating computation will actually be quasi-periodic by our definition \ref{def:quasiperiodic} - a fact that follows when we observe that certain strings, namely those representing states in our Turing Machine via the set of productions $P$ in our cyclic tag system being recurrent in the tiling.

Thus, any aperiodic behaviour will be apparent from the fact that there will be no sign of our Turing computational artefacts in the ECA following entering the halt state. As such, it will either become periodic or aperiodic, but our test for the occurrence of particular words that code these will fail.

The same carries forwards into our tiling by means of looking for particular sequences of tiles - represented as finite tuples - in any resultant tiling. Given this, we can safely work with ECA Rule 110 and not worry about `losing track' of the status of our computation.

\section{Elementary Cellular Automata and Tilings}

In this section we build on work from the author's MSc thesis, \cite{MCarneyMSc}, where we proved the following theorem:

\begin{theorem}[\protect{\cite[Chap. 3]{MCarneyMSc}}]\label{thm:WangECA}
There exists a universal prototile schema consisting of 18 Wang tiles that tiles the plane according to the rules of any given ECA.
\end{theorem}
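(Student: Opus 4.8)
The plan is to mimic the row-by-row simulation convention established in the proof of Theorem \ref{thm:TMTilings}, where each horizontal row of the tiling encodes one configuration of the system and successive configurations are stacked vertically. Here a configuration is a bi-infinite binary string $x \in \{0,1\}^{\mathbb{Z}}$, and the next row must realize $R_n(x)$; that is, each cell $c_{i,j}$ must equal $r_n(c_{i-1,j-1}, c_{i,j-1}, c_{i+1,j-1})$. First I would fix the encoding: the value $c_{i,j-1}$ of a cell is carried on the bottom edge of the tile occupying lattice point $(i,j)$, and—crucially, since the new value of a cell depends on three cells of the previous row while a Wang tile only abuts one tile directly below—this value must also be transmitted sideways.

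To achieve the sideways transmission using only the von Neumann neighbourhood, I would let each vertical (left/right) edge carry a pair of colours recording the cell values being passed leftward and rightward between horizontally adjacent tiles. Concretely, the tile at $(i,j)$ would read its own previous value $b = c_{i,j-1}$ off its bottom edge, receive the left neighbour's value $a = c_{i-1,j-1}$ on the rightward component of its left edge, receive the right neighbour's value $c = c_{i+1,j-1}$ on the leftward component of its right edge, and then place $r_n(a,b,c) = c_{i,j}$ on its top edge. The consistency conditions—that the top edge of the tile below matches the bottom edge above, and that each tile re-broadcasts its own bottom value to both neighbours—are exactly the Wang edge-matching criteria, so a correct simulation is forced by the tiling rules and no extra bookkeeping is needed.

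The heart of the argument is then a counting and specialization step. I would organise the construction as a schema in the sense of Definition \ref{def:schematile}: a fixed framework of schema tiles into which the eight output bits of an arbitrary rule $r_n$ are substituted. The computational core consists of the eight tiles indexed by the triples $(a,b,c) \in \{0,1\}^3$, and the remaining tiles of the claimed total of eighteen are the framework tiles needed to initialise the bottom row with the input configuration, to seed the horizontal signal edges coherently, and to guarantee the tiling extends without gaps to an infinite strip; I would enumerate these explicitly and check that the edge palettes close up. Universality across ECAs then follows because only the top-edge colour of the eight core tiles depends on $n$, so the same eighteen schema tiles instantiate to a simulating prototile set for every rule.

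Finally I would verify the correspondence in both directions. For soundness, I would argue that in any valid tiling the edge-matching constraints propagate the bottom-row configuration upward so that row $j$ is precisely $R_n^{\,j}$ applied to the input, hence every tiling encodes a genuine ECA orbit; for completeness, that any ECA orbit yields a tile assignment satisfying all edge meets. The main obstacle I anticipate is not the rule application itself but controlling the horizontal information flow so that it admits no \emph{spurious} tilings, namely configurations where the leftward and rightward signals are internally consistent yet decoupled from the actual cell values being computed. Ruling these out is what dictates the precise pairing discipline on the vertical edges, and is the reason the honest tile count lands near eighteen rather than the naive eight; pinning down exactly which auxiliary tiles are forced, and no more, is the delicate part of the proof.
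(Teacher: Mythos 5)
Your core encoding is sound, but it is not the paper's construction, and as written the proposal does not establish the stated theorem. The paper reaches the count of 18 by a structural decomposition into three tile families: 8 \emph{rule} tiles that receive the triple $(a,b,c)$ on the left, top and right edges and emit $f_n(a,b,c)$ on the bottom edge; 8 \emph{state-swapping} tiles that carry the pairs $(x,y)\in\{0,1\}^2$ vertically (two tiles per pair, using a distinguished blank colour $B$ to keep successive rows aligned) and perform the crossover of bits between neighbouring cells; and 2 \emph{distributor} tiles $\langle 0, 0^{f_n}, 0, 0\rangle$ and $\langle 1, 1^{f_n}, 1, 1\rangle$ that hold the actual cell values of the time-space diagram and broadcast them left, right and downwards, with the marked upper quadrant preventing trivial distributor-only tilings. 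The input row is coded in distributor tiles padded left and right with $0$'s, yielding a half-plane tiling, and the evolution runs downward rather than upward; only the bottom edges of the rule tiles depend on $n$, which is what makes the schema universal.

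Your design instead fuses all three roles into a single tile per cell, putting the pair $(a,b)$ on the left edge and $(b,c)$ on the right edge, with $b$ below and $r_n(a,b,c)$ above. This is a legitimate and in fact tighter construction: the horizontal matching condition $(b,c)=(a',b')$ forces $a'=b$ and $b'=c$, so the sideways signals are tied to each tile's own bottom value and the spurious ``decoupled'' tilings you worry about in your final paragraph cannot arise at all --- the delicate step you anticipate is vacuous in your own architecture. But precisely for that reason your computational core consists of exactly 8 tiles, and the content of the theorem --- that 18 schema tiles suffice, and which ones --- is exactly the part you defer with ``I would enumerate these explicitly.'' Nothing in your architecture makes the auxiliary count land at 10; the number of initialisation and boundary tiles depends entirely on design choices you never make, and your phrase ``lands near eighteen'' concedes this. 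The genuine gap is therefore the unproved tile count, not the simulation mechanics: either carry out your enumeration (in which case you would likely prove a sharper statement with fewer than 18 tiles, which must then be checked against the input-row mechanism and the exclusion of trivial tilings), or adopt the paper's decomposition, in which $18 = 8 + 8 + 2$ is forced by the architecture itself.
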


\begin{proof}

We note that we need to satisfy the following requirements:
\begin{enumerate}
\item Encode each cell in a time-space diagram for a given ECA.
\item Encode the relationships between each cell given by $R_n$.
\item Show how bits can be copied across each other in the tiling in order to emulate the action of $R_n$.
\end{enumerate}

We first construct the prototile scheme that will code the action of our ECA function given by $f_n : \{0,1\}^3 \rightarrow \{0,1\}$, given by our rule $R_n$. This scheme is as follows:

\begin{center}
\sampletilefarlabels{$a$}{$b$}{$c$}{$\substack{}{f_n(a,b,c)}$}
\end{center}

Thus, for each rule we get the following 8 prototiles, where we fill in the specific outputs for each $f_n$ to get our \emph{Rule prototiles}:

\begin{center}
\sampletilefarlabels{$0$}{$0$}{$0$}{$\substack{}{f_n(0,0,0)}$}
\sampletilefarlabels{$0$}{$0$}{$1$}{$\substack{}{f_n(0,0,1)}$}
\sampletilefarlabels{$0$}{$1$}{$0$}{$\substack{}{f_n(0,1,0)}$}
\sampletilefarlabels{$0$}{$1$}{$1$}{$\substack{}{f_n(0,1,1)}$}

\vspace{2.5mm}

\sampletilefarlabels{$1$}{$0$}{$0$}{$\substack{}{f_n(1,0,0)}$}
\sampletilefarlabels{$1$}{$0$}{$1$}{$\substack{}{f_n(1,0,1)}$}
\sampletilefarlabels{$1$}{$1$}{$0$}{$\substack{}{f_n(1,1,0)}$}
\sampletilefarlabels{$1$}{$1$}{$1$}{$\substack{}{f_n(1,1,1)}$}

\end{center}

We add to these \emph{state swapping} tiles that will take an output of $f_n$ and `swap' this bit with the cell's neighbours. We first fix the colour $B$ that will act as `blank', allowing us to line up the tiles above and below each crossover of bits from the distributor tiles (see below):

\begin{center}
\sampletile{$0$}{$B$}{$0$}{$(0,0)$} \sampletile{$0$}{$(0,0)$}{$0$}{$B$}

\vspace{2.5mm}

\sampletile{$0$}{$B$}{$1$}{$(0,1)$} \sampletile{$1$}{$(0,1)$}{$0$}{$B$}

\vspace{2.5mm}

\sampletile{$1$}{$B$}{$0$}{$(1,0)$} \sampletile{$0$}{$(1,0)$}{$1$}{$B$}

\vspace{2.5mm}

\sampletile{$1$}{$B$}{$1$}{$(1,1)$} \sampletile{$1$}{$(1,1)$}{$1$}{$B$}
\end{center}

We now need some \emph{distributor tiles} that will take an output state and distribute this information left, right, and downwards:

\begin{center}
\sampletile{$1$}{$1^{f_n}$}{$1$}{$1$}
\sampletile{$0$}{$0^{f_n}$}{$0$}{$0$}
\end{center}

Note that these tiles differentiate the upper quadrant as being specifically from the output of $f_n$ so as to prevent trivial tilings of the plane using just distributor prototiles. These tiles code exactly the cells from the original time-space diagram.

We then note that each part of the action of some ECA rule $R_n$ is now coded into our tiling:
\begin{itemize}
\item Each cell is represented in any planar tiling due to the above prototile constructions.
\item Each relationship coded by $f_n$ is represented as state swapping tiles creating a space for some rule tile, which then has the output of $f_n$ distributed for this process to repeat.
\item We do not code the upper half-plane owing to our not-knowing the previous rows of computation that took place before our input row.
\end{itemize}

Thus, we have fully represented in 18 prototiles, given by our 8 rule tiles, 8 state swapping, and 2 distributor prototiles.

The tiling process is as follows:
\begin{enumerate}
\item Code the input into a series of distributor tiles.
	\begin{itemize}
	\item We pad the input with infinitely many `0's left and right to achieve a full half-planar tiling.
	\end{itemize}
\item Place the relevant state swapping tiles between each of these.
\item Tile each successive row using the correct tilings, in order to get successive states of the ECA.
\end{enumerate}

\end{proof}

\begin{figure}[t]
\centering
\begin{tikzpicture}[scale=1.5]
\foreach \x/\y/\l/\u/\r/\b/\z in {0/0/white/white/white/white/,1/0/white/gray/white/green/,2/0/white/white/white/white/,3/0/white/gray/black/yellow/,4/0/black/black/black/black/,5/0/black/gray/white/red/,6/0/white/white/white/white/,7/0/white/gray/white/green/,8/0/white/white/white/white/,
                               0/1/white/white/white/white/,1/1/white/green/white/gray/,2/1/white/white/black/black/,3/1/black/yellow/white/gray/,4/1/white/black/white/black/,5/1/white/red/black/gray/,6/1/black/white/white/black/,7/1/white/green/white/gray/,8/1/white/white/white/white/,
                               0/2/white/white/white/white/,1/2/white/gray/black/yellow/,2/2/black/black/black/black/,3/2/black/gray/black/blue/,4/2/black/black/black/black/,5/2/black/gray/black/blue/,6/2/black/black/black/black/,7/2/black/gray/white/red/,8/2/white/white/white/white/,
                               0/3/white/white/black/black/,1/3/black/yellow/white/gray/,2/3/white/black/black/black/,3/3/black/blue/black/gray/,4/3/black/black/black/white/,5/3/black/blue/black/gray/,6/3/black/black/white/white/,7/3/white/red/black/gray/,8/3/black/white/white/black/,
                               0/4/black/black/black/black/,1/4/black/gray/black/blue/,2/4/black/black/black/black/,3/4/black/gray/white/red/,4/4/white/white/white/white/,5/4/white/gray/white/green/,6/4/white/white/white/white/,7/4/white/gray/black/yellow/,8/4/black/black/black/black/,
                               0/5/white/black/black/black/,1/5/black/blue/black/gray/,2/5/black/black/white/white/,3/5/white/red/black/gray/,4/5/black/white/white/black/,5/5/white/green/white/gray/,6/5/white/white/black/black/,7/5/black/yellow/white/gray/,8/5/white/black/white/black/}
{
\draw[fill=\r] (1+\x,1-\y) rectangle (0+\x,0-\y);
\filldraw[fill=\l] (0+\x,0-\y) -- (0.5+\x,0.5-\y) -- (0+\x,1-\y) -- cycle;
\filldraw[fill=\b] (0+\x,0-\y) -- (0.5+\x,0.5-\y) -- (1+\x,0-\y) -- cycle;
\filldraw[fill=\u] (0+\x,1-\y) -- (0.5+\x,0.5-\y) -- (1+\x,1-\y) -- cycle;
\node at (0.5+\x,0.5-\y) [label=below:$\z$] {};
}
\end{tikzpicture}
  \caption{A sample tiling of $S_{30}$. {\em NB:} Indicators $O^f$ and $1^f$ are omitted for clarity.}
  \label{fig:ECA-Wang}
\end{figure}
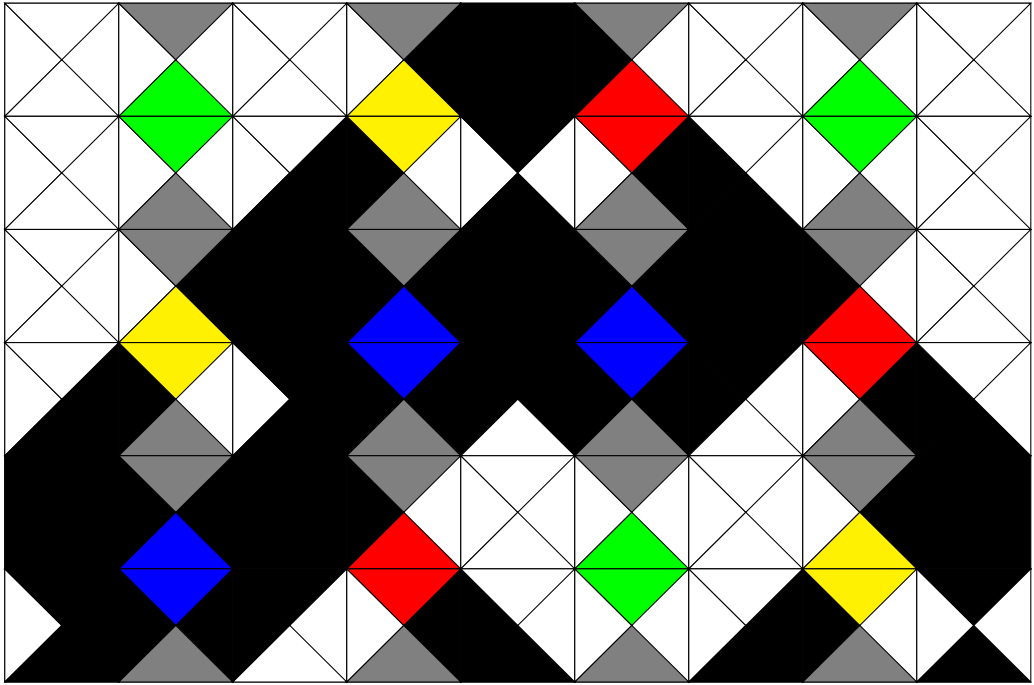

Figure \ref{fig:ECA-Wang} shows the tiling in action, coding the first few rows of ECA rule 30, with $R_{30}$ clearly coded with the connecting tiles showing how the outputs interact with each other.

\section{A 15 Prototile ECA Tiling}

We present a tiling that codes any ECA in only 15 tiles, using an adapted hexagon-based tiling. This particular tiling lends itself to our computable trinary functions that form our $f_n$ ECA functions, and have not yet been found in the literature.

\begin{theorem}[C. 2019]\label{thm:ECA-15-Hex}
For any ECA of Rule $n$ there exists a prototile set $S_n$ of size 15 such that any tiling of the plane $T$ by $S_n$ codes each iteration of the ECA starting from the string coded by the first row. 
\end{theorem}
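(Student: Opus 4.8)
The plan is to give an explicit construction of the 15-tile prototile set $S_n$ and then verify that its tilings of the lower half-plane are in bijection with ECA computations under rule $n$. Building on the 18-tile construction of \ref{thm:WangECA}, the key economy comes from switching to a hexagon-and-lozenge geometry rather than Wang squares: a hexagonal cell naturally has three upper-adjacent neighbours, which exactly matches the $3$-ary structure of the local rule $r_n : \{0,1\}^3 \rightarrow \{0,1\}$. First I would fix the geometry, using the \texttt{hexagontile} and \texttt{lozengetile} macros already defined in the preamble, so that each hexagon represents one cell $c_{i,j}$ carrying a bit value, and the surrounding lozenges mediate the transfer of a cell's value to the three cells it influences in the next row. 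The construction splits into three families of prototiles: hexagon tiles carrying the cell values (these encode bits $0$ and $1$), lozenge tiles that carry a bit from one cell diagonally down to its neighbours, and the rule-encoding tiles that read a triple $(a,b,c)$ of incoming values and emit $f_n(a,b,c)$ downward. The count of $15$ is then obtained by careful bookkeeping: the eight rule cases collapse because the hexagonal adjacency lets a single hexagon receive all three inputs at distinct edges, so one does not need the eight separate ``state swapping'' tiles of the square construction.

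Next I would establish the correspondence between tilings and computations in both directions. For the forward direction, given any bi-infinite input row (padded with $0$s on both sides), I would show by induction on the row index $j$ that once rows $0,\dots,j$ are tiled consistently, the edge-matching conditions on the hexagons and lozenges of row $j$ force exactly one admissible choice of tile at each position of row $j+1$, and that this forced value at position $i$ is precisely $r_n(c_{i-1,j}, c_{i,j}, c_{i+1,j})$. This is the crux of the verification: I must check that the colour set on the lozenge edges is rich enough to route each of the three neighbouring values into the correct input slot of the rule tile, yet constrained enough that no spurious tiling (one not corresponding to a genuine rule application) is admissible. For the reverse direction, given any $S_n$-tiling of the half-plane, reading off the hexagon values row by row recovers an ECA orbit, since the match criteria guarantee each row is the $r_n$-image of the one above it.

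The main obstacle I anticipate is the determinism-and-completeness argument in the hexagon geometry, analogous to the three checks at the end of the proof of \ref{thm:TMTilings}: I must ensure (i) that the tile placement is deterministic given the previous row, so that each ECA orbit yields a unique tiling and no ambiguity arises; (ii) that the lozenge tiles genuinely perform the bit-copying without allowing a value to be altered in transit; and (iii) that the upper boundary is handled correctly — as in \ref{thm:WangECA} we only tile the lower half-plane, since the rows above the input are not determined by the computation, and I would add the ``requisite tiles to neaten up the upper edge'' mentioned in the thesis outline to close off the top row cleanly. A supporting figure drawn with the \texttt{hexagontile} and \texttt{lozengetile} macros would make the adjacency pattern and the three-into-one routing transparent, and I would present the explicit list of all $15$ coloured prototiles so the reader can verify the edge-matching by inspection. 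Once the forced-placement lemma is in hand, the theorem follows immediately, and the subsequent chaos and Turing-completeness corollaries come from substituting $n = 30$ and $n = 110$ respectively into the established correspondence.
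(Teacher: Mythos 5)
Your proposal follows essentially the same route as the thesis: the same hexagon-and-lozenge geometry, rule hexagons whose three upper edges read the triple $(a,b,c)$ and whose lower edge emits $f_n(a,b,c)$, crossover lozenges to route each cell's output to its diagonal neighbours, boundary tiles giving a flat upper edge, and a row-by-row forced-placement verification on the lower half-plane. The only correction is the bookkeeping: there is no separate family of value-carrying hexagons — the eight rule hexagons themselves carry the cell values (absorbing the Wang construction's two distributor tiles; it is the eight state-swapping tiles, not the eight rule cases, that are economised), and the count is $15 = 8$ rule hexagons $+\,4$ lozenges (each carrying \emph{two} crossing bits $s,t$, which is why four suffice) $+\,3$ boundary tiles (the input hexagons $\langle I,I,I,0\rangle$ and $\langle I,I,I,1\rangle$ together with one half-lozenge $I$-tile).
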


\begin{proof}

Broadly speaking, we require three things from our tiling of ECA rules - for a given rule $R_n$:
\begin{enumerate}
\item Encoding of each input and output of the $f_n$ for our rule $R_n$.
\item Handling of the `transfer of bits' from one represented cell to the cells lower left, lower centre, and lower right.
\item Fixing of upper half-plane boundary.
\end{enumerate}

For the purposes of this proof, we work on tiling the lower half-plane, with the lower border of the upper half-plane having colour $I$. This means that we do not have to worry about the pre-images of the inverse function $f^{-1}$ which can not be unique or even be a `Garden of Eden', meaning it is a configuration that has no pre-image. Thus simplifying the way in which we tile the plane by omitting these in the upper half plane, essentially fixing it with colour $I$.

We first present the base tiling we are going to use - horizontally aligned hexagons with diamond lozenges filling the gaps between them, as so:

\begin{center}
\begin{tikzpicture}
%\begin{scope}[xshift=0cm]\fillshapesimple{(0:1) -- (60:1) -- (120:1) -- (180:1) -- (240:1) -- (300:1) -- cycle}\end{scope}
\begin{scope}[xshift=0cm]\fillshapesimple{(0:1) -- (60:1) -- (120:1) -- (180:1) -- (240:1) -- (300:1) -- cycle}\end{scope}
\begin{scope}[xshift=2cm]\fillshapesimple{(0:1) -- (60:1) -- (120:1) -- (180:1) -- (240:1) -- (300:1) -- cycle} \end{scope}
\begin{scope}[xshift=4cm]\fillshapesimple{(0:1) -- (60:1) -- (120:1) -- (180:1) -- (240:1) -- (300:1) -- cycle} \end{scope}
\begin{scope}[xshift=6cm]\fillshapesimple{(0:1) -- (60:1) -- (120:1) -- (180:1) -- (240:1) -- (300:1) -- cycle} \end{scope}
\begin{scope}[xshift=0cm,yshift=-1.75cm]\fillshapesimple{(0:1) -- (60:1) -- (120:1) -- (180:1) -- (240:1) -- (300:1) -- cycle} \end{scope}
\begin{scope}[xshift=2cm,yshift=-1.75cm]\fillshapesimple{(0:1) -- (60:1) -- (120:1) -- (180:1) -- (240:1) -- (300:1) -- cycle} \end{scope}
\begin{scope}[xshift=4cm,yshift=-1.75cm]\fillshapesimple{(0:1) -- (60:1) -- (120:1) -- (180:1) -- (240:1) -- (300:1) -- cycle} \end{scope}
\begin{scope}[xshift=6cm,yshift=-1.75cm]\fillshapesimple{(0:1) -- (60:1) -- (120:1) -- (180:1) -- (240:1) -- (300:1) -- cycle} \end{scope}
\end{tikzpicture}
\end{center}

We present two tile schemas that we will make use of can be carried out to obtain a tile set $S_n$ for each ECA Rule $R_n$.

Firstly, we give a schema for the hexagon tiles that will code the actual rule action. For $a,b,c, f_n(a,b,c) \in \{0,1\}$ we define our tile schema:

\begin{center}
\begin{tikzpicture} % this formed the prototype for \hexagontile
\begin{scope}[xshift=0cm]\fillshapesimple{(0:0) -- (180:2) -- (120:2) -- (0:0) -- (120:2) -- (60:2) -- (0:0) -- (60:2) -- (0:2) -- (0:0) -- (0:2) -- (300:2) -- (240:2) -- (180:2) -- cycle}
\draw (150:1) node {$a$};
\draw (90:1) node {$b$};
\draw (30:1) node {$c$};
\draw (270:1) node {$f_n(a,b,c)$};
\end{scope}
\end{tikzpicture}
\end{center}
where $f_n$ is the operation of applying rule $n$ to the three input bits $a,b,c$. Note, if required we can use similar notation to the 4-tuple codes we used for Wang tiles - specifically: $\langle a,b,c,f_n(a,b,c) \rangle$

We can see that for $a,b,c \in \{0,1\}$ there are 8 prototiles that we can define as our basis for each ECA tiling. These are as follows:

\begin{center}
\begin{tikzpicture}[scale=0.6]
\hexagontile{1}{1}{1}{f_n(1,1,1)}
\begin{scope}[xshift=4.5cm]\hexagontile{1}{1}{0}{f_n(1,1,0)}\end{scope}
\begin{scope}[xshift=9cm]\hexagontile{1}{0}{1}{f_n(1,0,1)}\end{scope}
\begin{scope}[xshift=13.5cm]\hexagontile{1}{0}{0}{f_n(1,0,0)}\end{scope}
\begin{scope}[xshift=0cm, yshift=-4cm]\hexagontile{0}{1}{1}{f_n(0,1,1)}\end{scope}
\begin{scope}[xshift=4.5cm, yshift=-4cm]\hexagontile{0}{1}{0}{f_n(0,1,0)}\end{scope}
\begin{scope}[xshift=9cm, yshift=-4cm]\hexagontile{0}{0}{1}{f_n(0,0,1)}\end{scope}
\begin{scope}[xshift=13.5cm, yshift=-4cm]\hexagontile{0}{0}{0}{f_n(0,0,0)}\end{scope}
\end{tikzpicture}
\end{center}

We next define our diamond lozenges as being tiles that are vertically and horizontally quadrisected and use the following tile schema, for $s, t \in \{0,1  \}$:

\begin{center}
\begin{tikzpicture} % this formed the prototype for \lozengetile function
\begin{scope}[xshift=0cm]\fillshapesimple{(0:0) -- (60:2) -- (0:2) -- (300:2) -- (60:2) -- (0:2) -- cycle -- (300:2)}
\draw (30:0.66) node {$s$};
\draw (15:1.33) node {$t$};
\draw (330:0.66) node {$t$};
\draw (345:1.33) node {$s$};
\end{scope}
\end{tikzpicture}
\end{center}

This gives us our 4 connecting lozenges as follows: 

\begin{center}
\begin{tikzpicture}
\lozengetile{(0:0) -- (60:2) -- (0:2) -- (300:2) -- (60:2) -- (0:2) -- cycle -- (300:2)}{0}{0}{0}{0}
\begin{scope}[xshift=2.5cm]\lozengetile{(0:0) -- (60:2) -- (0:2) -- (300:2) -- (60:2) -- (0:2) -- cycle -- (300:2)}{0}{1}{0}{1}\end{scope}
\begin{scope}[xshift=5cm]\lozengetile{(0:0) -- (60:2) -- (0:2) -- (300:2) -- (60:2) -- (0:2) -- cycle -- (300:2)}{1}{0}{1}{0}\end{scope}
\begin{scope}[xshift=7.5cm]\lozengetile{(0:0) -- (60:2) -- (0:2) -- (300:2) -- (60:2) -- (0:2) -- cycle -- (300:2)}{1}{1}{1}{1}\end{scope}
\end{tikzpicture}
\end{center}

These connecting lozenges are required owing to a property of ECAs - namely, for some string $\sigma \in \{0,1\}^{<\omega}$, any bit $b_i \in \sigma$ is needed to calculate the bits $b'_{i-1},b'_i, b'_{i+1} \in \sigma'$. As such, these lozenges achieve the required `crossover' of these bits. These act in principle precisely the same as the `state swapping tile' in our previous theorem \ref{thm:WangECA}.

We will also need the following 3 `I' tiles to make our tiling `neat' and to define the first row of out tiling:

\begin{center}
\begin{tikzpicture}[scale=0.6]
\hexagontile{I}{I}{I}{0}
\begin{scope}[xshift=4.5cm]\hexagontile{I}{I}{I}{1}\end{scope}
% REDO this tile so that it is higher up, and we don't need to do any weird shifts to get it to tile in large diagrams :P
\begin{scope}[xshift=7.5cm]\lozengetile{(0:0) -- (0:2) -- (300:2) -- cycle -- (300:2) -- (0:1)}{}{}{I}{I}\end{scope}
\end{tikzpicture}
\end{center}

This will give us a flat edge for the top of the tiling, where we can now see that a tiling of the plane, with no gaps can be achieved, as shown in this diagram:

We can thus define the tiling algorithm for some ECA as follows:
\begin{enumerate}
\item Take the input for our ECA and code this using the `I' tiles.
\begin{itemize}
\item Pad the input with $\langle I, I, I, 0\rangle$ tiles as needed left and right to fill the left and right halves of our lower half-plane.
\item Ensure that the half-lozenge `I'-tiles are placed between the upper gaps between these hexagons.
\end{itemize}
\item Place the correct corresponding lozenge tiles between the hexagon tiles.
\item Place the now-defined hexagon tiles under each hexagon such that the upper 3 sides correspond to the lozenges on the upper left and upper right, and the hexagon immediately above.
\item Go to 2.
\end{enumerate}

Given this algorithm and this tile set, we can code any ECA by choosing the prescribed outputs from $f_n(x,y,z)$ from our rule $n$. Given this setup, we can see that our tiling gives a tiling of the half-plane without any holes, and such that it imitates the behaviour of any ECA.

\end{proof}

As an illustrated example, the full prototile set for Rule 30 can be found in figure \ref{fig:15TileRule30}

\begin{figure}[t]
\begin{center}
\begin{tikzpicture}[scale=0.6]
\hexagontile{1}{1}{1}{0}
\begin{scope}[xshift=4.5cm]\hexagontile{1}{1}{0}{0}\end{scope}
\begin{scope}[xshift=9cm]\hexagontile{1}{0}{1}{0}\end{scope}
\begin{scope}[xshift=13.5cm]\hexagontile{1}{0}{0}{1}\end{scope}
\begin{scope}[xshift=0cm, yshift=-4cm]\hexagontile{0}{1}{1}{1}\end{scope}
\begin{scope}[xshift=4.5cm, yshift=-4cm]\hexagontile{0}{1}{0}{1}\end{scope}
\begin{scope}[xshift=9cm, yshift=-4cm]\hexagontile{0}{0}{1}{1}\end{scope}
\begin{scope}[xshift=13.5cm, yshift=-4cm]\hexagontile{0}{0}{0}{0}\end{scope}
\begin{scope}[xshift=0cm,yshift=-8cm]\lozengetile{(0:0) -- (60:2) -- (0:2) -- (300:2) -- (60:2) -- (0:2) -- cycle -- (300:2)}{0}{0}{0}{0}\end{scope}
\begin{scope}[xshift=2.5cm,yshift=-8cm]\lozengetile{(0:0) -- (60:2) -- (0:2) -- (300:2) -- (60:2) -- (0:2) -- cycle -- (300:2)}{0}{1}{0}{1}\end{scope}
\begin{scope}[xshift=5cm,yshift=-8cm]\lozengetile{(0:0) -- (60:2) -- (0:2) -- (300:2) -- (60:2) -- (0:2) -- cycle -- (300:2)}{1}{0}{1}{0}\end{scope}
\begin{scope}[xshift=7.5cm,yshift=-8cm]\lozengetile{(0:0) -- (60:2) -- (0:2) -- (300:2) -- (60:2) -- (0:2) -- cycle -- (300:2)}{1}{1}{1}{1}\end{scope}
\begin{scope}[yshift=-12cm]\hexagontile{I}{I}{I}{0}\end{scope}
\begin{scope}[xshift=4.5cm, yshift=-12cm]\hexagontile{I}{I}{I}{1}\end{scope}
\begin{scope}[xshift=7.5cm,yshift=-12cm]\lozengetile{(0:0) -- (0:2) -- (300:2) -- cycle -- (300:2) -- (0:1)}{}{}{I}{I}\end{scope}
\end{tikzpicture}
\end{center}
\caption{A 15 prototile set of tiles that encodes the behaviour of the Rule 30 ECA in the lower half-plane.}
\label{fig:15TileRule30}
\end{figure}
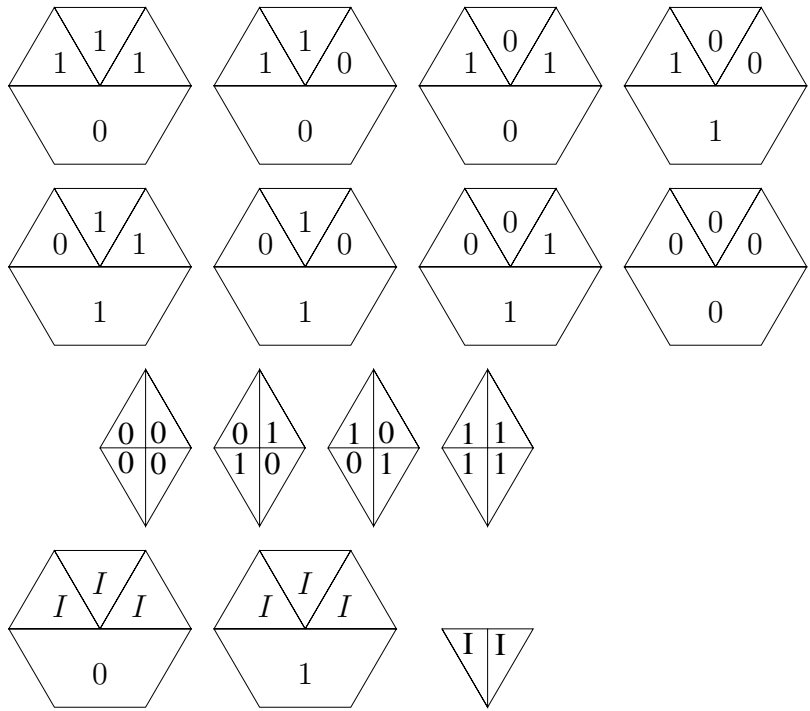

Our proof of this theorem is unusual as it makes use of a non-standard planar tiling made up of hexagon and lozenge tiles - something that the author has not seen at all in the literature. This particular prototile arrangement lends itself to 3-ary iterated functions and dynamical systems slightly better than Wang tiles. Hence, they are included in this thesis as objects for potential further consideration.

\begin{corollary}[C. 2019]
There are chaotic ECA prototile sets of size 15.
\end{corollary}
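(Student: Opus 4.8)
The plan is to combine the encoding of Theorem~\ref{thm:ECA-15-Hex} with the characterisation of chaotic elementary cellular automata already recorded in this chapter. First I would invoke the corollary due to Cattaneo \etal \cite[Cor. 3.2]{CattaneoFM00}, which establishes that an ECA local rule $r_n$ is Devaney (equivalently Knudson) chaotic exactly when it is leftmost or rightmost permutive. The canonical witness is Rule~30: writing its local rule as $f_{30}(a,b,c) = a \oplus (b \vee c)$, one checks immediately that $f_{30}(0,b,c) = b \vee c$ while $f_{30}(1,b,c) = \neg(b \vee c)$, so $f_{30}(0,b,c) \neq f_{30}(1,b,c)$ for all $b,c \in \{0,1\}$; hence $R_{30}$ is leftmost permutive and is therefore chaotic in both the Devaney and Knudson senses.

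Second I would apply Theorem~\ref{thm:ECA-15-Hex} with $n = 30$ to obtain the prototile set $S_{30}$ of exactly $15$ tiles displayed in Figure~\ref{fig:15TileRule30}. The content of that theorem is precisely that every $S_{30}$-tiling of the lower half-plane reproduces, row by row, the space-time diagram of Rule~30 starting from the string coded by the input row; in other words the tiling is a faithful, cell-for-cell rendering of the discrete dynamical system $(\mathbb{Z}^2, R_{30})$.

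Finally I would observe that, since the encoding is faithful, the dynamical behaviour exhibited by the family of $S_{30}$-tilings is exactly that of $(\mathbb{Z}^2, R_{30})$. As the latter is chaotic by the first step, $S_{30}$ is a chaotic ECA prototile set, and it has size $15$, giving the claim. I do not expect any genuine obstacle here: the only point that needs care is the phrase ``chaotic ECA prototile set'', which must be read as a prototile set whose tilings encode a chaotic ECA dynamical system. Once this reading is fixed, the corollary is an immediate consequence of Theorem~\ref{thm:ECA-15-Hex} together with the permutivity of Rule~30, the single modest verification being the leftmost-permutivity computation given above.
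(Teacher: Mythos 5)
Your proposal is correct and follows essentially the same route as the paper's own proof, which likewise cites the permutivity characterisation of chaos from \cite{CattaneoFM00} for Rule 30 (and Rule 90, etc.) and then applies Theorem \ref{thm:ECA-15-Hex} to obtain the 15-tile encoding with the input given by the initial row of `I'-tiles. Your explicit verification that $f_{30}(a,b,c) = a \oplus (b \vee c)$ is leftmost permutive is a welcome elaboration of a step the paper leaves implicit, but it does not change the argument.
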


\begin{proof}
This is immediate from the known properties of Rule 30, 90, etc. given in \cite{CattaneoFM00} - specifically, we can simply code these ECAs into prototiles using the scheme above and obtain a fixed-size prototile set that can code the required behaviour on a given input, such an input being given by an initial row of `I'-tiles from our original construction.
\end{proof}

\begin{corollary}[C. 2019]
There are Turing Complete prototile sets of size 15.
\end{corollary}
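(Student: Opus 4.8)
The plan is to combine the encoding of Theorem~\ref{thm:ECA-15-Hex} with Cook's universality result, Theorem~\ref{thm:CookECA}, in exactly the same spirit as the preceding corollary on chaotic prototile sets. First I would instantiate the construction of Theorem~\ref{thm:ECA-15-Hex} with $n = 110$, producing a prototile set $S_{110}$ of size $15$ whose planar tilings code each successive iteration of the ECA $R_{110}$ starting from the string encoded in the first (input) row of $I$-tiles. Since the size of the prototile set in Theorem~\ref{thm:ECA-15-Hex} is independent of the rule $n$ --- only the fixed outputs $f_n(a,b,c)$ appearing in the lower quadrants of the hexagon tiles change --- the cardinality $|S_{110}| = 15$ is immediate.

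Next I would invoke Theorem~\ref{thm:CookECA}, which states that $R_{110}$ is Turing Universal. The content of that result, via the glider-system implementation of cyclic tag systems, is that any Turing Machine computation can be coded into the initial configuration (the input row) of $R_{110}$, with the subsequent rows of the space-time diagram faithfully carrying out the computation. Because the tiling produced by $S_{110}$ reproduces the space-time diagram of $R_{110}$ row by row --- this is precisely the guarantee of Theorem~\ref{thm:ECA-15-Hex} --- any such computation is likewise encoded in an $S_{110}$-tiling of the lower half-plane. Hence $S_{110}$ inherits the Turing completeness of $R_{110}$, and the corollary follows by setting $S_n = S_{110}$.

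The one point requiring care, rather than a genuine obstacle, is the detection of the computational state within the tiling, which the chapter has already addressed: for a universal $R_{110}$ computation the tiling need not become outright periodic upon halting, but the relevant computational artefacts (the recurrent words coding the productions $P$ and data string $d$ of the underlying cyclic tag system) are quasi-periodic in the sense of Definition~\ref{def:quasiperiodic}. I would therefore note that the status of the computation is read off by searching the tiling for the prescribed finite tuples of tiles coding these artefacts, exactly as in the discussion preceding this section. With that observation in place the proof is essentially a one-line application of Theorems~\ref{thm:ECA-15-Hex} and~\ref{thm:CookECA}, mirroring the structure of the chaos corollary above.
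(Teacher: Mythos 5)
Your proposal is correct and takes essentially the same route as the paper's own proof: both instantiate Theorem \ref{thm:ECA-15-Hex} at $n=110$ and invoke Cook's universality result (Theorem \ref{thm:CookECA}), with the Turing Machine converted to a cyclic tag system and coded into the initial row of `I'-tiles, after which the tiling reproduces the computation row by row. Your closing remark on detecting the computation's status via quasi-periodic artefacts simply restates the chapter's earlier discussion and is a harmless (indeed helpful) addition rather than a deviation.
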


\begin{proof}
This corollary is immediate from the Turing completeness of Rule 110 \cite{CookUniversality} and the theorem \ref{thm:ECA-15-Hex} by the same argument given for 1. We note that we have to perform the following steps to obtain the result. Given a Turing Machine with index $e$ and a given input $x$:

\begin{enumerate}
\item Convert $\varphi_e$ to a cyclic tag system, to get $Tag_e$.
\item For $\varphi_e(x)$ we take $Tag_e$ and code this and $x$ into a single row input for our ECA.
\item Code this into the initial row `I'-tiles from our construction.
\end{enumerate}

With this done, we can allow our tiling to proceed row by row, and note that this codes each successive stage of the computation $\varphi_e(x)$ via the mapping above.
\end{proof}

\begin{figure}[b]
\begin{center}
\begin{tikzpicture}[scale=0.6]
% init fillers
\begin{scope}[xshift=-3cm,yshift=1.725cm]\lozengetile{(0:0) -- (0:2) -- (300:2) -- cycle -- (300:2) -- (0:1)}{}{}{$I$}{$I$}\end{scope}
\begin{scope}[xshift=1cm,yshift=1.725cm]\lozengetile{(0:0) -- (0:2) -- (300:2) -- cycle -- (300:2) -- (0:1)}{}{}{$I$}{$I$}\end{scope}
\begin{scope}[xshift=5cm,yshift=1.725cm]\lozengetile{(0:0) -- (0:2) -- (300:2) -- cycle -- (300:2) -- (0:1)}{}{}{$I$}{$I$}\end{scope}
\begin{scope}[xshift=9cm,yshift=1.725cm]\lozengetile{(0:0) -- (0:2) -- (300:2) -- cycle -- (300:2) -- (0:1)}{}{}{$I$}{$I$}\end{scope}
\begin{scope}[xshift=13cm,yshift=1.725cm]\lozengetile{(0:0) -- (0:2) -- (300:2) -- cycle -- (300:2) -- (0:1)}{}{}{$I$}{$I$}\end{scope}
\begin{scope}[xshift=17cm,yshift=1.725cm]\lozengetile{(0:0) -- (0:2) -- (300:2) -- cycle -- (300:2) -- (0:1)}{}{}{$I$}{$I$}\end{scope}

% first row
\begin{scope}[xshift=0cm]\hexagontile{I}{I}{I}{0}\end{scope}
\begin{scope}[xshift=4cm]\hexagontile{I}{I}{I}{0}\end{scope}
\begin{scope}[xshift=8cm]\hexagontile{I}{I}{I}{1}\end{scope}
\begin{scope}[xshift=12cm]\hexagontile{I}{I}{I}{0}\end{scope}
\begin{scope}[xshift=16cm]\hexagontile{I}{I}{I}{0}\end{scope}
% lozenge row
\begin{scope}[xshift=1cm,yshift=-1.75cm]\lozengetile{(0:0) -- (60:2) -- (0:2) -- (300:2) -- (60:2) -- (0:2) -- cycle -- (300:2)}{0}{0}{0}{0}\end{scope}
\begin{scope}[xshift=5cm,yshift=-1.75cm]\lozengetile{(0:0) -- (60:2) -- (0:2) -- (300:2) -- (60:2) -- (0:2) -- cycle -- (300:2)}{0}{1}{0}{1}\end{scope}
\begin{scope}[xshift=9cm,yshift=-1.75cm]\lozengetile{(0:0) -- (60:2) -- (0:2) -- (300:2) -- (60:2) -- (0:2) -- cycle -- (300:2)}{1}{0}{1}{0}\end{scope}
\begin{scope}[xshift=13cm,yshift=-1.75cm]\lozengetile{(0:0) -- (60:2) -- (0:2) -- (300:2) -- (60:2) -- (0:2) -- cycle -- (300:2)}{0}{0}{0}{0}\end{scope}

% second row
\begin{scope}[xshift=0cm,yshift=-3.5cm]\hexagontile{0}{0}{0}{0}\end{scope}
\begin{scope}[xshift=4cm,yshift=-3.5cm]\hexagontile{0}{0}{1}{1}\end{scope}
\begin{scope}[xshift=8cm,yshift=-3.5cm]\hexagontile{0}{1}{0}{1}\end{scope}
\begin{scope}[xshift=12cm,yshift=-3.5cm]\hexagontile{1}{0}{0}{1}\end{scope}
\begin{scope}[xshift=16cm,yshift=-3.5cm]\hexagontile{0}{0}{0}{0}\end{scope}

% second lozenge row
\begin{scope}[xshift=1cm,yshift=-5.25cm]\lozengetile{(0:0) -- (60:2) -- (0:2) -- (300:2) -- (60:2) -- (0:2) -- cycle -- (300:2)}{0}{1}{0}{1}\end{scope}
\begin{scope}[xshift=5cm,yshift=-5.25cm]\lozengetile{(0:0) -- (60:2) -- (0:2) -- (300:2) -- (60:2) -- (0:2) -- cycle -- (300:2)}{1}{1}{1}{1}\end{scope}
\begin{scope}[xshift=9cm,yshift=-5.25cm]\lozengetile{(0:0) -- (60:2) -- (0:2) -- (300:2) -- (60:2) -- (0:2) -- cycle -- (300:2)}{1}{1}{1}{1}\end{scope}
\begin{scope}[xshift=13cm,yshift=-5.25cm]\lozengetile{(0:0) -- (60:2) -- (0:2) -- (300:2) -- (60:2) -- (0:2) -- cycle -- (300:2)}{1}{0}{1}{0}\end{scope}

% third row
\begin{scope}[xshift=0cm,yshift=-7cm]\hexagontile{0}{0}{1}{1}\end{scope}
\begin{scope}[xshift=4cm,yshift=-7cm]\hexagontile{0}{1}{1}{1}\end{scope}
\begin{scope}[xshift=8cm,yshift=-7cm]\hexagontile{1}{1}{1}{0}\end{scope}
\begin{scope}[xshift=12cm,yshift=-7cm]\hexagontile{1}{1}{0}{0}\end{scope}
\begin{scope}[xshift=16cm,yshift=-7cm]\hexagontile{1}{0}{0}{1}\end{scope}

% third lozenge row
\begin{scope}[xshift=1cm,yshift=-8.75cm]\lozengetile{(0:0) -- (60:2) -- (0:2) -- (300:2) -- (60:2) -- (0:2) -- cycle -- (300:2)}{1}{1}{1}{1}\end{scope}
\begin{scope}[xshift=5cm,yshift=-8.75cm]\lozengetile{(0:0) -- (60:2) -- (0:2) -- (300:2) -- (60:2) -- (0:2) -- cycle -- (300:2)}{1}{0}{1}{0}\end{scope}
\begin{scope}[xshift=9cm,yshift=-8.75cm]\lozengetile{(0:0) -- (60:2) -- (0:2) -- (300:2) -- (60:2) -- (0:2) -- cycle -- (300:2)}{0}{0}{0}{0}\end{scope}
\begin{scope}[xshift=13cm,yshift=-8.75cm]\lozengetile{(0:0) -- (60:2) -- (0:2) -- (300:2) -- (60:2) -- (0:2) -- cycle -- (300:2)}{0}{1}{0}{1}\end{scope}

\end{tikzpicture}
\end{center}
\label{fig:ExHexLozTiling}
\caption{Example few rows of a hexagon and lozenge tiling of Rule 30.}
\end{figure}

We include in figure \ref{fig:ExHexLozTiling} as a worked example of the initial few stages and columns of a Rule 30 ECA Hexagon and Lozenge tiling, demonstrating the function of the initializer tiles, the ECA hexagons, and the connecting lozenge tiles to demonstrate how an ECA can be encoded into a tiling of the plane.

\begin{conjecture}[C. 2019]
There exist ECA prototile sets of 8 tiles.
\end{conjecture}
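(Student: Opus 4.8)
The plan is to show that the seven auxiliary prototiles in the proof of \ref{thm:ECA-15-Hex} --- namely the four crossover lozenges and the three $I$-tiles --- are in fact dispensable, so that the eight rule-hexagons $\langle a,b,c,f_n(a,b,c)\rangle$ alone carry the entire ECA dynamics. First I would account for the $15 = 8 + 4 + 3$ split exactly as in the cited construction, and observe that the eight hexagons are irreducible: each encodes a distinct input triple $(a,b,c) \in \{0,1\}^3$ to the local rule $f_n$, so no rule can be represented with fewer. Thus reaching $8$ is equivalent to the claim that the boundary-fixing tiles and the bit-routing lozenges can both be absorbed into the hexagon layer.

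The boundary tiles are the easier target. In \ref{thm:ECA-15-Hex} the three $I$-tiles exist solely to close off the top edge of the lower half-plane, a device introduced precisely to avoid having to supply pre-images (``Garden of Eden'' rows) above the input. I would instead tile the full plane $\mathbb{Z}^2$, which removes the need for any boundary device, and justify upward extension by restricting attention to \emph{surjective} rules: by the corollary from \cite{CattaneoFM00}, the left/right-permutive rules are exactly the chaotic ones, and these are surjective, so every row possesses a pre-image and the tiling extends infinitely upward without $I$-tiles. This already eliminates three tiles and covers the chaotic rules (e.g.\ Rule~$30$, Rule~$90$) of interest in the previous corollary.

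The lozenges are the substantive part. Their job is the crossover forced by the ECA neighbourhood: the bottom value $f_n$ of a hexagon must be delivered simultaneously to the top of the hexagon directly below and to the upper-right/upper-left edges of its two lower neighbours. I would redesign the geometry so that this routing is performed by the two currently-unlabelled side edges of each hexagon (the $0^{\circ}$ and $180^{\circ}$ edges), making the packing a directly-adjacent honeycomb in which horizontal neighbours share an edge that transports the needed bit. The matching conditions on these side edges would then enforce the same $(s,t)$ consistency the lozenges presently impose, and one checks that the resulting arrangement is gap-free and that the edge-matching remains deterministic given a fixed initial row. Combined with the boundary step, this leaves exactly the eight rule-hexagons.

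The hard part will be the lozenge elimination: one must prove that a single tile type can carry a bit both downward and sideways while the \emph{local} edge-matching rules still uniquely determine each successor row from its predecessor, and that no spurious tilings (decoupling the horizontal transport from the vertical rule application) slip in. There is also a genuine gap for Turing-universality: Rule~$110$ is neither left- nor right-permutive (as noted in the text) and is not surjective, so the full-plane argument does not apply to it directly. Establishing the $8$-tile bound for a non-surjective universal rule would likely require retaining a minimal one-sided boundary mechanism folded into a rule-hexagon rather than a separate $I$-tile, and it is precisely this combination of obstacles --- deterministic routing without auxiliary tiles, and boundary handling for non-surjective rules --- that keeps the statement at the level of a conjecture.
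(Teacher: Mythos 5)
The statement you are addressing is a \emph{conjecture} in the thesis: the paper offers no proof, only two constraints --- that by Jeandel--Rao an $8$-tile ECA set cannot consist of square Wang tiles, and the author's guess (Chapter 7) that it would instead require \emph{interlocking} prototiles resembling a ``double conjoined H,'' i.e.\ cutting the planar ECA representation into one non-convex tile per rule entry. Your plan shares the correct target (eight tiles, one per triple $(a,b,c)$, with the routing and boundary overhead absorbed), and your honest assessment that the routing step is the hard part is exactly right. But your specific mechanism has a concrete geometric gap. In the construction of Theorem \ref{thm:ECA-15-Hex} the hexagons have \emph{vertices}, not edges, at $0^{\circ}$ and $180^{\circ}$; the two unlabelled edges are the lower-left and lower-right ones. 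More importantly, no packing of directly adjacent regular hexagons gives a cell three upward edges meeting three \emph{same-row} predecessors: in a flat-top honeycomb the adjacent columns are offset by half a pitch, so the three upper neighbours sit at two different heights and the induced update is not the ECA update $c_{i,j}=f_n(c_{i-1,j-1},c_{i,j-1},c_{i+1,j-1})$; in a pointy-top honeycomb each cell has only \emph{two} upper neighbours, so the middle bit must be relayed sideways --- which reintroduces precisely the crossover the lozenges were built to handle, now folded into the rule tiles, where with only eight tiles the relaying constraints interact with $f_n$ and threaten the spurious tilings you mention. This geometric obstruction is presumably why the paper's own speculation abandons convex tiles for interlocking shapes that can physically reach all three predecessors.

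Your boundary step also quietly changes the statement. Theorem \ref{thm:ECA-15-Hex} requires the tiling to code the ECA \emph{starting from the string coded by the first row}; passing to full-plane tilings of a surjective rule abolishes the designated input row, so the resulting $8$-tile set would code bi-infinite orbits rather than computations from a chosen input. That is fatal for the intended applications (the Turing-completeness corollary needs the input row to encode the cyclic tag system, and, as you correctly note, Rule $110$ is not surjective anyway, so the upward-extension argument is unavailable for exactly the rule one most wants). So even granting the routing redesign, what you would prove is a weaker variant of the conjecture than the one the surrounding corollaries require. In short: your proposal is a sensible research sketch that converges on the same ``one tile per rule entry'' philosophy as the paper's closing remarks, but the honeycomb-adjacency idea as stated does not realize the ECA dependency structure, and the paper's interlocking-shape heuristic is, at present, the more plausible route around that obstruction.
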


By \cite{Rao2015} these cannot be formed from Wang tiles - this would mean that there is an aperiodic prototile set of fewer than 8 tiles, which they proved to not be the case. As such, a tiling of 8 tiles must be some other planar repeating tessellation with colours applied to different edges or areas in order to represent a prototile set of 8 tiles.

\chapter{Conclusion}
\label{chap7}
% next resets the equation numbers to start at 1 at the start of the chapter
\setcounter{equation}{0}
\renewcommand{\theequation}{\thechapter.\arabic{equation}}

%------------------------------------------------------------------------------

\epigraph{Nevertheless, I repeat; we are only at the beginning. I am only a beginner. I was successful in digging up buried monuments from the substrata of the mind. But where I have discovered a few temples, others may discover a continent.}{\textit{S. Freud, \\ in an interview with G. S. Viereck.}}

Here we give an overview of the conclusions from the work presented in this thesis, and give summary of some of the open questions arising from this research.

\section{Conclusions from Results}

In Chapter 3 we presented our first results concerning the relationship between computability and tiling problems. We extended results due to Harel in \cite{Harel1986} to the general Domino Problem for infinite prototile sets. These results follow the general intuition due to Berger in \cite{berger1966} that the Domino Problem for finite prototile sets is $\Sigma^0_1$/$\Pi^0_1$ complete, so expecting that $TILE/\neg TILE$ is equivalent to $\Sigma^1_1$/$\Pi^1_1$ does fit the general intuition regarding this class of tiling problems.

We next discussed, in chapter 4 the question of whether tilings from a given prototile set are periodic or aperiodic. From this outset we found a rather unusual set for which the problems of (a)periodicity for infinite prototile sets are complete - $(\Pi^1_1 \wedge \Sigma^1_1)$ - which is a rare class of problems. Indeed, it is entirely possible that this may be weakened in subsequent work to one side of this conjunction.

The fact that $ATile_{FIN} \in \Pi^0_1$ is surprising, given we did not even have a proven existence of such prototile sets until the mid-60's. However, we state the conjecture (below) that $PTile_{FIN}$ is unlikely to be arithmetical owing to the requirement to quantify over all possible tilings for a given prototile set, despite its bound on lengths of their possible periodicity vectors.

The Weihrauch reductions presented in Chapter 5 are the first that we know of concerning tiling problems. They directly use material from previous chapters in order to show that the Domino Problems we have defined and studied are all bounded above by the closed choice principle for Baire Space, with some equivalences also being found. These give further detail to our picture of the computability aspects of Domino Problems, fleshing out the overall picture beyond the conventional view.

Finally, our results in Chapter 6 paint a picture regarding how to code tilings of 3-ary functions, using ECAs as our example. This is, sometimes, a more natural formulation of a problem, and as such the presentation of this hexagon-lozenge tiling may be useful outside of this particular class of automaton coding into prototile sets.

\section{Open Problems and Further Work}

There remain some interesting open problems that arise both from the literature surrounding this thesis, and from results in the thesis itself. 

From \cite{Rao2015} we have the following conjecture:

\begin{conjecture}
All the aperiodic Wang prototile sets generated by Kari's method are \emph{minimal aperiodic}.
\end{conjecture}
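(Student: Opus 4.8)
The plan is to exploit the dynamical description that underlies Kari's method, as introduced in \cite{Kari1996} and refined by Culik in \cite{CULIK1996245}. Recall that in such constructions each admissible bi-infinite horizontal row of a tiling encodes, through a balanced (Beatty-type) representation, a real number $x$ lying in a fixed fundamental interval, and that passing from one row to the row above corresponds to applying one branch of a fixed piecewise-affine ``multiplier'' map $f$ (for Kari's own set, the two branches $x \mapsto 2x$ and $x \mapsto 2x/3$). Under this correspondence a complete $\mathbb{Z}^2$-tiling is exactly a bi-infinite orbit $(\ldots, x_{-1}, x_0, x_1, \ldots)$ with $x_{n+1} = f(x_n)$, and each individual prototile records one \emph{local transition}: a chosen branch of $f$ together with one input digit and the corresponding output digit of the balanced representation. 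First I would make this dictionary precise, so that the tiles of $S$ are in bijection with the finitely many transition labels of the system.

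Next I would reduce the minimality claim to a statement purely about this subshift. By Simpson's identification of Wang tilings with two-dimensional subshifts of finite type \cite{Simpson2007}, let $X_S \subseteq S^{\mathbb{Z}^2}$ be the subshift of all $S$-tilings. Removing a tile $t$ still tiles the plane precisely when some configuration of $X_S$ avoids the letter $t$; hence $S$ is minimal aperiodic if and only if \emph{every} configuration of $X_S$ uses \emph{every} letter. The cleanest sufficient condition is that $X_S$ be topologically minimal, for then every locally admissible pattern occurring anywhere recurs in every configuration, and in particular every usable tile appears in every tiling. So the core of the argument becomes: show that each local transition label is forced to recur in every bi-infinite orbit of the Kari system.

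To establish this forcing I would separate the two directions. Horizontally, the balanced representation of a fixed $x$ is governed by an irrational rotation, which is uniformly recurrent, so each finite digit block that occurs in a row recurs syndetically along that row. Vertically, I would argue that the orbit $(x_n)$ meets every branch interval of $f$ with positive frequency, so every branch of the map is exercised infinitely often; combining the two recurrences, every admissible (branch, input-digit, output-digit) triple---that is, every tile realizable in the system---must appear in any complete tiling. This is exactly the step that upgrades aperiodicity to minimality.

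The hard part will be controlling the ``rare'' tiles and doing so uniformly over the whole family of sets produced by Kari's method. Two obstacles stand out. First, ``Kari's method'' is a recipe, not a single tile set, so the argument must isolate the structural feature common to all its outputs (the rational multiplier pair and the balanced-representation coding) rather than compute with any single explicit tile list. Second, and more seriously, the orbit closures of the multiplier dynamics need not fill the entire fundamental interval, so $X_S$ may fail to be topologically minimal; certain transition labels could be realizable only for a measure-zero set of parameters $x$ and might conceivably be circumvented by some exotic bi-infinite orbit. Ruling this out seems to require a delicate number-theoretic analysis of the admissible orbits of the specific rational maps---establishing that no bi-infinite orbit can persistently avoid a given branch-digit combination---and it is presumably this uniform boundary analysis that keeps the statement, as reported in \cite{Rao2015}, at the level of a conjecture.
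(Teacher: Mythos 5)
You should note first that the thesis contains no proof of this statement: it is listed in Chapter 7 as an open conjecture inherited from the discussion in \cite{Rao2015}, and the author explicitly states that no progress was made on it. Your proposal must therefore stand on its own, and --- as you yourself concede in your closing paragraph --- it does not close the argument. The decisive gap is the step that reduces minimal aperiodicity to topological minimality of the subshift $X_S$. For the Kari and Culik sets this ``cleanest sufficient condition'' is not merely unproven but is known in the literature to be \emph{false}: the subshift of all Kari--Culik tilings contains proper subsystems (a strict minimal subsystem has been exhibited, and the subshift even has positive topological entropy), so $X_S$ is not minimal. Worse, the dictionary ``complete tiling $=$ bi-infinite orbit of the multiplier map'', on which both your horizontal (Sturmian recurrence) and vertical (branch-frequency) forcing arguments rest, is not a bijection: there exist valid configurations whose rows do not encode any real number at all, because the averages defining the balanced representation need not converge along a bi-infinite row. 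Minimal aperiodicity requires showing that after deleting a tile $t$ \emph{no} tiling survives, including precisely these degenerate configurations that your dynamical dictionary misses, so the reduction in your second paragraph fails at exactly the point where the difficulty lives.

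Even within the orbit picture, the recurrence claims of your third paragraph are unsupported. Uniform recurrence of digit blocks via an irrational rotation applies only to irrational $x$; rational parameters yield eventually periodic rows that omit digit blocks, and nothing you state rules out a bi-infinite orbit of $f$ that remains forever inside a sub-interval and never exercises one branch --- the assertion that every branch interval is met ``with positive frequency'' is the content of the conjecture, not a lemma you may invoke. It is also worth observing that topological minimality of $X_S$ is strictly stronger than what you need (every configuration using every letter is a weaker property, compatible with non-minimality), so even repairing the route would require abandoning minimality and proving the letter-forcing statement directly, uniformly over all outputs of Kari's recipe and over all degenerate tilings. Your identification of the two obstacles in the final paragraph is accurate and is the genuinely valuable part of the proposal; but what you have written is a research plan whose central step is the open problem itself, not a proof.
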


This result holds for all given prototile sets derived and demonstrated in the literature, but we did not make any progress regarding the resolution of this problem. It does, however, make a lot of sense, and would be a good result to complete the picture painted by Rao \etal. 

Recall $PTile_{FIN}$ is the set of finite prototile sets for whom all tilings are periodic, we stated the following conjecture: 

\begin{conjecture}
$PTile_{FIN}$ is not arithmetical.
\end{conjecture}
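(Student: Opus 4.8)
The plan is to upgrade the $\Pi^1_1$ upper bound of \ref{thm:PTileFINPi11} to a matching lower bound, thereby showing $PTile_{FIN}$ is $\Pi^1_1$-hard and hence not arithmetical. Recall that every arithmetical set is $\Delta^1_1$, so if we exhibit an $m$-reduction $WELL \leq_m PTile_{FIN}$ from the $\Pi^1_1$-complete set $WELL$ (\ref{def:WELL}, \ref{cor:CompTreesPi11}), then $PTile_{FIN} \in \Delta^1_1$ would force $WELL \in \Delta^1_1$, contradicting $\Pi^1_1$-completeness; so $PTile_{FIN}$ could not be arithmetical. Thus the whole problem reduces to designing, uniformly in an index $e$ for a computable tree $T_e$, a \emph{finite} Wang prototile set $S_e$ whose tilings are all periodic exactly when $T_e$ is well-founded.

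First I would fix the skeleton of the reduction. The essential tension is that the constructions of Chapter~3 (\ref{thm:WELL-PTile}, \ref{thm:ILL-PTile}) spend an \emph{infinite} tile set to record an arbitrary path through $T_e$, whereas here the tile set must be finite. The standard device for packing unbounded computational content into finitely many tiles is Berger's Turing-machine simulation (\ref{thm:TMTilings}, \cite{berger1966}): since $T_e$ is computable, its characteristic function $\varphi_e$ is computed by a fixed finite program, so a fixed finite family of computation tiles can verify, row by row, that a guessed sequence of nodes forms a genuine descending path in $T_e$. Around this I would lay a periodic background regime, available in every tiling, so that $S_e$ always tiles the plane and the only question is whether a non-background, path-carrying tiling can occur.

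Next I would arrange the two implications. If $T_e$ is ill-founded, a path $p \in [T_e]$ exists, and the computation tiles can be assembled into a bi-infinite vertical strip writing successive initial segments of $p$; to force this tiling to be genuinely aperiodic, rather than accidentally periodic when $p$ is eventually periodic, I would superimpose a strictly increasing counter or a distinct-square marking in the style of Brown-Westrick \cite{Westrick2017} and the self-similar tilings of \cite{Shen2010}, so that the strip necessarily breaks every candidate period vector. Hence $e \in ILL$ would yield at least one aperiodic $S_e$-tiling, placing $f(e) \notin PTile_{FIN}$. Conversely, if $T_e$ is well-founded, no infinite verified strip can exist, so I would need every $S_e$-tiling to fall back onto the periodic background; this direction gives $e \in WELL \Rightarrow f(e) \in PTile_{FIN}$ and completes $WELL \leq_m PTile_{FIN}$.

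The hard part will be precisely this last point: ruling out \emph{spurious} aperiodic tilings in the well-founded case. Finitely many tiles cannot be prevented from producing partial computation strips, seams, and fault lines, and any such infinite defect could give a non-periodic tiling even when $T_e$ has no infinite path, which is exactly the phenomenon that made the historical aperiodic sets of Berger and Robinson so delicate. Controlling these requires a rigidity argument showing that every $S_e$-tiling either is one of finitely many periodic background patterns or encodes a bona fide infinite path, and it is the difficulty of obtaining such rigidity with a finite tile set that keeps the statement a conjecture. A secondary obstacle is reconciling the aperiodicity-forcing counter of the ill-founded case with the demand that the well-founded case admit \emph{only} periodic tilings, since a single finite tile set must support both behaviours; should this prove impossible, the alternative outcome worth investigating is a collapse in which the universal quantifier over the compact space of $S$-tilings can be witnessed arithmetically, which would instead refute the conjecture.
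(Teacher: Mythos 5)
You should know at the outset that the paper contains no proof of this statement: it is posed as a conjecture, and the author states explicitly that ``a concrete proof that there is no arithmetical representation of $PTile_{FIN}$ has not been found, so the possibility remains open'' (and the conclusion chapter repeats this). So there is no argument of the paper's to compare yours against, and your text is, by your own admission, a programme rather than a proof: the logical skeleton is sound (an $m$-reduction $WELL \leq_m PTile_{FIN}$ from the $\Pi^1_1$-complete set of \ref{def:WELL} and \ref{cor:CompTreesPi11} would make $PTile_{FIN}$ $\Pi^1_1$-hard, hence non-arithmetical, since every arithmetical set is $\Delta^1_1$), but the decisive step is exactly the one you leave open.

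That gap is not merely technical; it is obstructed in a way worth making precise. For a \emph{finite} prototile set $S$ the space of $S$-tilings is an effectively compact $\Pi^0_1$ class --- the very fact the paper exploits to get $ATile_{FIN} \in \Pi^0_1$ in \ref{thm:ATileFIN-Pi01} --- and compactness manufactures precisely the spurious tilings you need to exclude. A well-founded tree $T_e \subseteq \omega^{<\omega}$ of rank $\geq \omega$ has arbitrarily long finite branches, so your tile set admits arbitrarily large verified path-fragments; a limit of these is again a valid tiling, and since no infinite branch exists it cannot code one, so in the limit the verification machinery must degenerate (stalled simulations, unary codings of node labels grown into infinite blocks --- note that the $\omega$-branching of Baire-space trees forces unbounded integers into unbounded geometry, which is exactly what blows up under limits). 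Such limit tilings carry your aperiodicity-forcing counter or an isolated defect and are therefore aperiodic, so $f(e) \notin PTile_{FIN}$ for well-founded $e$ of infinite rank and the reduction fails; no local matching rule can excise these configurations, because distinguishing ``well-founded of unbounded rank'' from ``ill-founded'' is itself a $\Pi^1_1$ distinction and is not expressible as a closed condition on a compact space. Moreover, the escape hatch in your final sentence is, as far as current knowledge goes, the actual situation: a structure theorem of Grandjean, Hellouin de Menibus, and Vanier (ICALP 2018) shows that a two-dimensional subshift of finite type with no aperiodic configuration has a uniform bound on the least periods of its points; granting this, $S \in PTile_{FIN}$ becomes ``$S$ tiles the plane and there exists $N$ such that every $S$-tiling has a period of norm at most $N$'', and a routine compactness computation places this at roughly $\Pi^0_1 \wedge \Sigma^0_3$, i.e.\ arithmetical. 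So no reduction from $WELL$ of the kind you propose can exist, the conjecture should be expected to be \emph{false} rather than provable, and the real value of your plan is that it correctly isolates, in the rigidity step, the exact point where any such attempt must break.
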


This is motivated by the need to at some point quantify over the entire class of tilings for some finite prototile set $S$ in order to assert that $S \in PTile_{FIN}$, and this need seems unavoidable. However this is not something we have yet been able to show in general. The possible vectors are bounded, which may belie some clever trick for making $PTile_{FIN}$ arithmetical, but this is thus far elusive.

Lastly, recall that $ATile_{FIN} \in \Pi^0_1$, it would seem natural to derive some notion of measure on a prototile set's tilings, in order to derive the following conjecture - an analogue of Kucera's key result (see \cite{Downey2010} for an exposition):

\begin{conjecture}[C. 2019]
For a notion of positive measure on $\mathcal{S}$-tilings, for some prototile set $\mathcal{S}$, if a tiling $T$ has positive measure:
\begin{itemize}
\item $T$ is aperiodic.
\item $T$ encodes some Martin-L\"of Random.
\end{itemize}
\end{conjecture}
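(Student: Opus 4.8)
The statement is a conjecture, so the plan is to make its hypotheses precise and then reduce it to Ku\v{c}era's theorem. First I would fix the encoding of tilings used throughout the thesis: by the bounded-branching tree construction in the alternative proof of Theorem \ref{thm:WangExtension}, the complete $\mathcal{S}$-tilings of the plane are in computable bijection with the infinite paths $[T_{\mathcal{S}}]$ of the tiling tree $T_{\mathcal{S}}$, and for finite $\mathcal{S}$ this tree is finitely branching (bounded by $|\mathcal{S}|$) and computable, so $[T_{\mathcal{S}}]$ is a $\Pi^0_1$ class exactly as in Theorem \ref{thm:ATileFIN-Pi01}. The phrase ``$T$ has positive measure'' cannot be read literally, since any single tiling is a single path and hence null; I would therefore interpret the hypothesis as: there is a computable, non-atomic measure $\mu$ on $[T_{\mathcal{S}}]$ with $\mu([T_{\mathcal{S}}]) > 0$, and $T$ is a $\mu$-Martin-L\"of-random tiling. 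The natural candidate for $\mu$ is the measure obtained by distributing mass uniformly among the valid successors at each node of $T_{\mathcal{S}}$ (equivalently, a Markov or maximal-entropy measure of the associated two-dimensional subshift of finite type in the sense of Simpson \cite{Simpson2007}).

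With this set-up, the existence of positive-measure tilings and the fact that their codes are random reals is exactly Ku\v{c}era's theorem: a $\Pi^0_1$ class of positive measure contains a Martin-L\"of random element (see \cite{Downey2010}). Applying it to $[T_{\mathcal{S}}]$ yields a path, hence a tiling, that is $\mu$-random, so the interpreted hypothesis is satisfiable precisely when such a $\mu$ exists. For aperiodicity I would then argue by computability, which keeps the proof inside the idiom of the thesis. If $T$ is periodic with period vector $\mathbf{v}$, then $T$ is determined by its values on a finite fundamental domain, so the spiral-out path coding $T$ is computable; since $\mu$ is non-atomic, no computable path is $\mu$-Martin-L\"of random. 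Hence a $\mu$-random $T$ cannot be periodic, i.e. it is aperiodic.

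The second bullet is then almost immediate: the code of $T$ is by construction a $\mu$-random real, and for a computable non-atomic $\mu$ the cumulative-distribution (splitting) transform is a computable measure-preserving map carrying $\mu$-randoms to Lebesgue-randoms and back, so $T$ is Turing-equivalent to --- and in this sense \emph{encodes} --- a genuine Martin-L\"of random for the uniform measure. Both bullets thus follow once the measure $\mu$ is in hand.

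The hard part is not either bullet but justifying the hypothesis itself, that is, constructing the measure $\mu$. One must exhibit $\mu$ that is simultaneously computable, so that Ku\v{c}era's theorem and the randomness-preservation transform apply; non-atomic, so that periodic (computable) tilings are genuinely $\mu$-null; and of positive total mass on $[T_{\mathcal{S}}]$. For tile sets with only finitely many tilings, or with only periodic tilings, no such $\mu$ can exist, so the conjecture is really conditional on $\mathcal{S}$ admitting a positive-entropy measure; aperiodic sets such as those of \cite{Rao2015} trivially satisfy the first bullet but may carry only zero-entropy measures, and a two-dimensional subshift can have uncountably many aperiodic tilings yet zero entropy. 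Pinning down for which $\mathcal{S}$ a suitable $\mu$ exists, and whether the maximal-entropy measure of a general 2D subshift of finite type is computable, is the substantive obstacle and the reason the statement remains a conjecture.
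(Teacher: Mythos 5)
You should first be clear about what the paper actually contains: it offers \emph{no} proof of this statement. It appears in Chapter 7 as an open conjecture, and the thesis says explicitly that ``the work to identify a suitable notion of measure was not yet undertaken,'' suggesting only that a measure might come from analysing colour densities of tile edges. So your framing --- formalise the hypothesis, reduce the existence claim to Ku\v{c}era's theorem via the $\Pi^0_1$ tiling-tree class, and isolate the construction of a computable non-atomic measure $\mu$ as the genuine obstacle --- is an honest and sensible way to treat a statement the paper leaves open, and your use of Ku\v{c}era's theorem \cite{Downey2010} and of the randomness-preserving transform between computable non-atomic measures and Lebesgue measure is sound for the second bullet.

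However, your argument for the first bullet contains a genuine error. Under the thesis's Definition \ref{def:PTile}, a tiling is periodic as soon as it is invariant under a \emph{single} nonzero vector $\mathbf{v}$. The quotient $\mathbb{Z}^2/\langle\mathbf{v}\rangle$ is infinite (for primitive $\mathbf{v}$ it is isomorphic to $\mathbb{Z}$), so a $\mathbf{v}$-periodic tiling is determined by its values on an infinite strip, not on a finite fundamental domain, and your implication ``periodic $\Rightarrow$ finitely determined $\Rightarrow$ computable $\Rightarrow$ not $\mu$-random'' fails; it is valid only for doubly periodic tilings. Worse, under your liberal interpretation of the hypothesis (any computable non-atomic $\mu$ of positive mass on the tiling class) the first bullet is outright false: take the sixteen-prototile set exhibited in Chapter 4, whose tilings encode a horizontal binary string $\sigma$ and a vertical string $\tau$. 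The tilings with $\tau$ constantly $0$ form a $\Pi^0_1$ subclass carrying the computable non-atomic measure that is uniform in $\sigma$; every such tiling is invariant under the unit vertical shift, hence periodic in the sense of Definition \ref{def:PTile}, yet the $\mu$-random ones have $\sigma$ Martin-L\"of random. So a $\mu$-random tiling can be periodic. Any rescue must tighten the interpretation --- for instance require $\mu$ to be shift-invariant with full support on the tiling space (whereupon positive mass on the shift-invariant set of $\mathbf{v}$-periodic tilings can be excluded), or read ``periodic'' as doubly periodic --- and this, together with the computability of such a measure for a general two-dimensional subshift of finite type, is precisely where the conjectural content lives; your own uniform-branching measure on the spiral tiling tree is not obviously shift-invariant, so it does not escape the counterexample either.
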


However, the work to identify a suitable notion of measure was not yet undertaken. We suspect that this can be achieved by means of analysis on the `colour density' for coloured edges/Want tile quadrants.

It is also worth noting that the following conjecture is unresolved:
\begin{conjecture}
The tiling method due to Socolar in \cite{Socolar1988} does indeed lead to total planar aperiodic tilings.
\end{conjecture}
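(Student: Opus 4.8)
The plan is to separate the claim into two independent assertions — that Socolar's growth method yields a \emph{total} planar tiling, and that every tiling so produced is \emph{aperiodic} — and to discharge each using the substitution machinery of Chapter 4. First I would recast the construction of \cite{Socolar1988} as a finite prototile set $\mathcal{S}$ carrying local matching conditions, generalising Definition \ref{def:Wang} from the square lattice to the hexagonal/rhombic geometry Socolar employs, so that his matching rules become edge-and-decoration conditions of the kind we use throughout. This reformulation is precisely what lets both the Extension Theorem \ref{thm:extension} and the self-similarity criterion of Proposition \ref{prop:SHEN-Aperiodic} be brought to bear on a construction that is not natively lattice-based.

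For aperiodicity I would exhibit Socolar's inflation rule as a substitution witnessing that $\mathcal{S}$ is self-similar in the sense of Definition \ref{def:macro-tile} and the surrounding notions, that is, that $\mathcal{S}$ implements a macro-tile set $M$ with $M \cong \mathcal{S}$. Once this isomorphism and the corresponding unique deflation (recognisability of the macro-tiles) are verified, Proposition \ref{prop:SHEN-Aperiodic} gives at once that every $\mathcal{S}$-tiling is aperiodic, with the quasi-periodicity of Definition \ref{def:quasiperiodic} emerging as a by-product of the hierarchical structure. The delicate part here is checking that the inflation is a genuine bijection onto $M$ and that every legal tiling decomposes uniquely under the inverse, rather than merely admitting \emph{some} substitution decomposition.

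Totality I would then split into an easy half and a hard half. The easy half — that complete planar $\mathcal{S}$-tilings exist at all — follows by iterating the inflation from a single seed tile to produce arbitrarily large legal patches and then invoking the compactness argument of the Extension Theorem \ref{thm:extension} (equivalently Wang's Theorem \ref{thm:WangExtension}). The hard half is the assertion that Socolar's \emph{tile-by-tile} method never stalls: one must show that the local rules force the substitution hierarchy, so that every legal finite patch is contained in some inflation image and therefore always admits a legal extension.

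The main obstacle is exactly this forcing step, which is the open difficulty already flagged in the text for the method of \cite{Socolar2011}. The gap between `a complete tiling exists' (pure compactness) and `the greedy local procedure completes a tiling with no dead-ends' is essential, since for quasicrystalline matching rules local growth can get stuck even when global tilings exist. In our vocabulary the legal patches form a computable tree, and the claim reduces to showing that this tree has no finite maximal branch lying off the substitution backbone; I would attempt to discharge it through a vertex-atlas/forcing analysis establishing that no legal patch is untileable. Carrying out that recognisability-and-forcing argument for Socolar's specific rule set is where I expect the real work, and the genuine risk of failure, to lie.
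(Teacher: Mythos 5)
You should be aware at the outset that the thesis contains no proof of this statement: it appears in Chapter 7 as an explicitly open conjecture, the author offering only the opinion that it should follow from ``an application of WKL to some additional machinery added to the construction,'' with the details admittedly not worked out. So there is no paper proof to compare against, and your proposal must stand on its own --- and, as you yourself concede in your final paragraph, it does not close the question. Your decomposition is sensible: aperiodicity via self-similarity and Proposition \ref{prop:SHEN-Aperiodic} is a plausible route (modulo transporting Definition \ref{def:macro-tile} and that proposition, which are stated for $n \times n$ blocks of square tiles, to Socolar's hexagonal/rhombic geometry, and modulo the unique-decomposition check you rightly flag), and mere \emph{existence} of total tilings via iterated inflation plus the Extension Theorem \ref{thm:extension} is routine. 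But the conjecture is precisely about the tile-by-tile growth method, and your ``hard half'' --- that the tree of legal patches has no finite maximal branch off the substitution backbone --- is stated, reduced, and then deferred to a ``vertex-atlas/forcing analysis'' that is never carried out. That step is the entire content of the conjecture, so what you have written is a research plan rather than a proof.

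There is also a concrete danger in the particular reduction you propose. The claim that ``every legal finite patch is contained in some inflation image and therefore always admits a legal extension'' is a statement that \emph{fails} for closely analogous matching-rule tilings: Penrose-type rules admit locally legal patches (decapod-style defects) that embed in no tiling of the plane, which is exactly why greedy local growth can stall even though global tilings exist by compactness. So this property cannot follow from self-similarity alone; it is a separate, delicate feature of Socolar's specific rule set, and establishing or refuting it is where the open problem lives. Note further that even patch-extendability everywhere would only show that every finite stage of the procedure continues; to conclude a \emph{total} tiling you additionally need the growth order to exhaust the plane rather than, say, snake off leaving uncovered regions --- a point your sketch passes over. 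For comparison, the thesis's hinted route (WKL on an augmented tiling tree) would at best yield existence of an infinite path, i.e.\ existence of a tiling, which again addresses the compactness half and not the stall-freeness of the dynamic method; neither route, as sketched, resolves the conjecture.
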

It is our strong opinion that this is true by means of an application of WKL to some additional machinery added to the construction that is presented. However, the details have not yet been worked out to see if this can be achieved.

Finally, we have our conjecture from chapter 6:

\begin{conjecture}
	There exist ECA prototile sets of 8 tiles.
\end{conjecture}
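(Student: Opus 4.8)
The plan is to start from the $15$-tile hexagon--lozenge set built in the proof of Theorem \ref{thm:ECA-15-Hex} and to exhibit, for a suitable class of rules, a refinement that collapses it to exactly $8$ prototiles. The arithmetic target is suggestive: of the $15$ tiles, exactly $8$ are the \emph{rule hexagons} indexed by the input triples $(a,b,c)\in\{0,1\}^3$, while the remaining $7$ are the $4$ connecting lozenges and the $3$ boundary (``$I$'') tiles. Thus it suffices to show that both the lozenge connectors and the initializer tiles can be dispensed with entirely, leaving only the rule hexagons, which already carry the whole content of $f_n$.

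First I would eliminate the lozenges. Their sole role is the ``crossover'': routing the output bit of each hexagon to the lower-left, lower-centre and lower-right inputs of the three hexagons beneath it, exactly as the state-swapping tiles did in Theorem \ref{thm:WangECA}. Rather than carry this bit through a separate tile, I would enrich the edge alphabet of the hexagon so that the two side edges themselves carry the routed bits, imposing matching conditions that force the same crossover. Since the hexagons are still distinguished only by their triple $(a,b,c)$ (the side labels being functionally determined by that triple together with the neighbours' outputs), this does not increase the count beyond $8$; it merely moves the routing information from the lozenge ``areas'' into hexagon ``edges'', which the setting of the conjecture explicitly permits.

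Second I would remove the three $I$-tiles, which exist only to seed an arbitrary top row and cap the upper boundary. The cleanest existence argument restricts to \emph{surjective} ECA rules --- a class that, by \cite{CattaneoFM00}, contains the chaotic rules such as $R_{30}$ and $R_{90}$, i.e.\ precisely the interesting cases. For a surjective $f_n$ every configuration has a preimage, so bi-infinite orbits exist; tiling all of $\mathbb{Z}^2$ rather than the lower half-plane then needs no seed row at all, and the $I$-tiles disappear. Combined with the first step this yields a prototile set consisting of the $8$ rule hexagons alone, and the full-plane tiling faithfully tracks the ECA in both time directions, establishing the conjecture for these rules.

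The hard part will be the boundary in the non-surjective case, together with the honest verification that the first step does not smuggle tiles back in. For a non-surjective rule a full-plane tiling may be impossible (some rows have no preimage, the ``Garden of Eden'' phenomenon), so a genuine top boundary is unavoidable and must be realized without a decoupling tile. I expect this to require folding the boundary marker $I$ into an otherwise-unused edge of one of the $8$ hexagons and proving that the resulting matching conditions still force a faithful simulation of $f_n$ from the seed row downward --- a delicate combinatorial check that, if it fails, would show $8$ to be achievable only for the surjective (hence chaotic) rules rather than universally. Settling which of these two outcomes holds is, I believe, the real content of the conjecture.
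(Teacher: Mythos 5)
This statement is an open conjecture in the thesis --- the paper offers no proof, only the remark that by \cite{Rao2015} such a set cannot consist of Wang tiles and the suggestion that interlocking shapes (a ``double conjoined H'', cf. \cite{GrunbaumTP}) are the likely route. So your proposal can only be judged on its own merits, and its central step has a genuine gap. A prototile is individuated by its \emph{complete} edge labelling, so the moment you enrich the hexagons' side edges with routed bits, two hexagons with the same input triple $(a,b,c)$ but different routed bits are \emph{different} prototiles. You in fact concede this when you say the side labels are determined ``by that triple together with the neighbours' outputs'': the neighbour's output depends on a cell two columns away ($c_{i\pm 2,\,j-1}$ enters $f_n$ for the cells $i\pm 1$ of the next row), which is not a function of the tile's own triple, so it is genuine context. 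Each independent transiting bit on a side edge at least doubles the count, giving no fewer than $8\times 2=16$ tiles (and $32$ if both sides carry independent crossover bits) --- worse than $15$. This is precisely why the construction in Theorem \ref{thm:ECA-15-Hex} pays for $4$ lozenges, and why Theorem \ref{thm:WangECA} pays for $8$ state-swapping tiles: in an edge-matching framework, crossover is context information and must be paid for in prototile multiplicity. There is also an unaddressed geometric problem: in the paper's layout the hexagons alone do \emph{not} tessellate --- the diamond gaps are exactly what the lozenges fill --- and passing to a true hexagonal tessellation staggers adjacent columns by half a row, so the three arguments of $f_n$ are no longer simultaneously presented to the upper edges of a single tile; your sketch never explains how a pure $8$-hexagon set fits together at all.

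Your second step is sounder but secondary. For a surjective rule every configuration has a preimage, so bi-infinite orbits exist and a full-plane tiling needs no seed row; since the conjecture is a bare existence claim, restricting to surjective (indeed chaotic) rules such as $R_{30}$ is legitimate, though it only removes $3$ of the $7$ surplus tiles and changes what the tilings encode (arbitrary orbits of the ECA rather than a computation seeded by a chosen first row, which is what Theorem \ref{thm:ECA-15-Hex} delivers). The thesis's own intuition points in a different direction from yours: rather than compressing routing into edge labels --- which, as above, cannot come for free --- one should let the \emph{shape} of a single tile per rule entry physically reach all three successor cells, so that the crossover is performed by geometry and no connector or label multiplicity is needed. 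If you want to salvage your plan, that is where the work lies; the counting argument as you state it does not close.
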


As noted there, this cannot be formed of Wang tiles, but there is likely some way of cutting a planar representation of a given ECA into a regular single prototile per part of each rule. Shapes for this result would probably resemble interlocking tilings that look like a double conjoined `H', as detailed in \cite{GrunbaumTP}.

Finally, we note that the work in Chapter 5 on Weihrauch reducibility for tiling problems as principles has the capability to be taken much further. We alluded to one, for which we gave a definition of $WIPT$, accompanied by the following estimate of $C_{\omega^\omega} \leq_W C_{2^\omega} \star C_\omega \star WIPT$.

Indeed, we consider that there are many further applications for tiling problems, in particular for dimensionality $\geq 2$ and for non-Euclidian planar tilings.

A good starting point for the latter is the result due to Beauquier, Muller, and Schupp in \cite{Beauquier1999}. Here, they showed that a tiling problem known as ``the Bar Problem'' - the question of whether a plane that has holes in it can be covered with $(1 \times n)$ `bars' - is $NP$-complete in the Euclidian plane, however in the hyperbolic plane it becomes polynomial time.

Overall, we hope that we have demonstrated some interesting results regarding tiling problems, and laid down some framework and exposition that encourages future results.

%
%\include{appendix1}
%
%%%%%%%%%%%%%%%%%%%%%%%%%%%
%%%%% The bibliography %%%%
%%%%%%%%%%%%%%%%%%%%%%%%%%%%
%
%\addcontentsline{toc}{chapter}{Bibliography}

\bibliographystyle{amsplain}
\bibliography{refs}

%% INDEX
\printindex

\end{document}